\theoremstyle{definition} \newtheorem{definition}{Definition}[section]
\theoremstyle{plain} \newtheorem{theorem}[definition]{Theorem}
\theoremstyle{plain} \newtheorem{proposition}[definition]{Proposition}
\theoremstyle{plain} \newtheorem{conjecture}[definition]{Conjecture}
\theoremstyle{plain} \newtheorem{corollary}[definition]{Corollary}
\theoremstyle{plain} \newtheorem{lemma}[definition]{Lemma}
\theoremstyle{plain} \newtheorem{observation}[definition]{Observation}
\theoremstyle{plain} 
\theoremstyle{definition} 
\theoremstyle{plain} \newtheorem{fact}[definition]{Fact}
\theoremstyle{definition} \newtheorem{question}{Question}
\theoremstyle{remark} 
\newcommand{\eps}{\ensuremath{\varepsilon}}
\newcommand{\mcC}{\ensuremath{\mathcal{C}}}
\newcommand{\mcG}{\ensuremath{\mathcal{G}}}
\newcommand{\mcP}{\ensuremath{\mathcal{P}}}
\newcommand{\mfX}{\ensuremath{\mathfrak{X}}}
\newcommand{\mfY}{\ensuremath{\mathfrak{Y}}}
\newcommand{\bbE}{\ensuremath{\mathbb{E}}}
\newcommand{\bbN}{\ensuremath{\mathbb{N}}}
\newcommand{\bbR}{\ensuremath{\mathbb{R}}}
\newcommand{\wtO}{\ensuremath{\widetilde{O}}}
\DeclareMathOperator{\Sym}{Sym}
\DeclareMathOperator{\Aut}{Aut}
\DeclareMathOperator{\Iso}{Iso}
\DeclareMathOperator{\dist}{dist}
\title{Structure and automorphisms of primitive coherent
configurations\thanks{An extended abstract of this paper appeared in the
Proceedings of the 47th ACM Symposium on Theory of Computing (STOC'15) under
the title \emph{Faster canonical forms for primitive coherent
configurations}.}}
\author{
Xiaorui Sun\thanks{{\tt xiaoruisun@cs.columbia.edu.} This work was partially supported by a grant from the Simons Foundation (\#320173 to Xiaorui Sun).}
\\Columbia University
\and
John Wilmes\thanks{{\tt wilmesj@math.uchicago.edu.} Research supported in part by NSF grant DGE-1144082.}
\\University of Chicago
}
\date{}
\begin{document}

\maketitle


\begin{abstract}
    Coherent configurations (CCs) are highly regular colorings of the 
    set of ordered pairs of a ``vertex set''; each color represents a 
    ``constituent digraph.''  CCs arise in the study of permutation groups,
    combinatorial structures such as partially balanced designs, and the
    analysis of algorithms; their history goes back to Schur in the 1930s.  A
    CC is primitive (PCC) if all its constituent digraphs are connected.   

    We address the problem of classifying PCCs with large automorphism
    groups.  This project was started in Babai's 1981 paper in which
    he showed that only the trivial PCC admits more than 
    $\exp(\wtO(n^{1/2}))$ automorphisms.  (Here, $n$ is the number of
    vertices and the $\wtO$ hides polylogarithmic factors.) 

    In the present paper we classify all PCCs with more than
    $\exp(\wtO(n^{1/3}))$ automorphisms, making the first progress on
    Babai's conjectured classification of all PCCs with more than
    $\exp(n^{\epsilon})$ automorphisms.

    A corollary to Babai's 1981 result solved a then 100-year-old problem
    on primitive but not doubly transitive permutation groups, giving an 
    $\exp(\wtO(n^{1/2}))$ bound on their order.  In a similar vein, our 
    result implies an $\exp(\wtO(n^{1/3}))$ upper bound on the order of 
    such groups, with known exceptions.  This improvement of Babai's result 
    was previously known only through the Classification of Finite Simple 
    Groups (Cameron, 1981), while our proof, like Babai's, is elementary 
    and almost purely combinatorial.  


    Our analysis relies on a new combinatorial structure theory we develop
    for PCCs. In particular, we demonstrate the presence of 
    ``asymptotically uniform clique geometries'' on PCCs in a
    certain range of the parameters.
\end{abstract}

\section{Introduction}\label{sec:intro}

Let $V$ be a finite set; we call the elements of $V$ ``vertices.''
A \emph{configuration} of rank $r$ is a coloring $c : V\times V \to
\{0,\ldots,r-1\}$ such that (i) $c(u,u) \ne
c(v,w)$ for any $v \ne w$, and (ii) for all $i < r$ there is $i^* < r$ such
that $c(u,v) = i$ iff $c(v,u)=i^*$. The configuration is \emph{coherent} (CC)
if (iii) for all $i,j,k < r$ there is a \emph{structure constant} $p^i_{jk}$
such that if $c(u,v) = i$, there are exactly $p^i_{jk}$ vertices $w$ such that
$c(u,w)=j$ and $c(w,v)=k$. The diagonal colors $c(u,u)$ are the \emph{vertex
colors}, and the off-diagonal colors are the \emph{edge colors}. A CC is
\emph{homogeneous} (HCC) if (iv) there is only one vertex color. We denote by
$R_i$ the set of ordered pairs $(u,v)$ of color $c(u,v)=i$.  The directed graph $X_i =
(V,R_i)$ is the \emph{color-$i$ constituent digraph}. An HCC is
\emph{primitive} (PCC) if each constituent digraph is strongly connected. An
\emph{association scheme} is an HCC for which $i = i^*$ for all colors $i$
(so the constituent graphs $X_i$ are viewed as undirected).

The term ``coherent configuration'' was coined by Donald Higman in 
1969~\cite{higman-cc}, but the essential objects are older. In the case
corresponding to a permutation group, CCs already effectively appeared
in Schur's 1933 paper~\cite{schur-permutation}. This group-theoretic
perspective on CCs was developed further by
Wielandt~\cite{wielandt-permutation}.

CCs appeared for the first time from a combinatorial perspective in a 1952
paper by Bose and Shimamoto~\cite{bs-as}.  They, along with many of the
subsequent authors, consider the case of an association scheme, which is
essential for understanding partially balanced incomplete block designs, of
interest to statisticians and to combinatorial design theorists. The
generalization of an association scheme to an HCC was considered by Nair in
1964~\cite{nair-designs}. The algebra associated with a CC, which already
appeared in Schur's paper, was rediscovered in 1959 in the context of
association schemes by Bose and Mesner~\cite{bm-algebra}.

Weisfeiler and Leman~\cite{wl-wl} and Higman~\cite{higman-cc} independently
defined CCs in their full generality, including the associated algebra
(called ``cellular algebras'' by Weisfeiler and Leman), in the
late 1960s. For Higman, CCs were a generalization of permutation groups,
whereas Weisfeiler and Leman were motivated by the algorithmic Graph
Isomorphism problem. In the intervening years, CCs, and association schemes in
particular, have become basic objects of study in algebraic
combinatorics~\cite{bi-ac,bcn-drg,bailey-as,zieschang-as}.  
CCs also continue to play a role in the study of 
permutation groups~\cite{ep-transitive,mp-schur2group}.
Recent algorithmic applications of the CC concept 
include the Graph Isomorphism problem~\cite{babai-quasipolynomial}
and the complexity of matrix multiplication~\cite{cu-matrix}.

PCCs are in a sense the ``indivisible objects'' among CCs and are therefore of
particular interest.

In this paper we classify the PCCs with the largest automorphism groups, up to
the threshold stated in the following theorem.  (See
Defintion~\ref{def:exception} and Theorem~\ref{thm:main-pcc} for a more
detailed statement, and see Section~\ref{sec:asymptotic} for an explanation of
the asymptotic notation used throughout, including $O$, $\widetilde{O}$,
$\Theta$, $\Omega$, $\sim$, and $o$.)

\begin{theorem}\label{thm:main}
If $\mfX$ is a PCC not belonging to any of three exceptional families, then 
$|\Aut(\mfX)| \le
\exp(O(n^{1/3} \log^{7/3} n))$.
\end{theorem}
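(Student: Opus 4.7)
The plan is to apply the individualization-and-refinement paradigm underlying Babai's 1981 proof of the weaker $\exp(\wtO(n^{1/2}))$ bound. Specifically, it suffices to construct a \emph{distinguishing set} $S\subseteq V$ of size $s = O(n^{1/3}\log^{4/3} n)$: a set such that, after assigning each vertex of $S$ its own color and iterating the naive (Weisfeiler--Leman) refinement, every vertex of $V$ ends up with a distinct color. Every automorphism of $\mfX$ is then determined by its action on $S$, giving $|\Aut(\mfX)| \le n!/(n-s)! \le n^{s} = \exp(O(n^{1/3}\log^{7/3} n))$, as required.

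I would construct $S$ by a case analysis on the valencies $k_i = p^0_{i i^*}$ of the non-reflexive constituents. When some $k_i$ is not too large --- say $k_i = \wtO(n^{2/3})$ --- arguments in the spirit of Babai's 1981 paper should suffice: the low-valency constituent yields strong sphere-counting estimates, and a random set of size $\wtO(n^{1/3})$ meets enough spheres to be distinguishing, with refinement collapsing the residual ambiguity. The genuinely difficult case is the \emph{high-valency regime}, where every non-reflexive constituent has valency $k_i = \omega(n^{2/3})$; here the earlier techniques yield nothing better than $\exp(\wtO(n^{1/2}))$.

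The key new tool for this regime, and the structural centerpiece of the paper, is the asymptotically uniform clique geometry advertised in the abstract. I plan to show that outside the three exceptional families, one of the constituents $X_i$ admits a system of maximal cliques of size $m = \Theta(n^{1/3})$ that almost partitions $V$, with any two such cliques meeting in $O(1)$ vertices. The existence of the geometry would be established via a Metsch-style clique identification theorem: using only the intersection numbers $p^i_{jk}$, one shows that the relation ``$u$ and $v$ have at least $\mu$ common $X_i$-neighbors'' defines, up to negligible defect, an equivalence on $X_i$-edges whose classes are genuine cliques. Given the geometry, individualizing a single vertex in each of $O(n^{1/3}\log n)$ well-chosen cliques then propagates through refinement to distinguish all of $V$, producing $S$.

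The main obstacle will be establishing the clique geometry from the PCC axioms alone. Without any a priori geometric input, one must (i) pick out the correct constituent on which the geometry lives, (ii) verify Metsch's numerical conditions via delicate control of the intersection numbers in the high-valency range, and (iii) show that the resulting clique partition is compatible with the remaining constituents, so that Weisfeiler--Leman refinement can exploit it globally. Simultaneously one must cleanly isolate the three exceptional families --- the PCCs whose clique geometries are so symmetric that $|\Aut(\mfX)| = \exp(\omega(n^{1/3}\log^{7/3} n))$ --- to match the statement of the theorem precisely. The delicate interplay between the geometry and the full multi-color structure is where the combinatorial heart of the argument lies.
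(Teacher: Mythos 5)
Your overall frame (find a set of $\wtO(n^{1/3})$ vertices that completely splits $\mfX$ under individualization/refinement, then bound $|\Aut(\mfX)|\le n^{|S|}$) is the paper's frame too, but your case decomposition is essentially inverted, and both halves of it fail as stated. The regime where ``some constituent has valency $\wtO(n^{2/3})$'' is \emph{not} the regime where Babai's 1981 sphere-counting suffices; it contains exactly the hard case. If one color is overwhelmingly dominant (in the paper's notation, $\rho=n-n_1-1=o(n^{2/3})$, so every nondominant constituent has valency between $\sqrt{n}$ and $o(n^{2/3})$), the distinguishing numbers $D(i)$ of the nondominant colors can be as small as $O(\rho)=o(n^{2/3})$ --- think of strongly regular graphs of degree $o(n^{2/3})$ arising from Steiner systems or generalized quadrangles --- so Babai-style individualization of $O(n\log n/D_{\min})$ vertices gives sets of size $\omega(n^{1/3})$, and no random-sphere argument repairs this. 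Indeed the exceptional graphs $T(m)$ and $L_2(m)$ themselves sit in this low-valency regime, which already signals it cannot be the easy case. Conversely, the regime you call ``genuinely difficult'' (every constituent of valency $\omega(n^{2/3})$, hence no dominant color and $\rho$ large) is precisely where the paper's enhanced distinguishing-number analysis (Lemma~\ref{lem:dominant}, built on the sphere-growth Lemma~\ref{lem:growth-spheres} and the bound on the number of large colors, Lemma~\ref{lem:few-large-colors}) yields $D(i)=\Omega(n^{2/3}(\log n)^{-1/3})$ for all colors and hence a splitting set of size $O(n^{1/3}\log^{4/3}n)$; no clique geometry is used or available there. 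The clique geometry of Theorem~\ref{thm:clique-geometry} requires $\rho=o(n^{2/3})$, lives on the sparse nondominant graph $G(\mfX)$, and its cliques have order $\Omega(n^{1/2})$ (roughly $\lambda_i$), not $\Theta(n^{1/3})$ as you posit; searching for $\Theta(n^{1/3})$-cliques inside a constituent of valency $\omega(n^{2/3})$ via Metsch-type conditions is not something the intersection numbers support in that range.

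There is a second gap in how you use the geometry and isolate the exceptions. Even when the asymptotically uniform clique geometry exists, ``individualize one vertex in each of $O(n^{1/3}\log n)$ cliques'' is not the mechanism that works: the paper needs a further dichotomy. If every vertex lies in at least three cliques, a Spielman-style count of \emph{good triples} (Lemma~\ref{lem:main-spielman-combined}~(b), Section~\ref{sec:spielman}) shows that a \emph{random} set of only $O(n^{1/4}(\log n)^{1/2})$ vertices splits $\mfX$; if some vertex lies in at most two cliques, one must prove a classification theorem (Theorem~\ref{thm:2clique-characterization}) showing $\mfX$ is $\mfX(T(m))$, $\mfX(L_2(m))$, or a rank-four relative of $T(m)$ split by $O(\log n)$ vertices --- this classification, not a vague ``too symmetric'' criterion, is how the three exceptional families are matched to the theorem statement. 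Finally, the case where some $\lambda_i$ is small (below $\eps n^{1/2}$), in which no clique geometry need exist at all, requires its own good-triple argument (Lemma~\ref{lem:main-spielman-combined}~(a)); your sketch has no branch covering it. So while the skeleton (I/R plus a Metsch-based clique structure) points in the right direction, the proposal as written would not go through: the parameter split must be made on $\rho$ (dominant versus no dominant color), the Babai-style part must be strengthened by the new sphere-growth and color-counting lemmas rather than applied off the shelf, and the dominant-color case needs the $\lambda_i$ dichotomy, the good-triples machinery, and the two-clique classification.
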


Primitive permutation groups of large order were classified
by Cameron~\cite{cameron-permutation}.  We refer to the 
orbital, or Schurian configurations of these groups as ``Cameron schemes''
(see Sections~\ref{sec:cc} and~\ref{sec:groups}).  For every $\epsilon>0$, for
every $n$ there is only a bounded number of primitive groups of order greater
than $\exp(n^{\epsilon})$ (the bound depends on $\epsilon$ but not on $n$);
we refer to this stratification as the ``Cameron hierarchy.''

Theorem~\ref{thm:main} represents progress on the following conjectured
classification of PCCs with large automorphism groups.

\begin{conjecture}[Babai]\label{conj:babai}
For every $\eps > 0$, there is some $N_\eps$ such that if $\mfX$ is a PCC on
$n \ge N_\eps$ vertices and $|\Aut(\mfX)| \ge \exp(n^\eps)$, then 
$\mfX$ is a Cameron scheme.  In particular, $\Aut(\mfX)$
is a known primitive group.
\end{conjecture}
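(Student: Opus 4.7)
The strategy reduces the problem of bounding $|\Aut(\mathfrak{X})|$ to finding a small \emph{distinguishing set} $S\subseteq V$, i.e., a subset whose pointwise stabilizer in $\Aut(\mathfrak{X})$ is trivial. If $|S|\le n^{\varepsilon}/\log n$, then the orbit of the ordered tuple $S$ under $\Aut(\mathfrak{X})$ has size at most $n^{|S|}\le\exp(n^{\varepsilon})$, immediately yielding the desired bound. So the core task becomes: for every PCC $\mathfrak{X}$ not isomorphic to a Cameron scheme, construct a distinguishing set of this size.

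The first step would be to \emph{iterate} the clique-geometry machinery developed for Theorem~\ref{thm:main}, which produces asymptotically uniform geometries whenever certain valency parameters lie above the $n^{1/3}$ threshold. The idea is to individualize a small set of vertices, apply Weisfeiler--Leman refinement to the resulting coherent configuration, and then re-establish a (possibly coarser) clique geometry on the refined object. If one can guarantee that after each round the geometric rank drops, then after $O(1/\varepsilon)$ rounds the configuration should become discrete for all but Cameron-scheme exceptions. Carrying this out requires a \emph{stability theorem}: small individualization-induced perturbations of a clique geometry still yield a clique geometry on slightly smaller parameters. Moreover, one must verify that the exceptional families of Theorem~\ref{thm:main} either reduce to Cameron schemes or can be directly handled by the finer refinement obtained at later rounds.

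For PCCs whose parameters lie outside every range where the clique-geometry argument applies, the plan would be to exploit classical structure theory for distance-regular graphs (Bannai--Ito, Brouwer--Cohen--Neumaier). PCCs here are highly constrained by feasibility conditions on the intersection numbers $p^{i}_{jk}$. One would aim to show that any non-Cameron PCC in this regime either embeds combinatorially into a Johnson-type or Hamming-type geometry -- where small distinguishing sets can be exhibited explicitly -- or else forces algebraic relations on the Bose--Mesner algebra that pin down a bounded-size base from spectral data alone.

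The main obstacle is the iteration. The single application of the clique-geometry argument gains one factor in the exponent (from $n^{1/2}$ down to $n^{1/3}$), but iterating requires that refinement strictly reduces a well-defined complexity measure of the residual configuration in a way compatible with the geometric hypotheses. Without a robust combinatorial dichotomy -- analogous to Babai's Split-or-Johnson paradigm from the Graph Isomorphism setting -- it is unclear how to propagate geometric structure cleanly through multiple refinement rounds, particularly because individualization can destroy primitivity and replace the PCC with an imprimitive coherent configuration whose constituents must then be analyzed separately. Closing the full gap to $\exp(n^{\varepsilon})$ would likely demand either a PCC-specific Split-or-Johnson theorem or a new structural invariant that measures how close the residual configuration is to a Cameron scheme at each stage.
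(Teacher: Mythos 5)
You have not given a proof, and indeed the paper does not contain one: the statement you were asked about is Conjecture~\ref{conj:babai}, which is \emph{open}. The paper only confirms it for every $\eps > 1/3$ (Theorem~\ref{thm:main}/Theorem~\ref{thm:main-pcc}), and the authors explicitly describe the obstructions to going further (Section~\ref{sec:conclusion}): in the range $\rho = \Omega(n^{2/3})$ their distinguishing-number bounds do not improve beyond $D(i) = \Omega(n^{2/3})$, and new exceptions such as $J(m,3)$ and $H(3,m)$ appear. So there is no proof in the paper to match your argument against, and any purported proof of the full conjecture would be a major new result requiring substantially more than the techniques developed here.

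Your proposal is a research program rather than a proof, and the gaps you yourself flag are precisely where it breaks. The iteration step has no content yet: after individualizing vertices and refining, the resulting coherent configuration is no longer homogeneous, let alone primitive, and essentially every tool in the paper (the sphere-growth Lemma~\ref{lem:growth-spheres}, the diameter-two Lemma~\ref{lem:diam2}, the clique-geometry Theorem~\ref{thm:clique-geometry}) uses primitivity of $\mfX$ in an essential way; you give no ``stability theorem,'' no complexity measure that provably decreases, and no argument that $O(1/\eps)$ rounds suffice. The claim that one ``gains one factor in the exponent'' per round is not supported by anything in the paper --- the improvement from $n^{1/2}$ to $n^{1/3}$ comes from a single, non-iterative structural analysis, not from a round of a recursion. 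Finally, the appeal to Bannai--Ito/Brouwer--Cohen--Neumaier theory does not apply: general PCCs are neither association schemes nor distance-regular, and the paper stresses that no generalization of Neumaier's structure theory to PCCs was previously known, which is exactly why the authors had to build their (weak) clique-geometry substitute. As it stands, your text identifies plausible directions (a PCC analogue of Split-or-Johnson, handling imprimitive quotients) but proves none of the steps, so it cannot be accepted as a proof of the conjecture.
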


This conjecture would be a far-reaching combinatorial generalization of
Cameron's classification of large primitive permutation groups.  In particular,
while Cameron's result is only known through the Classification of Finite
Simple Groups (CFSG), Conjecture~\ref{conj:babai} would imply (at least for orders
greater than $\exp(n^{\epsilon})$) a CFSG-free proof of Cameron's result,
giving a different kind of insight into the structure of large primitive
permutation groups.

Babai~\cite{babai-cc} established Conjecture~\ref{conj:babai}
for all $\eps > 1/2$ (the ``first level of the Cameron hierarchy'').  
As a corollary, he solved a then 100-year-old 
problem on primitive but not doubly transitive permutation groups, giving an 
nearly tight, $\exp(\wtO(n^{1/2}))$ bound on their order.  The tight bound
was subsequently found by Cameron, using CFSG; our result implies a
CFSG-free proof of the same tight bound.  Moreover,
our Theorem~\ref{thm:main} confirms the conjecture to all $\eps > 1/3$, the
first improvement since Babai's paper.   An elementary proof of 
an $\exp(\wtO(n^{1/3}))$ upper bound on the order of primitive 
permutation groups, with known exceptions (the ``second level
of the Cameron hierarchy'') follows.   

For the proof of Theorem~\ref{thm:main}, we find new combinatorial structure in
PCCs, including ``clique geometries'' in certain parameter ranges
(Theorem~\ref{thm:clique-geometry}).  An overview of our structural results for
PCCs is given in Section~\ref{sec:structure}.

Our motivation is thus twofold.   First, we develop a structure theory for
PCCs, the most general objects in a hierarchy of much-studied highly regular 
combinatorial structures.  Second, as a corollary to our main result, we
obtain a CFSG-free proof for the second level of the Cameron hierarchy
of large primitive permutation groups.

Additional motivation for our work comes from the algorithmic Graph Isomorphism problem. We explain this connection in Section~\ref{sec:isomorphism}.

\subsection{Exceptional coherent configurations}\label{sec:cc}

We now give a precise statement of our main combinatorial results.

Given a graph $X=(V,E)$, we associate with $X$ the configuration $\mfX(X) =
(V;\Delta,E,{\overline E})$ where ${\overline E}$ denotes the set of edges of
the complement of $X$.  (We omit $E$ if $E=\emptyset$ and omit ${\overline E}$
if ${\overline E}=\emptyset$.)  So graphs can be viewed as configurations of
rank $\le 3$.


Given an (undirected) graph $H$, the \emph{line-graph} $L(H)$ has as vertices the edges of
$H$, with two vertices adjacent in $L(H)$ if the corresponding edges are
incident in $H$.  The \emph{triangular graph} $T(m)$ is the line-graph of the
complete graph $K_m$ (so $n = \binom{m}{2}$). The \emph{lattice graph} $L_2(m)$
is the line-graph of the complete bipartite graph $K_{m,m}$ (on equal parts)
(so $n = m^2$). The configurations $\mfX(T(m))$ and $\mfX(L_2(m))$ are
coherent, and in fact primitive for $m > 2$.

\begin{definition}\label{def:exception}A PCC is \emph{exceptional} if it is of the form $\mfX(X)$,
    where $X$ is isomorphic to the complete graph $K_n$, the triangular graph
    $T(m)$, or the lattice graph $L_2(m)$, or the complement of such a graph.
\end{definition}

We note that the exceptional PCCs have $n^{\Omega(\sqrt{n})}$ automorphisms.
Indeed, the exceptional PCCs are exactly the ``orbital configurations'' of
large primitive permutation groups, as explained below.

Our main result is that all the non-exceptional PCCs have far fewer
automorphisms.

\begin{theorem}\label{thm:main-pcc}
If $\mfX$ is a non-exceptional PCC, then $|\Aut(\mfX)| \le
\exp(O(n^{1/3} \log^{7/3} n))$.
\end{theorem}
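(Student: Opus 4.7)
The plan is to apply the individualization-refinement paradigm. For $S\subseteq V$, individualize $S$ by assigning each vertex of $S$ a distinct new color, then take the coherent closure (equivalently, run $2$-dimensional Weisfeiler--Leman refinement to stability). If the resulting configuration has discrete vertex coloring, then every $\sigma\in\Aut(\mfX)$ is determined by its restriction to $S$, giving
$$|\Aut(\mfX)|\le n^{|S|}=\exp(|S|\log n).$$
It thus suffices to show that for every non-exceptional PCC $\mfX$ there exists such a set $S$ of size $O(n^{1/3}\log^{4/3}n)$; this immediately yields the bound $\exp(O(n^{1/3}\log^{7/3}n))$ of the theorem.

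The construction of $S$ would proceed by a case analysis driven by the minimum valency $k$ of a nontrivial constituent digraph of $\mfX$. In the low-valency regime ($k = \wtO(n^{1/3})$) the sparsest constituent is a regular graph of low degree, and short BFS trees inside this constituent yield a distinguishing set of acceptable size, via the kind of sparse-graph individualization arguments already exploited in Babai's earlier work. In an intermediate range, combinatorial identities among the structure constants $p^i_{jk}$ would be used to bound the number of ``twin'' pairs of vertices that survive refinement; a union bound then shows that a uniformly random set of $O(n^{1/3}\log^{4/3}n)$ individualizations refines to a discrete coloring with positive probability. In the high-valency regime the main new input is Theorem~\ref{thm:clique-geometry}, which guarantees an asymptotically uniform clique geometry on a suitable constituent. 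The cliques of this geometry either align, in both parameters and incidence structure, with those of an exceptional configuration such as $\mfX(T(m))$ or $\mfX(L_2(m))$ --- in which case a rigidity argument forces $\mfX$ itself to be exceptional --- or they provide a small hitting set whose individualization distinguishes every vertex.

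I expect the high-valency regime to be the main obstacle. The delicate point is the rigidity step, where an \emph{asymptotically} uniform clique geometry must be shown to be \emph{exactly} uniform whenever $\mfX$ is non-exceptional. This requires extracting from the coherent-configuration axioms enough algebraic control over the intersection numbers to rule out nontrivial perturbations of the Steiner or grid incidence structures underlying $T(m)$ and $L_2(m)$ (or their complements). Additionally, turning a hitting set for the clique geometry into a distinguishing set of the required size requires controlling how many vertices share the same ``profile'' of incidences with the chosen cliques, which in turn reduces to further combinatorial estimates on the $p^i_{jk}$. Once this rigidity and profile control are in place, balancing the three regimes against the single target bound $|S| = O(n^{1/3}\log^{4/3}n)$ should be routine, and the theorem follows from the individualization inequality above.
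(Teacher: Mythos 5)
Your overall framing (individualize a set $S$ of size $O(n^{1/3}\log^{4/3}n)$, refine, conclude $|\Aut(\mfX)|\le n^{|S|}$) is the same as the paper's, and you correctly identify Theorem~\ref{thm:clique-geometry} as the key structural input in the dense regime. But the proposal has genuine gaps exactly where the paper's technical work lies. First, your case split by the minimum constituent valency $k$, with a ``low-valency'' regime $k=\wtO(n^{1/3})$ handled by BFS trees and ``sparse-graph individualization arguments already exploited in Babai's earlier work,'' is not substantiated and would not deliver the bound: Babai's earlier machinery gives only the $\wtO(n^{1/2})$ threshold, and improving it in the regime without an overwhelmingly dominant color is precisely one of the paper's main contributions. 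The paper splits instead on $\rho=n-n_1-1$ (Lemma~\ref{lem:dominant}), and when $\rho\ge n^{2/3}(\log n)^{-1/3}$ it must prove $D(i)=\Omega(n^{2/3}(\log n)^{-1/3})$ for \emph{every} color, which requires the new sphere-growth estimate (Lemma~\ref{lem:growth-spheres}) and the bound on the number of large colors (Lemma~\ref{lem:few-large-colors}); nothing in your intermediate-range ``twin pairs plus union bound'' sketch supplies these quantitative inputs.

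Second, in the high-valency regime your plan diverges from what is actually needed and, as stated, would fail. You aim to upgrade the asymptotically uniform clique geometry to an exact one and match it to $T(m)$ or $L_2(m)$; but the paper only needs (and only proves) such a classification when some vertex lies in at most two cliques (Theorem~\ref{thm:2clique-characterization}), and even there a non-exceptional outcome survives --- the rank-four configuration with a non-symmetric nondominant color and $G(\mfX)\cong T(m)$ --- which must be split by a separate $O(\log n)$ argument (Lemma~\ref{lem:2clique-split}); your rigidity step would try to prove something false if it attempted to force exceptionality in that case. When every vertex lies in at least three cliques, the geometry does not resemble an exceptional configuration at all, and your fallback of ``a small hitting set for the cliques'' has no size control: the number of cliques can be of order $n^{1/2}$ or more, and individualizing clique representatives is not shown to split the configuration. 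The paper instead counts Spielman-style good triples (Lemmas~\ref{lem:many-good-triples-suffice}, \ref{lem:small-lambda-main}, \ref{lem:large-lambda-main}) to show that a \emph{random} set of $O(n^{1/4}(\log n)^{1/2})$ vertices splits $\mfX$, and it also needs the case $\lambda_i<\eps n^{1/2}$ handled separately (condition~(a) of Lemma~\ref{lem:main-spielman-combined}), which your plan does not address beyond a vague mention. So while the skeleton is right, the steps that carry the actual content --- the distinguishing-number lower bounds for large $\rho$, the good-triple counting for the clique-geometry case, and the treatment of the non-exceptional rank-four $T(m)$ case --- are missing or replaced by arguments that do not work as described.
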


We remark that the bound of Theorem~\ref{thm:main-pcc} is tight, up to
polylogarithmic factors in the exponent. Indeed, the Johnson scheme
$J(m,3)$ and the Hamming scheme $H(3,m)$ both have $\exp(\Theta(n^{1/3}\log
n))$ automorphisms. (The Johnson scheme $J(m,3)$ has vertices the $3$-subsets
of a domain of size $m$ and $c(A,B) = |A\setminus B|$, and the Hamming scheme
$H(3,m)$ has vertices the words of length $3$ from an alphabet of size $m$ with
color $c$ given by the Hamming distance.)

The exceptional PCCs correspond naturally to the largest primitive permutation
groups. Given a permutation group $G \le \Sym(V)$, we define the \emph{orbital
configuration} $\mfX(G)$ on vertex set $V$ with the $R_i$ given by the orbitals
of $G$, i.e., the orbits of the induced action on $V\times V$.  CCs of this
form were first considered by Schur~\cite{schur-permutation}, and are commonly
called \emph{Schurian}. Note that $G \le \Aut(\mfX(G))$.

The Schurian CC $\mfX(G)$ is homogeneous if and only
if $G$ is transitive, and primitive if and only if $G$ is a primitive
permutation group.  If $G$ is doubly transitive, then $\mfX(G) =
\mfX(K_n)$. We also have $\mfX(S_m^{(2)}) = \mfX(T(m))$ and $\mfX(S_m\wr
S_2) = \mfX(L_2(m))$. 

\subsection{Primitive permutation groups}\label{sec:groups}
Following the completion of the Classification of Finite Simple Groups (CFSG),
one of the tasks has been to obtain elementary proofs of results currently
known only through CFSG. One such result is Cameron's classification of all
primitive permutation groups of large order, obtained by combining CFSG with the
O'Nan--Scott theorem~\cite{cameron-permutation}. Cameron's threshold for the
order is $n^{O(\log\log n)}$, but we state Mar\'oti's refinement of his
classification of permutation groups of order greater than $n^{c \log
n}$~\cite{maroti-primitive}.

\begin{theorem}[Cameron, Mar\'oti]\label{thm:cameron}
If $G$ is a primitive permutation group of degree $n > 24$, then one of the
following holds:
\begin{enumerate}
    \item[(a)] there are positive integers $d$, $k$, and $m$ such that
    $(A_m^{(k)})^d \le G \le S_m^{(k)} \wr S_d$;
    \item[(b)] $|G| \le n^{1 + \log_2 n}$.
\end{enumerate}
\end{theorem}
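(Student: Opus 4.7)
The plan is to derive this dichotomy from the O'Nan--Scott theorem combined with CFSG-based bounds on the orders of almost simple primitive groups, in the manner of Cameron's original proof as sharpened by Mar\'oti.

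First I would invoke O'Nan--Scott to stratify primitive $G \le \Sym(V)$ by the structure of the socle $N = \operatorname{soc}(G) \cong T^r$ for a simple group $T$: the cases are (i) affine, with $n = p^r$; (ii) almost simple ($r=1$); (iii) simple diagonal; (iv) product action; and (v) twisted wreath product. Cases (i), (iii), and (v) yield bound (b) by direct order estimates: in the affine case, $|G| \le |N| \cdot |\operatorname{GL}_r(p)| \le p^{r^2+r} \le n^{1 + \log_2 n}$; in the diagonal and twisted wreath cases, the construction forces $|N| = n^{1+o(1)}$, and the CFSG bound $|\operatorname{Out}(T)| = (\log|T|)^{O(1)}$ then gives $|G| \le |N| \cdot |\operatorname{Out}(T)|^r \cdot r! \le n^{1+\log_2 n}$.

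The almost simple case (ii) is the heart of the argument and requires the full force of CFSG. One runs through each family of simple groups and each of their primitive actions. For $T = A_m$, the primitive actions on $k$-subsets exactly account for case (a) of the theorem (with $d = 1$), while all other primitive actions of $A_m$ or $S_m$ are controlled by Bochert's or Bovey's subgroup bounds. For classical groups, Liebeck's bound $|G| \le n^{9 \log_2 n}$ on non-standard actions gives (b), and the handful of standard subspace actions are checked individually. Sporadic and exceptional Lie-type simple groups contribute only bounded order and are dispatched uniformly.

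The product action case (iv) is handled by induction on the degree: $G$ embeds in $H \wr S_d$ acting on $\Gamma^d$ with $H \le \Sym(\Gamma)$ primitive. If the inner factor $H$ falls into case (a), i.e.\ $A_m^{(k)} \trianglelefteq H \le S_m^{(k)}$, then $G$ inherits case (a) with the same $m$, $k$ and appropriate $d$; otherwise the inductive bound $|H| \le |\Gamma|^{1+\log_2 |\Gamma|}$ combined with $|S_d| \le d^d$ yields $|G| \le n^{1+\log_2 n}$. The main obstacle is the almost simple case, since without CFSG one has no complete list of simple groups, let alone of their primitive actions, with which to bound orders uniformly; Mar\'oti's improvement from Cameron's $n^{O(\log \log n)}$ threshold to the explicit $n^{1 + \log_2 n}$ comes from careful accounting of the classical subspace actions together with sharpened subgroup bounds in $S_m$ and $A_m$.
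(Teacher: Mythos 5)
This theorem is not proved in the paper at all: it is quoted as background, attributed to Cameron and Mar\'oti and cited to their papers, and the surrounding discussion explicitly notes that Cameron's classification is known only through CFSG --- indeed the whole point of the paper is to give CFSG-free proofs of consequences of this kind above a higher threshold. So there is no in-paper proof to compare your argument against; the relevant comparison is with the cited literature.

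As a reconstruction of the literature proof, your outline follows the standard Cameron--Mar\'oti route (O'Nan--Scott reduction plus CFSG-based order bounds in the almost simple case, then induction through the product action) and is correct in overall shape, but it is a sketch rather than a proof: the almost simple case and the explicit exponent $1+\log_2 n$ are exactly where all the work lies, and you defer them to Liebeck-type bounds and ``careful accounting'' without carrying them out. Two details would need repair in a fleshed-out version. First, in the simple diagonal case the socle has order $|T|^r = |T|\cdot n$, which is $n^2$ when $r=2$, not $n^{1+o(1)}$; the bound (b) still holds, but because $n^{1+\log_2 n}$ comfortably exceeds $n^2\cdot|\mathrm{Out}(T)|\cdot r!$, not for the reason you state. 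Second, Mar\'oti's trichotomy has a third branch consisting of the Mathieu groups $M_{11}$, $M_{12}$, $M_{23}$, $M_{24}$ in their multiply transitive actions, which genuinely violate (b) and are not of type (a); the hypothesis $n>24$ in the statement is precisely what removes them, and your claim that sporadic groups are ``dispatched uniformly'' glosses over the step where this hypothesis is actually used.
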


We call the primitive groups $G$ of Theorem~\ref{thm:cameron}~(a) \emph{Cameron
groups}.  Given a Cameron group $G$ with parameters $d$ and $k$ bounded, we
obtain a PCC $\mfX(G)$ with exponentially large automorphism group $H \ge G$,
in particular, of order $|H| \ge \exp(n^{1/(kd)})$. We call the PCCs $\mfX(G)$
\emph{Cameron schemes} when $G$ is a Cameron group. 

Hence, Conjecture~\ref{conj:babai} states that Cameron's classification of
primitive permutation groups transfers to the combinatorial setting of PCCs.
Furthermore, the conjecture entails Cameron's theorem, above the threshold $|G|
\ge \exp(n^{\eps})$ (see~\cite{bw-cameron}). Hence, confirmation of
Conjecture~\ref{conj:babai} would yield a CFSG-free proof of Cameron's
classification (above this threshold).

It seems unlikely that combinatorial methods will match Cameron's
$n^{O(\log\log n)}$ threshold for classification of primitive permutation
groups. An $n^{O(\log n)}$ threshold (as in stated Theorem~\ref{thm:cameron})
via elementary techniques might be possible, since above this threshold the
socle of a primitive permutation group is a direct product of alternating
groups, whereas below this threshold, simple groups of Lie type may appear in
the socle.

However, until the present paper, the only CFSG-free classification of the
large primitive permutation groups was given by Babai in a pair of papers in
1981 and 1982~\cite{babai-cc, babai-2transitive}.  Babai proved that $|G| \le
\exp(O(n^{1/2}\log^2 n))$ for primitive groups $G$ other than $A_n$ and
$S_n$~\cite{babai-cc}. A corollary of our work gives the first CFSG-free
improvement to Babai's bound, by proving that $|G| \le \exp(O(n^{1/3}\log^{7/3}
n))$ for primitive permutation groups $G$, other than groups belonging to three
exceptional families. 

In the following corollary to Theorem~\ref{thm:main}, $S_m^{(2)}$ and
$A_m^{(2)}$ denote the actions of $S_m$ and $A_m$, respectively, on the
$\binom{m}{2}$ pairs, and $G \wr H$ denotes the wreath product of the
permutation groups $G \le S_n$ by $H \le S_m$ in the product action on a domain
of size $n^m$.

\begin{corollary}\label{cor:main-group}
Let $\Gamma$ be a primitive permutation group of degree $n$.  Then either
$|\Gamma| \le \exp(O(n^{1/3}\log^{7/3} n))$, or $\Gamma$ is one of the following groups:
\begin{enumerate}
    \item[(a)] $S_n$ or $A_n$;
    \item[(b)] $S_m^{(2)}$ or $A_m^{(2)}$, where $n = \binom{m}{2}$;
    \item[(c)] a subgroup of $S_m \wr S_2$ containing $(A_m)^2$, where $n = m^2$.
\end{enumerate}
\end{corollary}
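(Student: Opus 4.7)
The plan is to apply Theorem~\ref{thm:main-pcc} to the orbital Schurian PCC $\mfX(\Gamma)$, exploiting the containment $\Gamma \le \Aut(\mfX(\Gamma))$. Since $\Gamma$ is primitive on $V$, $\mfX(\Gamma)$ is a PCC; if it is non-exceptional then Theorem~\ref{thm:main-pcc} immediately yields $|\Gamma| \le \exp(O(n^{1/3}\log^{7/3} n))$. Otherwise I would match each of the three families of exceptional PCCs to one of the three listed families of exceptional groups, with any residual sub-case handled by a separate bound.

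Suppose first that $\mfX(\Gamma) = \mfX(T(m))$ or its complement. Then $\Gamma$ preserves the edge set of $T(m)$, and since $\Aut(T(m)) = S_m^{(2)}$ for $m \ge 5$, $\Gamma$ is induced by a subgroup $H \le S_m$. Primitivity of $\Gamma$ on the $\binom{m}{2}$ pairs forces $H$ to be 2-transitive on $[m]$; either $H \supseteq A_m$, giving case~(b), or $H$ is a 2-transitive group other than $A_m, S_m$, in which case Babai's CFSG-free bound~\cite{babai-2transitive} gives $|H| \le \exp(O(\sqrt{m}\log^2 m))$ and hence $|\Gamma| \le \exp(O(n^{1/4}\log^2 n))$, comfortably below the target. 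An analogous analysis for $\mfX(\Gamma) = \mfX(L_2(m))$ yields either case~(c) (when $(A_m)^2 \le \Gamma$) or reduces to 2-transitive subgroups of $S_m$ not containing $A_m$, with the same estimate.

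The delicate remaining case is $\mfX(\Gamma) = \mfX(K_n)$, i.e., $\Gamma$ is 2-transitive. If $\Gamma \supseteq A_n$ we are in case~(a); otherwise, for a 2-transitive group $\Gamma$ not containing $A_n$, I would follow Babai's strategy from~\cite{babai-2transitive} and construct a small base $B \subseteq V$ (a set with trivial pointwise stabilizer), so that $|\Gamma| \le n^{|B|}$. The base is built greedily: at each step one analyzes the orbital HCC of the current stabilizer on a well-chosen orbit and bounds the resulting shrinkage factor. When this orbital configuration is a non-exceptional PCC, Theorem~\ref{thm:main-pcc} applies; substituting our improved PCC bound in place of Babai's 1981 bound throughout his argument should reduce the base size from $O(\sqrt{n}\log n)$ to $O(n^{1/3}\log^{4/3} n)$, yielding $|\Gamma| \le \exp(O(n^{1/3}\log^{7/3} n))$.

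The main obstacle is the careful treatment of exceptional intermediate stages in the base construction: whenever a stabilizer's orbital configuration on an orbit is itself exceptional (a trivial CC, $\mfX(T(m'))$, or $\mfX(L_2(m'))$), Theorem~\ref{thm:main-pcc} is inapplicable, and one must instead use the rigid structure of the exceptional scheme either to recurse on an intermediate 2-transitive or wreath-product action, or to append enough points to the base to reduce the stabilizer directly. Establishing that these intermediate exceptional cases can always be absorbed without exceeding the target base size is the heart of the argument, and is tightly coupled to the exception list in the corollary: the three exceptional families listed are precisely those that survive this case analysis.
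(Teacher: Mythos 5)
Your opening move coincides with the paper's: pass to the Schurian configuration $\mfX(\Gamma)$, use $\Gamma \le \Aut(\mfX(\Gamma))$, and apply Theorem~\ref{thm:main-pcc} when $\mfX(\Gamma)$ is non-exceptional, so that only the exceptional configurations remain to be matched with families (a)--(c). Your sketches for $\mfX(T(m))$ and $\mfX(L_2(m))$ are in the spirit of Proposition~\ref{prop:group-classify}, which is what the paper actually invokes (deferring its proof to~\cite{bw-cameron}); two steps there are asserted rather than proved --- rank three only gives transitivity on ordered pairs of intersecting and of disjoint $2$-subsets, hence $2$-homogeneity of the inducing group $H \le S_m$ rather than $2$-transitivity outright, and in the $L_2(m)$ case the phrase ``reduces to $2$-transitive subgroups of $S_m$'' skips the product-action bookkeeping (the coordinate swap, subdirect products such as diagonal subgroups) needed to show that a primitive $\Gamma \le S_m \wr S_2$ not containing $(A_m)^2$ has both projections small --- but these are repairable.

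The genuine gap is the doubly transitive case $\mfX(\Gamma) = \mfX(K_n)$. There you do not give a proof: you propose to re-run the base construction of~\cite{babai-2transitive} with Theorem~\ref{thm:main-pcc} substituted for the 1981 bound, and you yourself flag the treatment of exceptional intermediate orbital configurations as the unresolved ``heart of the argument,'' so this case of the corollary is left unestablished; carrying that program out would be a substantial piece of work not contained in the proposal. It is also an unnecessary detour, apparently caused by a misquotation: the bound $\exp(O(\sqrt{m}\log^2 m))$ you attribute to~\cite{babai-2transitive} is Babai's 1981 bound for \emph{uniprimitive} groups~\cite{babai-cc}; the 1982 theorem for doubly transitive groups, and Pyber's elementary estimates~\cite{pyber-2transitive}, bound every $2$-transitive group not containing $A_n$ by a quasipolynomial, $\exp(O((\log n)^{c}))$ for an absolute constant $c$, which is far below $\exp(O(n^{1/3}\log^{7/3} n))$. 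This is exactly how the paper disposes of all three exceptional configurations at once: it quotes Proposition~\ref{prop:group-classify} (CFSG, or elementarily via~\cite{pyber-2transitive} under the threshold $\exp(c\log^3 n)$), so that any sufficiently large primitive group with exceptional orbital configuration lands in (a), (b), or (c), while every $2$-transitive group outside (a) already satisfies the stated bound. Replacing your reconstruction of the 1982 argument by that citation closes the gap.
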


The slightly stronger bound $|\Gamma| \le \exp(O(n^{1/3}\log n))$ follows from
CFSG~\cite{cameron-permutation}. By contrast, our proof is elementary.

For given $n=m^2$, there are exactly three primitive groups in the third category
of Corollary~\ref{cor:main-group}.  We note that the groups of categories~1--3
of the corollary have order $\exp(\Omega(n^{1/2}\log n))$.

Corollary~\ref{cor:main-group} follows from Theorem~\ref{thm:main-pcc} by
classifying the large primitive groups $G$ for which $\mfX(G)$ is an
exceptional PCC, as in the following proposition. \qed

\begin{proposition}\label{prop:group-classify} There is a constant $c$ such
that the following holds. Let $G \le S_n$ be primitive, and suppose $|G| \ge
n^{c\log n}$.
\begin{enumerate}
    \item If $\mfX(G) = \mfX(K_n)$, then $G$ belongs to category~(a) of
    Corollary~\ref{cor:main-group}.
    \item If $\mfX(G) = \mfX(T(m))$, then $G$ belongs to category~(b) of
    Corollary~\ref{cor:main-group}.
    \item If $\mfX(G) = \mfX(L_2(m))$, then $G$ belongs to category~(c) of
    Corollary~\ref{cor:main-group}.
\end{enumerate}
\end{proposition}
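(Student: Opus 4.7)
The plan is to treat the three cases separately, reducing parts~(2) and~(3) to part~(1) via the known automorphism groups of $T(m)$ and $L_2(m)$, and then appealing to classical, CFSG-free bounds on doubly transitive groups in part~(1).

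\textbf{Part~(1).} The condition $\mfX(G) = \mfX(K_n)$ is equivalent to $G$ having exactly two orbitals on $[n]$, i.e., $G$ being doubly transitive. The task then reduces to the purely permutation-group statement that a doubly transitive group of degree $n$ and order at least $n^{c \log n}$ must contain $A_n$. I would invoke a CFSG-free elementary upper bound of the form $|G| \le n^{C \log n}$ for doubly transitive $G$ with $A_n \not\le G$, which follows from a combination of Bochert's lower bound on the minimal degree of a non-$A_n/S_n$ primitive group, Wielandt-type bounds on $|G|$ in terms of the minimal degree, and elementary Pyber-type sharpening exploiting double transitivity. Taking $c > C$ then forces $A_n \le G$, whence $G \in \{A_n, S_n\}$.

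\textbf{Part~(2).} For $m \ge 5$ we have $\Aut(T(m)) = S_m^{(2)}$ by Whitney's theorem on line graphs, so $\mfX(G) = \mfX(T(m))$ forces $G \le S_m^{(2)}$. Since the action of $S_m$ on $\binom{[m]}{2}$ is faithful for $m \ge 3$, there is a unique $H \le S_m$ with $G = H^{(2)}$ and $|H| = |G|$. The orbital of $\mfX(T(m))$ corresponding to intersecting $2$-subsets forces $H$ to be transitive on configurations $(a, \{b, c\})$ with $\{a, b, c\}$ distinct, which, invoking Kantor's classification of $2$-homogeneous non-$2$-transitive groups to discard the bounded list of small exceptions, yields $2$-transitivity of $H$ on $[m]$. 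Since $n = \binom{m}{2}$, the hypothesis $|G| \ge n^{c \log n}$ gives $|H| \ge m^{c' \log m}$ for $c'$ proportional to $c$, so part~(1) applied to $H$ yields $A_m \le H$, hence $A_m^{(2)} \le G$ and $G$ lies in category~(b).

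\textbf{Part~(3) and main obstacle.} Analogously, $\Aut(L_2(m)) = S_m \wr S_2$ for $m \ne 2, 4$; the merger of the ``same-row'' and ``same-column'' $S_m \times S_m$-orbitals in $\mfX(L_2(m))$ forces the swap in $S_m \wr S_2$ to lie in $G$, so $G_0 := G \cap (S_m \times S_m)$ has index exactly $2$. Letting $H_1, H_2 \le S_m$ be the coordinate projections of $G_0$, the swap conjugates the factors, giving $|H_1| = |H_2| \ge \sqrt{|G_0|}$, and the orbital analysis forces each $H_i$ to be doubly transitive on $[m]$; since $n = m^2$, $|H_i| \ge m^{c'' \log m}$ for suitable $c''$, and part~(1) applied to each $H_i$ gives $A_m \le H_i$, so $(A_m)^2 \le G_0 \le G \le S_m \wr S_2$ puts $G$ in category~(c). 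The main obstacle throughout is part~(1): Babai's general primitive-group bound $|G| \le \exp(\wtO(\sqrt n))$ is far too weak at the threshold $n^{c \log n}$, so one must exploit double transitivity via elementary minimal-degree and Jordan--Wielandt techniques specific to $2$-transitive groups. The small-$m$ exceptions for $T(m)$ and $L_2(m)$ are a minor bookkeeping nuisance absorbed by taking $c$ large enough that the relevant parameter is in the generic range.
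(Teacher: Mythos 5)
Your plan founders at exactly the point the paper flags. Part~(1) rests on a claimed \emph{elementary} bound $|G|\le n^{C\log n}$ for doubly transitive groups not containing $A_n$, derived from Bochert's minimal-degree theorem, Wielandt-type order-versus-minimal-degree bounds, and an unspecified ``Pyber-type sharpening.'' No such elementary bound is known: minimal-degree arguments of Bochert--Wielandt type do not come close to $n^{O(\log n)}$, and the best CFSG-free estimate for $2$-transitive groups other than $A_n,S_n$ (Pyber~\cite{pyber-2transitive}, refining Babai) is $\exp(O(\log^3 n))$, i.e.\ $n^{O(\log^2 n)}$ --- one logarithmic factor short of the threshold $n^{c\log n}$ in the statement. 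The $n^{O(\log n)}$ bound for $2$-transitive (and, in your case~(2), in fact $3$-transitive) groups is known only through CFSG. This is precisely why the paper does not prove Proposition~\ref{prop:group-classify} internally: it states that the proposition \emph{as stated requires CFSG}, that an elementary proof exists only under the weaker hypothesis $|G|\ge\exp(c\log^3 n)$ via~\cite{pyber-2transitive}, and it refers to~\cite{bw-cameron} for the proof. So a CFSG-free argument at the stated threshold, which is the heart of your proposal, cannot be completed by the tools you cite; you must either invoke CFSG at this step or weaken the order hypothesis.

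There is also a genuine gap in part~(3): from $A_m\le H_1$ and $A_m\le H_2$ you cannot conclude $(A_m)^2\le G_0$, because $G_0$ is only a subdirect product of $H_1\times H_2$. Twisted diagonals $\{(\phi(\tau),\tau):\tau\in S_m\}$ have full projections but contain no copy of $A_m\times A_m$, and they are \emph{not} excluded by the order hypothesis, since their order is about $m!=\exp(\Theta(\sqrt n\log n))\gg n^{c\log n}$. They are ruled out only by a further appeal to the orbital condition --- e.g., $G_0$ must act transitively on ordered same-row pairs $((x,y),(x,y'))$, which a diagonal-type subgroup does not --- and that step is missing from your write-up. (In part~(2), note that the edge orbital of $\mfX(T(m))$ actually forces $H$ to be transitive on ordered triples of distinct points, i.e.\ $3$-transitive, so the detour through Kantor's $2$-homogeneous classification is unnecessary; but this is cosmetic compared with the obstruction in part~(1).)
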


Proposition~\ref{prop:group-classify} as stated requires CFSG, but an
elementary proof is available under the weaker bound of $|G| \ge \exp(c\log^3
n)$ using~\cite{pyber-2transitive}. For the proof and a more general classification,
we refer the reader to~\cite{bw-cameron}.

\subsection{Individualization and refinement}\label{sec:intro-refinement}

We now introduce the individualization/refinement heuristic.
We shall use individualization/refinement to find bases of automorphism groups
of configurations.

A \emph{base} for a group $G$ acting on a set $V$ is a subset $S \subseteq V$
such that the pointwise stablizer $G_{(S)}$ of $S$ in $G$ is trivial. If $S$ is
a base, then $|G| \le |V|^{|S|}$.

Let $\Iso(\mfX,\mfY)$ denote the set of isomorphisms from $\mfX$ to $\mfY$, and
$\Aut(\mfX) = \Iso(\mfX,\mfX)$.

\emph{Individualization} means the assignment of individual colors to some
vertices; then the irregularity so created propagates via some canonical color
refinement process.  For a class $\mcC$ of configurations (not necessarily
coherent), an assignment $\mfX \mapsto \mfX'$ is a \emph{color refinement} if
$\mfX, \mfX' \in \mcC$ have the same set of vertices and the coloring of
$\mfX'$ is a refinement of the coloring of $\mfX$.  Such an assignment is
\emph{canonical} if for all $\mfX,\mfY \in \mcC$, we have $\Iso(\mfX,\mfY) =
\Iso(\mfX',\mfY')$. In particular, $\Aut(\mfX) = \Aut(\mfX')$.

Repeated application of the refinement process leads to the \emph{stable
refinement} after at most $n-1$ rounds.

If after individualizing the elements of a set $S\subseteq V$, all vertices get
different colors in the resulting stable refinement, we say that $S$
\emph{completely splits} $\mfX$ (with respect to the given canonical refinement
process).  If $S$ completely splits $\mfX$, then $S$ is a base for
$\Aut(\mfX)$. Hence, to prove Theorem~\ref{thm:main-pcc}, it suffices to show
that some set of $O(n^{1/3}\log^{4/3} n)$ vertices completely splits $\mfX$
after canonical color refinement.

For our purposes, the simple ``naive vertex refinement'' will suffice as our
color refinement procedure.  Under \emph{naive vertex refinement}, the
edge-colors do not change, only the vertex-colors are refined. The refined
color of vertex $u$ of the configuration $\mfX$ encodes the following
information: the current color of $u$ and the number of vertices $v$ of color
$i$ such that $c(u,v)=j$, for every pair $(i,j)$, where $i$ is a vertex-color
and $j$ is an edge-color.

We now state our main technical result, from which Theorem~\ref{thm:main-pcc}
immediately follows.

\begin{theorem}[Main]\label{thm:main-ir}
Let $\mfX$ be a non-exceptional PCC. Then there exists a set of
$O(n^{1/3}\log^{4/3} n)$ vertices that completely splits $\mfX$ under naive
refinement.
\end{theorem}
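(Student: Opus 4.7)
The plan is to bound the base size for naive refinement via a direct analysis of Babai's \emph{distinguishing number} $D(u,v) := |\{w \in V : c(u,w) \ne c(v,w)\}|$. A standard Chernoff-plus-union-bound argument yields that if $D(u,v) \ge d$ for all $u \ne v$, then a uniformly random subset of size $O((n/d)\log n)$ completely splits $\mfX$ after one round of refinement. So a uniform lower bound $d = \Omega(n^{2/3}\log^{-1/3} n)$ would already suffice; but in general many pairs may have much smaller $D$, and we cannot appeal to such a bound directly.

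My plan is therefore a two-stage individualization. First I would choose a seed $S_0$ of size $O(n^{1/3}\log n)$ to force an informative vertex-color partition after naive refinement; then I would separately bound the number of additional individualizations required to completely split each surviving color class. The principal combinatorial parameter is the minimum valency $k$ of a nontrivial constituent of $\mfX$, and the argument divides into cases according to $k$.

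When $k$ is small, the seed $S_0$ should propagate widely through repeated refinement, since the neighborhoods of seed vertices are small and overlap controllably; this quickly cuts $V$ into sublinear cells, on which a further random sample of $O(n^{1/3}\log n)$ vertices would finish the job. When $k$ is large, I would invoke the structural input of Theorem~\ref{thm:clique-geometry}: in this regime the nontrivial constituents of $\mfX$ carry an asymptotically uniform clique geometry, so $V$ can be encoded as a set of incidence flags over a point set $\mcP$ of size roughly $O(n^{2/3})$. Splitting $\mfX$ then reduces to splitting $\mcP$ under the induced action of $\Aut(\mfX)$. Because $\mfX$ is non-exceptional, this induced action avoids the three high-symmetry families, and elementary base-size bounds should give a splitting set on $\mcP$ of the desired size.

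The main obstacle, and where the non-exceptional hypothesis enters most delicately, will be the intermediate regime in which $k$ is neither small enough for rapid propagation nor large enough to invoke the clique geometry cleanly. Here I expect to need a careful amortized counting argument: after individualizing $S_0$, even when no single pair $(u,v)$ enjoys distinguishing number $\Omega(n^{2/3})$, a weighted accounting over the pairs with small $D$-values should show that such pairs either propagate their regularity through the coherent structure constants to force large $D$-values elsewhere, or else force $\mfX$ into one of the three exceptional configurations $\mfX(K_n)$, $\mfX(T(m))$, $\mfX(L_2(m))$. Excluding these exceptions is precisely what permits the improvement from Babai's $n^{1/2}$ bound to the tight $n^{1/3}$, and I expect the sharpest case analysis to occur here.
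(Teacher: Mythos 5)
Your outline gestures at some of the right ingredients (random individualization driven by distinguishing numbers, and a clique geometry in a dense regime), but as a proof it has several genuine gaps, and the case division itself is organized around the wrong parameter. The paper splits on $\rho = n - n_1 - 1$, the total degree of the nondominant colors, not on the minimum constituent valency $k$; in particular Theorem~\ref{thm:clique-geometry} requires $\rho = o(n^{2/3})$ together with $\lambda_i \ge \eps n^{1/2}$ for all nondominant $i$, so ``$k$ large'' is not a hypothesis under which you can invoke it. More seriously, in the regime $\rho = o(n^{2/3})$ the distinguishing-number strategy you lean on cannot possibly reach $\wtO(n^{1/3})$: for the dominant color one has $D(1) \le 2\rho = o(n^{2/3})$, so no amount of amortized accounting over pairs will rescue a scheme in which a single individualized vertex must separate a dominant pair in one refinement round. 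The paper's way around this is qualitatively different: it individualizes \emph{pairs} of vertices that, together with a witness $w$, form Spielman-type ``good triples'' (Property $Q(i,j)$, Lemmas~\ref{lem:many-good-triples-suffice}--\ref{lem:large-lambda-main}), and counting these triples is where the clique geometry and the bounds on $\lambda_i$ are actually used. Your ``propagation through repeated refinement'' claim for small valencies is likewise unsubstantiated -- naive refinement does not cascade automatically, and no lemma of this type is available.

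Two further steps do not hold up as stated. First, your expectation that pairs with small $D$-values either force large $D$-values elsewhere ``or else force $\mfX$ into one of the three exceptional configurations'' misplaces where the exceptions enter: the lower bound $D(i) = \Omega(n^{2/3}(\log n)^{-1/3})$ in the large-$\rho$ regime (Lemma~\ref{lem:Di-large}) holds for \emph{all} PCCs of rank $>2$ with no exceptional cases, and its proof requires the sphere-growth estimate (Lemma~\ref{lem:growth-spheres}) and the bound on the number of large colors (Lemma~\ref{lem:few-large-colors}) -- the technical core you defer to an unspecified counting argument. The exceptional families surface only in the small-$\rho$ clique-geometry analysis, when some vertex lies on at most two cliques (Theorem~\ref{thm:2clique-characterization}). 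Second, reducing to ``splitting the point set $\mcP$ by elementary base-size bounds'' is not a proof: completely splitting $\mcP$ does not by itself completely split $\mfX$ under naive refinement, the splitting set must consist of vertices of $\mfX$, and there is a genuinely non-exceptional rank-four PCC whose geometry is exactly that of $T(m)$ (case (b) of Theorem~\ref{thm:2clique-characterization}), so excluding the three exceptional families does not make the induced action on $\mcP$ small-symmetry; the paper needs the separate color-counting argument of Lemma~\ref{lem:2clique-split} for this case. In short, the proposal reproduces the easy outer layer (Babai's Lemma~\ref{lem:distinguishing} plus a Chernoff/union bound) but omits or misstates the three components that carry the theorem: the sphere-growth/distinguishing-number lower bounds for large $\rho$, the good-triple pair-individualization argument for small $\rho$, and the classification and treatment of the two-cliques-per-vertex geometries.
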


This improves the main result of~\cite{babai-cc}, which stated that if $\mfX$
is a PCC other than $\mfX(K_n)$, then there is a set of $O(n^{1/2}\log n)$
vertices which completely splits $\mfX$ under naive refinement.

Naive vertex refinement is the only color refinement used in the present paper.
However, we remark that coherent configurations were first studied by
Weisfeiler and Leman in the context of their stronger canonical color
refinement~\cite{wl-wl,weisfeiler-graphs}.

Given a configuration $\mfX$, the \emph{Weisfeiler--Leman} (WL) canonical refinement
process~\cite{wl-wl, weisfeiler-graphs} produces a CC $\mfX'$ on the same
vertex set with $\Aut(\mfX) = \Aut(\mfX')$, by refining the coloring until it
is coherent.  More precisely, in every round of the refinement process, the
color $c(u,v)$ of the pair $u,v \in V$ is replaced with a color $c'(u,v)$ which
encodes $c(u,v)$ along with, for every pair $j,k$ of original colors, the
number of vertices $w$ such that $c(u,w) = i$ and $c(w,v) = k$. This refinement
is iterated until the coloring stabilizes, i.e., the rank no longer increases
in subsequent rounds of refinement.  The stable configurations under WL
refinement are exactly the coherent configurations.

\subsection{Relation to strongly regular graphs}

An undirected graph $X =(V,E)$ is called \emph{strongly regular} (SRG) with
parameters $(n,k,\lambda,\mu)$ if $X$ has $n$ vertices, every
vertex has degree $k$, each pair of adjacent vertices has $\lambda$
common neighbors, and each pair of non-adjacent vertices has $\mu$
common neighbors.

We note that a graph $X$ is a SRG if and only if the configuration $\mfX(X)$ is
coherent.  If a SRG $X$ is nontrivial, i.e., it is connected and coconnected,
then $\mfX(X)$ is a PCC.


All of our exceptional PCCs are in fact SRGs. Our classification of PCCs,
Theorem~\ref{thm:main-pcc}, was established in the special case of SRGs by
Spielman in 1996~\cite{spielman-srg}, on whose results we build. In fact, Chen,
Sun, and Teng have now established a stronger bound for SRGs: a non-exceptional
SRG has at most $\exp(\wtO(n^{9/37}))$ automorphisms~\cite{cst-srg}. 

The results of Spielman and Chen, Sun, and Teng both rely on Neumaier's
structure theory~\cite{neumaier-srg} of SRGs to separate the exceptional SRGs with many
automorphisms from those to which I/R can be effectively applied.  However, no
generalization of Neumaier's results to PCCs has been known. We provide a weak
generalization, sufficient for our purposes, in Section~\ref{sec:structure}.

\subsection{Graph Isomorphism}\label{sec:isomorphism}
The ``Graph Isomorphism (GI) problem'' is the computational problem 
to decide whether or not a pair of given graphs are isomorphic.
This problem is of great interest to complexity theory since it is
one of a very small number of natural problems in NP of intermediate
complexity status (unlikely to be NP-complete but not known to be
solvable in polynomial time).  

In recent major development, Babai~\cite{babai-quasipolynomial}
announced a quasipolynomial-time ($\exp(O((\log n)^c))$)
algorithm.

Babai's algorithm reduces the problem to the isomorphism problem 
of PCC's and then uses his (rather involved) ``split-or-Johnson''
procedure for further reduction.

Babai conjectures that a considerably simpler algorithm might succeed;
unless the PCC is a Cameron scheme,
individualization of a small number of vertices may
completely split the vertex set.  This is a more explicit
version of Conjecture~\ref{conj:babai}.

Our result proves that this is indeed the case if 
``small number'' means $\wtO(n^{1/3})$, improving
Babai's $\wtO(n^{1/2})$.  We hope that further refinement
of our structure theory will yield further progress
in this direction.

\subsection{Asymptotic notation}\label{sec:asymptotic}

To interpret asymptotic inequalities involving the parameters of a PCC, we
think of the PCC as belonging to an infinite family in which the asymptotic
inequalities hold. 

For functions $f,g : \bbN \to \bbR_{> 0}$, we write $f(n) = O(g(n))$ if there
is some constant $C$ such that $f(n) \le C g(n)$, and we write $f(n) =
\Omega(g(n))$ if $g(n) = O(f(n))$. We write $f(n) = \Theta(g(n))$ if $f(n) =
O(g(n))$ and $f(n) = \Omega(g(n))$. We use the notation $f(n) = \wtO(g(n))$
when there is some constant $c$ such that $f(n) = O(g(n)(\log n)^c)$. We write
$f(n) = o(g(n))$ if for every $\eps > 0$, there is some $N_{\eps}$ such that
for $n \ge N_{\eps}$, we have $f(n) < \eps g(n)$.  We write $f(n) =
\omega(g(n))$ if $g(n) = o(f(n))$. We use the notation $f(n) \sim g(n)$ for
asymptotic equality, i.e.,  $\lim_{n \to \infty} f(n)/g(n) =1$.  The asymptotic
inequality $f(n) \lesssim g(n)$ means $g(n) \sim \max\{f(n),g(n)\}$.


\section*{Acknowledgements}

The authors are grateful to L\'aszl\'o Babai for sparking our interest
in the problem addressed in this paper, providing insight into primitive
coherent configurations and primitive groups, uncovering a faulty application
of previous results in an early version of the paper, and giving invaluable
assistance in framing the results.


\section{Structure theory of primitive coherent configurations}\label{sec:structure}

To prove Theorem~\ref{thm:main-ir}, we need to develop a structure theory of
PCCs. The overview in this section highlights the main components of that
theory.

Throughout the paper, $\mfX$ will denote a PCC of rank $r$ on vertex set
$V$ with structure constants $p_{jk}^i$ for $0 \le i,j,k \le r-1$. \textbf{We
assume throughout that $r > 2$}, since the case $r=2$ is the trivial case of
$\mfX(K_n)$, listed as one of our exceptional PCCs. We also \textbf{assume}
without loss of generality that color $0$ corresponds to the diagonal, i.e.,
$R_0 = \{(u,u) : u \in V\}$.

For any color $i$ in a PCC, we write $n_i = n_{i^*} = p_{ii^*}^0 = p_{i^*i}^0$,
the out-degree of each vertex in $\mfX_i$.

We say that color $i$ is \emph{dominant} if $n_i \ge n/2$. Colors $i$ with $n_i
< n/2$ are \emph{nondominant}.  We call a pair of distinct vertices dominant
(nondominant) when its color is dominant (nondominant, resp.). We say color $i$
is \emph{symmetric} if $i^* = i$. Note that when color $i$ is dominant, it is
symmetric, since $n_{i^*} = n_i \ge n/2$.

Our analysis will divide into two cases, depending on whether or not there is a
dominant color. In fact, many of the results of this section will assume that
there is an overwhelmingly dominant color $i$ satisfying $n_i \ge n -
O(n^{2/3})$. The reduction to this case is accomplished via
Lemma~\ref{lem:dominant} of the next section. The main structural result used
in its proof is Lemma~\ref{lem:growth-spheres} below, which gives a lower bound
on the growth of ``spheres'' in a PCC.

For a color $i$ and vertex $u$, we denote by $\mfX_i(u)$ the
set of vertices $v$ such that $c(u,v) = i$.  We denote by $\dist_i(u, v)$ the
directed distance from $u$ to $v$ in the color-$i$ constituent digraph
$\mfX_i$, and we write $\dist_i(j) = \dist_i(u,v)$ for any vertices $u,v$ with
$c(u,v) = j$. (This latter quantity is well-defined by the coherence of
$\mfX$.) The \emph{$\delta$-sphere} $\mfX_i^{(\delta)}(u)$ in $\mfX_i$ centered
at $u$ is the set of vertices $v$ with $\dist_i(u,v) = \delta$. 

\begin{lemma}[Growth of spheres]\label{lem:growth-spheres}
    Let $\mfX$ be a PCC, let $i,j \ge 1$ be nondiagonal colors, let $\delta =
    \dist_i(j)$, and $u \in V$. Then for any integer $1 \le \alpha \le
    \delta-2$, we have 
    \[
    |\mfX_i^{(\alpha+1)}(u)||\mfX_i^{(\delta-\alpha)}(u)| \ge
    n_in_j.
    \]
\end{lemma}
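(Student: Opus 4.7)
The plan is to prove the sphere-growth inequality by a double counting argument leveraging the coherence of $\mfX$ and the BFS structure of the constituent digraph $\mfX_i$. The central observation is: for any $v \in \mfX_j(u)$, $\dist_i(u,v) = \delta$, so any walk of length $\delta$ from $u$ to $v$ in $\mfX_i$ is forced to be a shortest path (any shorter would contradict the distance, any longer would contain a contractible cycle). Consequently, its vertex at position $\alpha+1$ lies in $\mfX_i^{(\alpha+1)}(u)$, and its vertex at position $\delta-\alpha$ lies in $\mfX_i^{(\delta-\alpha)}(u)$, giving a natural association between walks to $\mfX_j(u)$ and pairs in the two target spheres.

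Write $s_\gamma := |\mfX_i^{(\gamma)}(u)|$ and let $W_k(c) = (A_i^k)_{uv}$ for any pair $(u,v)$ with $c(u,v)=c$; by coherence, $W_k$ depends only on the color. I would first establish the weaker "bipartite BFS" inequality $s_{\alpha+1} \cdot s_{\delta-\alpha-1} \ge n_j$ by counting pairs $(w,v) \in \mfX_i^{(\alpha+1)}(u) \times \mfX_j(u)$ with $\dist_i(w,v) = \delta - \alpha - 1$: each $v$ contributes at least one $w$ (the position-$(\alpha+1)$ vertex of any shortest path), and the total is bounded above by $s_{\alpha+1} \cdot s_{\delta-\alpha-1}$ since for each $w \in \mfX_i^{(\alpha+1)}(u)$ the vertices at $\mfX_i$-distance exactly $\delta-\alpha-1$ from $w$ form a sphere of size $s_{\delta-\alpha-1}$ (independent of $w$, by coherence of $\mfX$).

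To upgrade this to the claimed bound with the extra factor $n_i$, I would count walks of length $\delta$ in $\mfX_i$ from $u$ to $\mfX_j(u)$ via the coherence-based algebraic identity
\[
(A_i^{\alpha+1})(A_i^{\delta-\alpha-1}) = A_i^\delta
\]
and its matching rule $A_i A_c = \sum_{c'} p^{c'}_{i,c} A_{c'}$. Summing over $v \in \mfX_j(u)$ and exploiting the coherence identity $n_i n_j = \sum_c p^{c}_{i,j} n_c$, one introduces the additional "first step" variable $a \in \mfX_i(u)$ into the walk decomposition. Geometrically, this corresponds to enlarging the sphere $\mfX_i^{(\delta-\alpha-1)}$ by a single $\mfX_i$-step to reach $\mfX_i^{(\delta-\alpha)}$, which is precisely where the factor of $n_i$ (the out-degree of $\mfX_i$) enters, matching sphere sizes to the structure constants of $\mfX$.

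The main obstacle I anticipate is the precise execution of this last step: the trivial relation $s_{\delta-\alpha} \le n_i \cdot s_{\delta-\alpha-1}$ goes the wrong direction, so obtaining the tight bound requires a more delicate accounting of walks rather than spheres, combining the BFS-forced shortest-path structure with the multiplicative coherence identities. This is where the PCC axioms beyond mere regularity are essential, and I would expect the proof to hinge on relating $\sum_{v \in \mfX_j(u)} (A_i^{\delta-\alpha})_{wv}$ (over $w \in \mfX_i^{(\alpha+1)}(u)$) to $s_{\delta-\alpha}$ by a careful application of the uniformity of structure constants on pairs of same color.
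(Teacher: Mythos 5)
Your proposal carries out only the easy half of the argument. The counting in your second paragraph is fine and yields $|\mfX_i^{(\alpha+1)}(u)|\cdot|\mfX_i^{(\delta-\alpha-1)}(u)| \ge n_j$, but this is strictly weaker than the claim, and the step that would gain the extra factor $n_i$ is never executed: you propose to count length-$\delta$ walks via $A_i^{\delta} = A_i^{\alpha+1}A_i^{\delta-\alpha-1}$ and the structure-constant rules, you observe yourself that the trivial relation $s_{\delta-\alpha}\le n_i s_{\delta-\alpha-1}$ points the wrong way, and you then explicitly defer the resolution ("the main obstacle I anticipate"). That deferred step is the entire content of the lemma. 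It is also not clear that the walk-counting route closes the gap on its own: by coherence the number of length-$\delta$ walks from $u$ to a fixed $v\in\mfX_j(u)$ is a constant depending only on $j$, and splitting such walks at position $\alpha+1$ gives bounds in terms of per-pair walk counts, which do not convert into the needed lower bound on the product of two sphere sizes without some additional mechanism relating geodesic counts through a mid-sphere vertex to out-degrees. The lemma is immediate for distance-regular constituents; the whole difficulty, which your sketch does not address, is that $\mfX_i$ need not be distance-regular.

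For comparison, the paper's proof gets the factor $n_i$ by a two-stage biregularity argument built around the sets $S_{\alpha}^{(j)}$ (vertices of the $\alpha$-sphere lying on geodesics from $u$ into $\mfX_j(u)$) and $S_{\alpha}^{(j)}(v)$. First, since $S_{\alpha+1}^{(j)}$ is a union of whole color classes $\mfX_\ell(u)$, each biregular (with positive valency) to $\mfX_j(u)$ in the appropriate bipartite graph, a pigeonhole proposition for such partitioned biregular bipartite graphs gives, for $v\in S_\alpha^{(j)}$, the ratio inequality $|S_\delta^{(j)}(v)|/|S_{\alpha+1}^{(j)}(v)| \ge n_j/|S_{\alpha+1}^{(j)}|$ (Corollary~\ref{cor:biregular-applied}). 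Second, pivoting at $v$: for each color $k$ with $\mfX_k(v)\cap S_\delta^{(j)}(v)\ne\emptyset$, the bipartite graph $(\mfX_i(v),\mfX_k(v),I)$ with $I$ the colors at distance $\delta-\alpha-1$ is biregular, and the degree of a far-sphere vertex $w$ in it is at most $|S_{\alpha+1}^{(j)}(v)|$ (Fact~\ref{fact:large-rho}); summing $n_k|S_{\alpha+1}^{(j)}(v)| \ge n_i d_k$ over such $k$ yields $|\mfX_i^{(\delta-\alpha)}(u)| \ge n_i|S_\delta^{(j)}(v)|/|S_{\alpha+1}^{(j)}(v)|$ (Proposition~\ref{prop:large-slices}). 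Chaining the two and using $S_{\alpha+1}^{(j)}\subseteq\mfX_i^{(\alpha+1)}(u)$ gives the lemma. None of this machinery, nor any substitute for it, appears in your proposal, so as written it does not prove the statement.
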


We note that Lemma~\ref{lem:growth-spheres} is straightforward when $\mfX_i$
is distance-regular. Indeed, a significant portion of the difficulty of the
lemma was in finding the correct generalization. 

\begin{proof}[Overview of proof of Lemma~\ref{lem:growth-spheres}]
The bipartite subgraphs of $\mfX_i$ induced on pairs of the form
$(\mfX_j(u),\mfX_k(u))$, where $j,k$ are colors and $u$ is a vertex, are
biregular by the coherence of $\mfX_i$. We exploit this biregularity to count
shortest paths in $\mfX_i$ between a carefully chosen subset of
$\mfX_i^{(\delta-\alpha)}(u)$ and $\mfX_j(u)$, for an arbitrary vertex $u$.

The details of the proof are given in Section~\ref{sec:spheres}.
\end{proof}

In the rest of the paper, we \textbf{assume} without loss of generality that
$n_1 = \max_i n_i$. We write $\rho = \sum_{i \ge 2} n_i = n -n_1 -1$. For the
rest of the section, color $1$ will in fact be dominant. In fact, every theorem
in the rest of this section will state the assumption that $\rho = o(n^{2/3})$.
Lemma~\ref{lem:diam2} below demonstrates some of the power of this supposition.

\begin{lemma}\label{lem:diam2}
    Let $\eps > 0$ and let $\mfX$ be a PCC with $\rho <  (1-\eps)n^{2/3})$.
    Then, for $n$ sufficiently large, $\dist_i(1) = 2$ for every nondominant
    color $i$. Consequently, $n_i \ge \sqrt{n-1}$ for $i \ne 0$.
\end{lemma}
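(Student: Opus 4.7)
The plan is to argue by contradiction. First I would observe that $\dist_i(1) \ge 2$ automatically, since $i \ne 1$ (a nondominant color cannot equal the dominant color $1$). So it suffices to rule out $\delta := \dist_i(1) \ge 3$. The strategy is to derive the inequality $\rho \ge n_1^{2/3}$ which, together with $n_1 = n - 1 - \rho$ and the hypothesis $\rho < (1-\eps)n^{2/3}$, is impossible for $n$ sufficiently large.

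Fix $u \in V$ and write $s_\alpha = |\mfX_i^{(\alpha)}(u)|$. The heart of the argument is a two-sided estimate on $s_{\delta-1}$. For the lower bound, I would double-count pairs $(w, v)$ with $w \in \mfX_i^{(\delta-1)}(u)$, $v \in \mfX_1(u)$, and $c(w, v) = i$: the penultimate vertex on a shortest $\mfX_i$-path from $u$ to $v$ supplies such a $w$, giving at least $n_1$ pairs, while each $w$ has only $n_i$ color-$i$ out-neighbors, so the total is at most $s_{\delta-1} n_i$. This gives $s_{\delta-1} \ge n_1/n_i$. For the upper bound, since $\delta \ge 3$ the sphere $\mfX_i^{(\delta-1)}(u)$ is disjoint from $\{u\} \cup \mfX_i(u) \cup \mfX_1(u)$, so $s_{\delta-1} \le \rho - n_i \le \rho$; the same reasoning gives $s_2 \le \rho$.

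Next I would apply Lemma~\ref{lem:growth-spheres} with $j = 1$ and $\alpha = 1$ to obtain $s_2 \cdot s_{\delta-1} \ge n_i n_1$. Combined with $s_2, s_{\delta-1} \le \rho$, this yields the upper bound $n_i \le \rho^2/n_1$; combined with the double-counting bound $s_{\delta - 1} \ge n_1/n_i$ and $s_{\delta - 1} \le \rho$, it yields the lower bound $n_i \ge n_1/\rho$. Chaining these gives $n_1^2 \le \rho^3$, i.e., $\rho \ge n_1^{2/3}$. Since $\rho < (1-\eps) n^{2/3}$ forces $n_1 \ge (1 - \eps/2)n$ for $n$ large, we get $n_1^{2/3} \ge (1 - \eps/2)^{2/3} n^{2/3} > (1-\eps) n^{2/3} > \rho$, a contradiction. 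This establishes $\dist_i(1) = 2$. The ``Consequently'' clause then follows from the walk-count identity $n_i^2 = \sum_k n_k p_{ii}^k$: $\dist_i(1) = 2$ gives $p_{ii}^1 \ge 1$, so $n_i^2 \ge n_1 \cdot p_{ii}^1 \ge n_1 = n - 1 - \rho$, which delivers $n_i \ge \sqrt{n - 1}$ up to lower-order terms.

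The main obstacle is that a direct application of Lemma~\ref{lem:growth-spheres} with $j = 1$ alone, together with the walk-counting bound $n_i^\delta \ge n_1$, yields only $\rho \gtrsim n_1^{(\delta+1)/(2\delta)}$, an estimate sharp enough to exclude $\delta = 3$ but that degrades for $\delta \ge 4$. The double-counting bound $s_{\delta - 1} \ge n_1/n_i$ is the new ingredient supplying the missing inequality, and it is what makes the estimate $\rho \ge n_1^{2/3}$ uniform in $\delta$.
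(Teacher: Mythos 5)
Your proof is correct, but it takes a genuinely different route from the paper's. The paper does not invoke Lemma~\ref{lem:growth-spheres} here at all: Section~\ref{sec:diam2} is self-contained, working with the bipartite graph $B=(\mfX_i^{(\delta-1)}(u),\mfX_1(u),i)$, which is regular of some degree $\gamma$ on the $\mfX_1(u)$ side; the technical core (Lemmas~\ref{lem:twodistance-basic} and~\ref{lem:betabound}) bounds the maximum degree $\beta$ on the sphere side by $\gamma\rho^2/n_1$, by transporting the extremal vertex into an auxiliary graph $(\mfX_j(w),S_3,i)$ that is regular on the $\mfX_j(w)$ side, and then the edge count $n_1\gamma=|E(B)|\le\beta\,|\mfX_i^{(\delta-1)}(u)|\le\gamma\rho^3/n_1$ yields $n_1^2\le\rho^3$. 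You reach the same inequality $n_1^2\le\rho^3$ by instead applying Lemma~\ref{lem:growth-spheres} with $j=1$, $\alpha=1$ (legitimate: that lemma is proved in Section~\ref{sec:spheres} with no reference to Lemma~\ref{lem:diam2}, so there is no circularity), combined with the containments $\mfX_i^{(2)}(u),\mfX_i^{(\delta-1)}(u)\subseteq N(u)$ (valid because $\delta\ge 3$ keeps these spheres away from colors $0$ and $1$) and the double count $|\mfX_i^{(\delta-1)}(u)|\,n_i\ge n_1$, which is essentially Observation~\ref{obs:ni-bound}. Your route is shorter and reuses machinery the paper has already built; the paper's route keeps Section~\ref{sec:diam2} independent of the sphere-growth lemma and extracts the finer max-degree information $\beta\le\gamma\rho^2/n_1$ about $B$, though nothing downstream exploits that extra precision. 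Your final contradiction ($\rho\ge n_1^{2/3}>(1-\eps)n^{2/3}>\rho$ for large $n$) is fine.

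One small precision point on the ``Consequently'' clause: your count gives $n_i^2\ge n_1p^1_{ii}\ge n_1=n-1-\rho$, hence $n_i\ge\sqrt{n-1-\rho}$, which matches $\sqrt{n-1}$ only up to a $1-o(1)$ factor, as you acknowledge. The paper's own proof is no more explicit about this clause, and all later uses are asymptotic ($n_i\gtrsim\sqrt n$), so this is harmless; note also that for the dominant color the claim is trivial since $n_1\ge n/2$.
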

\begin{proof}[Overview of proof of Lemma~\ref{lem:diam2}]
    We will prove that if $\dist_i(1) \ge 3$ for some color $i$, then
    $\rho \gtrsim n^{2/3}$. Without loss of generality, we assume $n_1 \sim n$,
    since otherwise we are already done.

    Fix an arbitrary vertex $u$ and consider the bipartite graph $B$ between
    $\mfX_i^{(\delta - 1)}(u)$ and $\mfX_1(u)$, with an edge from $x \in
    \mfX_i^{(\delta - 1)}(u)$ to $y \in \mfX_1(u)$ when $c(x, y) = i$.  By the
    coherence of $\mfX$, the bipartite graph is regular on $\mfX_1(u)$; call
    its degree $\gamma$. An obstacle to our analysis is that the graph need not
    be biregular.  Nevertheless, we estimate the maximum degree $\beta$ of a
    vertex in $\mfX_i^{(\delta-1)}(u)$ in $B$. We first note that $n_1 \gamma
    \leq \beta \rho$.

    Let $w$ be a vertex satisfying $c(u, w) = i$. We pass to the subgraph $B'$
    induced on $(\mfX_i^{(\delta-2)}(w),\mfX_i^{(\delta-1)}(w))$, and observe
    that the degree of vertices in $\mfX_i^{(\delta-1)}(u)\cap
    \mfX_i^{(\delta-2)}(w)$ is preserved, while the degree of vertices in
    $\mfX_1(u)\cap \mfX_i^{(\delta-1)}(w)$ does not increase.  Let
    $v$ be a vertex of degree $\beta$ in $B'$, and let $j = c(w,v)$. We finally
    consider the bipartite graph $B''$ on $(\mfX_j(w),X_w)$, where $X_w$ is the
    set of vertices $x \in \mfX_i^{(\delta-1)}(w)$ with at most $\gamma$
    in-neighbors in $\mfX_i$ lying in the set $\mfX_j(w)$. In particular,
    $\mfX_1(u)\cap \mfX_i^{(\delta-1)}(w) \subseteq X_w$. This graph $B''$ is
    now regular (of degree $\ge \beta$) on $\mfX_j(w)$. Since $X_w \subseteq
    \mfX_i^{(\delta-1)}(w)$, we have $|X_w| \le\rho$, which eventually
    gives the bound $\beta \le \gamma\rho^2/n_1$. Combining this with our
    earlier estimate $\beta \rho \ge n_1 \gamma$ proves the lemma.

    The details of the proof are given in Section~\ref{sec:diam2}.
\end{proof}

\vspace{10pt}
\emph{Notation.} Let $G(\mfX)$ be the graph on $V$ formed by the nondominant pairs. So $G(\mfX)$
is regular of valency $\rho$, and every pair of distinct nonadjacent vertices
in $G(\mfX)$ has exactly $\mu$ common neighbors, where $\mu = \sum_{i,j \ge
2}p^1_{ij}$.  The graph $G(\mfX)$ is not generally SR, since pairs of adjacent
vertices in $G(\mfX)$ of different colors in $\mfX$ will in general have
different numbers of common neighbors. However, intuition from SRGs will
prove valuable in understanding $G(\mfX)$.

We write $N(u)$ for the set of neighbors of $u$ in the graph $G(\mfX)$.  For
$i$ nondominant, we define $\lambda_i = |\mfX_i(u) \cap N(v)|$, where $c(u,v) =
i$. So, the parameters $\lambda_i$ are loosely analogous to the parameter
$\lambda$ of a SRG.

A \emph{clique} $C$ in an undirected graph $G$ is a set of pairwise adjacent
vertices; its \emph{order} $|C|$ is the number of vertices in the set.

\begin{definition}\label{def:clique-geom}
    A \emph{clique geometry} on a graph $G$ is a collection $\mcG$ of maximal
    cliques such that every pair of adjacent vertices in $G$ belongs to a
    unique clique in $\mcG$. A clique geometry of a PCC $\mfX$ is a clique
    geometry on $G(\mfX)$.  The clique geometry $\mcG$ is
    \emph{asymptotically uniform} (for an infinite family of PCCs) if for
    every $C \in \mcG$, $u \in C$, and nondominant color $i$, we have either
    $|C \cap \mfX_i(u)| \sim \lambda_i$ or $|C \cap \mfX_i(u)| = 0$ (as $n \to
    \infty$).
\end{definition}

We have the following sufficient condition for the existence of clique
geometries in PCCs.

\begin{theorem}\label{thm:clique-geometry}
    Let $\mfX$ be a PCC satisfying $\rho = o(n^{2/3})$, and fix a constant
    $\eps > 0$. If $\lambda_i \ge \eps n^{1/2}$ for every nondominant color
    $i$, then for $n$ sufficiently large, there is a clique geometry $\mcG$ on $\mfX$.
    Moreover, $\mcG$ is asymptotically uniform.
\end{theorem}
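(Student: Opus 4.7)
The plan is to verify the hypotheses of a Metsch-style clique identification theorem for the graph $G(\mfX)$---analogous to those used by Bose, Neumaier, and Metsch in the theory of strongly regular and distance-regular graphs---and then to invoke that theorem to produce the clique geometry $\mcG$.

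The main technical ingredient is a \emph{local near-clique estimate}: for every nondominant edge $uv$ of color $i$ in $G(\mfX)$, each vertex $w \in N(u) \cap N(v)$ has at most $o(\lambda_i)$ non-neighbors within $N(u) \cap N(v)$. I would prove this by double-counting the $4$-tuples $(u,v,w_1,w_2)$ with $c(u,v) = i$, $w_1, w_2 \in N(u) \cap N(v)$, and $c(w_1, w_2) = 1$ (the dominant color). By coherence, for fixed nondominant colors $a_1 = c(u, w_1)$, $b_1 = c(v, w_1)$, $a_2 = c(u, w_2)$, $b_2 = c(v, w_2)$, the count decomposes into a sum of products of structure constants of the form $p^i_{a_1 b_1^*}\, p^{a_1}_{a_2^* 1}$ and the like. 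The hypothesis $\rho = o(n^{2/3})$ forces sums such as $\sum_{a_1, a_2 \text{ nondominant}} n_{a_1} \, p^{a_1}_{a_2, 1}$ to be tightly controlled, while $\lambda_i \ge \eps n^{1/2}$ is used to show the resulting bound is $o(\lambda_i^2)$ in aggregate---i.e., $o(\lambda_i)$ per vertex $w$.

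Once the local near-clique estimate is in hand, a Metsch-style clique partition theorem yields a collection $\mcG$ of maximal cliques that partition the edges of $G(\mfX)$: each edge $uv$ of color $i$ lies in a unique clique of size $\sim \lambda_i + 2$, obtained as $\{u,v\}$ together with the ``core'' of $N(u) \cap N(v)$ (those common neighbors with few non-neighbors inside $N(u) \cap N(v)$). For asymptotic uniformity, fix $C \in \mcG$, $u \in C$, and a nondominant color $i$ with $C \cap \mfX_i(u) \neq \emptyset$. Choose $v \in C \cap \mfX_i(u)$; then $c(u,v) = i$, and $|\mfX_i(u) \cap N(v)| = \lambda_i$ by definition. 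Since $C$ is a clique containing $v$, we have $C \cap \mfX_i(u) \subseteq \mfX_i(u) \cap N(v)$, so $|C \cap \mfX_i(u)| \le \lambda_i$; and the near-clique estimate implies all but $o(\lambda_i)$ elements of $\mfX_i(u) \cap N(v)$ lie in the unique clique through $uv$, namely $C$. Hence $|C \cap \mfX_i(u)| \sim \lambda_i$.

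The main obstacle is the local near-clique estimate itself: executing the double-counting requires careful bookkeeping of structure constants across all nondominant color choices, and a sharp use of both hypotheses $\rho = o(n^{2/3})$ and $\lambda_i \ge \eps n^{1/2}$ simultaneously. A naive counting exploiting only the degree bound in $G(\mfX)$ yields an estimate of order $O(\rho n)$ per pair, which is too weak; extracting the tighter $o(\lambda_i^2)$ bound requires exploiting the $v$-side constraints in tandem with the $u$-side constraints, so that the interaction of nondominant colors at both endpoints is used to save an additional factor. Once this estimate is established, the rest of the argument follows a pattern well-developed in the literature on strongly regular and distance-regular graphs.
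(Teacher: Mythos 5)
There is a genuine gap: your ``local near-clique estimate'' is false as stated, and it is the engine of your whole argument. Take $\mfX = \mfX(T(m))$, which satisfies the hypotheses ($\rho \sim 2\sqrt{2n} = o(n^{2/3})$, $\lambda \sim \sqrt{2n} \ge \eps n^{1/2}$). For $u = \{a,b\}$, $v = \{a,c\}$ the common neighborhood $N(u)\cap N(v)$ consists of the $m-3$ pairs through $a$ together with $w = \{b,c\}$, and $w$ has $m-4 = \Omega(\lambda_i)$ non-neighbors inside $N(u)\cap N(v)$. So no double count can show that \emph{every} $w \in N(u)\cap N(v)$ has $o(\lambda_i)$ non-neighbors there; the most one can hope for is that few common neighbors are ``bad,'' which is a different (and weaker) statement, and even an aggregate bound of $o(\lambda_i^2)$ on dominant pairs inside $N(u)\cap N(v)$ is not available in the upper range of the parameters ($\rho$ near $n^{2/3}$, $\lambda_i$ near $\sqrt n$), where $|N(u)\cap N(v)|$ can be of order $\rho \gg \lambda_i$. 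This points to a second error: you conflate $\lambda_i = |\mfX_i(u)\cap N(v)|$ with the total number of common neighbors of an $i$-colored edge, which is $\sum_{j,k\ge 2} p^i_{jk}$ and can be much larger; correspondingly, the clique through an $i$-edge has order $\sim \sum_{j\in I}\lambda_j$ for a whole class $I$ of colors, not $\sim \lambda_i + 2$ (compare Lemma~\ref{lem:2clique-all2}, where the cliques have order $\sim \rho/2 \sim \lambda_i + \lambda_{i^*}$). Your uniformity paragraph inherits these problems, since it leans on the false per-vertex estimate and on ``the unique clique through $uv$,'' whose existence is exactly what is to be proved.

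Beyond the false lemma, the appeal to ``a Metsch-style clique partition theorem'' applied to all of $G(\mfX)$ is where the real work lies, and you neither state that theorem nor verify its actual hypotheses (which couple the degree $\rho$, the minimum common-neighbor count over all nondominant colors, $\mu$, and the number of cliques per vertex). The paper deliberately uses only the simple regular-graph version (Lemma~\ref{lem:metsch-local}), applied inside each $\mfX_i(u)$, where the induced subgraph of $G(\mfX)$ is exactly $\lambda_i$-regular and the condition $n_i\mu = o(\lambda_i^2)$ is easy to check; the bulk of Section~\ref{sec:clique} is then devoted to the part your proposal never addresses: merging colors into $I$-local partitions, upgrading them to strong partitions, and proving the symmetry $K_I(u,v) = K_J(v,u)$ (Lemma~\ref{lem:clique-eq-sym}), which is nontrivial precisely because nondominant colors need not be symmetric and the partition of $\mfX_i(u)$ seen from $u$ has no a priori relation to the partition of $\mfX_{i^*}(v)$ seen from $v$. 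That is where the hypothesis $\lambda_i \ge \eps n^{1/2}$ is really spent (via $\lambda_i\lambda_{i^*} = \Omega(n)$ and $\mu\rho = o(n)$). A genuinely global, symmetric construction could in principle sidestep the gluing, but your sketch does not supply a correct global statement or a correct verification of its hypotheses, so as written the proof does not go through.
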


Theorem~\ref{thm:clique-geometry} provides a powerful dichotomy for PCCs:
either there is an upper bound on some parameter $\lambda_i$, or there is a
clique geometry.  Adapting a philosophy expressed in~\cite{bcstw-srg}, we note
that bounds on $\lambda_i$ are useful because they limit the correlation
between the $i$-neighborhoods of two random vertices. Similar bounds on the
parameter $\lambda$ of a SRG were used in~\cite{bcstw-srg}.

On the other hand, Theorem~\ref{thm:clique-geometry} guarantees that if all
parameters $\lambda_i$ are sufficiently large, the PCC has an asymptotically
uniform clique geometry. This is our weak analogue of Neumaier's geometric
structure. Clique geometries offer their own dichotomy. Geometries with at most
two cliques at a vertex can classified; this includes the exceptional PCCs
(Theorem~\ref{thm:2clique-characterization} below). A far more rigid structure
emerges when there are at least three cliques at every vertex. In this case, we
exploit the ubiquitous 3-claws (induced $K_{1,3}$ subgraphs) in $G(\mfX)$ in
order to construct a set which completely splits $\mfX$
(Lemma~\ref{lem:main-spielman-combined}~(b)).

\begin{proof}[Overview of proof of Theorem~\ref{thm:clique-geometry}]
    The existence of a weaker clique structure follows from a result of
    Metsch~\cite{metsch-clique}.
    (See Lemma~\ref{lem:metsch-local} below and the
    comments in the paragraph preceding it.) Specifically, under the hypotheses
    of Theorem~\ref{thm:clique-geometry}, for every nondominant color $i$ and
    vertex $u$, there is a partition of $\mfX_i(u)$ into cliques of order $\sim
    \lambda_i$ in $G(\mfX)$.  We call such a collection of cliques a
    \emph{local clique partition} (referring to the color-$i$ neighborhood of
    any fixed vertex).
    
    The challenge is to piece together these local clique partitions into a
    clique geometry. An obstacle is that Metsch's cliques are cliques of
    $G(\mfX)$, not $\mfX_i$; that is, the edges of the cliques partitioning
    $\mfX_i(u)$ have nondominant colors but not in general color $i$.
    In particular, for two vertices $u,v \in V$ with $c(u,v) = i$, the clique
    containing $v$ in the partition of $\mfX_i(u)$ may not correspond to any of
    the cliques in the partition of $\mfX_i(v)$. 

    We first generalize these local structures.  An \emph{$I$-local clique
    partition} is a partition of $\bigcup_{i \in I}\mfX_i(u)$ into cliques of
    order $\sim \sum_{i \in I}\lambda_i$. We study the maximal sets $I$ for
    which such $I$-local clique partitions exist, and eventually prove that
    these maximal sets $I$ partition the set of nondominant colors, and the
    corresponding cliques are maximal in $G(\mfX)$.

    Finally, we prove a symmetry condition: given a nondominant pair of
    vertices $u,v \in V$, the maximal local clique at $u$ containing
    $v$ is equal to the maximal local clique at $v$ containing $u$.  This
    symmetry ensures the cliques form a clique geometry, and this clique
    geometry is asymptotically uniform by construction.

    The details of the proof are given in Section~\ref{sec:clique}.
\end{proof}

The case that $\mfX$ has a clique geometry with some vertex belonging to at
most two cliques includes the exceptional CCs corresponding to $T(m)$ and
$L_2(m)$. We give the following classification.

\begin{theorem}\label{thm:2clique-characterization}
    Let $\mfX$ be a PCC such that $\rho = o(n^{2/3})$. Suppose that
    $\mfX$ has an asymptotically uniform clique geometry $\mcG$ and a vertex $u \in V$ belonging to at
    most two cliques of $\mcG$. Then for $n$ sufficiently large, one of the
    following is true:
\begin{enumerate}
    \item[(a)] $\mfX$ has rank three and is isomorphic to $\mfX(T(m))$ or
    $\mfX(L_2(m))$;
    \item[(b)] $\mfX$ has rank four, $\mfX$ has a non-symmetric non-dominant
        color $i$, and $G(\mfX)$ is isomorphic to $T(m)$ for $m = n_i+2$.
\end{enumerate}
\end{theorem}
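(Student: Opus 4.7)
The plan is to leverage the 2-clique-per-vertex hypothesis to realize $G(\mfX)$ as a line graph, to pin down the underlying multigraph $H$ via coherence and primitivity, and then to separate the rank-three and rank-four cases by invariant-partition arguments.

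First, I would show that every vertex of $\mfX$ lies in \emph{exactly} two cliques of $\mcG$. Indeed, if some $u \in V$ lies in a unique clique $C \in \mcG$, then asymptotic uniformity gives $|C| \sim \rho + 1$, so $C = \{u\} \cup N(u)$. Coherence then propagates this property to every vertex of $\mfX$, making $\mcG$ an $\Aut(\mfX)$-invariant partition of $V$ into blocks of size $\sim \rho + 1 < n$, violating primitivity.

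Next, I form the multigraph $H$ on vertex set $\mcG$ where each $v \in V$ is an edge joining its two cliques, so $G(\mfX) \cong L(H)$. Regularity of $G(\mfX)$ of valency $\rho$ forces $\deg_H(x) + \deg_H(y) = \rho + 2$ on every edge $xy$ of $H$, so $H$ is either regular of degree $(\rho+2)/2$ or biregular bipartite with part-degrees summing to $\rho+2$. Combining this with the uniform common-neighbor count $\mu = \sum_{i,j \ge 2} p^1_{ij}$ along non-edges of $G(\mfX)$, and with the asymptotic uniformity of $\mcG$, any non-edge of $H$ would produce $G(\mfX)$-non-edges with an aberrant common-neighbor count. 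This rigidity pins $H$ down to $K_m$ in the regular case (so $G(\mfX) \cong T(m)$) and $K_{m,m}$ in the bipartite case (so $G(\mfX) \cong L_2(m)$).

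Finally, I split on rank. If $\mfX$ has rank three, the unique nondominant color coincides with the edge set of $G(\mfX)$, so $\mfX = \mfX(T(m))$ or $\mfX(L_2(m))$, giving~(a). Suppose $\mfX$ has rank four with nondominant colors $i$ and $j$. If both are symmetric, asymptotic uniformity forces each clique of $\mcG$ to be monochromatic in $i$ or in $j$, and the $i$-cliques form an $\Aut(\mfX)$-invariant partition of $V$. This would require a proper 2-coloring of the vertices of $H$: impossible for $K_m$ with $m \ge 3$, while for $K_{m,m}$ it is realized by the bipartition and produces the row partition of $L_2(m)$; both alternatives contradict primitivity. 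Hence $j = i^*$ is the mate of a non-symmetric color $i$. An analogous argument rules out the rank-four $L_2(m)$ subcase, since the coloring would distinguish row from column cliques and again promote the row partition to an $\Aut(\mfX)$-invariant one. We are left with $G(\mfX) \cong T(m)$; then $\rho = n_i + n_{i^*} = 2n_i$ matches the $2(m-2)$-regularity of $T(m)$, giving $m = n_i + 2$ and yielding~(b). The main obstacle is the middle step: showing that coherence of $\mfX$ together with the asymptotic uniformity of $\mcG$ pins $H$ down to $K_m$ or $K_{m,m}$ rather than to one of the other edge-regular multigraphs whose line graphs could locally satisfy the required regularity conditions.
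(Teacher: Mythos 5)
Your skeleton parallels the paper's (exactly two cliques per vertex, realize $G(\mfX)$ as a line graph of $H$, identify $T(m)$/$L_2(m)$, split by rank, eliminate two symmetric nondominant colors, compute $m=n_i+2$), but several load-bearing steps are missing or wrong. First, your ``exactly two cliques'' step only rules out the hypothesized vertex $u$ lying in \emph{one} clique; the hypothesis says nothing about other vertices, which could a priori lie in three or more cliques, and ruling this out is the bulk of the paper's Lemma~\ref{lem:2clique-all2} (it uses Corollary~\ref{cor:lambdai-not-ni} to get $\lambda_i\sim n_i/2$, then a coherence-plus-maximality argument via Observation~\ref{obs:clique-basic}). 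Without it you cannot even define $H$ globally. Also, your primitivity contradiction is phrased as ``$\mcG$ is an $\Aut(\mfX)$-invariant partition,'' but primitivity of a PCC is the combinatorial condition that all constituent digraphs are connected ($\Aut(\mfX)$ may be trivial); the correct contradiction in the one-clique case is that the nondominant relation becomes an equivalence relation, disconnecting the nondominant constituents. Second, your final case split assumes the rank is $3$ or $4$, but you never bound the rank; the paper's Lemma~\ref{lem:2clique-rank} proves rank $\le 4$ from the vertex--clique incidence count ($\rho\lesssim\sqrt{8n}$) combined with $n_i\gtrsim\sqrt n$ from Lemma~\ref{lem:diam2}, and nothing in your outline substitutes for this.

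Third, the step you yourself flag as ``the main obstacle''---pinning $H$ down to $K_m$ or $K_{m,m}$---is precisely where the theorem's content lies, and your proposal does not supply it. The paper avoids classifying $H$ directly: in rank three, $\rho=o(n^{2/3})$ excludes the tournament case, $G(\mfX)$ is strongly regular, and the folklore Lemma~\ref{lem:srg} on strongly regular line graphs finishes; in rank four, $G(\mfX)$ need not be strongly regular, and the paper instead shows $\mu=4$ (each $p^1_{ij}\ge 1$ via Lemma~\ref{lem:2clique-cross-edges}, and $\mu\le 4$ from pairwise clique intersections), which forces every two cliques of $\mcG$ to meet, i.e.\ $H\cong K_m$ outright, so the $L_2(m)$ subcase never needs separate elimination. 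Finally, your elimination of two symmetric nondominant colors rests on the claim that asymptotic uniformity forces each clique to be monochromatic; this is false---by Lemma~\ref{lem:2clique-all2} one has $\lambda_i\sim n_i/2$ and \emph{both} cliques at a vertex meet $\mfX_i(u)$ for every nondominant $i$, so cliques carry both colors (as they do in the genuine rank-four example of case~(b)). The paper eliminates the symmetric rank-four case by a concrete configuration argument around a dominant pair $u,v$ and their four common neighbors, using $p^1_{ij}=1$ for all nondominant $i,j$, not by any monochromaticity. As it stands, the proposal is an outline whose two decisive steps (identification of $H$, and the symmetric rank-four exclusion) are respectively missing and incorrect.
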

\begin{proof}[Overview of proof of Theorem~\ref{thm:2clique-characterization}]
    We first use the coherence of $\mfX$ to show that every vertex $u \in V$
    belongs to exactly two cliques of $\mcG$, and these cliques have
    order $\sim \rho/2$. By counting vertex-clique incidences, we then obtain the
    estimate $\rho \lesssim 2\sqrt{2n}$.  On the other hand, by
    Lemma~\ref{lem:diam2}, every nondominant color $i$ satisfies $n_i \gtrsim
    \sqrt{n}$. Hence, there are at most $2$ nondominant colors.

    Since every vertex belongs to exactly two cliques, the graph $G(\mfX)$ is
    the line-graph of a graph. If there is only one nondominant color, then
    $G(\mfX)$ is strongly regular, and therefore, for $n$ sufficiently large,
    $G(\mfX)$ is isomorphic to $T(m)$ or $L_2(m)$. On the other hand, if
    there are two nondominant colors, by counting paths of length $2$ we show
    that $G(\mfX)$ must again be isomorphic to $T(m)$. By studying the
    edge-colors at the intersection of the cliques containing two distinct
    vertices and exploiting the coherence of $\mfX$, we finally eliminate the
    case that the two nondominant colors are symmetric.
    
    The details of the proof are given in Section~\ref{sec:2clique}.
\end{proof}


\section{Overview of analysis of I/R}\label{sec:overview}

We now give a high-level overview of how we apply our structure theory of PCCs
to prove Theorem~\ref{thm:main-ir}.

Most of the results highlighted in Section~\ref{sec:structure} assumed
that $\rho = o(n^{2/3})$. Hence, the first step is to reduce to this case,
which we accomplish via the following lemma.

\begin{lemma}\label{lem:dominant}
    Let $\mfX$ be a PCC. If $\rho \ge  n^{2/3} (\log n)^{-1/3}$, then there is
    a  set of size $O(n^{1/3}(\log n)^{4/ 3})$ which completely splits $\mfX$.
\end{lemma}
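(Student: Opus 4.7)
The plan is iterated individualization, showing that at each step a well-chosen vertex shrinks the largest color class (under naive refinement) by a factor of roughly $n_1/n = 1-(1+\rho)/n$. After $t = O((n/\rho)\log n) = O(n^{1/3}(\log n)^{4/3})$ iterations, the partition becomes discrete.

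The one-step shrinkage comes from an averaging argument. For the current largest class $B$, I compute $\bbE_{u \in V}[|B \cap \mfX_1(u)|] = |B| n_1/n$ (since each $v \in B$ contributes $\Pr_u[c(u,v) = 1] = n_1/n$), so some $u^* \in V$ satisfies $|B \cap \mfX_1(u^*)| \le |B| n_1/n = |B|(1 - (1+\rho)/n)$. For any nondominant color $i \ge 2$ we have the crude bound $|B \cap \mfX_i(u^*)| \le n_i \le \rho$, and the total nondominant mass is $\sum_{i \ge 2}|B \cap \mfX_i(u^*)| \le |B|(1+\rho)/n$. Since color $1$ is dominant we have $n_1 \ge 1+\rho$, so the largest piece of $B$ has size at most $|B| n_1/n$. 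Iterating yields $M_t \le n\exp(-t(1+\rho)/n)$, and the hypothesis $\rho \ge n^{2/3}(\log n)^{-1/3}$ produces $t = O(n^{1/3}(\log n)^{4/3})$.

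The hardest step will be handling the case when several color classes tie for the maximum size, since the greedy above only shrinks the \emph{targeted} class and could in principle stall on the others, blowing up the total count by a factor of $n/\rho$. To address this I would invoke Lemma~\ref{lem:growth-spheres}: since $\rho \ge n^{2/3}(\log n)^{-1/3}$, some nondominant color $i$ has $n_i \ge \rho/(r-1)$, and Lemma~\ref{lem:growth-spheres} (applied with $\alpha \in \{1,\ldots,\delta-2\}$ for $\delta = \dist_i(j)$ and a suitable $j$) gives product lower bounds $|\mfX_i^{(\alpha+1)}(u)| \cdot |\mfX_i^{(\delta-\alpha)}(u)| \ge n_i n_j$ that force the $\mfX_i$-distance structure from any single vertex to be spread across many distance classes. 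A careful amortization then allows a single individualization to refine many large classes simultaneously, preserving the per-step shrinkage rate uniformly. Making this amortization precise — ruling out adversarial configurations of tied classes — is the main technical work of the proof.
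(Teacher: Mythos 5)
Your one-step shrinkage claim does not hold, and the place where it fails is exactly where the real content of the lemma lies. The averaging argument only controls the piece $B\cap\mfX_1(u^*)$; it says nothing about the pieces $B\cap\mfX_i(u^*)$ for $i\ge 2$, and your inequality $\sum_{i\ge 2}|B\cap\mfX_i(u^*)|\le |B|(1+\rho)/n$ is in fact backwards: choosing $u^*$ to \emph{minimize} the color-$1$ piece forces the total nondominant mass to be at least roughly $|B|(1+\rho)/n$, and nothing in the argument prevents almost all of it from landing in a single piece $B\cap\mfX_i(u^*)$. The crude bound $|B\cap\mfX_i(u^*)|\le n_i\le\rho$ only yields shrinkage while $|B|\gtrsim \rho n/n_1$; once the classes have size $O(\rho)$ — and, worse, throughout the regime $\rho\ge n/3$, where no dominant color need exist and $n_1$ can be far below $n/2$ — the step guarantees no shrinkage at all. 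Note that the hypothesis $\rho\ge n^{2/3}(\log n)^{-1/3}$ is precisely the range the paper uses to \emph{reduce} to the case of an overwhelmingly dominant color, so assuming ``color $1$ is dominant'' begs the main case. A uniform per-individualization shrinkage factor $1-\Omega(\rho/n)$ for every class would essentially amount to a $D(i)=\Omega(\rho)$ bound on every distinguishing number, which is stronger than what is available: for $\rho\ge n/3$ the paper only proves $D(i)=\Omega(\sqrt{\rho n_i/\log n})$ (Lemma~\ref{lem:nondominant-main}), and the final bound is $D(i)=\Omega(n^{2/3}(\log n)^{-1/3})$, not $\Omega(\rho)$.

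The ``amortization via Lemma~\ref{lem:growth-spheres}'' in your last paragraph is where all of this would have to be repaired, but as written it is a hope rather than an argument (and the bound $n_i\ge\rho/(r-1)$ is vacuous since the rank $r$ is unbounded). The paper's actual route is different and you would need its substance: Babai's Lemma~\ref{lem:distinguishing} converts a lower bound on $\zeta=\min_{i\ne 0}D(i)$ into a splitting set of size $O(n\log n/\zeta)$, so everything reduces to Lemma~\ref{lem:Di-large}, the bound $D(i)=\Omega(n^{2/3}(\log n)^{-1/3})$ for every nondiagonal color. That bound is proved in two regimes: for $\rho<n/3$ via $D(1)\ge\rho-\mu$ together with sphere growth (Lemma~\ref{lem:growth-spheres} through Lemma~\ref{lem:Di-technical} and Proposition~\ref{prop:babai-Di}), and for $\rho\ge n/3$ via the structure of distinguishing numbers across colors (Lemmas~\ref{lem:few-small-colors}, \ref{lem:few-large-colors}, and \ref{lem:nondominant-main}). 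Your proposal contains neither this reduction nor a substitute for the distinguishing-number lower bounds, so it has a genuine gap at both the basic step and the ``tied classes / small classes'' stage you flagged.
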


We remark that in the case that the rank $r$ of $\mfX$ is bounded, our
Lemma~\ref{lem:dominant} follows from a theorem of Babai~\cite[Theorem 2.4]{babai-cc}.
Following Babai~\cite{babai-cc}, we analyze the distinguishing number.
\begin{definition} Let $u,v \in V$. We say $w \in V$ \emph{distinguishes} $u$
and $v$ if $c(w,u) \ne c(w,v)$. We write $D(u,v)$ for the set of vertices $w$
distinguishing $u$ and $v$, and $D(i) = |D(u,v)|$ where $c(u,v) = i$. We call
$D(i)$ the \emph{distinguishing number} of $i$.
\end{definition}
Hence, $D(i) = \sum_{j \ne k} p^i_{jk^*}$. If $w \in D(u,v)$, then after
individualizing $w$ and refining, $u$ and $v$ get different colors.

Babai observed that in order to completely split a PCC $\mfX$, it suffices to
individualize some set of $O(n \log n/D_{\min})$ vertices, where $D_{\min} =
\min_{i\ne 0}\{D(i)\}$~\cite[Lemma 5.4]{babai-cc}.  Thus, to prove
Lemma~\ref{lem:dominant}, we show that if $\rho \ge n^{2/3}(\log n)^{-1/3}$
then for every color $i \ne 0$, we have $D(i) = \Omega(n^{2/3}  (\log
n)^{-1/3})$. 

The following bound on the number of large colors in a PCC becomes powerful
when $D(i)$ is small.

\begin{lemma}\label{lem:few-large-colors}
    Let $\mfX$ be a PCC. For any nondiagonal color $i$, the number of colors
    $j$ such that $n_j > n_i/2$ is at most $O((\log n +n/\rho)D(i)/n_i)$.
\end{lemma}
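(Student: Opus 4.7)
The plan is to fix a pair $u, v$ with $c(u, v) = i$ and exploit the identity
\[
D(i) = \sum_{j \ne 0}\bigl(n_j - p^i_{j, j^*}\bigr),
\]
which counts distinguishers by grouping $w$ according to $c(u, w) = j$: the number of $w \in \mfX_j(u)$ with $c(v,w) \ne j$ equals $n_j - p^i_{j,j^*}$. To bound $|J|$ I need to exhibit, for most $j \in J$, a useful lower bound on the contribution $n_j - p^i_{j, j^*}$.

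I will combine two ingredients. The first is an algebraic bound: the standard double-counting identity $n_i\, p^i_{j, k} = n_j\, p^j_{i, k^*}$ gives
\[
p^i_{j, j^*} \;=\; \frac{n_j}{n_i}\, p^j_{i, j} \;\le\; \frac{n_j^2}{n_i},
\]
so any color $j$ with $n_j \le (1-\eps)n_i$ contributes at least $\eps\, n_j = \Omega(n_i)$ to $D(i)$. The second is a neighborhood disjointness bound: for non-dominant $j$, $\mfX_j(u) \subseteq N(u)$ in the graph $G(\mfX)$, and the sets $\mfX_j(u) \cap \mfX_j(v)$ are pairwise disjoint subsets of $N(u) \cap N(v)$, so
\[
\sum_{\substack{j \in J\\ j \text{ non-dom}}} p^i_{j, j^*} \;\le\; |N(u) \cap N(v)| \;\le\; \rho.
\]

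With these in hand, I plan to partition $J$ dyadically into $O(\log n)$ classes according to the magnitude of $n_j$, with the at-most-one dominant color accounted for separately. In each class I apply whichever of the two bounds above is stronger. For classes where $n_j$ is within a constant factor of $n_i$, the algebraic bound forces a contribution of $\Omega(n_i)$ per color, bounding the class size by $O(D(i)/n_i)$. For classes where $n_j \gg n_i$, the disjointness bound combined with the partial sum inequality $\sum_{j \in \mathrm{class}}(n_j - p^i_{j,j^*}) \le D(i)$ together with $\sum_{j \in \mathrm{class}} n_j \ge |\mathrm{class}|\cdot n_i/2$ yields class size $O((\rho + D(i))/n_i)$; recasting the $\rho/n_i$ term as $(n/\rho)(D(i)/n_i)$ under the operating regime $D(i) \gtrsim \rho^2/n$ produces the $n/\rho$ term. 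Summing over the $O(\log n)$ dyadic classes yields the stated bound.

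The hardest part will be the bookkeeping needed to unify the two bounds across the dyadic scales into the single expression $O((\log n + n/\rho)\,D(i)/n_i)$, and in particular to extract the $n/\rho$ factor rather than settle for the weaker $\rho/n_i$. A secondary subtlety is treating the dominant regime (where $\rho$ is small and $n/\rho$ is the effective term) and the non-dominant regime (where $\rho \sim n$ and $\log n$ dominates) within a single uniform framework without an explicit case split.
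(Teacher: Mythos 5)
Your two ingredients are individually correct (the identity $D(i)=\sum_j(n_j-p^i_{jj^*})$ up to the $j=0$ term, and the disjointness bound $\sum_{j\,\mathrm{nondominant}}p^i_{jj^*}\le|N(u)\cap N(v)|=O(\rho)$), but all they yield is that the number of colors $j$ with $n_j>n_i/2$ is $O\bigl(1+(D(i)+\rho)/n_i\bigr)$; the dyadic splitting does not improve this, because the $\rho$ on the right-hand side does not scale down with the dyadic level. The step that is supposed to convert the $\rho/n_i$ term into $(n/\rho)\,D(i)/n_i$ is exactly where the argument breaks: it requires $D(i)=\Omega(\rho^2/n)$ (or, to hide the term under the $\log n$ part, $D(i)=\Omega(\rho/\log n)$), and you offer no proof of this ``operating regime.'' No such lower bound is available at this point of the paper --- strong lower bounds on $D(i)$ are precisely what this lemma is later used to establish (Lemmas~\ref{lem:nondominant-main} and~\ref{lem:Di-large}), so invoking one here is essentially circular, and even the final bounds of the paper give only $D(i)=\Omega(n^{2/3}(\log n)^{-1/3})$ when $\rho=\Theta(n)$, far below $\rho^2/n=\Theta(n)$. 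Thus in the regime $\rho=\Theta(n)$ with $n_i$ of moderate size your bound $O(\rho/n_i)$ is genuinely weaker than the claimed $O(\log n\cdot D(i)/n_i)$, and it is too weak for the downstream application. (A smaller soft spot: your algebraic bound $p^i_{jj^*}\le n_j^2/n_i$ is vacuous once $n_j\ge n_i$, so the bands with $n_j$ near or above $n_i$ fall back on the disjointness bound and incur the same $\rho$ loss.)

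What the paper does instead, and what your argument is missing, is a level-by-level bound on the \emph{total degree} of small-degree colors that scales with $D(i)$ rather than with $\rho$. The triangle inequality for distinguishing numbers along nonzero structure constants (Lemma~\ref{lem:nondominant-smooth}), combined with the existence of a color of distinguishing number exceeding $\rho$ (Corollary~\ref{cor:exists-large-Di}), gives Lemma~\ref{lem:nondominant-basic} and then Lemma~\ref{lem:nondominant-applied}: the colors with distinguishing number at most $\eta$ have total degree at least $\lfloor\eta/(3D(i))\rfloor n_i$. Feeding this into the transfer Lemma~\ref{lem:few-small-colors} shows that for every $\alpha\le\log_2(\rho/3D(i))$ the total degree of \emph{all} colors $j$ with $n_j\le 2^{\alpha}n_i$ is at most $6\cdot 2^{\alpha}D(i)$; hence each dyadic band $(2^{\alpha-1}n_i,2^{\alpha}n_i]$ contains at most $12\,D(i)/n_i$ colors, which produces the $\log n$ term, while the colors with $n_j\gtrsim\rho n_i/D(i)$ are counted against $\sum_j n_j\le n$, which produces the $n/\rho$ term. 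Without some substitute for this bootstrap between distinguishing numbers and degrees, the disjointness bound alone cannot reach the stated estimate.
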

\begin{proof}[Overview of proof of Lemma~\ref{lem:few-large-colors}]
    Let $I_{\alpha}$ be the set of colors $i$ such that $D(i) \le \alpha$,
    and $J_{\beta}$ the set of colors $j$ such that $n_j \le \beta$.
    For a set $I$ of colors, let $n_I = \sum_{i \in I}n_i$ be the total degree
    of the colors in $I$.

    First, we prove that $\lfloor \alpha/(3D(i))\rfloor n_i \le
    n_{I_{\alpha}}$, a lower bound on the total degree of colors with distinguishing
    number $\le \alpha$. Next, we prove a lemma that allows us to transfer
    estimates for the total degree of colors with small distinguishing number
    into estimates for the total degree of colors with low degree.
    Specifically, we prove that $n_{J_{\beta}} \le 2\alpha$, where $\beta =
    n_{I_{\alpha}/2}$. Together, these two results allow us to transfer
    estimates on total degree between the sets $I_{\alpha}$ and $J_{\beta}$, as
    $\alpha$ and $\beta$ increase. 

    The details of the proof are given in Section~\ref{sec:dominant}.
\end{proof}

\begin{proof}[Overview of proof of Lemma~\ref{lem:dominant}]
    Fix a color $i \ge 1$. We wish to give a lower bound on $D(i)$. Babai
    observed that for any color $j \ge 1$, we have $D(i) \ge
    D(j)/\dist_i(j)$~\cite[Proposition 6.4 and Theorem 6.1]{babai-cc}). Hence,
    we wish to give an upper bound on $\dist_i(j)$ for some color $j$ with
    $D(j)$ large.

We analyze two cases: $n^{2/3}(\log n)^{-1/3} \le \rho < n/3$ and $\rho
\ge n/3$.

In the former case, when $n^{2/3}(\log n)^{-1/3} \le \rho < n/3$, we first
observe that $D(1) = \Omega(\rho)$.   Hence, the problem is reduced to bounding
the quantity $\dist_i(1)$ for every color $i$. Our bound in
Lemma~\ref{lem:growth-spheres} on the size of spheres suffices for this task
since $n_1$ is large.

In the case that $\rho \ge n/3$, Babai observed that the color $j$ maximizing
$D(j)$ satisfies $D(j) = \Omega(n)$.  We partition the colors of $\mfX$
according to their distinguishing number, by first partitioning the positive
integers less than $D(j)$ into cells of length $3D(i)$. (Specifically, we
partition the colors of $\mfX$ so that the $\alpha$-th cell contains the colors
$k$ satisfying $3D(i)\alpha \le D(k) < 3D(i)(\alpha+1)$, and there are $O(D(j)/D(i))$
cells.) Each cell of this partition $\mcP$ is nonempty. In fact, we show that
the sum of the degrees of the colors in each cell is at least $n_i$.

On the other hand, Lemma~\ref{lem:few-large-colors} says that there are few
colors $k$ satisfying $n_k > n_i/2$, and we show that the total degree of the colors
$k$ with $n_k \le n_i/2$ is also small.  Since each cell of the partition has
degrees summing to at least $n_i$, these together give an upper bound on the
number of cells, and hence a lower bound on $D(i)$.

The details of the proof are given in Section~\ref{sec:dominant}.
\end{proof}

We have now reduced to the case that $\rho = o(n^{2/3})$. Our analysis of this
case is inspired by Spielman's analysis of SRGs~\cite{spielman-srg}.

\begin{lemma}\label{lem:main-spielman-combined}
    There exists a constant $\eps > 0$ such that the following holds. Let
    $\mfX$ be a PCC with $\rho = o(n^{2/3})$. If $\mfX$ satisfies either of the following
    conditions, then there is a set of $O(n^{1/4}(\log n)^{1/2})$
    vertices which completely splits $\mfX$.
    \begin{enumerate}
        \item[(a)] There is a nondominant color $i$ such that $\lambda_i < \eps
            n^{1/2}$.
        \item[(b)] For every nondominant color $i$, we have $\lambda_i \ge \eps
            n^{1/2}$. Furthermore, $\mfX$ has an asymptotically uniform clique
            geometry $\mcC$ such that every vertex belongs to at least three
            cliques of $\mcC$.
    \end{enumerate}
\end{lemma}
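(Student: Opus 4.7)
In both parts, the strategy is to construct a set $S \subseteq V$ of size $O(n^{1/4}(\log n)^{1/2})$ whose individualization, followed by naive refinement to stability, assigns every vertex a distinct color. By the discussion of Section~\ref{sec:intro-refinement}, this yields the conclusion of the lemma.

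For part~(a), my plan is a two-phase random individualization argument. The hypothesis $\lambda_i < \eps n^{1/2}$ for some nondominant color $i$ is a sparseness condition: if $c(u,v)=i$, then the $i$-neighborhood of $u$ shares fewer than $\eps n^{1/2}$ nondominant vertices with $v$. Combining this with Lemma~\ref{lem:diam2} (which gives $n_j \ge \sqrt{n-1}$ for every nondominant color $j$) and an analysis of the structure constants $p^j_{kk^*}$, I would show that a random set $S_1$ of the prescribed size, when individualized, produces a refined configuration in which every nontrivial color class has size $o(n^{1/2})$. A second random set $S_2$ of comparable size then completes the splitting, via a pair-distinguishing analysis between $S_1$ and $S_2$ that again uses the bound on $\lambda_i$ to control the residual orbits.

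For part~(b), we exploit the clique geometry $\mcC$. Because $\lambda_i \ge \eps n^{1/2}$ for every nondominant color $i$ and $\mcC$ is asymptotically uniform, every clique of $\mcC$ has order $\Omega(n^{1/2})$. My plan is to individualize one representative from each clique in a random sub-collection $\mcC' \subseteq \mcC$ of the required size, and then apply naive refinement to stability. Because each vertex lies in at least three cliques of $\mcC$, iterated naive refinement propagates the initial clique identifications throughout $G(\mfX)$: once a clique $C$ is recognized (as the set of nondominant neighbors of an individualized vertex matching a distinguished color pattern), the refined colors of the vertices in $C$ determine the other cliques through each $u \in C$, and so on. A Chernoff/birthday-style estimate then shows that $O(n^{1/4}(\log n)^{1/2})$ initial clique representatives seed this cascade with high probability, yielding distinct final colors for every vertex.

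The principal obstacle in part~(a) will be establishing that the individualization of $S_1$ suffices to cut every nontrivial orbit to size $o(n^{1/2})$: since $\lambda_i$ controls only the nondominant common neighbors in color~$i$, leveraging its smallness to obtain uniform control over orbits of all colors requires a careful use of the coherence of $\mfX$, and the two-phase structure is needed to bypass the $\log n$ loss inherent in a crude union bound. In part~(b), the principal difficulty is that we must work with naive refinement rather than the more powerful Weisfeiler--Leman process; the asymptotic uniformity of $\mcC$ and the 3-cliques-per-vertex hypothesis are essential to guarantee that the cascade of clique identifications actually reaches every vertex, rather than stalling on a symmetric subset of $V$.
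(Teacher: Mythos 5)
There is a genuine gap: your plan never identifies the mechanism by which naive refinement, after individualizing only $O(n^{1/4}(\log n)^{1/2})$ vertices, can separate a pair $u,v$ whose color is dominant. With naive vertex refinement, a single individualized vertex $w$ separates $u$ from $v$ in the first round only if $c(w,u)\ne c(w,v)$, and for a dominant pair the distinguishing set has size only $O(\rho)=o(n^{2/3})$ (and can be as small as $\Theta(n^{1/2})$, since $\rho\ge n_i\ge\sqrt{n-1}$); a random set of size $n^{1/4}(\log n)^{1/2}$ then hits it with probability $o(1)$, so your Phase~1 claim in part~(a) --- that individualizing $S_1$ cuts every color class to size $o(n^{1/2})$ --- cannot be achieved by the effects you describe, and no alternative multi-round mechanism is given. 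The whole point of the $n^{1/4}$ bound (and of the paper's proof) is that \emph{pairs} of individualized vertices act jointly: the paper defines a triple $(w,x,y)$ to be good for $(u,v)$ when $(u,w,x,y)$ realizes the pattern $Q(i,j)$ but no $z$ realizes it for $v$, observes that individualizing both $x$ and $y$ then separates $u$ from $v$ after two refinement rounds, and proves (Lemma~\ref{lem:many-good-triples-suffice}) that $\Omega(n_in_j^2)$ good triples per pair suffice for a random set of the stated size; the substance of cases~(a) and~(b) is the counting of good triples (Lemmas~\ref{lem:small-lambda-main} and~\ref{lem:large-lambda-main}). Your proposal contains no analogue of this pair-based, two-round argument, which is the missing idea. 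A further omission in part~(a): a bound on $\lambda_i$ gives no bound on $\lambda_{i^*}$, and the paper must split into the subcases $\lambda_{k^*}\lesssim n_k/3$ (Spielman-style counting) and $\lambda_{k^*}=\Omega(n_k)$ (handled via strong local clique partitions, Lemma~\ref{lem:clique-halfnk}); your sketch does not address this asymmetry.

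In part~(b) the proposed ``cascade'' is also unsubstantiated and, as stated, would stall. Individualizing one representative per clique cannot by itself split the vertices \emph{inside} a clique: two non-individualized vertices of the same clique with the same colors to the individualized data remain symmetric under naive refinement (indeed, even in the much more rigid situation of Theorem~\ref{thm:2clique-characterization}~(b), the paper needs $O(\log n)$ individualizations \emph{within} a clique to split it, Lemma~\ref{lem:2clique-split}). Moreover the budget does not fit: the geometry has $\Theta(n/\lambda_i)$ cliques through the color-$i$ neighborhoods, which can be as large as $\Theta(n^{1/2})$, so a subcollection of size $O(n^{1/4}(\log n)^{1/2})$ touches only a vanishing fraction of cliques, and you give no argument that refinement reaches, let alone splits, the untouched ones. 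The paper avoids all of this by not trying to reconstruct the geometry at all: it uses the hypothesis that every vertex lies in at least three cliques only to produce, for each $u,w$ with $c(u,w)=i$, a nondominant color $j$ with $|\mfX_j(w)\cap N(u)|\lesssim n_j/3$ (Lemma~\ref{lem:good-color-j}), and then runs the same good-triple count as in case~(a).
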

\begin{proof}[Overview of proof of Lemma~\ref{lem:main-spielman-combined}]
    We will show that if we individualize a random set of
    $O(n^{1/4}(\log n)^{1/2})$ vertices, then with positive probability, every
    pair of distinct vertices gets different colors in the stable refinement.
    
    Let $u,v \in C$, and fix two colors $i$ and $j$. Generalizing a pattern
    studied by Spielman, we say a triple $(w,x,y)$ is good for $u$ and $v$ if
    $c(u, x) = c(u, y ) = c(x,y) =1$, $c(u,w) = i$, and $c(w,x) = c(w,y) = j$,
    but there exists no vertex $z$ such that $c(v,z) = i$ and $c(z,x) = c(z,y)
    = j$. (See Figure~\ref{fig:spielman}). To ensure that $u$
    and $v$ get different colors in the stable refinement, it suffices to
    individualize two vertices $x,y \in V$ for which there exists a vertex $w$
    such that $(w,x,y)$ is good for $u$ and $v$.  We show that if there are
    many good triples for $u$ and $v$, then individualizing a random set of
    $O(n^{1/4}(\log n)^{1/2})$ vertices is overwhelming likely to result in the
    individualization of such a pair $x,y \in V$.

    Condition~(a) of the lemma is analogous to the asymptotic consequences of
    Neumaier's claw bound used by Spielman~\cite{spielman-srg} (cf.~\cite[Section
    2.2]{bw-delsarte}), except that the bound on $\lambda_i$ does not imply a
    similar bound on $\lambda_{i^*}$. We show that a relatively weak bound on
    $\lambda_{i^*}$ already suffices for Spielman's argument to essentially go
    through. However, if even this weaker assumption fails, then we turn to our
    local clique structure for the analysis (as described in the overview
    of Theorem~\ref{thm:clique-geometry}).
    
    When condition~(b) holds, we cannot argue along Spielman's lines, and
    instead analyze the structural properties of our clique geometries to
    estimate the number of good triples.

    The details of the proof are given in Section~\ref{sec:spielman}.
\end{proof}

By Theorem~\ref{thm:clique-geometry}, either the hypotheses of of
Lemma~\ref{lem:main-spielman-combined} are satisfied, or $\mfX$ has an
asymptotically uniform clique geometry $\mcC$, and some vertex belongs to at
most two cliques of $\mcC$.  Theorem~\ref{thm:2clique-characterization} gives a
characterization PCCs $\mfX$ with the latter property: $\mfX$ is one of the
exceptional PCCs, or $\mfX$ has rank four with a non-symmetric non-dominant
color $i$ and $G(\mfX)$ is isomorphic to $T(m)$ for $m = n_i +2$. We handle
this final case via the following lemma, proved in
Section~\ref{sec:2clique}. 

\begin{lemma}\label{lem:2clique-split}
    Let $\mfX$ be a PCC satisfying
    Theorem~\ref{thm:2clique-characterization}~(b). Then some set of size
    $O(\log n)$ completely splits $\mfX$.
\end{lemma}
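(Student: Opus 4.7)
The plan is to exploit the isomorphism $G(\mfX) \cong T(m)$ granted by Theorem~\ref{thm:2clique-characterization}(b). I would identify $V$ with the 2-subsets of $[m]$, so that $n = \binom{m}{2}$ and $m = \Theta(\sqrt{n})$, and the clique geometry $\mcG$ consists of the cliques $C_a = \{u \in V : a \in u\}$ indexed by $a \in [m]$. Each vertex $v = \{a,b\}$ is the unique element of the intersection $C_a \cap C_b$, so to completely split $\mfX$ it suffices to assign each of the $m$ cliques a distinct color in the refined coloring; once this is done, the characterization $\{v\} = C_a \cap C_b$ forces every vertex to acquire a distinct color as well.

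I would proceed by iterated individualization. After individualizing a single vertex $u_0 = \{a_0,b_0\}$ and applying naive refinement, every other clique $C_c$ (for $c \notin \{a_0,b_0\}$) acquires a ``type'' determined by the orientation colors $c(u_0,\{a_0,c\})$ and $c(u_0,\{b_0,c\})$ under the non-symmetric color $i$; this partitions the $m-2$ remaining cliques into at most four classes. I would then iterate, at each stage selecting a new individualization inside the largest current clique equivalence class and propagating the refinement. By a Babai-style union-bound argument, I would aim to show that $O(\log m) = O(\log n)$ iterations suffice for all cliques to receive distinct colors.

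The main obstacle is showing that each iteration achieves a constant-fraction reduction in the size of the largest clique equivalence class. A direct calculation gives that the one-shot distinguishing number satisfies $D_{\min}(\mfX) = \Theta(\sqrt{n})$ (this follows by counting, for each color, the witness vertices lying in the symmetric difference of the two 2-subsets involved), which is too small for Babai's $O(n\log n / D_{\min})$ bound to yield the desired $O(\log n)$ result. The hard part is therefore to show that once a few vertices have been individualized, the effective distinguishing number for two still-indistinguishable cliques jumps to $\Omega(n/\log n)$ in the refined configuration. I expect this growth to follow from a careful structure-constant calculation: using the coherence of $\mfX$ and the non-symmetric orientation on each clique, one can show that the intersections $|\mfX_i(u_0) \cap C_c|$, $|\mfX_{i^*}(u_0) \cap C_c|$ and their propagated analogues after several individualizations encode enough information about $c \in [m]$ to witness $\Omega(n/\log n)$ distinguishing pairs for any two cliques sharing a refined type. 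Formalizing this requires tracking refined colors across multiple rounds of individualization, similarly in spirit to the distinguishing-number analysis in Section~\ref{sec:dominant}.
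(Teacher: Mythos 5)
Your proposal is a plan, not a proof, and the step you yourself flag as ``the hard part'' is exactly the missing idea. You need each round of individualization to cut the largest class of still-indistinguishable cliques by a constant factor (equivalently, you assert that the effective distinguishing number jumps to $\Omega(n/\log n)$ after a few individualizations), but you only say you ``expect'' this to follow from a structure-constant calculation. Nothing in the rank-four hypothesis by itself guarantees it: a priori the type of a clique $C_c$ seen from an individualized vertex $u_0=\{a_0,b_0\}$, namely the pair $\bigl(c(u_0,\{a_0,c\}),c(u_0,\{b_0,c\})\bigr)$, could be badly skewed, so the four classes need not shrink geometrically. The inequality that makes a constant-probability split work is a concrete consequence of the non-symmetry of the color $i$: by Proposition~\ref{prop:cc}, $p^i_{ii^*}=p^i_{ii}=p^i_{i^*i}$, and counting the $n_i$ common neighbors of an edge inside a clique gives $n_i=3p^i_{ii}+p^i_{i^*i^*}$, hence $p^i_{ii^*}+p^i_{i^*i}\le 2n_i/3$. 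You never derive this or any substitute for it, so the core of the argument is absent. There is also a secondary gap: ``assign each clique a distinct color'' is not a primitive of naive vertex refinement, and the implication ``cliques distinguished $\Rightarrow$ all vertices split'' is not justified as stated; the refinement only sees vertex colors and counts of colored neighbors, so you must say which vertex-color information separates $\{a,b\}$ from $\{a,c\}$ once $C_b$ and $C_c$ are ``distinguished.''

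For comparison, the paper's proof sidesteps any global distinguishing-number analysis. Using the displayed structure-constant bound, for any two vertices $u,v$ in a common clique $C$ at least $n_i/3$ of the other vertices of $C$ satisfy $c(w,u)\ne c(w,v)$; so individualizing each vertex of $C$ independently with probability $\Theta(\log n_i/n_i)$ and union-bounding over pairs yields a set of $O(\log n)$ vertices after which every vertex of $C$ has a unique color. Doing this for two cliques $C,C'$, every remaining vertex lies in two cliques of $\mcC$, each meeting $C\cup C'$ in a uniquely determined (and now uniquely colored) vertex, so distinct vertices have distinct sets of uniquely colored nondominant neighbors and the stable refinement splits everything. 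If you want to salvage your iterated scheme, you would need to prove a quantitative ``balanced split'' statement for clique types, which is essentially the same structure-constant inequality in disguise; as written, the proposal does not establish the $O(\log n)$ bound.
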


We conclude this overview by observing that Theorem~\ref{thm:main-ir} follows
from the above results.

\begin{proof}[Proof of Theorem~\ref{thm:main-ir}]
    Let $\mfX$ be a PCC. Suppose first that $\rho \geq n^{2/3} (\log
    n)^{-1/3}$. Then by Lemma~\ref{lem:dominant}, there is a set of size
    $O(n^{1/3}(\log n)^{4/ 3})$ which completely splits $\mfX$. 
    
    Otherwise, $\rho < n^{2/3} (\log n)^{-1/3} = o(n^{2/3})$. By
    Theorem~\ref{thm:clique-geometry}, either the hypotheses of
    Lemma~\ref{lem:main-spielman-combined} are satisfied, or the hypotheses of
    Theorem~\ref{thm:2clique-characterization} are satisfied. In the former
    case, some set of $O(n^{1/4}(\log n)^{1/2})$ vertices completely splits
    $\mfX$. In the latter case, either $\mfX$ is exceptional, or, by
    Lemma~\ref{lem:2clique-split}, some set of $O(\log n)$ vertices completely
    splits $\mfX$.
\end{proof}


\section{Growth of spheres}\label{sec:spheres}

In this section, we will prove Lemma~\ref{lem:growth-spheres}, our estimate of
the size of spheres in constituent digraphs.

We start from a few basic observations.

\begin{proposition}\label{prop:biregular}
    Let $G = (A,B,E)$ be a bipartite graph, and let $A_1\cup\cdots\cup A_m$ be
    a partition of $A$ such that the subgraph induced on $(A_i,B)$ is biregular
    of positive valency for each $1 \le i \le m$. Then for any $A' \subseteq
    A$, we have \[|N(A')|/|A'| \ge |B|/|A|\] where $N(A')$ is the set of
    neighbors of vertices in $A'$, i.e., $N(A') = \{y \in B : \exists
    x \in A', \{x,y\} \in E\}$.
\end{proposition}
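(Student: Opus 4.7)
My plan is a short weighted double-counting argument. For each $1 \le i \le m$, let $a_i$ denote the common degree of each vertex of $A_i$ in the biregular subgraph on $(A_i,B)$, and let $b_i$ denote the common degree of each vertex of $B$ in the same subgraph. Biregularity with positive valency gives $a_i \ge 1$ and the identity $|A_i|\, a_i = |B|\, b_i$, which I rewrite as
\[
  \frac{b_i}{a_i} \;=\; \frac{|A_i|}{|B|}.
\]
This identity is the one that will make the weighting collapse cleanly on the $B$-side.

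The key step is to assign to each edge $(x,y) \in E$ with $x \in A_i$ the weight $w(x,y) = 1/a_i$, and to compute total incident weight at each vertex on both sides. At any $x \in A_i$, the total incident weight is $a_i \cdot (1/a_i) = 1$, so the sum on the $A$-side is $|A|$. At any $y \in B$, the total incident weight is
\[
  \sum_{i=1}^m \bigl(\text{number of neighbors of } y \text{ in } A_i\bigr) \cdot \frac{1}{a_i}
  \;=\; \sum_{i=1}^m \frac{b_i}{a_i}
  \;=\; \sum_{i=1}^m \frac{|A_i|}{|B|}
  \;=\; \frac{|A|}{|B|},
\]
which is independent of $y$. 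This is the payoff of the weighting: the incident weight is constant $1$ on $A$ and constant $|A|/|B|$ on $B$.

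Now I would restrict to edges between $A'$ and $N(A')$. Summing incident weights from the $A'$-side gives exactly $|A'|$, since every edge out of $A'$ ends in $N(A')$. Summing from the $B$-side is bounded above by $|N(A')| \cdot |A|/|B|$, since each vertex of $N(A')$ has total incident weight at most $|A|/|B|$. Comparing the two counts yields
\[
  |A'| \;\le\; |N(A')| \cdot \frac{|A|}{|B|},
\]
which rearranges to the desired inequality $|N(A')|/|A'| \ge |B|/|A|$.

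I do not anticipate a genuine obstacle: the only nontrivial move is choosing the weighting $w(x,y) = 1/a_i$ so that the biregularity identity forces the $B$-side incident weights to be uniform. Everything else is bookkeeping. Positive valency is used only to ensure $a_i > 0$ so that $1/a_i$ is well defined (empty parts $A_i$, if any, may simply be discarded).
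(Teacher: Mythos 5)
Your argument is correct, but it is not the route the paper takes. The paper's proof is local: by pigeonhole it selects a single block $A_i$ in which $A'$ has relative density at least $|A'|/|A|$, and then uses the biregularity identity $\alpha|A_i|=\beta|B|$ within that one block to get $|N(A'\cap A_i)| \ge |A'\cap A_i|\,\alpha/\beta \ge |B||A'|/|A|$. You instead argue globally by a weighted double count: giving each edge out of $A_i$ weight $1/a_i$ makes the incident weight identically $1$ on $A$ and, via $b_i/a_i=|A_i|/|B|$, identically $|A|/|B|$ on $B$, and then comparing the two sides over the edges leaving $A'$ yields $|A'| \le |N(A')|\,|A|/|B|$. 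Both proofs are correct and of comparable length; your version treats all blocks simultaneously and makes the averaging mechanism transparent (a normalized, fractional-matching-style weighting), while the paper's version buys the slightly stronger localized fact that a single block already suffices, i.e.\ $|N(A'\cap A_i)| \ge |B||A'|/|A|$ for some $i$. Your closing remark is also accurate: positive valency enters only to make $1/a_i$ well defined (and it is genuinely needed, since the statement fails if some block has valency zero).
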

\begin{proof}
    Let $A' \subseteq A$. By the pigeonhole principle, there is some $i$ such
    that $|A'\cap A_i|/|A_i| \ge |A'|/|A|$. Let $\alpha$ be the degree of a
    vertex in $A_i$ and let $\beta$ be the number of neighbors in $A_i$ of a
    vertex in $B$.  We have $\alpha|A_i| = \beta|B|$, and $\beta|N(A' \cap
    A_i)|\ge \alpha|A' \cap A_i|$. Hence,
    \[
        |N(A')| \ge |N(A'\cap A_i)| \ge \frac{|A' \cap A_i|\alpha}{\beta} = \frac{|A'\cap A_i| |B|}{|A_i|} \ge
        \frac{|B||A'|}{|A|}. \qedhere
    \]
\end{proof}

Suppose $A, B \subseteq V$ are disjoint set of vertices.  We denote by $(A, B,
i)$ the bipartite graph between $A$ and $B$ such that there is an edge from $x
\in A$ to $y \in B$ if $c(x, y) = i$.  For $I \subseteq [r - 1]$ a set of nondiagonal 
colors,  we denote by $(A, B, I)$ the bipartite graph between $A$ and $B$ such
that there is an edge from $x \in A$ to $y \in B$ if $c(x, y) \in I$.  

\begin{fact}\label{fact:bipartite_regular}
For any vertex $u$, colors $0 \leq j, k \leq r -  1$ with $j \neq k$, and set
$I \subseteq [r-1]$ of nondiagonal colors, the bipartite graph $(\mfX_j(u), \mfX_k(u),
I)$ is biregular.
\end{fact}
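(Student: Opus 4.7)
The plan is to compute the degrees on each side of the bipartite graph $(\mfX_j(u),\mfX_k(u),I)$ directly from the structure constants of $\mfX$, and observe that these degrees depend only on $j$, $k$, and $I$, not on the choice of individual vertex within each side.

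First I would dispose of the degenerate cases where one of $j,k$ is the diagonal color $0$. If $j=0$, then $\mfX_j(u)=\{u\}$ is a singleton; every $y \in \mfX_k(u)$ satisfies $c(u,y) = k$, so $y$ is adjacent to $u$ iff $k \in I$. The graph is either complete bipartite $K_{1,n_k}$ or empty, hence trivially biregular. The case $k=0$ is symmetric. (In the statement of Fact~\ref{fact:bipartite_regular}, $I$ is required to consist of nondiagonal colors, so adjacency never uses color $0$, and this does not interact with the above.)

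For the main case, assume $j \neq k$ and both are nondiagonal. Fix $x \in \mfX_j(u)$, so $c(u,x)=j$. The degree of $x$ in $(\mfX_j(u),\mfX_k(u),I)$ is the number of $y$ with $c(u,y) = k$ and $c(x,y) \in I$. For each $i \in I$, the condition $c(x,y)=i$ is equivalent to $c(y,x) = i^*$. Applying the definition of the structure constant to the pair $(u,x)$ of color $j$, the number of $y$ with $c(u,y) = k$ and $c(y,x) = i^*$ is precisely $p^j_{k,i^*}$. Summing,
\[
\deg(x) = \sum_{i \in I} p^j_{k,i^*},
\]
which depends only on $j$, $k$, and $I$. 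Symmetrically, for $y \in \mfX_k(u)$, applying coherence to the pair $(u,y)$ of color $k$ and counting $x$ with $c(u,x)=j$ and $c(x,y)=i$ yields $\deg(y) = \sum_{i \in I} p^k_{j,i}$, again a constant depending only on $j,k,I$. Hence both sides have constant degree, and the graph is biregular.

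There is essentially no mathematical obstacle here; the statement is a direct unpacking of the coherence axiom. The only care required is bookkeeping with edge directions, in particular keeping track of $i$ versus $i^*$ so that the correct structure constant is invoked on each side of the bipartition.
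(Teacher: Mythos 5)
Your proof is correct and takes essentially the same route as the paper: compute the degree on each side as a sum of structure constants depending only on $j$, $k$, and $I$, hence constant on each side. (Your expression $\sum_{i\in I}p^j_{k\,i^*}$ for the $\mfX_j(u)$-side degree is the directly computed one; the paper writes $\sum_{i\in I}p^j_{i\,k^*}$, which matches only via the identity $p^j_{k\,i^*}=p^{j^*}_{i\,k^*}$ and appears to be a harmless index slip, so the biregularity conclusion is unaffected.)
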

\begin{proof}
The degree of every vertex in $\mfX_j(u)$ is $\sum_{i \in I}p^j_{i k^*}$.
And the degree of every vertex in $\mfX_k(u)$ is $\sum_{i \in I}p^k_{j i}$.
\end{proof}

Recall our notation $\mfX_i^{(\delta)}(u)$ for the $\delta$-sphere centered at
$u$ in the color-$i$ constituent digraph, i.e., the set of vertices $v$ such
that $\dist_i(u,v)=\delta$. 

For the remainder of Section~\ref{sec:spheres}, we fix a PCC $\mfX$, a color $1
\le i \le r-1$, and a vertex $u$. For a color $1 \le j \le r-1$ and an integer
$1 \leq \alpha \leq \dist_i(j)$, we denote by $S_\alpha^{(j)}$ the set of
vertices $v \in \mfX_i^{(\alpha)}(u)$ such that there is a vertex $w \in
\mfX_j(u)$ and a shortest path in $\mfX_i$ from $u$ to $w$ passing through $v$,
i.e.,
\[
    S_\alpha^{(j)} = \{v \in \mfX_i^{(\alpha)}(u): \exists w \in \mfX_j(u) \text{ s.t. }
\dist_i(u, v) + \dist_i(v, w) = \dist_i(u, w)\}.
\]
Note that these sets $S_{\alpha}^{(j)}$ are nonempty by the primitivity of
$\mfX$, and in particular, if $\alpha = \dist_i(j)$, then $S_\alpha^{(j)} =
\mfX_j(u)$.  For $v \in V$ and an integer $\dist_i(u, v) < \alpha \leq
\dist_i(j)$, we denote by $S_\alpha^{(j)}(v) \subseteq S_\alpha^{(j)}$ the set
of vertices $x \in S_\alpha^{(j)}$ such that there is a shortest path in
$\mfX_i$ from $u$ to $x$ passing through $v$, i.e. 
\begin{align*}
S_\alpha^{(j)}(v) &= S_\alpha^{(j)}\cap \mfX_i^{(\alpha-\dist_i(u,v))}(v) \\
&= \{x \in S_\alpha^{(j)} : \dist_i(u, v) + \dist_i(v, x) = \dist_i(u, x)\}.
\end{align*}

See Figure 1 for a graphical explanation of the
notation. 

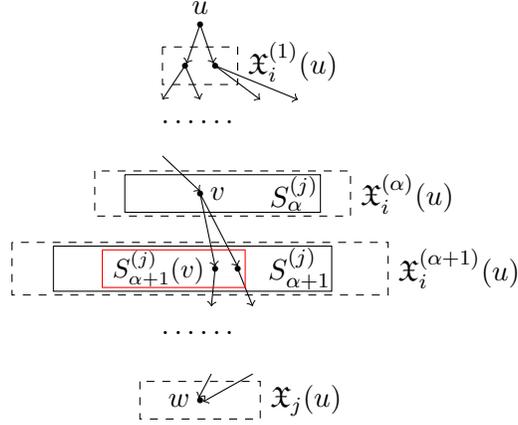
\begin{figure}
\begin{center}

\begin{tikzpicture}

\def\ptrad{0.04}

\definecolor{plum}{RGB}{142, 69, 33}

\coordinate (u) at (0,0);
\coordinate (w) at (0, -5);
\coordinate (v) at (0,-2.25);
\coordinate (x1) at (-0.2, -0.55);
\coordinate (x2) at (0.2, -0.55);
\coordinate (y1) at (0.2, -3.25);
\coordinate (y2) at (0.5, -3.25);
\coordinate (z1) at (0.15, -3.75);
\coordinate (z2) at (0.7, -3.75);
\coordinate (a1) at (0.15, -4.65);
\coordinate (a2) at (0.7, -4.65);

\fill[color = black] (u) circle[radius=\ptrad];
\fill[color = black] (v) circle[radius=\ptrad];
\fill[color = black] (w) circle[radius=\ptrad];
\fill[color = black] (x1) circle[radius=\ptrad];
\fill[color = black] (x2) circle[radius=\ptrad];
\fill[color = black] (y1) circle[radius=\ptrad];
\fill[color = black] (y2) circle[radius=\ptrad];

\node[above] at (u) {$u$};
\node[left] at (w) {$w$};
\node[right] at (v) {$v$};
\draw[->](u) --(-0.18, -0.52);
\draw[->](u) --(0.18, -0.52);
\draw[->](x1) -- (0, -1);
\draw[->](x1) -- (-0.5, -1);
\draw[->](x2) -- (0.8, -1);
\draw[->](x2) -- (1.3, -1);

\draw[->](-0.5, -1.75) -- (0, -2.22);
\draw[->](v) -- (0.2, -3.22);
\draw[->](v) -- (0.5, -3.22);

\draw[->](y1) -- (z1);
\draw[->](y2) -- (z2);

\node at (0, -4.1) {$\dots \dots$};

\draw[->](a1) -- (-0.02, -4.97);
\draw[->](a2) -- (0.04, -5.02);

\draw[dashed] (-0.5, -0.3) -- (-0.5, -0.8) -- (0.5, -0.8) -- (0.5, -0.3) -- (-0.5, -0.3);
\node[right] at (0.5, -0.5) {$\mfX_i^{(1)}(u)$};

\node at (0, -1.3) {$\dots \dots$};

\draw[dashed] (-1.4, -1.95) -- (-1.4, -2.55) -- (2, -2.55) -- (2, -1.95) -- (-1.4, -1.95);
\draw (-1, -2) -- (-1, -2.5) -- (1.6, -2.5) -- (1.6, -2) -- (-1, -2);
\node[right] at (2, -2.25) {$\mfX_i^{(\alpha)}(u)$};
\node at (1.25, -2.25) {$S^{(j)}_{\alpha}$};

\draw[dashed] (-2.5, -2.9) -- (-2.5, -3.6) -- (2.5, -3.6) -- (2.5, -2.9) -- (-2.5, -2.9);
\draw (-1.95, -2.95) -- (-1.95, -3.55) -- (1.75, -3.55) -- (1.75, -2.95) -- (-1.95, -2.95);
\draw[color = red](-1.3, -3) -- (-1.3, -3.5) -- (0.6, -3.5) -- (0.6, -3) -- (-1.3, -3);
\node[right] at (2.5, -3.25) {$\mfX_i^{(\alpha+1)}(u)$};

\node[right] at (-1.3, -3.25) {\small $S^{(j)}_{\alpha+1}(v)$};
\node[right] at (0.77, -3.25) {$S^{(j)}_{\alpha+1}$};

\draw[dashed] (-0.8, -4.75) -- (-0.8, -5.25) -- (0.8, -5.25) -- (0.8, -4.75) -- (-0.8, -4.75);
\node[right] at (0.8, -5) {$\mfX_j(u) $};





\end{tikzpicture}
\end{center}
\caption{$S_\alpha^{(j)}$ and $S_{\alpha+1}^{(j)}(v)$.}
\label{fig:distinguish-notation}
\end{figure}

\begin{corollary}\label{cor:biregular-applied}
    Let $1 \le j \le r-1$ be a color such that $\delta = \dist_i(j)
    \ge 3$. Let $1 \leq \alpha \le \delta-2$ be an integer, and let $v \in S_\alpha^{(j)}$. Then
    \[
        \frac{|S_\delta^{(j)}(v)|}{|S_{\alpha+1}^{(j)}(v)|} \ge
        \frac{n_j}{|S_{\alpha+1}^{(j)}|}.
    \]
\end{corollary}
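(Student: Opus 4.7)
The plan is to apply Proposition~\ref{prop:biregular} to a suitably constructed bipartite graph between $S_{\alpha+1}^{(j)}$ and $S_\delta^{(j)} = \mfX_j(u)$. Specifically, let $I$ be the set of colors $m$ with $\dist_i(m) = \delta - \alpha - 1$, and define the bipartite graph $B = (S_{\alpha+1}^{(j)}, \mfX_j(u), E)$ where $x \sim y$ iff $c(x,y) \in I$. By the triangle inequality in $\mfX_i$, this is equivalent to saying that $\dist_i(u,x) + \dist_i(x,y) = \dist_i(u,y) = \delta$, i.e., $y \in S_\delta^{(j)}(x)$.

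First I would verify that $S_{\alpha+1}^{(j)}$ is naturally partitioned by the color $c(u,x) = k$ of its members into full color classes of $u$. Since membership in $S_{\alpha+1}^{(j)}$ is a coherent property—it depends only on $c(u,x)$, because whether there exists $y\in\mfX_j(u)$ completing a shortest $\mfX_i$-path through $x$ is determined by the structure constants—we have $S_{\alpha+1}^{(j)} = \bigsqcup_{k \in K}\mfX_k(u)$ for some set $K$ of colors. For each $k \in K$, Fact~\ref{fact:bipartite_regular} tells us that the subgraph of $B$ induced on $(\mfX_k(u), \mfX_j(u))$ is biregular, and it has positive valency because, by definition of $K$, some (and hence every) vertex $x \in \mfX_k(u)$ has a $B$-neighbor in $\mfX_j(u)$.

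Now I would apply Proposition~\ref{prop:biregular} with this partition of $A = S_{\alpha+1}^{(j)}$ and $A' = S_{\alpha+1}^{(j)}(v)$, obtaining
\[
\frac{|N(A')|}{|A'|} \;\ge\; \frac{|\mfX_j(u)|}{|S_{\alpha+1}^{(j)}|} \;=\; \frac{n_j}{|S_{\alpha+1}^{(j)}|}.
\]
To conclude, I would show $N(A') \subseteq S_\delta^{(j)}(v)$: if $x \in S_{\alpha+1}^{(j)}(v)$ and $y$ is a $B$-neighbor of $x$, then combining $\dist_i(u,x) = \dist_i(u,v)+\dist_i(v,x)$ with $\dist_i(u,y) = \dist_i(u,x) + \dist_i(x,y)$ gives $\dist_i(u,y) = \dist_i(u,v) + \dist_i(v,x) + \dist_i(x,y) \ge \dist_i(u,v) + \dist_i(v,y)$, forcing equality and hence $y \in S_\delta^{(j)}(v)$.

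The only real subtlety is identifying the correct edge relation and confirming that the partition by color $k = c(u,x)$ refines $S_{\alpha+1}^{(j)}$ cleanly into complete color classes (so that the biregularity of Fact~\ref{fact:bipartite_regular} applies as stated). Once this is done, Proposition~\ref{prop:biregular} does all the work.
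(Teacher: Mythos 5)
Your proposal is correct and follows essentially the same route as the paper: the same bipartite graph $(S_{\alpha+1}^{(j)},\mfX_j(u),I)$ with $I$ the colors at $\mfX_i$-distance $\delta-\alpha-1$, the same coherence argument showing $S_{\alpha+1}^{(j)}$ is a union of color classes $\mfX_\ell(u)$ so that Fact~\ref{fact:bipartite_regular} and Proposition~\ref{prop:biregular} apply, and the same identification of $N(S_{\alpha+1}^{(j)}(v))$ with (a subset of) $S_\delta^{(j)}(v)$. Your final triangle-inequality verification that $N(A')\subseteq S_\delta^{(j)}(v)$ is a slightly more explicit version of the step the paper dispatches with ``(hence) $N(A') = S_\delta^{(j)}(v)$,'' and suffices for the stated inequality.
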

\begin{proof}
    Consider the bipartite graph $(S_{\alpha+1}^{(j)},\mfX_j(u), I)$ with 
    \[I = \{k: 1 \leq k \leq r - 1 \text{ and } \dist_i(k) = \dist_i(j) - \alpha - 1\}.\]
    There is an edge from $x \in S_{\alpha + 1}^{(j)}$ to $y \in \mfX_j(u)$ if there
    is a shortest path from $u$ to $y$ passing through $x$.

    By the coherence of $\mfX$, if $\mfX_\ell(u)\cap S_{\alpha+1}^{(j)}$ is nonempty
    for some color $\ell$, then $\mfX_\ell(u)\subseteq S_{\alpha+1}^{(j)}$. Hence,
    $S_{\alpha+1}^{(j)}$ is partitioned into sets of the form $\mfX_\ell(u)$ with
    $\dist_i(\ell) = \alpha + 1$.  For such colors $\ell$, by Fact
    \ref{fact:bipartite_regular},  $(\mfX_\ell(u), \mfX_j(u), I)$ is biregular,
    and by the definition of $S^{(j)}_{\alpha+1}$, then $(\mfX_\ell(u), \mfX_j(u), I)$ is
    not an empty graph.
    
    Therefore, the result follows by applying Proposition~\ref{prop:biregular}
    with $A = S_{\alpha+1}^{(j)}$, $B = \mfX_j(u)$, $A' = S_{\alpha+1}^{(j)}(v) \subseteq
    S_{\alpha+1}^{(j)}$, and (hence) $N(A') = S_\delta^{(j)}(v)$.
\end{proof}

\begin{fact}\label{fact:large-rho}
    Let $1 \le j \le r-1$ be a color such that
    $\delta  = \dist_i(j) \ge 3$,  and $w$ be a vertex in $\mfX_j(u)$. 
     Let $1 \leq \alpha \le \delta-2$, and let $v$ be a vertex in  $S_\alpha^{(j)}$.
     If $\dist_i(v, w) = \delta - \alpha$, then 
     \[\{x: x \in \mfX_i(v) \text{ and } \dist_i(x, w) = \delta -  \alpha - 1 \} \subseteq S_{\alpha+1}^{(j)}(v).\]
 \end{fact}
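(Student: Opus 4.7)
The plan is to verify the two defining conditions for $x \in S_{\alpha+1}^{(j)}(v)$ directly: namely, that $x \in S_{\alpha+1}^{(j)}$ and that $\dist_i(v,x) = (\alpha+1) - \dist_i(u,v) = 1$. The second condition is immediate from the hypothesis $x \in \mfX_i(v)$, which means $c(v,x) = i$ and hence $\dist_i(v,x) = 1$. So the substance of the proof reduces to showing $x \in S_{\alpha+1}^{(j)}$, and the chosen witness $w' \in \mfX_j(u)$ for membership in $S_{\alpha+1}^{(j)}$ will be $w$ itself.

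I would next pin down $\dist_i(u,x)$ by sandwiching it between matching upper and lower bounds. Since $v \in \mfX_i^{(\alpha)}(u)$, $\dist_i(u,v) = \alpha$, so the triangle inequality along $u \to v \to x$ gives $\dist_i(u,x) \le \alpha + \dist_i(v,x) = \alpha + 1$. For the lower bound, use the fact that $c(u,w) = j$ forces $\dist_i(u,w) = \dist_i(j) = \delta$; then the triangle inequality along $u \to x \to w$ gives $\delta = \dist_i(u,w) \le \dist_i(u,x) + \dist_i(x,w) = \dist_i(u,x) + (\delta - \alpha - 1)$, which rearranges to $\dist_i(u,x) \ge \alpha + 1$. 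Hence $\dist_i(u,x) = \alpha + 1$, so $x \in \mfX_i^{(\alpha+1)}(u)$.

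Finally, with the exact value in hand, I would verify the shortest-path witness requirement:
\[
\dist_i(u,x) + \dist_i(x,w) = (\alpha+1) + (\delta - \alpha - 1) = \delta = \dist_i(u,w),
\]
so $w \in \mfX_j(u)$ lies on a shortest $\mfX_i$-path from $u$ through $x$. This certifies $x \in S_{\alpha+1}^{(j)}$, and combined with $\dist_i(v,x) = 1$ and the second formulation of $S_{\alpha+1}^{(j)}(v)$, we conclude $x \in S_{\alpha+1}^{(j)}(v)$.

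There is no real obstacle here: the statement is essentially a two-sided triangle-inequality squeeze, and the only nontrivial ingredient is the observation that $\dist_i(u,w) = \delta$ is forced by the coherence of $\mfX$ (through the fact that $\dist_i(j)$ is well-defined). All other steps are bookkeeping against the two equivalent formulations of $S_{\alpha+1}^{(j)}(v)$ provided in the preceding paragraph.
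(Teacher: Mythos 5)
Your proof is correct and follows essentially the same route as the paper's: the paper also notes $\dist_i(u,x)\le\alpha+1$, rules out $\dist_i(u,x)<\alpha+1$ because it would force $\dist_i(u,w)<\delta$, and then concludes $x\in S_{\alpha+1}^{(j)}(v)$ from the shortest-path witness through $x$. Your write-up just makes the triangle-inequality squeeze and the check against the definition of $S_{\alpha+1}^{(j)}(v)$ more explicit.
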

 \begin{proof}
        For any $x \in \mfX_i(v)$, we have $\dist_i(u, x) \leq \alpha + 1$.  If
        $\dist_i(x, w) = \delta- \alpha - 1$, then $x \in
        \mfX_i^{(\alpha+1)}(u)$, because otherwise $\dist(u, w) < \delta$.
        Then $x$ is in $S_{\alpha + 1}^{(j)}(v)$, since there is a shortest
        from $u$ to $w$ passing through $x$.
 \end{proof}

\begin{proposition}\label{prop:large-slices}
    Let $1 \le j \le r-1$ be a color such that
    $\delta  = \dist_i(j) \ge 3$. Let $1 \le \alpha \le \delta-2$, and let $v \in
    S_\alpha^{(j)}$. Then
    \[
        |\mfX_i^{\delta-\alpha}(u)| \ge \frac{n_i|S_\delta^{(j)}(v)|}{|S_{\alpha+1}^{(j)}(v)|}.
    \]
\end{proposition}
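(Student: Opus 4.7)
The plan is to apply Proposition~\ref{prop:biregular} to a bipartite graph rooted at $v$, exploiting the homogeneity of $\mfX$ to transfer the resulting bound back to the center $u$.

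The key initial observation is that in a homogeneous CC the sphere size is independent of the center: since $\mfX_i^{(\delta-\alpha)}(w) = \bigsqcup_{k:\dist_i(k) = \delta-\alpha}\mfX_k(w)$ for every $w \in V$, its cardinality is $\sum_{k:\dist_i(k) = \delta-\alpha} n_k$, which depends only on the colors. In particular $|\mfX_i^{(\delta-\alpha)}(u)| = |\mfX_i^{(\delta-\alpha)}(v)|$, so it suffices to prove the claimed lower bound on $|\mfX_i^{(\delta-\alpha)}(v)|$.

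I then set up the following bipartite graph. Take $A = \mfX_i^{(\delta-\alpha)}(v)$, partitioned into the cells $A_k = \mfX_k(v)$ over colors $k$ with $\dist_i(k) = \delta-\alpha$, and take $B = \mfX_i(v)$. Define the edge set by $(w,x) \in E$ iff $c(x,w) \in M$, where $M = \{m : \dist_i(m) = \delta-\alpha-1\}$. Each subgraph $(A_k, B, M)$ is biregular by Fact~\ref{fact:bipartite_regular}, and it has positive valency: for any $y \in A_k$, the first vertex on a shortest $\mfX_i$-path from $v$ to $y$ lies in $\mfX_i(v) = B$ and has $\mfX_i$-distance $\delta-\alpha-1$ to $y$, witnessing an edge incident to $y$.

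Now set $A' = S_\delta^{(j)}(v)$. Because $S_\delta^{(j)} = \mfX_j(u)$ and $S_\delta^{(j)}(v) = S_\delta^{(j)} \cap \mfX_i^{(\delta-\alpha)}(v)$, we have $A' \subseteq A$. Fact~\ref{fact:large-rho} says that for every $w \in A'$, the set $\{x \in \mfX_i(v) : \dist_i(x,w) = \delta-\alpha-1\}$ is contained in $S_{\alpha+1}^{(j)}(v)$; this set is precisely the neighborhood of $w$ in the bipartite graph, so $N(A') \subseteq S_{\alpha+1}^{(j)}(v)$. Applying Proposition~\ref{prop:biregular} yields
\[
\frac{|S_{\alpha+1}^{(j)}(v)|}{|S_\delta^{(j)}(v)|} \;\ge\; \frac{|N(A')|}{|A'|} \;\ge\; \frac{|B|}{|A|} \;=\; \frac{n_i}{|\mfX_i^{(\delta-\alpha)}(v)|},
\]
which rearranges to the desired inequality. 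The main subtlety is the choice of center: had I attempted to set $A = \mfX_i^{(\delta-\alpha)}(u)$ directly, the partition classes would have to be refined by both $c(u,\cdot)$ and $c(v,\cdot)$ to obtain biregularity, and the resulting counts would be 4-vertex quantities not governed by the structure constants of $\mfX$. Recentering at $v$ and invoking homogeneity avoids this difficulty.
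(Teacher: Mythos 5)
Your proof is correct, and it rests on the same ingredients as the paper's: recentering at $v$ and transferring back to $u$ by coherence (spheres are unions of color classes, so their sizes are center-independent), the biregularity of the graphs between $\mfX_i(v)$ and the classes $\mfX_k(v)$ from Fact~\ref{fact:bipartite_regular}, and Fact~\ref{fact:large-rho} to place the relevant neighborhood of each $w \in S_\delta^{(j)}(v)$ inside $S_{\alpha+1}^{(j)}(v)$. The difference is in how these are aggregated. The paper works cell by cell: for each color $k$ with $\mfX_k(v)\cap S_\delta^{(j)}(v)\ne\emptyset$ it counts edges of $(\mfX_i(v),\mfX_k(v),I)$ to get $n_k \ge n_i d_k/|S_{\alpha+1}^{(j)}(v)|$ and then sums over $k$, leaving the final step of that summation (in effect $\sum_k d_k \ge |S_\delta^{(j)}(v)|$) implicit. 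You instead apply Proposition~\ref{prop:biregular} a single time, to the sphere $\mfX_i^{(\delta-\alpha)}(v)$ partitioned into its color classes, with $A' = S_\delta^{(j)}(v)$ and $N(A')\subseteq S_{\alpha+1}^{(j)}(v)$ --- which is exactly how the paper deploys that proposition in Corollary~\ref{cor:biregular-applied}, only centered at $v$ rather than $u$. The pigeonhole packaged inside Proposition~\ref{prop:biregular} then replaces the per-color summation, so your aggregation step is fully explicit and arguably cleaner, at the cost of nothing; the paper's version is a self-contained degree count that does not invoke Proposition~\ref{prop:biregular} at this point. All hypotheses you need are verified correctly (in particular positive valency of each cell via the first step of a shortest path from $v$, and $k\ne i$ since $\dist_i(k)=\delta-\alpha\ge 2$), so the argument stands as written.
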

\begin{proof}
    Let $k$ be a color satisfying $\dist_i(k) = \delta - \alpha$ and  $\mfX_k(v)\cap S_\delta^{(j)}(v) \ne \emptyset$. Let $w$ be a vertex in $ \mfX_k(v)\cap S_\delta^{(j)}(v)$. Consider the bipartite graph
    $B = (\mfX_i(v),\mfX_k(v), I)$, where $I = \{\ell : \dist_i(\ell) = \delta - \alpha - 1\}$. 

    By Fact \ref{fact:bipartite_regular}, $B$ is biregular, and
     by Fact~\ref{fact:large-rho} the degree of $w$ in $B$ is at most $|S_{\alpha+1}^{(j)}(v)|$. Denote by
    $d_k$ the degree of a vertex $x \in \mfX_i(v)$ in $B$, so $n_k
    |S_{\alpha+1}^{(j)}(v)| \ge n_i d_k$. Hence, summing over all colors $k$
    such that $\mfX_k(v)\cap S_\delta^{(j)}(v) \ne \emptyset$, we have
    \[
        |\mfX_i^{(\delta-\alpha)}(v)|
        \ge \sum_k n_k \ge \sum_k \frac{n_i
        d_k}{|S_{\alpha+1}^{(j)}(v)|} \ge \frac{n_i
        |S_\delta^{(j)}(v)|}{|S_{\alpha+1}^{(j)}(v)|}.
    \]
    Finally, by the coherence of $\mfX$, we have
    $|\mfX_i^{(\delta-\alpha)}(u)|= |\mfX_i^{(\delta-\alpha)}(v)|$.
\end{proof}

We now complete the proof of Lemma~\ref{lem:growth-spheres}.

\begin{proof}[Proof of Lemma~\ref{lem:growth-spheres}]
    Combining Corollary~\ref{cor:biregular-applied} and
    Proposition~\ref{prop:large-slices}, for any $1 \leq \alpha \le \delta - 2$
    we have
    \[
        |\mfX_i^{(\delta-\alpha)}(u)| \ge \frac{n_i n_k}{|S_{\alpha+1}^{(k)}|}
    \]
    and so since $S_{\alpha+1}^{(k)} \subseteq \mfX_i^{(\alpha+1)}(u)$ by
    definition, we have the desired inequality. 
\end{proof}


\section{Distinguishing number}\label{sec:dominant}

In this section, we will prove Lemma~\ref{lem:dominant}, which will allow us to
assume that our PCCs $\mfX$ satisfy $\rho = o(n^{2/3})$. 

We recall that the \emph{distinguishing number}
$D(i)$ of a color $i$ is the number of vertices $w$ such that $c(w,u) \ne
c(w,v)$, where $u$ and $v$ are any fixed pair of vertices such that $c(u,v) =
i$. Hence, $D(i) = \sum_{k \ne j} p^i_{jk^*}$. If $D(i)$ is
large for every color $i > 0$, then for every pair of distinct vertices $u,v
\in V$, a random individualized vertex $w$ gives different colors to $u$ and
$v$ in the stable refinement with good probability.  This idea is formalized in
the following lemma due to Babai~\cite{babai-cc}.

\begin{lemma}[{Babai~\cite[Lemma 5.4]{babai-cc}}]\label{lem:distinguishing}
    Let $\mfX$ be a PCC and let \newline $\zeta = \min\{D(i) : 1 \le i \le
    r-1\}$. Then there is a set of size $O(n\log n /\zeta)$ which completely
    splits $\mfX$.
\end{lemma}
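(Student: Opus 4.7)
The plan is to use a straightforward probabilistic argument combined with the observation that naive refinement propagates the discriminating information contained in $S$ after a single round.

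First I would show that if $S\subseteq V$ has the property that for every pair of distinct vertices $u,v\in V$ there is some $w\in S$ distinguishing $u$ and $v$, then $S$ completely splits $\mfX$ under naive vertex refinement. Indeed, after individualization, each $w\in S$ has its own unique vertex-color. After one round of naive refinement, the refined color of any vertex $x\in V$ encodes, in particular, the edge-color $c(w,x)$ for every $w\in S$, since each such $w$ is identifiable by its unique vertex-color. Hence if $c(w,u)\ne c(w,v)$ for some $w\in S$, the vertices $u$ and $v$ receive distinct refined colors, and therefore $S$ completely splits $\mfX$.

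Next I would produce such a set $S$ probabilistically. Sample $s = \lceil 3n\log n/\zeta\rceil$ vertices of $V$ independently and uniformly at random (with replacement, for simplicity). For any fixed pair $u\ne v$ with $c(u,v)=i$, the number of vertices distinguishing $u$ and $v$ is $D(i)\ge\zeta$, so the probability that no sampled vertex distinguishes them is at most
\[
\left(1-\frac{D(i)}{n}\right)^{s} \le \left(1-\frac{\zeta}{n}\right)^{s} \le \exp\!\left(-\frac{s\zeta}{n}\right)\le n^{-3}.
\]
A union bound over the fewer than $n^2$ ordered pairs of distinct vertices yields failure probability at most $n^{-1}<1$, so there exists a multiset $S$ of at most $s$ vertices that distinguishes every pair; discarding repetitions gives a set $S$ of size $O(n\log n/\zeta)$ with the same distinguishing property. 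By the first step, this $S$ completely splits $\mfX$, proving the lemma.

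There is no real obstacle here; the only mildly subtle point is verifying that one round of naive refinement already transmits the individualized information in the form $c(w,\cdot)$, which follows directly from the definition of the refinement recorded in Section~\ref{sec:intro-refinement}. Everything else is standard sampling with a union bound.
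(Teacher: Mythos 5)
Your proof is correct: the one-round refinement observation (that individualizing $w$ lets the refined color of each vertex $x$ encode $c(x,w)$, hence $c(w,x)$ via the pairing $i\mapsto i^*$) is exactly the fact the paper records after defining the distinguishing number, and the sampling-plus-union-bound step gives the stated $O(n\log n/\zeta)$ bound since every distinct pair has at least $\zeta$ distinguishing vertices. The paper itself does not prove this lemma but cites Babai, and your argument is essentially the standard one behind Babai's Lemma 5.4, so the proposal matches the intended approach.
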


We give the following lower bound on $\zeta$ when $\rho$ is sufficiently
large.

\begin{lemma}\label{lem:Di-large}
    Let $\mfX$ be a PCC and suppose that $\rho \ge  n^{2/3}(\log n)^{-1/3}$.
    \newline Then $D(i) =
    \Omega(n^{2/3}(\log n)^{-1/3})$ for all $1 \le i \le r-1$.
\end{lemma}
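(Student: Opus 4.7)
The plan is to split the proof into two regimes based on $\rho$: the regime $n^{2/3}(\log n)^{-1/3} \le \rho < n/3$, in which color $1$ is strictly dominant with $n_1 \ge 2n/3 - 1$, and the regime $\rho \ge n/3$, in which no color is overwhelmingly dominant and we apply a partition argument. In both regimes, the main tool is Babai's observation that $D(i) \ge D(j)/\dist_i(j)$ for any nondiagonal color $j$ (see~\cite[Proposition~6.4]{babai-cc}); the task is to find a color $j$ and an upper bound on $\dist_i(j)$ for which the ratio exceeds $\Omega(n^{2/3}(\log n)^{-1/3})$.

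In the first regime, I would first establish the base estimate $D(1) = \Omega(\rho)$: the counting bound $D(1) \ge 2(n_1 - 1 - p^1_{11})$ handles the case when $u$ and $v$ (with $c(u,v)=1$) have notably different dominant neighborhoods, while the complementary case $p^1_{11} \ge n_1 - \rho/2$ is handled by examining the nondominant neighborhoods of $u$ and $v$, following Babai~\cite{babai-cc}. Next, I invoke Babai's inequality with $j=1$, reducing the task to bounding $\delta = \dist_i(1)$ from above. For this I apply Lemma~\ref{lem:growth-spheres}: on the one hand, coherence gives $\mfX_1(u) \subseteq \mfX_i^{(\delta)}(u)$, so the intermediate spheres $\mfX_i^{(2)}(u), \ldots, \mfX_i^{(\delta-1)}(u)$ occupy in total at most $n - n_i - n_1 - 1 \le \rho$ vertices; on the other hand, applying AM-GM to the pairs $(|\mfX_i^{(\alpha+1)}(u)|, |\mfX_i^{(\delta-\alpha)}(u)|)$ as $\alpha$ ranges over $1, \dots, \delta - 2$ (each intermediate index $\beta \in \{2, \ldots, \delta-1\}$ appearing exactly twice in the sum) shows the intermediate spheres sum to at least $(\delta - 2)\sqrt{n_i n_1}$. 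Hence $\dist_i(1) \le \rho/\sqrt{n_i n_1} + 2$, yielding $D(i) = \Omega(\min(\rho, \sqrt{n_i n}))$, which is $\Omega(n^{2/3}(\log n)^{-1/3})$ provided $n_i \gtrsim n^{1/3}(\log n)^{-2/3}$.

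In the second regime ($\rho \ge n/3$), I would follow the strategy sketched in the overview of Lemma~\ref{lem:dominant}: use Babai's observation that the color $j_0$ maximizing $D(j_0)$ satisfies $D(j_0) = \Omega(n)$, partition the nondiagonal colors into $O(D(j_0)/D(i))$ cells by distinguishing number (cell $\alpha$ containing colors $k$ with $D(k) \in [3D(i)\alpha, 3D(i)(\alpha+1))$), and show each cell contributes total degree at least $n_i$ using the fact that colors $k, k'$ in the same cell cannot be distinguished from $j_0$ by too few witnesses. Combined with Lemma~\ref{lem:few-large-colors}, which bounds the number of colors $k$ with $n_k > n_i/2$, and the total-degree constraint $\sum_k n_k = n-1$, this yields the desired lower bound on $D(i)$.

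The hardest part is handling, in the first regime, colors $i$ with $n_i < n^{1/3}(\log n)^{-2/3}$, where the sphere-growth bound gives only $D(i) = \Omega(\sqrt{n_i n})$, which is insufficient. I would resolve this subcase by a secondary argument: either by reapplying Babai's inequality with a carefully chosen $j \ne 1$ optimizing $D(j)/\dist_i(j)$, or by combining Lemma~\ref{lem:few-large-colors} under the contrapositive (if $D(i)$ is small and $n_i$ is small, then almost all colors satisfy $n_k \le n_i/2$) with $\sum_k n_k = n-1$ to derive incompatibility with the PCC's parameters. Since small $n_i$ forces $\mfX_i$ to be a sparse but strongly connected (by primitivity) constituent with a very rich distance structure, a direct exploitation of this rigidity in the style of the $n_i = 1$ (cyclic) case may also deliver the bound.
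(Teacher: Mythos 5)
Your overall architecture (split at $\rho < n/3$ versus $\rho \ge n/3$, get $D(1) = \Omega(\rho)$, then transfer via $D(i) \ge D(j)/\dist_i(j)$ and the sphere-growth lemma; cell-partition argument in the dense regime) matches the paper's. But there is a genuine gap, and you have located it yourself without closing it: colors $i$ with small $n_i$. In the first regime your chain gives only $D(i) = \Omega(\min(\rho, \sqrt{n_i n}))$, which fails when $n_i \lesssim n^{1/3}(\log n)^{-2/3}$, and none of the three escape routes you sketch is carried out or convincing — in particular, the ``contrapositive of Lemma~\ref{lem:few-large-colors}'' route does not obviously produce a contradiction (its bound $O((\log n + n/\rho)D(i)/n_i)$ on the number of large colors is weak precisely when $n_i$ is small), and ``exploit the rigidity of sparse constituents'' is not an argument. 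The same problem silently reappears, unflagged, in your second regime: the cell-counting argument yields $D(i)^2 = \Omega(\rho\, n_i/\log n)$, which again is insufficient for small $n_i$, yet you assert it ``yields the desired lower bound.''

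The missing ingredient is a single classical inequality of Babai, stated in the paper as Proposition~\ref{prop:degree-Di}: $n_i D(i) \ge n-1$ for every nondiagonal color $i$. This disposes of the small-$n_i$ case instantly ($D(i) \ge (n-1)/n_i$ is huge when $n_i$ is small), and the paper in fact weaves it into both regimes rather than case-splitting on $n_i$: in the sparse regime it substitutes $n_i \ge (n-1)/D(i)$ into the sphere-growth estimate, obtaining the self-referential bound $D(i)^{3/2} = \Omega\bigl(D(1)\sqrt{n\, n_1}/\gamma\bigr)$ with $\gamma \le n - n_1$ (Lemma~\ref{lem:Di-technical}), whence $D(i) = \Omega(n^{2/3})$ uniformly in $n_i$; in the dense regime it combines $D(i) = \Omega(\sqrt{\rho n_i/\log n})$ (Lemma~\ref{lem:nondominant-main}) with the same inequality to get $D(i)^{3/2} = \Omega(\sqrt{\rho n/\log n})$. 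One further small omission in your dense regime: the cell argument there is applied to colors with $n_i \le \rho$, and the possibly dominant color $1$ (when $n_1 > \rho$) must be handled separately, which the paper does by noting $\dist_1(i) = 2$ for some nondominant $i$ and invoking $D(1) \ge D(i)/2$. Without Proposition~\ref{prop:degree-Di} (or an equivalent), your proposal does not prove the lemma.
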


Lemma~\ref{lem:dominant} follows immediately from
Lemmas~\ref{lem:distinguishing} and~\ref{lem:Di-large}. \qed

We will prove Lemma~\ref{lem:Di-large} by separately addressing the cases $\rho
\ge n/3$ and $\rho < n/3$. The case $\rho < n/3$ will rely on our estimate for
the size of spheres in constituent digraphs, Lemma~\ref{lem:growth-spheres}.
For the case $\rho \ge n/3$, we will rely on Lemma~\ref{lem:few-large-colors},
which bounds the number of large colors when $D(i)$ is small for some color $i
\ge 1$. We prove Lemma~\ref{lem:few-large-colors} in the following subsection.

We first recall the following observation of Babai~\cite[Proposition
6.3]{babai-cc}.

\begin{proposition}[Babai]\label{prop:avg-Di}
    Let $\mfX$ be a PCC. Then
    \[
        \frac{1}{n-1}\sum_{j=1}^{r-1}D(j)n_j \ge \rho + 2.
    \]
\end{proposition}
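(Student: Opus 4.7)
The plan is to prove the exact identity
\[
\sum_{j=1}^{r-1} n_j D(j) = n^2 - 1 - \sum_{i=1}^{r-1} n_i^2
\]
by a double count, and then derive the proposition from the elementary estimate $\sum_{i \ge 1} n_i^2 \le n_1(n-1)$, which exploits our standing assumption that $n_1 = \max_i n_i$.

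To set up the double count, I would introduce the set
\[
T = \{(u,v,w) \in V^3 : u \ne v \text{ and } c(w,u) \ne c(w,v)\}.
\]
Summing first over the ordered pair $(u,v)$ and then over $w$, the inner count is $D(c(u,v))$ by the definition of the distinguishing number, and since there are exactly $n \cdot n_j$ ordered pairs of color $j$, this gives $|T| = n\sum_{j \ge 1} n_j D(j)$. Summing instead over $(u,w)$ and then over $v$ requires splitting two cases. When $w = u$, the condition reduces to $c(u,v) \ne 0$, which holds for all $n-1$ choices of $v \ne u$, contributing $n(n-1)$. When $w \ne u$ with $c(w,u) = i \ge 1$, the vertices $v \ne u$ with $c(w,v) = i$ form $\mfX_i(w)\setminus\{u\}$, which has size $n_i - 1$, so the contributing choices of $v$ number $(n-1) - (n_i - 1) = n - n_i$; summing over the $n \cdot n_i$ ordered pairs $(u,w)$ of color $i$ gives $n\sum_{i \ge 1} n_i(n - n_i)$. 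Equating the two expressions for $|T|$ and using $\sum_{i\ge 1}n_i = n-1$ produces the identity above.

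Dividing through by $n - 1$ rewrites the identity as
\[
\frac{1}{n-1}\sum_{j=1}^{r-1} n_j D(j) = (n+1) - \frac{\sum_{i \ge 1} n_i^2}{n-1}.
\]
Since $n_1 = \max_{i \ge 1} n_i$ and $\sum_{i \ge 1} n_i = n-1$, we have $\sum_{i \ge 1} n_i^2 \le n_1 \sum_{i \ge 1} n_i = n_1(n-1)$, so the right-hand side is at least $n + 1 - n_1 = \rho + 2$, completing the proof. I do not anticipate a real obstacle: this is pure bookkeeping plus one application of the trivial inequality $\sum n_i^2 \le (\max n_i)(\sum n_i)$, and no coherence identities beyond the degree counts $|R_i| = n \cdot n_i$ are needed.
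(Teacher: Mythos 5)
Your proof is correct: the double count of triples $(u,v,w)$ with $u\ne v$ and $c(w,u)\ne c(w,v)$ gives exactly $\sum_{j\ge 1} n_j D(j) = n^2-1-\sum_{i\ge 1} n_i^2$, and with $\sum_{i\ge1}n_i=n-1$, $n_1=\max_i n_i$, and $\rho=n-n_1-1$ the stated bound follows. The paper itself does not reprove this statement (it is quoted from Babai's 1981 paper), and your argument is essentially the same elementary counting-plus-max-degree estimate used there, so there is nothing to correct.
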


The following corollary is then immediate. \qed

\begin{corollary}\label{cor:exists-large-Di}
    Let $\mfX$ be a PCC.  There exists a nondiagonal color $i$ with
    $D(i) > \rho$.
\end{corollary}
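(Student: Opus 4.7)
The plan is to derive the corollary directly from Proposition~\ref{prop:avg-Di} by an averaging argument. First I would observe that $\sum_{j=1}^{r-1} n_j = n-1$: fixing any vertex $u$, the sets $\mfX_j(u)$ for $j = 1, \ldots, r-1$ partition $V \setminus \{u\}$, and $|\mfX_j(u)| = n_j$. Thus the quantity $\frac{1}{n-1}\sum_{j=1}^{r-1} D(j) n_j$ appearing in Proposition~\ref{prop:avg-Di} is a convex combination of the values $D(j)$, with nonnegative weights $n_j/(n-1)$ that sum to $1$.

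Next I would apply the standard fact that a weighted average cannot exceed the maximum of the values being averaged. Since Proposition~\ref{prop:avg-Di} guarantees this weighted average is at least $\rho + 2$, there must be at least one nondiagonal color $i \in \{1, \ldots, r-1\}$ with $D(i) \ge \rho + 2 > \rho$, which is the conclusion of the corollary.

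There is essentially no obstacle in this step; it is a one-line pigeonhole consequence, which is why the author asserts that the corollary is ``immediate.'' The real work has already been done in establishing Proposition~\ref{prop:avg-Di}. I would present the above as a short two-sentence proof: identify the expression as a weighted average using $\sum_j n_j = n-1$, then invoke that the maximum dominates the average.
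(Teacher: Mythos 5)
Your proposal is correct and is exactly the argument the paper intends: the paper derives the corollary ``immediately'' from Proposition~\ref{prop:avg-Di}, and the only content to supply is the observation that $\sum_{j=1}^{r-1} n_j = n-1$ makes the left-hand side a weighted average of the $D(j)$, so some $D(i) \ge \rho+2 > \rho$. Nothing is missing.
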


The following facts about the parameters of a coherent configuration are
standard.

\begin{proposition}[{\cite[Lemma 1.1.1, 1.1.2, 1.1.3]{zieschang-as}}]\label{prop:cc}
    Let $\mfX$ be a CC. Then for all colors $i,j,k$, the following relations
    hold:
    \begin{enumerate}
        \item[(i)] $n_i = n_{i^*}$
        \item[(ii)] $p^i_{jk} = p^{i^*}_{k^*j^*}$
        \item[(iii)] $n_i p^i_{jk} = n_jp^j_{ik^*}$
        \item[(iv)] $\sum_{j=0}^{r-1}p^i_{jk} = \sum_{j=0}^{r-1}p^i_{kj} = n_k$
    \end{enumerate}
\end{proposition}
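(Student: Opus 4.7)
The plan is to prove all four identities by direct double-counting arguments that use only the definition of coherence (existence of the structure constants $p^i_{jk}$) and the $*$-involution on colors. None of the four requires structural insight beyond the defining axioms, so there is no substantial obstacle; the mildly nontrivial step is the handshaking argument underlying (i), and the mild bookkeeping of $*$'s in (iii).

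For (i), I would compute the out- and in-degrees of a fixed vertex $u$ in the constituent digraph $\mfX_i$. The out-degree is $|\{w : c(u,w) = i\}| = p^0_{ii^*} = n_i$, using that $c(u,w) = i$ forces $c(w,u) = i^*$. The in-degree is $|\{w : c(w,u) = i\}| = |\{w : c(u,w) = i^*\}| = n_{i^*}$. Since $\sum_u \deg^+(u) = \sum_u \deg^-(u)$ in any finite digraph, we get $n \cdot n_i = n \cdot n_{i^*}$, hence (i). For (ii), I would fix any pair $u,v$ with $c(u,v) = i$, so that $c(v,u) = i^*$. Then $p^i_{jk}$ counts $\{w : c(u,w) = j,\ c(w,v) = k\}$, while $p^{i^*}_{k^*j^*}$ counts $\{w : c(v,w) = k^*,\ c(w,u) = j^*\}$. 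Applying the $*$-involution in each coordinate identifies these two sets.

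For (iii), I would double-count the set $T$ of ordered triples $(x,y,z) \in V^3$ satisfying $c(x,y) = i$, $c(x,z) = j$, and $c(z,y) = k$. Fixing $x$ first and choosing $y$ before $z$ gives $|T| = n \cdot n_i \cdot p^i_{jk}$; alternatively, fixing $x$ and choosing $z$ before $y$ gives $n_j$ choices for $z$ and $p^j_{ik^*}$ choices for $y$ (since $c(z,y) = k$ is equivalent to $c(y,z) = k^*$), yielding $|T| = n \cdot n_j \cdot p^j_{ik^*}$. Equating produces (iii). Finally, for (iv), I would fix $u,v$ with $c(u,v) = i$: partitioning the set $\{w : c(u,w) = k\}$ of size $n_k$ by the color $j = c(w,v)$ gives $n_k = \sum_j p^i_{kj}$, while partitioning the set $\{w : c(w,v) = k\}$ of size $n_{k^*} = n_k$ (by (i) applied to $k$) by the color $j = c(u,w)$ gives $n_k = \sum_j p^i_{jk}$.
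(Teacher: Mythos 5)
Your proof is correct; the paper itself gives no argument for this proposition, simply citing it as a standard fact from Zieschang, and your double-counting derivations (handshaking in $\mfX_i$ for (i), the involution applied coordinatewise for (ii), counting triples two ways for (iii), and partitioning the $k$-neighborhood resp.\ $k^*$-in-neighborhood for (iv)) are exactly the standard textbook proofs, carried out completely and with the $*$'s handled correctly. The only caveat worth noting is that, as in the cited source, the argument for (i) (and hence the second equality in (iv)) uses that every vertex has out-degree $n_i$ in $\mfX_i$, which is the homogeneous setting in which the paper actually applies the proposition.
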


\subsection{Bound on the number of large colors}

We now prove Lemma~\ref{lem:few-large-colors}, using the following preliminary
results.

\begin{lemma}\label{lem:few-small-colors}
    Let $\mfX$ be a PCC, let $I$ be a nonempty set of nondiagonal
    colors, let $n_I = \sum_{i \in I}n_i$, and let $J$ be the set of colors $j$
    such that $n_j \le n_I/2$.  Then 
    \[
        \sum_{j \in J}n_j \le 2\max\{D(i) : i \in I\}.
    \]
\end{lemma}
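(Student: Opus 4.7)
The plan is to combine a pointwise lower bound on $D(i)$ with a weighted average over $i \in I$, bounding the resulting error term using only the identities of Proposition~\ref{prop:cc}. For the pointwise bound, fix $u,v$ with $c(u,v) = i$ and fix a color $j$. Among the $n_j$ vertices $w$ with $c(w,u) = j$, those that fail to distinguish $u$ from $v$ are precisely those with $c(w,v) = j$ as well; rewriting $c(w,u) = j$ as $c(u,w) = j^*$, this count is exactly $p^i_{j^*j}$. Since the sets $\{w : c(w,u) = j\}$ are disjoint as $j$ varies, summing over $j \in J$ gives
\[
D(i) \;\ge\; \sum_{j \in J}\bigl(n_j - p^i_{j^*j}\bigr).
\]

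Multiplying this inequality by $n_i$ and summing over $i \in I$ yields
\[
\max_{i \in I}D(i) \cdot n_I \;\ge\; \sum_{i \in I} n_i D(i) \;\ge\; n_I \sum_{j \in J} n_j \;-\; \sum_{j \in J}\sum_{i \in I} n_i\, p^i_{j^*j}.
\]
To control the error term, I apply Proposition~\ref{prop:cc}(iii) in the form $n_i p^i_{j^*j} = n_j p^{j^*}_{ij^*}$ and then use Proposition~\ref{prop:cc}(iv) to bound $\sum_{i \in I} p^{j^*}_{ij^*} \le \sum_i p^{j^*}_{ij^*} = n_{j^*} = n_j$. This gives, for each $j \in J$,
\[
\sum_{i \in I} n_i\, p^i_{j^*j} \;=\; n_j \sum_{i \in I} p^{j^*}_{ij^*} \;\le\; n_j^2 \;\le\; \tfrac{1}{2}\, n_I\, n_j,
\]
where the last step uses the defining property of $J$. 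Substituting back and dividing through by $n_I$ yields $\max_{i \in I}D(i) \ge \tfrac{1}{2}\sum_{j \in J} n_j$, which is the desired bound.

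The argument is essentially an algebraic manipulation, and I do not expect a substantive obstacle. The only care required is bookkeeping the transposes $j \mapsto j^*$ when translating between $c(w,u)$ and $c(u,w)$ to correctly identify the count $p^i_{j^*j}$; one should also check that the diagonal color $0$ (which lies in $J$ whenever $n_I \ge 2$) is handled consistently, since $n_0 - p^i_{00} = 1 - 0 = 1$ matches the trivial observation that $w = u$ itself distinguishes $u$ from $v$ whenever $c(u,v) \ne 0$.
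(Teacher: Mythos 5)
Your proof is correct and follows essentially the same route as the paper: both lower-bound $n_I\max_{i\in I}D(i)$ by $\sum_{i\in I}n_iD(i)$, express the non-distinguishing counts via structure constants, convert with Proposition~\ref{prop:cc}(iii), bound the partial sum over $i\in I$ by the full sum $n_j$ via Proposition~\ref{prop:cc}(iv), and finish with $n_j\le n_I/2$. The only differences are cosmetic bookkeeping (transposed indices, and restricting to $J$ before rather than after applying the identities).
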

\begin{proof}
    For any color $i$, by Proposition~\ref{prop:cc}, we have 
    \begin{align*}
        D(i) &= \sum_{j=0}^{r-1}\sum_{k \ne j} p^i_{jk^*}
             = \sum_{j=0}^{r-1} \sum_{k \ne j} \frac{n_j p^j_{ik}}{n_i}\\
             &= \frac{1}{n_i} \sum_{j=0}^{r-1} n_j \sum_{k \ne j} p^j_{ik}
             = \frac{1}{n_i} \sum_{j=0}^{r-1} n_j(n_i - p^j_{ij}).
    \end{align*}

    Therefore,
    \begin{align*}
        n_I\max\{D(i) : i\in I\} &\ge \sum_{i\in I}
        n_i D(i)\\
        &\ge \sum_{i \in I}\sum_{j\in J}n_j(n_i - p^j_{ij})\\
        &\ge \sum_{j \in J} n_j \sum_{i \in I}(n_i - p^j_{ij})\\
        &\ge \sum_{j \in J} n_j \left(n_I - n_j\right)\\
        &\ge \frac{n_I}{2} \left(\sum_{j \in J} n_j\right).
    \end{align*}
\end{proof}

\begin{lemma}\label{lem:nondominant-smooth}
    Let $\mfX$ be a PCC, and suppose $p^i_{jk} > 0$ for some $i,j,k$.
    Then \[D(j) - D(k) \le D(i) \le D(j) + D(k).\]
\end{lemma}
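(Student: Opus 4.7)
The plan is to deduce both inequalities from a single ``triangle inequality'' for the individual distinguishing sets: for any three vertices $u,v,w \in V$,
\[
D(u,v) \subseteq D(u,w) \cup D(w,v).
\]
This is immediate by contrapositive: if $z$ distinguishes neither $u$ from $w$ nor $w$ from $v$, then $c(z,u) = c(z,w) = c(z,v)$, so $z$ does not distinguish $u$ from $v$. Taking cardinalities yields $|D(u,v)| \le |D(u,w)| + |D(w,v)|$.

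The upper bound $D(i) \le D(j) + D(k)$ is then immediate. Since $p^i_{jk} > 0$, for any pair $(u,v)$ with $c(u,v) = i$ we can choose a vertex $w$ with $c(u,w) = j$ and $c(w,v) = k$. Applying the triangle inequality to this triple gives exactly $D(i) \le D(j) + D(k)$, and both sides are well-defined by coherence.

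For the lower bound $D(j) - D(k) \le D(i)$, rewrite it as $D(j) \le D(i) + D(k)$ and apply the same principle to a different triangle. By Proposition~\ref{prop:cc}(iii), $n_j p^j_{i k^*} = n_i p^i_{jk} > 0$, so $p^j_{i k^*} > 0$. Pick a pair $(u,w)$ with $c(u,w) = j$ and a vertex $v$ with $c(u,v) = i$, $c(v,w) = k^*$; the triangle inequality gives $D(j) \le D(i) + D(k^*)$. Finally, $D(k^*) = D(k)$, because the set $D(u,v) = \{z : c(z,u) \ne c(z,v)\}$ is symmetric in $(u,v)$, so swapping $u$ and $v$ (which replaces the color $k$ by $k^*$) does not change the cardinality.

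No step is really a significant obstacle; the only idea worth isolating is that Proposition~\ref{prop:cc}(iii) is exactly what converts the hypothesis $p^i_{jk} > 0$ into the existence of the second triangle needed for the lower bound, and the rest reduces to a one-line set-theoretic argument.
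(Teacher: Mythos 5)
Your proposal is correct and follows essentially the same route as the paper: the set-theoretic triangle inequality $D(u,v) \subseteq D(u,w) \cup D(w,v)$ gives the upper bound, and the lower bound is obtained exactly as in the paper by using Proposition~\ref{prop:cc} to get $p^j_{ik^*} > 0$ together with $D(k^*) = D(k)$.
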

\begin{proof}
    Fix vertices $u,v,w \in V$ with $c(u,w) = i$, $c(u,v)=j$, and $c(v,w)=k$.
    (These vertices exist since $p^i_{jk} > 0$.) For any vertex $x$ such that
    $c(x,u)\ne c(x,w)$, we have $c(x,u) \ne c(x,v)$ or $c(x,v) \ne c(x,w)$.
    Therefore, $D(j) + D(k) \ge D(i)$.
    
    For the other inequality, if $p^i_{jk} > 0$ then $p^j_{ik^*} > 0$ by
    Proposition~\ref{prop:cc}, and $D(k^*) = D(k)$ by the definition of
    distinguishing number. So we have $D(i) + D(k) = D(i) + D(k^*) \ge D(j)$,
    using the previous paragraph for the latter inequality.
\end{proof}

\begin{lemma}\label{lem:nondominant-smooth-appl}
    Let $\mfX$ be a PCC. Then for any nondiagonal color $i$ and number $0 \le
    \eta \le \rho - D(i)$, there is a color $j$ such that $\eta < D(j) \le \eta +
    D(i)$.
\end{lemma}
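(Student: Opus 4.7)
The plan is to combine Lemma~\ref{lem:nondominant-smooth} with the strong connectivity of the constituent digraph $\mfX_i$, showing that as we walk along color-$i$ edges, the distinguishing number of the color from a fixed base vertex to the current vertex can change by at most $D(i)$ per step. Hence the values $\{D(k)\}$ cannot skip over the interval $(\eta, \eta + D(i)]$.

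First I would dispose of the trivial case $D(i) > \eta$ by taking $j = i$, giving $\eta < D(i) \le \eta + D(i)$. So assume $D(i) \le \eta$, and suppose for contradiction that no color $j$ satisfies $D(j) \in (\eta, \eta + D(i)]$.

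Next, I would fix an arbitrary $u \in V$ and study walks $u = v_0, v_1, v_2, \ldots$ in $\mfX_i$ (i.e., $c(v_t, v_{t+1}) = i$ for all $t$). Setting $k_t = c(u, v_t)$, coherence of $\mfX$ forces $p^{k_{t+1}}_{k_t\, i} \ge 1$, and Lemma~\ref{lem:nondominant-smooth} then gives $|D(k_{t+1}) - D(k_t)| \le D(i)$. A straightforward induction on $t$ shows that $D(k_t) \le \eta$ for every $t$: the base case is $D(k_0) = D(0) = 0 \le \eta$, and in the inductive step $D(k_{t+1}) \le D(k_t) + D(i) \le \eta + D(i)$, so the hypothesized gap forces $D(k_{t+1}) \le \eta$.

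To conclude, strong connectivity of $\mfX_i$ ensures every $v \in V$ appears as some $v_t$ along a suitable walk, so every color $k$ satisfies $D(k) \le \eta$; but Corollary~\ref{cor:exists-large-Di} supplies a nondiagonal color $k'$ with $D(k') > \rho \ge \eta + D(i) > \eta$, contradicting this universal upper bound. The only point requiring care is matching the indices of the structure constant $p^{k_{t+1}}_{k_t\, i}$ against the hypothesis of Lemma~\ref{lem:nondominant-smooth}; once the right triple is identified, the rest is a clean induction followed by an appeal to Corollary~\ref{cor:exists-large-Di}.
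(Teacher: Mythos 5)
Your proof is correct and follows essentially the same route as the paper: both rest on Corollary~\ref{cor:exists-large-Di} to supply a color with $D(k)>\rho$, on the positivity of the structure constant $p^{k_{t+1}}_{k_t\,i}$ for the triangle $(u,v_t,v_{t+1})$ together with Lemma~\ref{lem:nondominant-smooth} to bound the change of the distinguishing number by $D(i)$ per color-$i$ step, and on strong connectivity of $\mfX_i$ (primitivity). The paper phrases this as a discrete intermediate-value argument along one shortest path to a vertex of color $k$, whereas you phrase it as a contradiction plus induction over walks; the difference is purely presentational.
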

\begin{proof}
    By Corollary~\ref{cor:exists-large-Di}, there is a color $k$ with
    $D(k) > \rho$. Now consider a shortest path $u_0,\ldots, u_\ell$ in
    $\mfX_i$ with $c(u_0,u_\ell) = k$. (By the primitivity of $\mfX$, the
    digraph $\mfX_i$ is strongly connected, and such a path exists.) Let
    $\delta_j = D(c(u_0,u_j))$ for $1 \le j \le k$. By
    Lemma~\ref{lem:nondominant-smooth}, we have $|\delta_j - \delta_{j+1}| \le
    D(i)$. Hence, one of the numbers $\delta_j$ falls in the interval $(\eta,
    \eta + D(i)]$ for any $0 \le \eta \le \rho - D(i)$.
\end{proof}

We denote by $I_\alpha$ the set of colors $i$ with $D(i)\le \alpha$.

\begin{lemma}\label{lem:nondominant-basic}
    Let $\mfX$ be a PCC with $\rho > 0$. Let $i$ be a nondiagonal color and let $0 \le
    \eta \le \rho - 2D(i)$. Then
    \[
        n_i \le \sum_{j \in I_{\eta+3D(i)}\setminus I_\eta} n_j.
    \]
\end{lemma}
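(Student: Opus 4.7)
The plan is to realize the target set $\bigcup_{k \in I_{\eta+3D(i)} \setminus I_\eta} \mfX_k(u)$ as a single color-$i$ out-neighborhood $\mfX_i(v)$ for a well-chosen vertex $v$. Once such a $v$ is found, the inequality becomes an immediate disjoint-union cardinality estimate, since in a PCC $|\mfX_i(v)| = n_i$ and the sets $\mfX_k(u)$ partition $V \setminus \{u\}$ by colors.

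First I would apply Lemma~\ref{lem:nondominant-smooth-appl} with $\eta + D(i)$ in place of $\eta$. This is legal because the hypothesis $\eta \le \rho - 2D(i)$ gives $\eta + D(i) \le \rho - D(i)$. The lemma then produces a color $j$ whose distinguishing number lies in the middle band $D(j) \in (\eta + D(i),\, \eta + 2D(i)]$. I would then fix any vertex $u$, and any vertex $v \in \mfX_j(u)$, which exists since $n_j \ge 1$.

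The main step is to bound $D(c(u,w))$ for each $w \in \mfX_i(v)$. Because $c(u,v) = j$ and $c(v,w) = i$, the vertex $v$ itself witnesses $p^{c(u,w)}_{j\,i} > 0$, so Lemma~\ref{lem:nondominant-smooth} yields $|D(c(u,w)) - D(j)| \le D(i)$. Combined with the band containing $D(j)$, this forces
\[
\eta \;<\; D(j) - D(i) \;\le\; D(c(u,w)) \;\le\; D(j) + D(i) \;\le\; \eta + 3D(i),
\]
so $c(u,w) \in I_{\eta+3D(i)} \setminus I_\eta$. Hence $\mfX_i(v) \subseteq \bigcup_{k \in I_{\eta+3D(i)} \setminus I_\eta} \mfX_k(u)$, and comparing cardinalities gives $n_i \le \sum_{k \in I_{\eta+3D(i)} \setminus I_\eta} n_k$.

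I do not expect serious obstacles: the argument is essentially a two-sided Lipschitz estimate for $D$ along a single $\mfX_i$-edge, and the constant $3$ arises precisely from traveling up by $D(i)$ from the lower endpoint of the band $D(j) > \eta + D(i)$ and down by $D(i)$ from the upper endpoint $D(j) \le \eta + 2D(i)$. The one thing worth double-checking is that the invocation of Lemma~\ref{lem:nondominant-smooth-appl} at parameter $\eta + D(i)$ matches exactly the hypothesis $\eta \le \rho - 2D(i)$, so the parameter range in the statement of the lemma is tight for this approach.
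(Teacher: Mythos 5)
Your proof is correct and is essentially the paper's own argument: the paper likewise applies Lemma~\ref{lem:nondominant-smooth-appl} at parameter $\eta + D(i)$ to obtain a color $k$ with $D(k) \in (\eta+D(i),\, \eta+2D(i)]$, then uses the Lipschitz estimate of Lemma~\ref{lem:nondominant-smooth} to place every color $j$ with $p^k_{ij} > 0$ in $I_{\eta+3D(i)}\setminus I_\eta$, concluding from $n_i = \sum_j p^k_{ij} \le \sum_{j \in I_{\eta+3D(i)}\setminus I_\eta} n_j$. Your vertex-level count of $\mfX_i(v)$ against the color classes at $u$ is just this structure-constant identity stated pointwise, so the two proofs coincide.
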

\begin{proof}
    By Lemma~\ref{lem:nondominant-smooth-appl}, the set
    $I_{\eta+2D(i)}\setminus I_{\eta+D(i)}$ is nonempty. Let $k \in
    I_{\eta+2D(i)}\setminus I_{\eta+D(i)}$. We have $\sum_{j=0}^{r-1}p^k_{ij} =
    n_i$ by Proposition~\ref{prop:cc}.  On the other hand, if $p^k_{ij} > 0$
    for some $j$, then $D(j)-D(i)\le D(k)\le D(j) + D(i)$ by
    Lemma~\ref{lem:nondominant-smooth}, and so $j \in I_{\eta+3D(i)}\setminus
    I_\eta$. Hence,
    \[
        n_i = \sum_{j=0}^{r-1}p^k_{ij}
        = \sum_{j \in I_{\eta+3D(i)}\setminus I_\eta} p^k_{ij}
        \le \sum_{j \in I_{\eta+3D(i)}\setminus I_\eta} n_j.
    \]
\end{proof}

\begin{lemma}\label{lem:nondominant-applied}
    Let $\mfX$ be a PCC with $\rho > 0$, let $i$ be a nondiagonal color, and let $0
    \le \eta \le \rho$. Then
    \[
        \left\lfloor\frac{\eta}{3D(i)}\right\rfloor n_i \le \sum_{j \in I_\eta} n_j.
    \]
\end{lemma}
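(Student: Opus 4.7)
The plan is to iterate Lemma~\ref{lem:nondominant-basic} and telescope. Let $m = \lfloor \eta/(3D(i))\rfloor$, and for $0 \le t \le m$ define $\eta_t = 3D(i)\,t$, so $\eta_0 = 0$ and $\eta_m \le \eta$. The key observation is that the sets $I_{\eta_{t+1}}\setminus I_{\eta_t}$ for $t=0,1,\ldots,m-1$ are pairwise disjoint and all contained in $I_{\eta_m} \subseteq I_{\eta}$.

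For each $t$ with $0 \le t \le m-1$, I want to apply Lemma~\ref{lem:nondominant-basic} at the value $\eta_t$. To do so I need to verify the hypothesis $\eta_t \le \rho - 2D(i)$. Since $3D(i)\,m \le \eta \le \rho$ and $D(i) \ge 0$, we have
\[
\eta_t = 3D(i)\,t \le 3D(i)(m-1) \le \eta - 3D(i) \le \rho - 3D(i) \le \rho - 2D(i),
\]
so Lemma~\ref{lem:nondominant-basic} applies and yields
\[
n_i \le \sum_{j \in I_{\eta_t + 3D(i)}\setminus I_{\eta_t}} n_j = \sum_{j \in I_{\eta_{t+1}}\setminus I_{\eta_t}} n_j.
\]

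Summing these $m$ inequalities and using the disjointness of the annuli $I_{\eta_{t+1}}\setminus I_{\eta_t}$,
\[
\left\lfloor \frac{\eta}{3D(i)}\right\rfloor n_i = m\, n_i \le \sum_{t=0}^{m-1}\sum_{j \in I_{\eta_{t+1}}\setminus I_{\eta_t}} n_j = \sum_{j \in I_{\eta_m}\setminus I_0} n_j \le \sum_{j \in I_\eta} n_j,
\]
which is the claimed inequality. There is essentially no obstacle: the entire argument is a telescoping of Lemma~\ref{lem:nondominant-basic}, and the only verification needed is that the hypothesis of that lemma holds at every step, which follows from the definition of $m$ and the assumption $\eta \le \rho$. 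Edge cases are benign: if $m = 0$ the statement is vacuous, and the possible presence of the diagonal color $0$ in $I_0$ is harmless since we only use $I_{\eta_m}\setminus I_0 \subseteq I_\eta$.
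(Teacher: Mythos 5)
Your proof is correct and takes essentially the same route as the paper: both partition $I_\eta$ into the disjoint annuli of width $3D(i)$ and apply Lemma~\ref{lem:nondominant-basic} to each, then sum using disjointness. Your explicit verification that each $\eta_t \le \rho - 2D(i)$ (so the lemma's hypothesis holds at every step) is a detail the paper leaves implicit, but otherwise the arguments coincide.
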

\begin{proof}
    If $\eta < 3D(i)$, the left-hand side is $0$, so assume $\eta \ge 3D(i)$.
    For any integer $1 \le \alpha \le \lfloor \eta/(3D(i))\rfloor$, let
    $S_\alpha = I_{3D(i)\alpha} \backslash I_{3D(i)(\alpha-1)}$.  Then
    \[
        \bigcup_{\alpha = 1}^{ \lfloor \eta/(3D(i))\rfloor} S_\alpha \subseteq I_\eta
    \]
    By the disjointness of the sets $S_{\alpha}$ and
    Lemma~\ref{lem:nondominant-basic}, we have
    \[
        \sum_{j \in I_\eta} n_j \ge \sum_{\alpha = 1}^{\lfloor \eta/(3D(i))\rfloor}
        \sum_{j \in S_{\alpha}} n_j \ge \left\lfloor\frac{\eta}{3D(i)}\right\rfloor
        n_i.
    \]
\end{proof}

Finally, we are able to prove Lemma~\ref{lem:few-large-colors}.

\begin{proof}[Proof of Lemma~\ref{lem:few-large-colors}]
    Fix an integer $0 \le \alpha \le \lfloor \log_2(\rho/(3D(i)))\rfloor $.
    For any number $\beta$, let $J_\beta$ denote the set of colors $j$ such
    that $n_j \le \beta$.  We start by estimating $|J_{2^{\alpha}n_i}\setminus
    J_{2^{\alpha - 1} n_i}|$, i.e.,
    the number of colors $j$ with $2^{\alpha-1}n_i < n_j \le 2^\alpha n_i$. By
    Lemma~\ref{lem:nondominant-applied}, we have
    \[
        \sum_{j \in I_{2^\alpha(3D(i))}}n_j \ge 2^\alpha n_i.
    \]
    Therefore, applying Lemma~\ref{lem:few-small-colors} with $I =
    I_{2^{\alpha}(3D(i))}$ and $J = J_{2^{\alpha}n_i}$, we have
    \[
        \sum_{j \in J_{2^\alpha n_i}}n_j \le 2 \max\{D(i) : i\in I_{2^\alpha
        \cdot 3 D(i)}\} \le 2^{\alpha+1}(3D(i)),
    \]
    with the second inequality coming from the definition of
    $I_{2^{\alpha}(3D(i))}$.

    It follows that the number of colors $j$ such that $j \in J_{2^{\alpha}n_i}\setminus J_{2^{\alpha - 1} n_i}$
    is at most
    $2^{\alpha+1}(3D(i))/(2^{\alpha-1}n_i) = 12D(i)/n_i$.
    Overall, the number of colors $j$ satisfying 
    \[
        (1/2)n_i < n_j \le 2^{\lfloor\log_2(\rho/3D(i))\rfloor}n_i
    \]
    is at most $12(\log_2 n + 1)D(i)/n_i$.

    Furthermore, the number of colors $j$ satisfying
    \[
        n_j > 2^{\lfloor\log_2(\rho/3D(i))\rfloor}n_i \ge \frac{\rho n_i}{6D(i)}
    \]
    is at most $(6 D(i)/(\rho n_i))n$, since $\sum_{j=0}^{r-1}n_j = n$. Hence,
    the number of colors $j$ such that $n_j > n_i/2$ is at most $O((\log n +
    n/\rho)D(i)/n_i)$.
\end{proof}

\subsection{Estimates of the distinguishing number}

We now prove Lemma~\ref{lem:Di-large}, our lower bound for $D(i)$.

First, we recall the following two observations made by Babai~\cite[Proposition
6.4 and Theorem 6.11]{babai-cc}.

\begin{proposition}[Babai]\label{prop:babai-Di}
    Let $\mfX$ be a PCC. For colors $0 \le i,j \le r-1$, we have
    \[
        D(j) \le \dist_i(j)D(i).
    \]
\end{proposition}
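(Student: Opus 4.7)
The plan is to use the path structure in the color-$i$ constituent digraph $\mfX_i$ directly, exploiting the definition of $\dist_i(j)$ together with a simple additivity argument along a shortest path.

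Let $u, v \in V$ be any pair of vertices with $c(u,v) = j$, so that $|D(u,v)| = D(j)$. Set $\delta = \dist_i(j) = \dist_i(u,v)$, and fix a shortest directed path $u = u_0, u_1, \ldots, u_\delta = v$ in $\mfX_i$, so that $c(u_{t-1}, u_t) = i$ for each $1 \le t \le \delta$. The key observation is that if $w \in D(u,v)$, i.e. $c(w,u_0) \ne c(w,u_\delta)$, then along the sequence $c(w,u_0), c(w,u_1), \ldots, c(w,u_\delta)$ some consecutive pair must differ: that is, there exists $t$ with $c(w,u_{t-1}) \ne c(w, u_t)$, which means $w \in D(u_{t-1}, u_t)$.

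Therefore $D(u,v) \subseteq \bigcup_{t=1}^{\delta} D(u_{t-1}, u_t)$. Since $c(u_{t-1}, u_t) = i$ for every $t$, the coherence of $\mfX$ gives $|D(u_{t-1}, u_t)| = D(i)$. Taking the union bound,
\[
D(j) = |D(u,v)| \;\le\; \sum_{t=1}^{\delta} |D(u_{t-1}, u_t)| \;=\; \delta\, D(i) \;=\; \dist_i(j)\, D(i),
\]
which is the claimed inequality.

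There is really no obstacle of substance here; the proof is a one-line path argument and is well-defined precisely because the coherence of $\mfX$ makes the distinguishing number depend only on the color of the pair, and the primitivity of $\mfX$ ensures that $\mfX_i$ is strongly connected, so that a path of length $\dist_i(j)$ from $u$ to $v$ exists. Thus the bound holds for every pair of colors $i, j$.
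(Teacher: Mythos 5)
Your proof is correct. The paper does not reprove this proposition—it cites Babai's 1981 paper—but your argument (fix a shortest path $u=u_0,\ldots,u_\delta=v$ in $\mfX_i$, note that any $w$ distinguishing $u$ from $v$ must distinguish some consecutive pair $u_{t-1},u_t$ of color $i$, and apply the union bound together with coherence) is exactly the standard proof, and it is the same triangle-inequality idea the paper uses in its proof of Lemma~\ref{lem:nondominant-smooth}.
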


\begin{proposition}[Babai]\label{prop:degree-Di}
    Let $\mfX$ be a PCC.  For any color $1 \le i \le r-1$, we have $n_i D(i)
    \ge n - 1$.
\end{proposition}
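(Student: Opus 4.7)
The plan is a double counting argument. Fix a vertex $u \in V$ and count pairs $(v, w) \in V \times V$ with $c(u, v) = i$ and $c(w, u) \ne c(w, v)$. Summing $D(u, v) = D(i)$ over the $n_i$ choices of $v \in \mfX_i(u)$ gives this count as $n_i D(i)$. Reorganizing by $w$, let $N(w) = |\{v \in \mfX_i(u) : c(w, v) \ne c(w, u)\}|$, so that $\sum_{w \in V} N(w) = n_i D(i)$. Since $N(u) = n_i$ (because $c(u, u) = 0 \ne i = c(u, v)$ for every $v \in \mfX_i(u)$), the desired inequality $n_i D(i) \ge n - 1$ reduces to showing $N(w) \ge 1$ for every $w \ne u$.

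The main obstacle is ruling out a ``bad'' $w \ne u$ with $N(w) = 0$, i.e., a $w$ such that every $v \in \mfX_i(u)$ satisfies $c(w, v) = c(w, u)$. I would suppose such a $w$ exists and set $a = c(u, w)$, so $a \ne 0$. The hypothesis on $w$ forces the structure constant $p^a_{i, a}$---which for any pair $(u', w') \in R_a$ counts the $v' \in \mfX_i(u')$ with $c(v', w') = a$---to attain its maximum value $n_i$. Applying the identity $n_i p^i_{a, a^*} = n_a p^a_{i, a}$ from Proposition~\ref{prop:cc}(iii), this yields $p^i_{a, a^*} = n_a$.

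The equation $p^i_{a, a^*} = n_a$ is highly restrictive: for every $(u', v') \in R_i$, every $w' \in \mfX_a(u')$ must satisfy $c(w', v') = a^*$, hence $w' \in \mfX_a(v')$, and comparing sizes gives $\mfX_a(u') = \mfX_a(v')$. To reach a contradiction, I would consider the equivalence relation $u' \sim v'$ defined by $\mfX_a(u') = \mfX_a(v')$; by the preceding, this relation contains $R_i$. Since $\mfX$ is primitive, $\mfX_i$ is strongly connected, so iterating $\sim$ along a directed $\mfX_i$-path shows that all of $V$ is a single $\sim$-class, and $\mfX_a(u') = X$ for a fixed $X \subseteq V$ independent of $u'$. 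But $u' \notin \mfX_a(u') = X$ for every $u' \in V$, forcing $X = \emptyset$, which contradicts $|X| = n_a \ge 1$. This rules out any bad $w$ and completes the double count.
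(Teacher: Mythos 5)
Your proof is correct. Note that the paper itself does not prove this proposition: it is quoted from Babai's 1981 paper (cited as Proposition 6.4 and Theorem 6.11 of \cite{babai-cc}), so there is no internal argument to compare against; what you have written is a valid self-contained derivation. The double count is right ($\sum_{w} N(w) = n_i D(i)$ with $N(w)$ the number of $v \in \mfX_i(u)$ that $w$ distinguishes from $u$), and the heart of the matter is exactly the claim $N(w) \ge 1$ for $w \ne u$, which genuinely requires primitivity --- e.g.\ for the imprimitive scheme of two disjoint triangles one has $n_1 D(1) = 4 < n-1$, so any proof must use connectivity of the constituents, and you invoke it at precisely the right point. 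Your chain of deductions checks out: $N(w)=0$ forces $p^a_{ia} = n_i$ for $a = c(u,w) \ne 0$, hence $p^i_{aa^*} = n_a$ via Proposition~\ref{prop:cc}(iii), hence $\mfX_a(u') = \mfX_a(v')$ whenever $c(u',v') = i$, and strong connectivity of $\mfX_i$ then makes $\mfX_a(\cdot)$ a constant set containing none of the vertices, contradicting $n_a \ge 1$. One small simplification: the detour through Proposition~\ref{prop:cc}(iii) is not needed, since $p^a_{ia} = n_i$ already says that for every pair $(u',w')$ of color $a$ and every $v' \in \mfX_i(u')$ one has $c(v',w') = a$, which gives $\mfX_a(u') \subseteq \mfX_a(v')$ directly, and equality follows by cardinality.
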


We prove the following two estimates of the distinguish number

\begin{lemma}\label{lem:Di-technical}
    Let $\mfX$ be a PCC. Fix nondiagonal colors $i,j \ge 1$ and a vertex $u \in
    V$. Let $\delta = \dist_i(j)$, and $\gamma =
    \sum_{\alpha=2}^{\delta-1}|\mfX_i^{(\alpha)}(u)|$. If $\delta \ge 3$, then
    \[
        D(i) = \Omega\left(\left(\frac{D(j)\sqrt{n
        n_j}}{\gamma}\right)^{2/3}\right).
    \]
\end{lemma}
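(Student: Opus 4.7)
The plan is to combine the growth-of-spheres estimate (Lemma~\ref{lem:growth-spheres}) with the two Babai estimates for $D(i)$ (Propositions~\ref{prop:babai-Di} and~\ref{prop:degree-Di}). The key is that Lemma~\ref{lem:growth-spheres} forces the intermediate spheres $\mfX_i^{(\alpha)}(u)$, $2 \le \alpha \le \delta-1$, to be large \emph{on average}, and therefore $\gamma$ controls $\delta$ from above. Since Proposition~\ref{prop:babai-Di} gives $\delta \ge D(j)/D(i)$, such an upper bound on $\delta$ translates into a lower bound on $D(i)$.

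Concretely, first I would apply AM--GM to each instance of Lemma~\ref{lem:growth-spheres}: for $1 \le \alpha \le \delta-2$,
\[
|\mfX_i^{(\alpha+1)}(u)| + |\mfX_i^{(\delta-\alpha)}(u)| \ge 2\sqrt{n_i n_j}.
\]
Summing over $\alpha = 1, \ldots, \delta-2$, the left-hand side is exactly $2\sum_{\alpha=2}^{\delta-1}|\mfX_i^{(\alpha)}(u)| = 2\gamma$ (each sphere with $2 \le \alpha \le \delta-1$ is counted twice by the two terms). This yields
\[
\gamma \ge (\delta-2)\sqrt{n_i n_j}.
\]

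Next, I would use Proposition~\ref{prop:degree-Di} to eliminate $n_i$ in favor of $D(i)$ and $n$: since $n_i \ge (n-1)/D(i)$, the previous inequality rearranges to
\[
\delta \le 2 + \frac{\gamma}{\sqrt{n_i n_j}} \le 2 + \gamma\sqrt{\frac{D(i)}{(n-1)n_j}}.
\]
Finally, I would feed this into Proposition~\ref{prop:babai-Di}: from $D(j) \le \delta D(i)$ we obtain
\[
D(j) \le 2 D(i) + \frac{\gamma D(i)^{3/2}}{\sqrt{(n-1)n_j}}.
\]
One of the two terms on the right-hand side dominates. If the first dominates, then $D(i) = \Omega(D(j))$, which easily implies the claimed bound (since $n_j \le n$ and $\gamma \ge 1$ force $D(j) \gtrsim (D(j)\sqrt{nn_j}/\gamma)^{2/3}$ whenever the claim is nontrivial). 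If the second dominates, then isolating $D(i)$ gives exactly $D(i) \gtrsim (D(j)\sqrt{nn_j}/\gamma)^{2/3}$, as required.

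I do not expect a serious obstacle: the only subtle point is correctly bookkeeping the indices when summing the AM--GM inequalities (each middle sphere must be counted exactly twice, and the endpoints $\alpha = 1$ and $\alpha = \delta$ must not appear), and then handling the two regimes in the final step cleanly. Both are routine.
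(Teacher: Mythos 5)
Your overall route is the same as the paper's: combine Lemma~\ref{lem:growth-spheres} with Propositions~\ref{prop:babai-Di} and~\ref{prop:degree-Di} to force $D(i)^{3/2} \gtrsim D(j)\sqrt{n n_j}/\gamma$. The only structural difference is bookkeeping: the paper takes the maximum of the two sphere sizes at each level and absorbs the additive slack via $\delta-2 \ge \delta/3$ (valid since $\delta \ge 3$), so it gets $\gamma = \Omega(\delta\sqrt{n_i n_j})$ and needs no case analysis; you keep $\gamma \ge (\delta-2)\sqrt{n_i n_j}$ via AM--GM, which is what forces your final two-case split.

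The second case of that split is exactly the paper's conclusion, but your treatment of the first case (where $2D(i)$ dominates) has a genuine flaw: the parenthetical claim that ``$n_j \le n$ and $\gamma \ge 1$ force $D(j) \gtrsim (D(j)\sqrt{nn_j}/\gamma)^{2/3}$'' is not a valid inference. That inequality is equivalent to $\gamma^2 D(j) \gtrsim n n_j$, and neither $\gamma \ge 1$ nor $n_j \le n$ comes close to giving it (it would require $D(j) \gtrsim n n_j/\gamma^2$, which can vastly exceed the trivial bound $D(j) \le n$); even using the nontrivial facts available at that point, namely $\gamma \ge \sqrt{n_i n_j}$ and $n_j D(j) \ge n-1$, one only obtains $\gamma^2 D(j) \gtrsim n_i n$, which is insufficient when $n_i \ll n_j$. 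The case is nonetheless easily repaired by exploiting the largeness of $D(i)$ rather than of $D(j)$: since $\delta \ge 3$ gives $\gamma \ge \sqrt{n_i n_j}$, and $n_i \ge (n-1)/D(i)$ by Proposition~\ref{prop:degree-Di}, you get $D(j)\sqrt{n n_j}/\gamma \le D(j)\sqrt{n/n_i} \lesssim D(j)\sqrt{D(i)} \le 4\,D(i)^{3/2}$ in this case, which is the desired bound. Alternatively, avoid the split altogether, as the paper does, by replacing $\delta-2$ with $\delta/3$ before solving for $\delta$.
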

\begin{proof}
    By Lemma~\ref{lem:growth-spheres}, for any $1 \leq \alpha \le \delta - 2$
    we have
    \[
        |\mfX_i^{(\alpha+1)}(u)||\mfX_i^{(\delta-\alpha)}(u)| \ge n_i n_j
    \]
    and in particular,
    \[
        \max\{|\mfX_i^{(\alpha+1)}(u)|, |\mfX_i^{(\delta-\alpha)}(u)|\} \ge
        \sqrt{n_i n_j}.
    \]
    Hence,
    \begin{equation}\label{eq:t}
        \gamma = \sum_{\alpha = 2}^{\delta-1} |\mfX_i^{(\alpha)}(u)| = \Omega(\delta\sqrt{n_i n_j}) =
        \Omega\left(\frac{\delta\sqrt{n n_j}}{\sqrt{D(i)}}\right),
    \end{equation}
    where the last inequality comes from Proposition~\ref{prop:degree-Di}.
    Now by Proposition~\ref{prop:babai-Di} and Eq.~\eqref{eq:t}, we have
    \[
        D(i) \ge \frac{D(j)}{\delta} = \Omega\left(\frac{D(j)\sqrt{n n_j}}{\gamma
        \sqrt{D(i)}}\right),
    \]
    from which the desired inequality immediately follows.
\end{proof}

\begin{lemma}\label{lem:nondominant-main}
    Let $\mfX$ be a PCC with $\rho = \Omega(n)$.  Then every nondiagonal color $i$ with $n_i
    \le \rho$ satisfies
    \[
        D(i) = \Omega\left(\sqrt{\frac{\rho n_i}{\log n}}\right).
    \]
\end{lemma}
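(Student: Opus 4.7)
The plan is to partition the nondiagonal colors into disjoint ``bands'' of bounded distinguishing number and combine the upper bounds from Lemmas~\ref{lem:few-small-colors} and~\ref{lem:few-large-colors} to control the total number of bands.

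First, I would observe that we may assume $D(i) \le \rho/6$, since otherwise $D(i)^2 > \rho^2/36 \ge \rho n_i/36$ (using $n_i \le \rho$), which already gives $D(i) = \Omega(\sqrt{\rho n_i/\log n})$ for $n$ sufficiently large. Next, mirroring the proof of Lemma~\ref{lem:nondominant-applied}, I would define the bands $S_\alpha := I_{3D(i)\alpha} \setminus I_{3D(i)(\alpha-1)}$ for $\alpha = 1, \ldots, K$ with $K := \lfloor \rho/(3D(i))\rfloor$. By Lemma~\ref{lem:nondominant-basic}, each band satisfies $\sum_{j \in S_\alpha} n_j \ge n_i$, and the assumption $D(i) \le \rho/6$ guarantees $K \ge \rho/(6D(i))$.

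The crux is then a dichotomy on the bands. Call a band \emph{small-only} if all its colors $j$ satisfy $n_j \le n_i/2$, and \emph{large-containing} otherwise. Applying Lemma~\ref{lem:few-small-colors} with $I = \{i\}$ bounds the total degree of all colors of degree at most $n_i/2$ by $2D(i)$; since every small-only band has color-degree sum at least $n_i$, the number of small-only bands is at most $2D(i)/n_i$. On the other hand, using $\rho = \Omega(n)$, Lemma~\ref{lem:few-large-colors} implies the number of colors of degree exceeding $n_i/2$ is $O(\log n \cdot D(i)/n_i)$, which likewise bounds the number of large-containing bands. Adding gives $K = O(\log n \cdot D(i)/n_i)$, and combining with $K \ge \rho/(6D(i))$ yields $D(i)^2 = \Omega(\rho n_i/\log n)$, as desired.

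The main obstacle I anticipate is conceptual rather than technical: one must resist the temptation to apply Lemma~\ref{lem:nondominant-applied} directly with $\eta = \rho$. Doing so and bounding $\sum_{j \in I_\rho} n_j$ by $n-1$ loses the $\log n$ savings and yields only $D(i) = \Omega(n_i/\sqrt{\log n})$, which falls short of the target whenever $n_i \ll \rho$. The correct move is to bound the number of large-containing bands by the \emph{number} of large colors, not by their total degree, which is precisely what the $\log n$ factor in Lemma~\ref{lem:few-large-colors} affords.
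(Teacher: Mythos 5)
Your proposal is correct and matches the paper's own argument essentially step for step: the same reduction to $D(i)\le\rho/6$, the same bands $I_{\eta+3D(i)}\setminus I_\eta$ with degree sum $\ge n_i$ via Lemma~\ref{lem:nondominant-basic}, and the same dichotomy bounding small-only bands by Lemma~\ref{lem:few-small-colors} (total degree $\le 2D(i)$) and large-containing bands by Lemma~\ref{lem:few-large-colors} (using $\rho=\Omega(n)$). The only cosmetic difference is that you take bands over the full range $[0,\rho]$ rather than $[0,\rho/2]$ as the paper does, which changes nothing in the conclusion.
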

\begin{proof}
    Fix a nondiagonal color $i$ with $n_i \le \rho$, and suppose $D(i) <
    \rho/6$ (otherwise the lemma holds trivially).  Let $J_\beta$ denote the
    set of colors $j$ such that $n_j \le \beta$.  Applying
    Lemma~\ref{lem:few-small-colors} with the set $I = \{i\}$, we have
    \begin{equation}\label{eq:J-small}
        \sum_{j \in J_{n_i/2}} n_j \le 2D(i).
    \end{equation}
    
    On the other hand, by Lemma~\ref{lem:nondominant-basic}, for every integer
    $\eta$ with $0 \le \eta \le \rho/2 - 3D(i)$, 
    \[
        n_i \le \sum_{j \in I_{\eta+3D(i)}\setminus I_\eta} n_j.
    \]
    Thus, for every such $\eta$, at least one of following two conditions hold:
    \begin{enumerate}
        \item[(a)] there exists a color $j \in I_{\eta+3D(i)}\setminus I_\eta$ satisfying
    $n_j > n_i / 2$; 
        \item[(b)] $\displaystyle{\sum_{\substack{j \in I_{\eta+3D(i)}\setminus I_\eta: \\ n_j \le n_i /
            2}} n_j \geq n_i}$.
    \end{enumerate}

    There are at least $\lfloor \rho/(6 D(i)) \rfloor$ disjoint sets of the
    form $I_{\eta+3D(i)}\setminus I_\eta$ with $0 \le \eta \le \rho/2 - 3D(i)$.
    By Lemma~\ref{lem:few-large-colors}, at most $O((\log n + n/\rho)D(i)/n_i)
    = O((\log n)D(i)/n_i)$ of these satisfy (a). By Eq.~\eqref{eq:J-small}, at
    most $2D(i)/n_i$ satisfy (b).  Hence, $\lfloor \rho/(6 D(i)) \rfloor =
    O((\log n)D(i)/n_i)$, giving the desired inequality.
\end{proof}

We recall that when color $1$ is dominant, it is symmetric. In this case, we
recall our notation $\mu = |N(x)\cap N(y)|$, where $x,y \in V$ are any pair of
vertices with $c(x,y)=1$ and $N(x)$ is the nondominant neighborhood of $x$.
Hence, $\mu = \sum_{i,j > 1}p^1_{ij}$. 

\begin{lemma}\label{lem:mu}
    Let $\mfX$ be a PCC with $n_1 \ge n/2$. Then $\mu \le \rho^2/n_1$.
\end{lemma}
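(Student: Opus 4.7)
The plan is to transform the definition of $\mu$ using the standard identity $n_i p^i_{jk} = n_j p^j_{ik^*}$ from Proposition~\ref{prop:cc}(iii), and then interpret the resulting sum as a count of certain triples that admits a trivial bound by $\rho^2$.

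Recall that since color $1$ is dominant (and hence symmetric, so $1^* = 1$), we have $\mu = \sum_{i,j \ge 2} p^1_{ij}$. Applying Proposition~\ref{prop:cc}(iii) with the triple $(1, i, j)$ gives $n_1 p^1_{ij} = n_i p^i_{1 j^*}$, so
\[
  n_1 \mu \;=\; \sum_{i,j \ge 2} n_i\, p^i_{1 j^*}.
\]
Note that the substitution $j \mapsto j^*$ preserves the index set $\{j : j \ge 2\}$, since $n_j = n_{j^*}$ implies $j$ is nondominant iff $j^*$ is nondominant.

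Next I would give a combinatorial interpretation of the right-hand side. Fix an arbitrary vertex $u$. For each color $i$, there are $n_i$ vertices $v$ with $c(u,v) = i$, and for each such $v$ there are exactly $p^i_{1 j^*}$ vertices $w$ satisfying $c(u,w) = 1$ and $c(w,v) = j^*$. Summing over $i, j \ge 2$, we see that $n_1 \mu$ equals the number of ordered pairs $(v, w)$ such that the pair $u$--$v$ is nondominant, the pair $u$--$w$ is dominant (color $1$), and the pair $v$--$w$ is nondominant.

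The final step is to bound this count by $\rho^2$. The number of nondominant out-neighbors $v$ of $u$ is exactly $\rho$. For each such $v$, the number of $w$ with $c(u,w) = 1$ and $c(v,w) \ne 0$ nondominant is at most the total number of nondominant out-neighbors of $v$, which is again $\rho$. Hence the count is at most $\rho \cdot \rho = \rho^2$, so $n_1 \mu \le \rho^2$ and the lemma follows. No step here is a genuine obstacle; the main content is simply identifying the correct application of Proposition~\ref{prop:cc}(iii) so that the resulting structure constants become amenable to the trivial double-counting bound.
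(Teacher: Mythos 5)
Your proof is correct and is essentially the paper's argument: the paper directly counts the at most $\rho^2$ length-two paths from a fixed vertex $u$ along nondominant edges and observes that exactly $\mu$ of them end at each of the $n_1$ vertices $v$ with $c(u,v)=1$, giving $n_1\mu\le\rho^2$. Your detour through Proposition~\ref{prop:cc}(iii) just re-derives that same double count algebraically, so the two proofs coincide in substance.
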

\begin{proof}
    Fix a vertex $u$. There are at most $\rho^2$ paths of length two from $u$
    along edges of nondominant color, and exactly $n_1$ vertices $v$ such that
    $c(u,v) = 1$. For any such vertex $y$, there are exactly $\mu$ paths of
    length two from $u$ to $v$ along edges of nondominant color. Hence, $\mu
    \le \rho^2/n_1$.
\end{proof}

\begin{proof}[Proof of Lemma~\ref{lem:Di-large}]
    First, suppose $n^{2/3}(\log n)^{-1/3} \leq \rho < n/3$. We have $n_1 = n - \rho - 1
    > 2n / 3 - 1$. Consider two vertices $u,v \in V$ with $c(u,v)=1$. Note that
    for any vertex $w \in N(v)\setminus N(u)$, we have $c(w,u) = 1$ and $c(w,v)
    > 1$. Hence, by Lemma~\ref{lem:mu} and the definition of $D(1)$,
    \[
        D(1) \ge \rho - \mu \ge \rho - \frac{\rho^2}{n_1} \ge
        \left(\frac{1}{2} - o(1)\right)\rho = \Omega(n^{2/3}(\log n)^{-1/3}).
    \]
    Fix a color $i\ne 1$. If $\dist_i(1) = 2$, then  by Proposition~\ref{prop:babai-Di},
    \[
        D(i)\geq \frac{D(1)}{2} \ge \Omega(n^{2/3}(\log n)^{1/3}).
    \]
    Otherwise, if $\dist_i(1) \ge 3$, by applying Lemma~\ref{lem:Di-technical}
    with $j=1$, we have
    \[
        D(i) = \Omega\left(\left(\frac{D(1)\sqrt{n n_1}}{n - n_1}\right)^{2/3}\right) = \Omega\left(\left(\frac{\rho n}{\rho - 1}\right)^{2/3}\right) = \Omega(n^{2/3}).
    \]

    Now suppose $\rho \ge n/3$.  By Lemma~\ref{lem:nondominant-main} and
    Proposition~\ref{prop:degree-Di}, for every color $i$ with $n_i \le \rho$,
    we have
    \[
        (D(i))^{3/2} = \Omega\left(\sqrt{\frac{\rho n_i D(i)}{\log n}}\right) = \Omega\left(\sqrt{\frac{\rho n}{\log n}}\right),
    \]
    and hence $D(i) = \Omega(n^{2/3}(\log n)^{-1/3})$.
    If $n_1 \le \rho$, then
    $n_i \le \rho$ for all $i$, and we are done. Otherwise, if $n_1 > \rho$, we
    have only to verify that $D(1) = \Omega(n^{2/3}(\log n)^{-1/3})$.
    Consider two vertices $u, w$ with $\dist_1(u, w) = 2$. (Since we assume the
    rank is at least $3$, we can always find such $u, w$ by the primitivity of
    $\mfX$.) Let $i = c(u,w)$. Then $i > 1$ and so $n_i \le \rho$.  Since $D(i)
    = \Omega(n^{2/3}(\log n)^{-1/3})$ for every color $1 < i \leq r - 1$, and
    $\dist_1(i) = 2$, we have $D(1) = \Omega(n^{2/3}(\log n)^{-1/3})$ by
    Proposition~\ref{prop:babai-Di}.
\end{proof}


\section{Diameter of constituent graphs}\label{sec:diam2}

We now prove Lemma~\ref{lem:diam2}, which states that $\dist_i(1)=2$ for any
nondominant color $i$, assuming that inequality $\rho = o(n^{2/3})$.

We start from a few basic observations.

\begin{observation}\label{obs:ni-bound}
    Let $\mfX$ be a PCC.  For any nondominant color $i$, we have $n_i
    \ge n_1/\rho$.
\end{observation}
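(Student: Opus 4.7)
The plan is a double-counting of directed 2-paths $u \to v \to w$ with $c(u, v) = i$ and $c(v, w)$ nondominant, combined with an application of primitivity to exclude a degenerate case. Fix a nondominant color $i$ and any vertex $u$. Each $v \in \mfX_i(u)$ has exactly $\rho$ outgoing edges of nondominant color, so the total number of such 2-paths is $n_i \rho$. Exactly $n_i$ of these paths terminate at $u$: for each $v \in \mfX_i(u)$, the edge $v \to u$ has color $i^*$, which is nondominant. Hence the number of 2-paths ending at some vertex $w \ne u$ is exactly $n_i(\rho - 1)$.

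Next I would examine how many such paths terminate inside $\mfX_1(u)$. For each $w \in \mfX_1(u)$, the number of $v \in \mfX_i(u)$ with $c(v, w) \notin \{0, 1\}$ equals $n_i - p^1_{i 1}$, since $p^1_{i 0} = 0$ (because $i \ne 1$) while $p^1_{i 1}$ counts precisely the $v$ excluded by the color $1$ condition. Summing over all $w \in \mfX_1(u)$ gives the estimate
\[
n_1(n_i - p^1_{i 1}) \le n_i(\rho - 1).
\]

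The remaining step is to establish $p^1_{i 1} \le n_i - 1$, using primitivity. By Proposition~\ref{prop:cc}(iii), $p^1_{i 1} = n_i p^i_{1 1} / n_1$, so it suffices to show $p^i_{1 1} < n_1$. When color $1$ is dominant (the only nontrivial case, since otherwise $\rho \ge n_1$ and the claim is immediate), color $1$ is symmetric and $p^i_{1 1} = |\mfX_1(u) \cap \mfX_1(v)|$ for any $u, v$ with $c(u, v) = i$. Were this equal to $n_1$, we would have $\mfX_1(u) = \mfX_1(v)$; since $\mfX_i$ is strongly connected by primitivity, this equality propagates along directed $\mfX_i$-paths and forces every pair of vertices in $V$ to share the same $\mfX_1$-neighborhood -- impossible as soon as $n_1 \ge 1$, since no vertex lies in its own neighborhood. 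Combining the integer bound $n_i - p^1_{i 1} \ge 1$ with the displayed inequality then gives $n_1 \le n_i(\rho - 1) \le n_i \rho$, as desired. The principal difficulty is precisely this primitivity step: without it the counting alone is inconclusive, since a priori $p^1_{i 1}$ could saturate at $n_i$ and the estimate would collapse.
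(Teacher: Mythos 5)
Your proof is correct, and it gets to the same counting bound by a noticeably different route than the paper. The paper's one-line argument fixes $u$ and, for each $v\in\mfX_1(u)$, takes a shortest path from $u$ to $v$ in the strongly connected digraph $\mfX_i$; the penultimate vertex $w$ of that path satisfies $c(w,v)=i$, and $c(u,w)$ can be neither diagonal nor dominant (otherwise $\dist_i(u,w)$ would equal $\dist_i(1)$ rather than $\dist_i(1)-1$), so every $v\in\mfX_1(u)$ receives an $i$-edge from the set of $\rho$ nondominant neighbors of $u$, giving $\rho\, n_i\ge n_1$ directly. You count the reversed configuration (an $i$-edge out of $u$ followed by a nondominant edge into $\mfX_1(u)$), and the existence of at least one such path per $w\in\mfX_1(u)$ — i.e.\ $n_i-p^1_{i1}\ge 1$ — is supplied not by a distance argument but by the identity $n_1p^1_{i1}=n_ip^i_{11}$ from Proposition~\ref{prop:cc}(iii) together with a primitivity argument: $p^i_{11}=n_1$ would force $\mfX_1(u)=\mfX_1(v)$ along every $i$-edge, hence (by strong connectivity of $\mfX_i$) globally, putting some vertex in its own color-$1$ neighborhood. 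Both proofs use primitivity, but in different places; the paper's version is shorter and needs no case split on whether color $1$ is dominant or symmetric, while yours requires the (easy) reduction to the dominant case but yields the clean by-products $p^i_{11}<n_1$ (no ``twin'' dominant neighborhoods) and the marginally sharper bound $n_1\le n_i(\rho-1)$. All steps check out, including $p^1_{i0}=0$, the treatment of paths returning to $u$ via color $i^*$, and the integrality step turning $p^1_{i1}<n_i$ into $n_i-p^1_{i1}\ge 1$.
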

\begin{proof}
    Fix a vertex $u \in V$. Since $\mfX_i$ is connected ($\mfX$ is primitive), for any $v \in
    \mfX_1(x)$, there is a shortest path in $\mfX_i$ from $u$ to $v$, hence there exists a
    vertex $w \in N(u)$ such that $c(w,v) = i$, so $|N(x)|n_i \ge |\mfX_1(x)|$.
\end{proof}

\begin{lemma}\label{lem:twodistance-basic}
Let $\mfX$ be a PCC with a nondominant color $i$, let $\delta = \dist_1(i)$,
and suppose $\delta \geq 3$.  Any vertices $u,w$ with $c(u,w)=i$ satisfy the
following two properties:
\begin{enumerate}
\item[(i)] If $v \in \mfX_i^{(\delta -1)}(u) \cap \mfX_i^{(\delta - 2)}(w)$, then
\[\mfX_{i}(v) \cap  \mfX_1(u) \subseteq \mfX_{i}(v) \cap  \mfX_i^{(\delta - 1)}(w);\]
\item[(ii)] If $z \in \mfX_1(u) \cap \mfX_i^{(\delta - 1)}(w)$, then
\[\mfX_{i^*}(z) \cap  \mfX_i^{(\delta - 2)}(w) \subseteq \mfX_{i^*}(z) \cap  \mfX_i^{(\delta - 1)}(u).\]
\end{enumerate}
\end{lemma}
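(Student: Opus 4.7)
The plan is to reduce both claims to a double squeeze on the directed distance in $\mfX_i$, relying on only two elementary inputs: (a) $\dist_i(u,w)=1$, since $c(u,w)=i$, and (b) $\dist_i(x,y)=\delta$ whenever $c(x,y)=1$, by the definition of $\delta$ in terms of distances in $\mfX_i$. In each part the goal is to pin a certain $i$-distance exactly to $\delta-1$, and I would accomplish this by a short-path upper bound (concatenating one of the known length-$1$ edges with a ``nearly shortest'' portion of the existing sphere hypothesis) and a matching lower bound coming from the triangle inequality applied to a color-$1$ pair, whose $i$-distance is forced to be $\delta$.

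For part (i), I would fix an arbitrary $y\in \mfX_i(v)\cap \mfX_1(u)$. The upper bound on $\dist_i(w,y)$ would come from the concatenation $w\to v\to y$:
\[
\dist_i(w,y)\le \dist_i(w,v)+\dist_i(v,y)=(\delta-2)+1=\delta-1.
\]
The lower bound would come from the concatenation $u\to w\to y$: since $c(u,y)=1$ forces $\dist_i(u,y)=\delta$, the triangle inequality yields $\delta\le 1+\dist_i(w,y)$, i.e.\ $\dist_i(w,y)\ge \delta-1$. Together these give $\dist_i(w,y)=\delta-1$, placing $y$ in $\mfX_i^{(\delta-1)}(w)$.

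Part (ii) is the mirror image, with the role of the ``length-$1$ detour'' now played by a color-$i$ edge ending at $z$. I would fix $y\in \mfX_{i^*}(z)\cap \mfX_i^{(\delta-2)}(w)$, noting first that $y\in \mfX_{i^*}(z)$ unpacks to $c(y,z)=i$, hence $\dist_i(y,z)=1$. The upper bound on $\dist_i(u,y)$ uses $u\to w\to y$, giving $\dist_i(u,y)\le 1+(\delta-2)=\delta-1$, and the lower bound uses $u\to y\to z$, giving $\delta=\dist_i(u,z)\le \dist_i(u,y)+1$. Again the two bounds force $\dist_i(u,y)=\delta-1$, so $y\in \mfX_i^{(\delta-1)}(u)$.

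There is no real obstacle here beyond orientation bookkeeping; the only subtlety is that $\mfX_i$ is genuinely directed when $i\ne i^*$, so the concatenated paths must be oriented consistently. In particular, in part (ii) one must read $\mfX_{i^*}(z)$ as the set of in-neighbors of $z$ in $\mfX_i$ in order to extract $\dist_i(y,z)=1$ rather than $\dist_i(z,y)=1$; reversing this orientation would collapse the argument. Once directions are tracked, both parts are immediate double squeezes using only the directed triangle inequality and the numerical constraints (a) and (b) above.
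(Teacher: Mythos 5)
Your proof is correct and is essentially the paper's own argument: both parts are the same two-sided squeeze, bounding the relevant $i$-distance above by concatenating the length-$(\delta-2)$ segment with the adjacent color-$i$ (resp.\ $i^*$) edge, and below via the triangle inequality through the color-$1$ pair whose $i$-distance equals $\delta$ (you also correctly read $\delta$ as the $\mfX_i$-distance of a dominant pair and handled the in-neighbor orientation of $\mfX_{i^*}(z)$, just as the paper does).
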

\begin{proof}
If $v \in \mfX_i^{(\delta -1)}(u) \cap \mfX_i^{(\delta - 2)}(w)$ then $\dist_i(u, v) = \delta - 1$ and 
$\dist_i(w, v) = \delta - 2$.
So for any vertex $x \in \mfX_{i}(v)$, we have $\dist_i(w, x) \leq \delta - 1$.
If $x \in \mfX_1(u)$, then $\dist_i(w, x) = \delta - 1$, since otherwise $\dist_i(u, x) < \delta$.

Similarly, $z \in \mfX_1(u) \cap \mfX_i^{(\delta - 1)}(w)$ means $\dist_i(u, z) = \delta$ and 
$\dist_i(w, z) = \delta - 1$.
So for any $y$ satisfying $\dist_i(w, y) = \delta - 2$,
we have $\dist_i(u, y) \leq \delta - 1$. 
If $z \in \mfX_i(y)$, then $\dist_i(u, y) = \delta - 1$, since otherwise $\dist_i(u, z) < \delta$.
\end{proof}

\begin{lemma}\label{lem:betabound}
Let $\mfX$ be a PCC with a nondominant color $i$,
let $\delta = \dist_1(i)$, and suppose $\delta \ge
3$. Fix a vertex $u \in V$ and let $B$ be the bipartite graph
$(\mfX_i^{(\delta-1)}(u), \mfX_1(u), i)$. Let $\gamma$ denote the minimum degree in $B$ of a
vertex in $\mfX_1(u)$, and let $\beta$ denote the maximum degree in $B$ of
a vertex in $\mfX_i^{(\delta-1)}(u)$. Then $\beta \le \gamma\rho^2/n_1$.
\end{lemma}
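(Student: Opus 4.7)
The plan is to ``re-center'' the analysis at a vertex $w$ adjacent to $u$ in $\mfX_i$ and to construct an auxiliary bipartite graph $B''$ that is (i) regular on one side, so that the maximum degree $\beta$ of $B$ propagates to every vertex on that side, and (ii) controlled from above by $\gamma$ on the other side, so that double-counting edges delivers the bound.

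First I would pick a vertex $v_0 \in \mfX_i^{(\delta-1)}(u)$ realizing the maximum degree $\beta$ in $B$ and take $w$ to be the first vertex (after $u$) along a shortest $\mfX_i$-path from $u$ to $v_0$. Then $c(u,w)=i$, $v_0 \in \mfX_i^{(\delta-1)}(u) \cap \mfX_i^{(\delta-2)}(w)$, and the color $j := c(w,v_0)$ satisfies $\dist_i(j) = \delta-2 < \delta = \dist_i(1)$, so $j \ne 1$ and, in the regime where color $1$ is the unique dominant color, $j$ is nondominant. Lemma~\ref{lem:twodistance-basic}(i), applied to $v_0$, gives
\[
\mfX_i(v_0) \cap \mfX_1(u) \;\subseteq\; \mfX_i(v_0) \cap \mfX_i^{(\delta-1)}(w),
\]
which is the mechanism by which the large degree $\beta$ is transported from the $u$-picture to the $w$-picture.

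I would then define
\[
X_w := \{\, x \in \mfX_i^{(\delta-1)}(w) : |\mfX_{i^*}(x) \cap \mfX_j(w)| \le \gamma \,\},
\]
which is a union of color classes at $w$ by coherence, and consider $B'' := (\mfX_j(w), X_w, i)$. Two things need to be verified. First, all $\beta$ of $v_0$'s $B$-neighbors $y \in \mfX_1(u)$ actually lie in $X_w$: the directed triangle inequality in $\mfX_i$, combining $\dist_i(u,y) = \delta$, $\dist_i(u,w)=1$, $\dist_i(v_0,y) = 1$, and $\dist_i(w,v_0) = \delta-2$, forces $\dist_i(w,y) = \delta-1$, so Lemma~\ref{lem:twodistance-basic}(ii) gives $|\mfX_{i^*}(y) \cap \mfX_j(w)| \le |\mfX_{i^*}(y) \cap \mfX_i^{(\delta-2)}(w)| \le \gamma$, and $y \in X_w$. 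Second, by Fact~\ref{fact:bipartite_regular} the $\mfX_j(w)$-side of $B''$ is regular (since $X_w$ is a union of color classes at $w$), so every vertex of $\mfX_j(w)$ inherits $v_0$'s degree $\ge \beta$. Double-counting the edges of $B''$ then yields $\beta n_j \le \gamma|X_w| \le \gamma\rho$, and Observation~\ref{obs:ni-bound} supplies $n_j \ge n_1/\rho$, giving $\beta \le \gamma\rho^2/n_1$.

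The main obstacle is the absence of biregularity in $B$ itself: different vertices of $\mfX_i^{(\delta-1)}(u)$ can carry different colors from $u$, so coherence does not directly constrain the max-degree side. Both parts of Lemma~\ref{lem:twodistance-basic} are needed to bypass this. Part~(i) transfers $v_0$'s degree into the $w$-indexed picture, where coherence at $w$ restores regularity on the $\mfX_j(w)$ side; part~(ii) is what guarantees that $v_0$'s ``important'' neighbors survive the restriction to $X_w$, whose definition in turn supplies the upper cap of $\gamma$ on the other side of $B''$. Identifying the correct set $X_w$ so that these two parts interact cleanly is the delicate technical step.
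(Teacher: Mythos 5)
Your proposal is correct and follows essentially the same route as the paper's proof: choose a maximum-degree vertex $v_0$, re-center at $w\in\mfX_i(u)$ with $\dist_i(w,v_0)=\delta-2$, use both parts of Lemma~\ref{lem:twodistance-basic} to move $v_0$'s neighbors into the set $X_w$ (the paper's $S_3$), invoke Fact~\ref{fact:bipartite_regular} for regularity on the $\mfX_j(w)$ side, double-count, and finish with Observation~\ref{obs:ni-bound}. The only step you leave implicit is that $B$ is regular on $\mfX_1(u)$ (again by Fact~\ref{fact:bipartite_regular}), so that each $y\in\mfX_1(u)$ has degree exactly $\gamma$, which is what makes the final bound $|\mfX_{i^*}(y)\cap\mfX_i^{(\delta-1)}(u)|\le\gamma$ in your chain legitimate.
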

\begin{proof}
In fact, $B$ is regular on $\mfX_1(u)$ by Fact~\ref{fact:bipartite_regular}, so
every vertex in $\mfX_1(u)$ has degree $\gamma$ in $B$.

Let $v \in \mfX_i^{(\delta-1)}(u)$ achieve degree $\beta$ in $B$, and let $w
\in \mfX_i(u)$ be such that $\dist_i(w, v) = \delta - 2$.  Let $B'$ be the subgraph
of $B$ given by $(S_1, S_2, i)$, where $S_1 = \mfX_i^{(\delta -1)}(u) \cap
\mfX_i^{(\delta - 2)}(w)$ and $S_2 = \mfX_1(u) \cap \mfX_i^{(\delta - 1)}(w)$.
Note that $v \in S_1$. By Lemma~\ref{lem:twodistance-basic}~(i), every neighbor
of $v$ in $B$ is also a neighbor of $v$ in $B'$, and in particular, the degree
of $v$ in $B'$ is again $\beta$.

Let $j = c(w, v)$, and let $H = (\mfX_j(w), S_3, i)$, where
\[
S_3 = \left\{z \in \mfX_i^{(\delta-1)}(w): |\mfX_{i^*}(z)\cap \mfX_j(w)| \le \gamma\right\}.
\]
Recall that $v \in \mfX_j(w)$, so $v$ is also a vertex of $H$. We claim that
every neighbor of $v$ in $B'$ is also a neighbor of $v$ in $H$, so the
degree of $v$ in $B'$ is again $\ge \beta$. Indeed, let $z \in \mfX_i(v) \cap
S_2$. Then $z \in \mfX_i^{(\delta-1)}(w)$, and furthermore
\[
|\mfX_{i^*}(z) \cap  \mfX_j(w)| \le |\mfX_{i^*}(z) \cap  \mfX_i^{(\delta - 2)}(w)| 
\le |\mfX_{i^*}(z) \cap  \mfX_i^{(\delta - 1)}(u)| = \gamma.
\]
So, $z \in S_3$, and every neighbor of $v$ in $B'$ is also a vertex of $H$ as
claimed.

Now by Fact~\ref{fact:bipartite_regular}, $H$ is regular on $\mfX_j(w)$ with
degree $\ge \beta$.  Hence,
\[
    \beta n_j \leq |E(H)| \leq \gamma |S_3| \leq  \gamma |\mfX_i^{(\delta-1)}(w)| \leq \gamma \rho.
\]
The lemma follows by Observation~\ref{obs:ni-bound}.
\end{proof}

\begin{proof}[Proof of Lemma~\ref{lem:diam2}]
    We prove that if $\dist_i(1) \ge 3$ for some color $i$, then $\rho
    \gtrsim n^{2/3}$. Without loss of generality, we assume $n_1 \sim n$, since
    otherwise we are already done.

    Let $i$ be such that $\dist_i(1) \ge 3$,
    and write $\delta = \dist_i(1)$.
    Fix a vertex $u$ and let $B$, $\gamma$, and $\beta$ be as in
    Lemma~\ref{lem:betabound}, so $\beta \le \gamma\rho^2/n_1$.
    By Lemma~\ref{lem:betabound}, $B$ is regular on $\mfX_1(u)$. 
    Let $\gamma, \beta$ be defined as Lemma~\ref{lem:betabound}.
    By Lemma~\ref{lem:betabound}, we have $\beta \leq \gamma\rho^2/n_1$.
    Therefore, by counting the number of edges in $B$
    \[
        \frac{\rho^3 \gamma}{n_1} \ge \beta|\mfX_i^{(\delta-1)}(x)| \ge |E(B)| = n_1\gamma.
    \]
    The lemma is then immediate since $n_1 \sim n$.
\end{proof}


\section{Clique geometries}\label{sec:clique}

In this section, we prove Theorem~\ref{thm:clique-geometry}, giving
sufficient conditions for the existence of an asymptotically uniform clique
geometry in a PCC. 

We use the word ``geometry'' in Definition~\ref{def:clique-geom} because the
cliques resemble lines in a geometry: two distinct cliques intersect in
at most one vertex. Indeed, a regular graph $G$ has a clique geometry $\mcG$
with cliques of uniform order only if it is the point-graph of a geometric
1-design with lines corresponding to cliques of $\mcG$.


Theorem~\ref{thm:clique-geometry} builds on earlier work of
Metsch~\cite{metsch-clique} on the existence of similar clique structures in
``sub-amply regular graphs'' (cf.~\cite{bw-delsarte}) via the following lemma.
The lemma can be derived from~\cite[Theorem 1.2]{metsch-clique}, but
see~\cite[Lemma 4]{bw-delsarte} for a self-contained proof.
\begin{lemma}\label{lem:metsch-local}
    Let $H$ be a graph on $k$ vertices which is regular of degree $\lambda$
    and such that any pair of nonadjacent vertices have at most $\mu$ common
    neighbors. Suppose that $k\mu = o(\lambda^2)$. Then there is a
    partition of $V(H)$ into maximal cliques of order $\sim \lambda$, and all
    other maximal cliques of $H$ have order $o(\lambda)$.
\end{lemma}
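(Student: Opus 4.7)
The plan is to follow the approach of Metsch, adapted along the lines of the self-contained argument in \cite{bw-delsarte}. The intuition is that if $H$ were a disjoint union of $(\lambda+1)$-cliques, then adjacent vertices would share exactly $\lambda-1$ common neighbors while nonadjacent vertices would share $0$; the hypothesis $k\mu = o(\lambda^2)$ quantifies how close we are to this situation. The proof converts this intuition into an explicit construction of the cliques by attaching to each edge a candidate ``line'' built from high-multiplicity common neighborhoods.

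First I would record a simple double-counting fact: since $H$ is $\lambda$-regular on $k$ vertices, the average value of $\lambda(u,v) := |N(u)\cap N(v)|$ over edges $\{u,v\}$ is essentially $\lambda^2/k$ weighted appropriately, but more usefully, by inclusion-exclusion and the hypothesis on nonadjacent pairs one shows that for any vertex $u$ and neighbor $v$, the ``typical'' value of $\lambda(u,v)$ is close to $\lambda$, and any deviation forces many pairs of nonadjacent vertices with large common neighborhoods, contradicting $\mu \ll \lambda^2/k$. More precisely, for each edge $e=\{u,v\}$ set
\[
L(e) := \{u,v\} \cup \{w \in N(u)\cap N(v) : \lambda(u,w) > \mu \text{ and } \lambda(v,w) > \mu\}.
\]
Because any two nonadjacent vertices have at most $\mu$ common neighbors, any two vertices in $L(e)$ that were nonadjacent would contradict the defining inequality; hence $L(e)$ is a clique.

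Next I would show that $|L(e)| \sim \lambda$ for ``most'' edges, by a counting argument comparing triangles through $u$ to the structure constants: the number of pairs $(v,w)$ with $v \in N(u)$ and $w \in N(u)\cap N(v)$ but $\lambda(u,w) \le \mu$ is controlled by $k\mu = o(\lambda^2)$, so all but $o(\lambda)$ neighbors $v$ of $u$ lie in a unique large clique $L(e)$ at $u$. Then I would establish the key consistency property: if two edges $e,e'$ at $u$ satisfy $L(e) \cap L(e') \supsetneq \{u\}$, then $L(e) = L(e')$. This follows because a shared second vertex $w$ forces, via the constraint that nonadjacent pairs have few common neighbors, all of $L(e)\cup L(e')$ to be pairwise adjacent, so its closure is a common extension; then the maximality of both $L(e)$ and $L(e')$ inside this extension, together with the size bound, forces equality.

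Finally, the local partition at each vertex $u$ (into the $L(e)$'s as $e$ ranges over edges at $u$) glues together into a global partition of $V(H)$ into maximal cliques of order $\sim \lambda$, and any other maximal clique $C$ of $H$ must have $|C|=o(\lambda)$: otherwise $C$ would intersect one of the constructed cliques in $\omega(1)$ vertices, again forcing too many common neighbors among nonadjacent pairs. The main obstacle is the consistency step, where one must carefully use the bound $k\mu = o(\lambda^2)$ together with regularity to show that the locally defined cliques $L(e)$ agree across their overlap; the bookkeeping is where Metsch's original argument does most of its work, and it is what makes $\sim \lambda$ (rather than a weaker estimate) possible.
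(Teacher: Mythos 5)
First, note that the paper does not prove Lemma~\ref{lem:metsch-local} at all: it is quoted from the literature, derived from \cite[Theorem 1.2]{metsch-clique}, with \cite[Lemma 4]{bw-delsarte} cited for a self-contained proof. So your sketch must stand on its own, and as written it has a genuine gap at its core: the threshold ``$>\mu$'' in your definition of $L(e)$ is too weak, and the one-line claim that $L(e)$ is a clique is a non sequitur. Membership of $w$ in $L(e)$ only constrains $\lambda(u,w)$ and $\lambda(v,w)$; if $w,w'\in L(e)$ are nonadjacent, nothing in the defining inequalities is contradicted --- to get a contradiction you must show $|N(w)\cap N(w')|>\mu$, and from $\lambda(u,w),\lambda(u,w')>\mu$ the only guaranteed overlap inside $N(u)$ is $\ge 2(\mu+1)-\lambda$, which is negative since $\mu\le k\mu/\lambda=o(\lambda)$. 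The actual mechanism requires a \emph{high} threshold: by regularity, counting paths $u$--$v$--$w$ with $w\notin N(u)\cup\{u\}$ gives $\sum_{v\in N(u)}\bigl(\lambda-1-|N(u)\cap N(v)|\bigr)\le (k-\lambda-1)\mu\le k\mu=o(\lambda^2)$, which is the one place the hypothesis $k\mu=o(\lambda^2)$ enters; hence all but $o(\lambda)$ neighbors $v$ of $u$ satisfy $|N(u)\cap N(v)|\ge(1-\eps)\lambda$, and for two such neighbors $v,w$ that are nonadjacent one gets $|N(v)\cap N(w)|\ge(1-2\eps)\lambda-2>\mu$, a contradiction. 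Cliqueness and the size estimate $\sim\lambda$ come out of this high-threshold argument simultaneously; in your write-up the counting appears only as an afterthought (``$|L(e)|\sim\lambda$ for most edges'') and is not used where it is actually needed.

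The consistency and maximality steps fail for the same reason. A single shared second vertex cannot force $L(e)=L(e')$: the hypothesis bounds common neighbors of \emph{nonadjacent} pairs by $\mu$, and two sets sharing two vertices yield no contradiction; the correct glue is that two candidate cliques at $u$ both lie in $N(u)\cup\{u\}$ and both have order $(1-o(1))\lambda$, hence intersect in more than $\mu$ vertices, after which a nonadjacent pair straddling them would have more than $\mu$ common neighbors. A clean way to finish (essentially \cite[Lemma 4]{bw-delsarte}) is to take as partition classes the maximal cliques of order at least $(1-\delta)\lambda$: the overlap argument shows every vertex lies in exactly one such clique $\widehat C(u)$, and for any other maximal clique $D$ containing $u$ one has $|D\cap\widehat C(u)|\le\mu+1$ and $|N(u)\setminus\widehat C(u)|=o(\lambda)$, whence $|D|=o(\lambda)$. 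Your final assertion that a large stray maximal clique ``would intersect one of the constructed cliques in $\omega(1)$ vertices'' is likewise insufficient: since $\mu$ need not be bounded, only an intersection exceeding $\mu$ (not merely $\omega(1)$) produces the required contradiction.
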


Metsch's result, applied to the graphs induced by $G(\mfX)$ on sets of the form
$\mfX_i(u)$, gives collections of cliques which locally resemble asymptotically
uniform clique geometries. These collections satisfy the following definition
for a set $I = \{i\}$ containing a single color.

\begin{definition}\label{def:local-clique}
    Let $I$ be a set of nondominant colors. An \emph{$I$-local clique
    partition} at a vertex $u$ is a collection $\mcP$ of subsets of $\mfX_I(u)$
    satisfying the following properties:
    \begin{enumerate}
        \item[(i)] $\mcP$ is a partition of $\mfX_I(u)$ into maximal cliques in the
            subgraph of $G(\mfX)$ induced on $\mfX_I(u)$;
        \item[(ii)] for every $C \in \mcP_u$ and $i \in I$, we have $|C\cap
            \mfX_i(u)|\sim \lambda_i$.
    \end{enumerate}
    We say $\mfX$ \emph{has $I$-local clique partitions} if there is an
    $I$-local clique partition at every vertex $u \in V$.
\end{definition}

To prove Theorem~\ref{thm:clique-geometry}, we will stitch local clique
partitions together into geometric clique structures.

Note that from the definition, if $\mcP$ is an $I$-local clique partition (at
some vertex) and $i \in I$, then $|\mcP| \sim n_i/\lambda_i$.
\begin{corollary}\label{cor:single-color-clique}
    Let $\mfX$ be a PCC and let $i$ be a nondominant color such that
    $n_i \mu = o(\lambda_i^2)$. Then $\mfX$ has $\{i\}$-local clique
    partitions.
\end{corollary}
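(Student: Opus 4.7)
The plan is to apply Lemma~\ref{lem:metsch-local} directly to the subgraph of $G(\mfX)$ induced on $\mfX_i(u)$, for each vertex $u \in V$. Fix $u$, and let $H$ denote this induced subgraph, so $V(H) = \mfX_i(u)$ has $k = n_i$ vertices. I need to verify the hypotheses of Metsch's lemma with parameters $\lambda = \lambda_i$ and $\mu' = \mu$.

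First I would check regularity. For any $v \in V(H)$, we have $c(u,v) = i$, so by the coherence of $\mfX$ the quantity $|\mfX_i(u) \cap N(v)|$ depends only on $c(u,v) = i$; by definition, this equals $\lambda_i$. Hence $H$ is $\lambda_i$-regular. Next, for the common-neighbor bound: any pair of distinct vertices $v,w \in V(H)$ is nonadjacent in $H$ precisely when $c(v,w) = 1$ (the dominant color), in which case $|N(v) \cap N(w)| = \mu$ by the definition of $\mu$, giving at most $\mu$ common neighbors inside $H$. Finally, the hypothesis $n_i \mu = o(\lambda_i^2)$ is exactly the assumption $k\mu' = o(\lambda^2)$ needed by Lemma~\ref{lem:metsch-local}.

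Applying Lemma~\ref{lem:metsch-local} yields a partition $\mcP_u$ of $\mfX_i(u)$ into maximal cliques of $H$, each of order $\sim \lambda_i$. Since $I = \{i\}$, these are maximal cliques in the subgraph of $G(\mfX)$ induced on $\mfX_I(u) = \mfX_i(u)$, so property~(i) of Definition~\ref{def:local-clique} holds. Property~(ii) also holds: for each $C \in \mcP_u$, we have $C \subseteq \mfX_i(u)$ and $|C| \sim \lambda_i$, hence $|C \cap \mfX_i(u)| = |C| \sim \lambda_i$. Since $u$ was arbitrary, $\mfX$ has $\{i\}$-local clique partitions.

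There is no real obstacle here; the work is entirely in verifying that the graph-theoretic hypotheses of Metsch's lemma translate into the PCC parameters $n_i$, $\lambda_i$, and $\mu$, which follows immediately from the coherence of $\mfX$ and the definitions. The nontrivial content of the corollary lies in Lemma~\ref{lem:metsch-local} itself, and in the later (much harder) task of stitching such local partitions together into a global clique geometry, which is what Theorem~\ref{thm:clique-geometry} will address.
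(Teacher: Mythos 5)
Your proof is correct and follows exactly the paper's route: apply Lemma~\ref{lem:metsch-local} to the subgraph of $G(\mfX)$ induced on $\mfX_i(u)$, with the regularity of degree $\lambda_i$, the $\mu$ bound on common neighbors of nonadjacent pairs, and the hypothesis $n_i\mu = o(\lambda_i^2)$ matching Metsch's conditions. The paper states this in one line; you have simply made the routine verifications explicit.
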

\begin{proof}
    Fix a vertex $u$, and apply Lemma~\ref{lem:metsch-local} to the graph $H$
    induced by $G(\mfX)$ on $\mfX_i(u)$. The Lemma gives a collection of
    cliques satisfying Definition~\ref{def:local-clique}.
\end{proof}

The following simple observation is essential for the proofs of this section.

\begin{observation}\label{obs:clique-basic}
    Let $\mfX$ be a PCC, let $C$ be a clique in $G(\mfX)$, and suppose $u \in V\setminus C$ is such that
    $|N(u)\cap C| > \mu$. Then $C \subseteq N(u)$.
\end{observation}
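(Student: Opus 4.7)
The plan is to argue by contradiction. Suppose for contradiction that there exists $v \in C$ with $v \notin N(u)$. Since $u \notin C$ while $v \in C$, we have $u \ne v$, so $c(u,v)$ is a non-diagonal color. The fact that $v \notin N(u)$ means precisely that the color $c(u,v)$ is dominant, i.e., $c(u,v) = 1$ (since color $1$ is assumed to be the unique dominant color in this regime). By the definition of $\mu$ as $\sum_{i,j\ge 2} p^1_{ij}$, we have $|N(u) \cap N(v)| = \mu$.

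Now I would exhibit more than $\mu$ common nondominant neighbors of $u$ and $v$, which is the desired contradiction. Consider any vertex $w \in N(u) \cap C$. Since $v \notin N(u)$, we have $w \ne v$, so $w \in C \setminus \{v\}$. Because $C$ is a clique in $G(\mfX)$, the pair $\{v, w\}$ is nondominant, i.e., $w \in N(v)$. Hence $w \in N(u) \cap N(v)$. This shows the inclusion $N(u) \cap C \subseteq N(u) \cap N(v)$, which gives
\[
|N(u) \cap N(v)| \;\ge\; |N(u) \cap C| \;>\; \mu,
\]
contradicting $|N(u) \cap N(v)| = \mu$.

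There is essentially no obstacle here: the argument is a one-line application of the defining property of $\mu$ together with the fact that every nondominant neighbor of $u$ lying in $C$ is automatically a nondominant neighbor of every other vertex of $C$. The only subtlety worth flagging is that one must invoke the setting of the section in which there is a single (overwhelmingly) dominant color $1$, so that $v \notin N(u)$ together with $u \ne v$ forces $c(u,v)=1$; this is consistent with the standing assumption $\rho = o(n^{2/3})$ under which the surrounding results operate.
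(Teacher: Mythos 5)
Your proof is correct and is essentially the paper's own argument: take $v \in C \setminus N(u)$, note $c(u,v)=1$ forces $|N(u)\cap N(v)| = \mu$, and observe that every vertex of $N(u)\cap C$ lies in $N(u)\cap N(v)$ (since $v\notin N(u)$ and $C$ is a clique), giving $|N(u)\cap N(v)| \ge |N(u)\cap C| > \mu$, a contradiction. The remark that one needs the dominant color $1$ for $\mu$ to be defined is fine and matches the paper's standing conventions.
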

\begin{proof}
    Suppose there exists a vertex $v \in C\setminus N(u)$, so $c(u,v) = 1$. Then $|N(u)
    \cap N(v)| = \mu$ by the definition of $\mu$ in a PCC. But
    \begin{align*}
        |N(u)\cap N(v)| &\ge |N(u) \cap C \cap N(v)| = |N(u) \cap (C \backslash
        \{v\})| \\
        &= |N(u) \cap C| > \mu,
    \end{align*}
    a contradiction.
\end{proof}

Under modest assumptions, if local clique partitions exist, they are unique.

\begin{lemma}\label{lem:lcp-unique}
    Let $\mfX$ be a PCC, let $i$ be a nondominant color such that $n_i\mu =
    o(\lambda_i^2)$, and let $I$ be a set of nondominant colors such that $i
    \in I$. Suppose $\mfX$ has $I$-local clique partitions. Then for every
    vertex $u \in V$, there is a unique $I$-local clique partition $\mcP$ at
    $u$.
\end{lemma}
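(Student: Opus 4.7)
The plan is to compare any two $I$-local clique partitions at $u$ through their restrictions to the single color class $\mfX_i(u)$, where $i$ is the color for which $n_i\mu = o(\lambda_i^2)$. Let $H$ be the subgraph of $G(\mfX)$ induced on $\mfX_i(u)$; by the definition of $\lambda_i$ this graph is regular of degree $\lambda_i$, its nonadjacent pairs have at most $\mu$ common neighbors in $H$, and $n_i\mu = o(\lambda_i^2)$. Thus Lemma~\ref{lem:metsch-local} yields a partition $\mathcal{F}_i$ of $\mfX_i(u)$ into maximal cliques of $H$ of order $\sim \lambda_i$, all other maximal cliques of $H$ having order $o(\lambda_i)$; in particular $|\mathcal{F}_i| \sim n_i/\lambda_i$.

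For any $I$-local clique partition $\mcP$ at $u$, set $\mathcal{F}_{\mcP} := \{C \cap \mfX_i(u) : C \in \mcP\}$; this is a partition of $\mfX_i(u)$ into cliques of order $\sim \lambda_i$, with $|\mathcal{F}_{\mcP}| \sim n_i/\lambda_i$. The main claim is $\mathcal{F}_{\mcP} = \mathcal{F}_i$. Fix $M \in \mathcal{F}_i$. If every $A \in \mathcal{F}_{\mcP}$ had $|A \cap M| \le \mu$, then $|M| = \sum_A |A \cap M|$ would force at least $|M|/\mu \sim \lambda_i/\mu$ distinct parts of $\mathcal{F}_{\mcP}$ to meet $M$; combined with $|\mathcal{F}_{\mcP}| \sim n_i/\lambda_i$ this gives $\lambda_i^2 \lesssim n_i\mu$, contradicting the hypothesis. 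Hence some $A \in \mathcal{F}_{\mcP}$ satisfies $|A \cap M| > \mu$. For any $w \in A \setminus M$ we would then have $|N(w) \cap M| \ge |A \cap M| > \mu$, so Observation~\ref{obs:clique-basic} together with the maximality of $M$ in $H$ would force $w \in M$, a contradiction; hence $A \subseteq M$. Distinct classes of $\mathcal{F}_i$ are disjoint, so the assignment $M \mapsto A(M)$ is well-defined and injective. For surjectivity, any $A \in \mathcal{F}_{\mcP}$ is a clique in $H$ of order $\sim \lambda_i$ and thus extends to a maximal clique of $H$ that belongs to $\mathcal{F}_i$. The chain $n_i = \sum_A |A| = \sum_M |A(M)| \le \sum_M |M| = n_i$ now forces $A(M) = M$ for every $M$, proving $\mathcal{F}_{\mcP} = \mathcal{F}_i$.

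Finally, let $\mcP_1, \mcP_2$ be two $I$-local clique partitions at $u$; by the previous paragraph both induce the partition $\mathcal{F}_i$ on $\mfX_i(u)$. Fix $C_1 \in \mcP_1$ and let $C_2 \in \mcP_2$ be the unique clique with $C_1 \cap \mfX_i(u) = C_2 \cap \mfX_i(u)$. For any $w \in C_1 \setminus C_2$, the vertex $w$ must lie outside $\mfX_i(u)$ (else $w \in C_1 \cap \mfX_i(u) \subseteq C_2$) and is adjacent to every vertex of $C_1 \cap \mfX_i(u) \subseteq C_2$, giving $|N(w) \cap C_2| \ge |C_1 \cap \mfX_i(u)| \sim \lambda_i > \mu$. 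By Observation~\ref{obs:clique-basic} and the maximality of $C_2$ in the subgraph of $G(\mfX)$ induced on $\mfX_I(u)$ we conclude $w \in C_2$, a contradiction; so $C_1 \subseteq C_2$, and by symmetry $C_1 = C_2$. Hence $\mcP_1 = \mcP_2$.

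The main obstacle is the middle paragraph: converting the asymptotic hypothesis $n_i\mu = o(\lambda_i^2)$ into an \emph{exact} identification of two partitions of $\mfX_i(u)$. The hypothesis is used both in the pigeonhole/counting step locating a part $A$ with $|A \cap M| > \mu$ and (through Lemma~\ref{lem:metsch-local}) for the rigidity of Metsch's partition; the sum-of-sizes inequality is what upgrades the inclusion $A(M) \subseteq M$, which might otherwise be merely almost-equality, to genuine equality.
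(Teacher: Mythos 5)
Your overall strategy is workable and genuinely different from the paper's, but one step does not hold as written: the surjectivity of the assignment $M \mapsto A(M)$. What you actually prove is that every $A \in \mathcal{F}_{\mcP}$ extends to a maximal clique of $H$ of order $\gtrsim \lambda_i$ and hence is contained in some member of $\mathcal{F}_i$; this does not show that every $A$ is the \emph{chosen} part $A(M)$ of its containing class, so the middle equality in the chain $n_i=\sum_A|A|=\sum_M|A(M)|$ is unjustified --- if some $M$ contained two parts of $\mathcal{F}_{\mcP}$, the right-hand sum would omit one of them and the chain would collapse. The repair is one sentence and stays inside your framework: distinct parts of $\mathcal{F}_{\mcP}$ are disjoint and each has order $\sim\lambda_i$, while $|M|\sim\lambda_i$ by Metsch, so no $M$ can contain two parts; together with your two established facts (every part lies in exactly one $M$, since the members of $\mathcal{F}_i$ are disjoint, and every $M$ contains at least one part, by your pigeonhole step plus Observation~\ref{obs:clique-basic}) the containment map is a bijection, and then your counting chain does force $A(M)=M$ for every $M$. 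With that fix, the remaining lift from the color-$i$ restriction to equality of the full partitions, via Observation~\ref{obs:clique-basic} and maximality of the cliques in the subgraph induced on $\mfX_I(u)$, is correct.

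For comparison, the paper works directly at the level of $\mfX_I(u)$ and never re-invokes Metsch: it first shows that any two distinct maximal cliques $C,C'$ of the subgraph of $G(\mfX)$ induced on $\mfX_I(u)$ satisfy $|C\cap C'|<\mu$ (Observation~\ref{obs:clique-basic} applied to the clique $C'\cup\{u\}$), and then that any maximal clique $C\notin\mcP$ has $|C|=\sum_{C'\in\mcP}|C\cap C'|<\mu|\mcP|\sim n_i\mu/\lambda_i=o(\lambda_i)$, so $C$ violates Definition~\ref{def:local-clique}~(ii) and cannot belong to any $I$-local clique partition; since any two such partitions consist of maximal cliques partitioning $\mfX_I(u)$, they must coincide. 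That argument is shorter because it avoids the detour through a canonical partition of $\mfX_i(u)$ and the subsequent restriction-and-lift step; your route costs more bookkeeping but yields the additional (true, though not needed here) fact that the color-$i$ restriction of every $I$-local clique partition is exactly Metsch's partition of $\mfX_i(u)$.
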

\begin{proof}
    Let $u \in V$ and let $\mcP$ be an $I$-local clique partition at $u$.
    Let $C$ and $C'$ be two distinct maximal cliques in the subgraph 
    of $G(\mfX)$ induced on $\mfX_I(u)$. We show that $|C \cap C'| < \mu$.
    Suppose for the contradiction that $|C\cap C'| \ge \mu$.
    For a vertex $v \in C \setminus C'$, we have $|N(v)\cap
    (C'\cup\{u\})| > \mu$, and so $C' \subseteq N(v)$ by
    Observation~\ref{obs:clique-basic}. But since $v \notin C'$, this contradicts 
    the maximality of $C'$. So in fact $|C \cap C'| < \mu$.
    
    Now let $C \notin \mcP$ be a maximal clique in the subgraph of $G(\mfX)$ 
    induced on $\mfX_I(u)$. Since $\mcP$ is an $I$-local clique partition,
    it follows that
    \[
        |C| = \sum_{C' \in \mcP} |C' \cap C| < \mu |\mcP|
        \sim n_i\mu/\lambda_i = o(\lambda_i).
    \]
    Then $C$ does not belong to an $I$-local clique partition, since it fails
    to satisfy Definition~\ref{def:local-clique}~(ii).
\end{proof}

\subsection{Local cliques and symmetry}

Suppose $\mfX$ has $I$-local clique partitions, and $c(u,v) \in I$ for some
$u,v \in V$. We remark that in general, the clique containing $v$ in the
$I$-local clique partition at $u$ will not be in any way related to any clique
in the $I$-local clique partition at $v$. In particular, we need not have
$c(v,u) \in I$. However, even when $c(v,u) \in I$ as well, there is no
guarantee that the clique at $u$ containing $v$ will have any particular
relation to the clique at $v$ containing $u$. This lack of symmetry is a
fundamental obstacle that we must overcome to prove
Theorem~\ref{thm:clique-geometry}.

Lemma~\ref{lem:clique-eq-sym} below is the main result of this subsection. It
gives sufficient conditions on the parameters of a PCC for finding the desired
symmetry in local clique partitions satisfying the following additional
condition.

\begin{definition}\label{def:str-local-clique}
    Let $I$ be a set of nondominant colors, let $u \in V$, and let $\mcP$ be an
    $I$-local clique partition at $u$. We say $\mcP$ is \emph{strong} if for
    every $C \in \mcP$, the clique $C \cup \{u\}$ is maximal in $G(\mfX)$. We
    say $\mfX$ has \emph{strong} $I$-local clique partitions if there is a
    strong $I$-local clique partition at every vertex $u \in V$.
\end{definition}

We introduce additional notation. Suppose $I$ is a set of nondominant colors,
and $i \in I$ satisfies $n_i\mu = o(\lambda_i)^2$. If $\mfX$ has $I$-local clique
partitions, then for every $u,v \in V$ with $c(u,v) \in I$, we denote by
$K_I(u,v)$ the set $C\cup \{u\}$, where $C$ is the clique in the partition of
$\mfX_I(u)$ containing $v$ (noting that by Lemma~\ref{lem:lcp-unique}, this
clique is uniquely determined).  

\begin{lemma}\label{lem:clique-eq-sym}
    Let $\mfX$ be a PCC with $\rho = o(n^{2/3})$, let $i$ be a
    nondominant color, and let $I$ and $J$ be sets of nondominant colors such
    that $i \in I$, $i^* \in J$, and $\mfX$ has strong $I$-local and $J$-local
    clique partitions.  Suppose $\lambda_i\lambda_{i^*} = \Omega(n)$.
    Then for every $u,v \in V$ with $c(u,v) = i$, we have $K_I(u,v) =
    K_J(v,u)$.
\end{lemma}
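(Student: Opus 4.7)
My approach reduces the equality of cliques to a lower bound on their intersection and then establishes this lower bound using the asymptotic uniformity.

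First, I would show that $|K_I(u,v) \cap K_J(v,u)| > \mu$ is sufficient. Assuming this inequality and that there exists $w \in K_I(u,v) \setminus K_J(v,u)$, since $K_I(u,v)$ is a clique in $G(\mfX)$, $w$ is adjacent to every other vertex of $K_I(u,v)$. Therefore $K_I(u,v) \cap K_J(v,u) \subseteq N(w)$, so $|N(w) \cap K_J(v,u)| > \mu$. Observation~\ref{obs:clique-basic} then forces $K_J(v,u) \subseteq N(w)$, making $K_J(v,u) \cup \{w\}$ a clique, which contradicts the maximality of $K_J(v,u)$ guaranteed by the strong $J$-local clique partition hypothesis. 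Hence $K_I(u,v) \subseteq K_J(v,u)$, and a symmetric argument using the strong $I$-local hypothesis yields the reverse inclusion.

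For the intersection bound, note first that $K_I(u,v) \setminus \{v\} \subseteq N(v)$, since every other clique-mate of $v$ is adjacent to $v$ in $G(\mfX)$. Combined with Definition~\ref{def:local-clique}(ii), this shows that $K_I(u,v) \cap \mfX_i(u) \setminus \{v\}$ is a subset of $\mfX_i(u) \cap N(v)$ of size $\sim \lambda_i - 1$, and since $|\mfX_i(u) \cap N(v)| = \lambda_i$ exactly, we miss only $o(\lambda_i)$ vertices. Symmetrically, $K_J(v,u) \cap \mfX_{i^*}(v) \setminus \{u\}$ captures all but $o(\lambda_{i^*})$ of $\mfX_{i^*}(v) \cap N(u)$. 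Restricting to the symmetric slice $S = \mfX_i(u) \cap \mfX_{i^*}(v)$, whose cardinality equals the structure constant $p^i_{ii}$ by coherence, a double inclusion-exclusion yields
\[
    |K_I(u,v) \cap K_J(v,u)| \;\ge\; p^i_{ii} \;-\; o(\lambda_i + \lambda_{i^*}).
\]

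To conclude, Lemma~\ref{lem:mu} together with $\rho = o(n^{2/3})$ gives $\mu \le \rho^2/n_1 = o(n^{1/3})$, while $\lambda_i \lambda_{i^*} = \Omega(n)$ combined with the trivial bounds $\lambda_i, \lambda_{i^*} \le \rho = o(n^{2/3})$ forces $\min(\lambda_i, \lambda_{i^*}) = \omega(n^{1/3})$, so the $o(\lambda_i + \lambda_{i^*})$ error is absorbed. The main obstacle is then to verify $p^i_{ii} \gg \mu$. When $p^i_{ii}$ is a constant fraction of $\min(\lambda_i, \lambda_{i^*})$ the conclusion is immediate, but in the residual case where $p^i_{ii}$ happens to be small, I would enlarge the slice to $\mfX_I(u) \cap \mfX_J(v)$ and pigeonhole over the colors $c(w,v)$ appearing on $K_I(u,v) \cap \mfX_i(u)$; the transfer identities $n_j p^j_{ik} = n_i p^i_{jk^*}$ from Proposition~\ref{prop:cc} can then redistribute the count across $\sum_{j \in I,\,k \in J} p^i_{jk^*}$ to again ensure the intersection has size $\omega(n^{1/3}) \gg \mu$.
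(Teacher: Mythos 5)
Your reduction to the bound $|K_I(u,v)\cap K_J(v,u)| > \mu$ is fine (it is exactly Lemma~\ref{lem:clique-overlap-eq}), and the near-containment of the slice $S=\mfX_i(u)\cap\mfX_{i^*}(v)$ in each clique is also correct: since $|\mfX_i(u)\cap N(v)|=\lambda_i$ exactly and $|K_I(u,v)\cap\mfX_i(u)|\sim\lambda_i$, the slice misses $K_I(u,v)$ in only $o(\lambda_i)$ vertices, and symmetrically for $K_J(v,u)$. The genuine gap is the step you yourself flag: nothing in the hypotheses gives a lower bound on $|S|=p^i_{ii}$, which can be $0$ or far smaller than $\mu$. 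The proposed repair for the "residual case" does not close this. Enlarging the slice to $\mfX_I(u)\cap\mfX_J(v)$ only helps if many vertices $w\in K_I(u,v)$ satisfy $c(v,w)\in J$; but $J$ is one specific block of nondominant colors, and a priori the colors $c(v,w)$ seen from $v$ inside the clique $K_I(u,v)$ need not lie in $J$ at all (let alone land in the particular clique of the $J$-local partition at $v$ that contains $u$). This asymmetry between the partition at $u$ and the partition at $v$ is precisely the obstacle the lemma exists to overcome, and the transfer identities $n_jp^j_{ik}=n_ip^i_{jk^*}$ of Proposition~\ref{prop:cc} relate structure constants to each other but cannot by themselves show that $\sum_{j\in I,\,k\in J}p^i_{jk^*}$ is large. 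Note also that you only use the key hypothesis $\lambda_i\lambda_{i^*}=\Omega(n)$ to get $\min(\lambda_i,\lambda_{i^*})=\omega(n^{1/3})$; that is much weaker than what the hypothesis is actually needed for.

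The paper's proof is structured around exactly this difficulty and is a two-stage argument rather than a direct intersection count for an arbitrary pair. First, for each vertex $u$ it shows that \emph{some} $v\in\mfX_i(u)$ has $K_I(u,v)=K_J(v,u)$: otherwise, inside $K_I(u,v)\cap\mfX_i(u)$ one can select $\ell=\lceil\lambda_i/(2\mu)\rceil$ vertices $w_\alpha$ whose cliques $K_J(w_\alpha,u)$ are pairwise distinct, hence pairwise intersecting in at most $\mu$ vertices by Lemma~\ref{lem:clique-overlap-eq}; each has size $\gtrsim\lambda_{i^*}$ and lies in $N(u)$, so $|N(u)|\gtrsim\lambda_i\lambda_{i^*}/(4\mu)=\omega(\rho)$ (this is where $\lambda_i\lambda_{i^*}=\Omega(n)$ and $\mu\rho=o(n)$ from Proposition~\ref{prop:23} are used), contradicting $|N(u)|=\rho$. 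Second, from that single good pair one gets $|\mfX_{i^*}(v)\cap\mfX_I(u)|\gtrsim\lambda_{i^*}$, and \emph{coherence} transfers this inequality to every pair with $c(u,v)=i$; then, subtracting the $\le\mu$ contribution of each of the $\sim n_i/\lambda_i$ other cliques at $u$, one gets $|\mfX_{i^*}(v)\cap K_I(u,v)|\sim\lambda_{i^*}$, and a pigeonhole over the $\sim n_{i^*}/\lambda_{i^*}$ cliques of the $J$-local partition at $v$ produces a clique meeting $K_I(u,v)$ in $\omega(\mu)$ vertices, whence equality by Lemma~\ref{lem:clique-overlap-eq} and, since $u$ lies in it, it must be $K_J(v,u)$. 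In other words, the quantitative fact your proof needs (that the relevant slice is large) is not an input one can assume; it is the conclusion of the existence-plus-coherence argument, and without it your proposal does not go through.
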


We first prove two easy preliminary statements.

\begin{proposition}\label{prop:23}
    Suppose $\rho = o(n^{2/3})$. Then $\mu = o(n^{1/3})$ and $\mu\rho = o(n)$.
    Furthermore, for every nondominant color $i$, we have $\mu = o(n_i)$.
\end{proposition}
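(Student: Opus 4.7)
The plan is to bootstrap from Lemma~\ref{lem:mu} (the bound $\mu \le \rho^2/n_1$) together with the lower bounds on nondominant valencies we have already established. The first step is to note that the hypothesis $\rho = o(n^{2/3})$ forces color $1$ to be (overwhelmingly) dominant: $n_1 = n - \rho - 1 \sim n$. In particular $n_1 \ge n/2$ for $n$ large enough, so Lemma~\ref{lem:mu} applies and gives
\[
\mu \;\le\; \frac{\rho^2}{n_1} \;=\; \frac{o(n^{4/3})}{\Theta(n)} \;=\; o(n^{1/3}),
\]
which is the first conclusion. Multiplying through by $\rho$ yields
\[
\mu\rho \;\le\; \frac{\rho^3}{n_1} \;=\; \frac{o(n^2)}{\Theta(n)} \;=\; o(n),
\]
which is the second conclusion.

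For the third conclusion, I need a matching lower bound on $n_i$ for each nondominant color $i$. The cleanest source is Lemma~\ref{lem:diam2}, whose hypothesis $\rho < (1-\eps)n^{2/3}$ is satisfied (for any fixed $\eps$ and all sufficiently large $n$) because $\rho = o(n^{2/3})$; the conclusion gives $n_i \ge \sqrt{n-1} = \Omega(n^{1/2})$ for every nondominant color $i$. Combining with $\mu = o(n^{1/3})$ from the first step,
\[
\frac{\mu}{n_i} \;\le\; \frac{o(n^{1/3})}{\Omega(n^{1/2})} \;=\; o(n^{-1/6}) \;=\; o(1),
\]
so $\mu = o(n_i)$ as desired.

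There is no real obstacle here; the statement is essentially a bookkeeping consequence of two previously proved lemmas. The only minor subtlety is to make sure Lemma~\ref{lem:diam2} is being cited in the range where its hypothesis applies, which it does since $\rho = o(n^{2/3})$ is strictly stronger than the $(1-\eps)n^{2/3}$ threshold there. (If one prefers a self-contained argument avoiding Lemma~\ref{lem:diam2}, Observation~\ref{obs:ni-bound} gives $n_i \ge n_1/\rho = \omega(n^{1/3})$, which is already enough to conclude $\mu = o(n_i)$, though with a weaker margin.)
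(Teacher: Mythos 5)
Your proof is correct and follows essentially the same route as the paper: $\mu \le \rho^2/n_1 = o(n^{1/3})$ via Lemma~\ref{lem:mu}, hence $\mu\rho = o(n)$, and $\mu = o(n_i)$ via the lower bound $n_i \ge \sqrt{n-1}$ from Lemma~\ref{lem:diam2}. Your explicit verification of the hypotheses ($n_1 \sim n$, so $n_1 \ge n/2$, and $\rho$ below the threshold of Lemma~\ref{lem:diam2}) is just a more careful writeup of what the paper leaves implicit.
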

\begin{proof}
   By Lemma~\ref{lem:mu}, $\mu \leq \rho^2 / n_1 = o(n^{1/3})$, and then $\mu
   \rho = o(n)$. The last inequality follows by Lemma~\ref{lem:diam2}.
\end{proof}

\begin{lemma}\label{lem:clique-overlap-eq}
    Let $\mfX$ be a PCC and let $I$ and $J$ be sets of nondominant colors such
    that $\mfX$ has strong $I$-local and $J$-local clique partitions.  Suppose
    that for some vertices $u,v,x,y \in V$ we have $|K_I(u,v)\cap K_J(x,y)| >
    \mu$. Then $K_I(u,v) = K_J(x,y)$.
\end{lemma}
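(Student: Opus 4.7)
The plan is to deduce this directly from Observation~\ref{obs:clique-basic} using only the maximality granted by the word \emph{strong}. Write $K_1 = K_I(u,v)$ and $K_2 = K_J(x,y)$; by definition both are cliques in $G(\mfX)$, and by the strongness hypothesis both are in fact maximal cliques of $G(\mfX)$. So the task reduces to showing: two maximal cliques of $G(\mfX)$ whose intersection has size strictly greater than $\mu$ must coincide.

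To see this, suppose for contradiction that $K_2 \not\subseteq K_1$, and pick $w \in K_2 \setminus K_1$. Since $K_2$ is a clique, $N(w) \supseteq K_2 \setminus \{w\}$, and since $w \notin K_1$ we have $K_1 \cap K_2 \subseteq K_2 \setminus \{w\}$. Therefore
\[
|N(w) \cap K_1| \;\geq\; |K_1 \cap K_2| \;>\; \mu.
\]
Now $w \notin K_1$, so Observation~\ref{obs:clique-basic} (applied with $C = K_1$) yields $K_1 \subseteq N(w)$. Consequently $K_1 \cup \{w\}$ is a clique of $G(\mfX)$ properly containing $K_1$, contradicting the maximality of $K_1$. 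Thus $K_2 \subseteq K_1$, and by the symmetric argument (swap the roles of $K_1$ and $K_2$, picking instead $w \in K_1 \setminus K_2$) we also get $K_1 \subseteq K_2$, hence $K_1 = K_2$.

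There is no real obstacle here; the entire content of the lemma is in packaging the $\mu$-threshold of Observation~\ref{obs:clique-basic} together with the maximality embedded in the definition of ``strong.'' The only point that requires a moment of care is that $w$ must be chosen outside $K_1$ to apply the observation, which is why we control $|N(w) \cap K_1|$ by $|K_1 \cap K_2|$ rather than by $|K_1 \cap K_2| - 1$: the element $w$ being excluded from $K_1 \cap K_2$ ensures no subtraction is needed, and the strict inequality $|K_1 \cap K_2| > \mu$ is preserved.
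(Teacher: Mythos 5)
Your proof is correct and is essentially the paper's own argument: pick a vertex of one clique outside the other, note its neighborhood contains the intersection so exceeds $\mu$, apply Observation~\ref{obs:clique-basic}, and contradict the maximality supplied by strongness, then argue symmetrically. The only difference is that you spell out the bookkeeping (why $w$ can be excluded from the intersection without weakening the strict inequality) slightly more explicitly than the paper does.
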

\begin{proof}
    Suppose there exists a vertex
    $z \in K_J(x, y) \setminus K_I(u, v)$.
    We have $|N(z)\cap K_I(u,v)| \geq |K_J(x,y) \cap K_I(u,v)|  > \mu$. Then
    $K_I(u,v) \subseteq N(z)$ by Observation~\ref{obs:clique-basic},
    contradicting the maximality of $K_I(u,v)$.  Thus, $K_J(x,y) \subseteq
    K_I(u, v)$.  Similarly, $K_I(u,v)\subseteq K_J(x,y)$.
\end{proof}

\begin{proof}[Proof of Lemma~\ref{lem:clique-eq-sym}]
    Without loss of generality, assume $\lambda_i \le \lambda_{i^*}$.

    Suppose for contradiction that there exists a vertex $u \in V$ such that for \emph{every} $v
    \in \mfX_i(u)$, we have $K_I(u,v) \ne K_J(v,u)$.  Then $|K_I(u,v) \cap K_J(v,u)| \le \mu$ by
    Lemma~\ref{lem:clique-overlap-eq}. Fix $v \in \mfX_i(u)$, so for every $w
    \in K_I(u,v)\cap \mfX_i(u)$, we have $|K_J(w,u)\cap K_I(u,v)| \le \mu$.
    Hence, there exists some sequence $w_1,\ldots,w_\ell$ of $\ell =
    \lceil\lambda_i/(2\mu)\rceil$ vertices $w_\alpha \in K_I(u,v)\cap
    \mfX_i(u)$ such that $K_J(w_\alpha,u) \ne K_J(w_\beta,u)$ for $\alpha \ne
    \beta$. But by Lemma~\ref{lem:clique-overlap-eq}, for
    $\alpha \ne \beta$ we have $|K_J(w_\alpha,u)\cap K_J(w_\beta,u)| \le \mu$. Hence, for any $1
    \le \alpha \le \ell$ we have
    \[
        \left|K_J(w_\alpha,u)\setminus \bigcup_{\beta \ne \alpha} K_J(w_\beta,u)\right| \gtrsim
        \lambda_{i^*} - \mu\lambda_i/(2\mu) \ge \lambda_{i^*}/2.
    \]
    But $K_J(w_\alpha,u) \subseteq N(u)$, so
    \[
        |N(u)| \ge \left|\bigcup_{\alpha=1}^{\ell} K_J(w_\alpha,u)\right| \gtrsim
        \frac{\lambda_i\lambda_{i^*}}{4\mu} = \omega(\rho)
    \]
    by Proposition~\ref{prop:23}. This contradicts the definition of
    $\rho$.

    Hence, for any vertex $u$, there is some $v \in \mfX_i(u)$ such that
    $K_I(u,v) = K_J(v,u)$.  Then, in particular, $|\mfX_{i^*}(v) \cap
    \mfX_I(u)| \gtrsim \lambda_{i^*}$ by the definition of a $J$-local clique
    partition. By the coherence of $\mfX$, for every $v \in \mfX_i(u)$, we have
    $|\mfX_{i^*}(v)\cap \mfX_I(u)| \gtrsim \lambda_{i^*}$. Recall that
    $\mfX_I(u)$ is partitioned into $\sim n_i/\lambda_i$ maximal cliques, and
    for each of these cliques $C$ other than $K_I(u,v)$, we have $|N(v)\cap C|
    \le \mu$.  Hence,
    \[
        |\mfX_{i^*}(v)\cap K_I(u,v)| \gtrsim \lambda_{i^*} - O\left(\frac{\mu
        n_i}{\lambda_i}\right) = \lambda_{i^*} -
        o\left(\frac{n}{\lambda_i}\right) \sim \lambda_{i^*}
    \]
    by Proposition~\ref{prop:23}. Since the $J$-local clique partition at $v$
    partitions $\mfX_{i^*}(v)$ into $\sim n_i/\lambda_{i^*}$ cliques, at least
    one of these intersects $K_I(u,v)$ in at least $\sim \lambda_{i^*}^2/n_i =
    \omega(\mu)$ vertices. In other words, there is some $x \in \mfX_{i^*}(v)$
    such that $|K_J(v,x) \cap K_I(u,v)| = \omega(\mu)$.  But then $K_J(v,x) =
    K_I(u,v)$ by Lemma~\ref{lem:clique-overlap-eq}. In particular, $u \in
    K_J(v,x)$, so $K_J(v,x) = K_J(v,u)$. Hence, $K_J(v,u) = K_J(v,x) =
    K_I(u,v)$, as desired.
\end{proof}

\subsection{Existence of strong local clique partitions}

Our next step in proving Theorem~\ref{thm:clique-geometry} is showing the
existence of strong local clique partitions. We accomplish this via the
following lemma.

\begin{lemma}\label{lem:local-clique-strong}
    Let $\mfX$ be a PCC such that $\rho = o(n^{2/3})$, and let $i$ be
    a nondominant color such that $n_i\mu = o(\lambda_i^2)$. Suppose that for
    every color $j$ with $n_j < n_i$, we have $\lambda_j = \Omega(\sqrt{n})$.
    Then for $n$ sufficiently large, there is a set $I$ of nondominant colors
    with $i \in I$ such that $\mfX$ has strong $I$-local clique partitions.
\end{lemma}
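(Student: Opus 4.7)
The plan is to start with the $\{i\}$-local clique partition $\mcP_u^{(i)}$ at every vertex $u$ (which exists by Corollary~\ref{cor:single-color-clique}, since the hypothesis $n_i\mu = o(\lambda_i^2)$ is exactly what is needed) and then extend each clique $C \in \mcP_u^{(i)}$ to the unique maximal clique in $G(\mfX)$ containing $C \cup \{u\}$. The set $I$ will be read off from the colors of vertices appearing in these extensions, and then shown to yield a strong $I$-local clique partition.

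First observe that $\lambda_i = \omega(\mu)$: since $\lambda_i \le n_i$, the hypothesis $\lambda_i^2 = \omega(n_i\mu)$ forces $\lambda_i^2 = \omega(\lambda_i\mu)$. Now, for each $u \in V$ and each $C \in \mcP_u^{(i)}$, let $\tilde C$ be the set of vertices in $V \setminus (C \cup \{u\})$ adjacent in $G(\mfX)$ to every vertex of $C \cup \{u\}$, and set $\hat C = C \cup \{u\} \cup \tilde C$. Two vertices $w, w' \in \tilde C$ with $c(w,w') = 1$ would satisfy $|N(w) \cap N(w')| \ge |C \cup \{u\}| \sim \lambda_i > \mu$, contradicting the definition of $\mu$; hence $\hat C$ is a clique, and it is the unique maximal clique in $G(\mfX)$ containing $C \cup \{u\}$. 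Define $I$ to be the set of nondominant colors $j$ such that $|\hat C \cap \mfX_j(u)| = \omega(\mu)$ for some (equivalently, by coherence together with the uniqueness of $\mcP_u^{(i)}$ from Lemma~\ref{lem:lcp-unique}, every) choice of $u$ and $C$. Since $|\hat C \cap \mfX_i(u)| = |C| \sim \lambda_i = \omega(\mu)$, we have $i \in I$.

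The main task, and the principal obstacle, is to show that the collection $\mcP_u = \{\hat C \setminus \{u\} : C \in \mcP_u^{(i)}\}$ satisfies the quantitative estimate $|\hat C \cap \mfX_j(u)| \sim \lambda_j$ for every $j \in I$. For $j \in I$ with $n_j < n_i$, the hypothesis $\lambda_j = \Omega(\sqrt n)$, combined with $n_j \le \rho = o(n^{2/3})$ and $\mu = o(n^{1/3})$ from Proposition~\ref{prop:23}, yields $n_j\mu = o(\lambda_j^2)$, so the $\{j\}$-local clique partition $\mcP_u^{(j)}$ exists by Corollary~\ref{cor:single-color-clique}. The set $D := \hat C \cap \mfX_j(u)$ is a clique of order $\omega(\mu)$ in $G(\mfX)[\mfX_j(u)]$, so by Observation~\ref{obs:clique-basic} and the uniqueness of $\mcP_u^{(j)}$ (Lemma~\ref{lem:lcp-unique}), $D$ is contained in a unique clique $D' \in \mcP_u^{(j)}$ of order $\sim \lambda_j$; a further application of Observation~\ref{obs:clique-basic} forces $D' \subseteq \hat C$ by maximality of $\hat C$, so $D = D'$. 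Colors $j \in I$ with $n_j \ge n_i$, if any, require a separate coherence-and-averaging argument to pin down the count.

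Finally, the partition properties follow from standard arguments: disjointness of $\hat C \setminus \{u\}$ and $\hat C' \setminus \{u\}$ for $C \neq C' \in \mcP_u^{(i)}$ follows since a shared vertex $w$ would satisfy $|N(w) \cap C'| = |C'| > \mu$, forcing $C' \subseteq \hat C$ by Observation~\ref{obs:clique-basic} and contradicting the disjointness of $C,C'$ inside the $\{i\}$-partition of $\mfX_i(u)$; the cliques cover $\mfX_I(u)$ by the definition of $I$ and coherence; each $\hat C \setminus \{u\}$ is maximal in $G(\mfX)[\mfX_I(u)]$ since any $w \in \mfX_I(u) \subseteq N(u)$ adjacent to all of $\hat C \setminus \{u\}$ is adjacent to all of $\hat C$, forcing $w \in \hat C$ by maximality. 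The partition is strong because each $\hat C$ was constructed to be maximal in $G(\mfX)$.
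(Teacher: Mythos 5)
Your construction takes a genuinely different route from the paper --- the paper starts from the set $I$ supplied by Lemma~\ref{lem:local-clique-exists} and proves strongness by a minimal-counterexample argument on $n_i$, invoking the inductively available strong $J$-local partitions for a smaller color $j$ --- but as written your route has gaps at exactly the points where the paper does its hard work. The central one: you never establish Definition~\ref{def:local-clique}~(ii), i.e.\ $|\hat C\cap\mfX_j(u)|\sim\lambda_j$ for all $j\in I$. For colors $j\in I$ with $n_j\ge n_i$ you explicitly defer this (``a separate coherence-and-averaging argument''), but this is not a routine step: the lemma's hypothesis gives $\lambda_j=\Omega(\sqrt n)$ only when $n_j<n_i$, so $\{j\}$-local partitions need not exist for such $j$, and it is precisely to control attachments of these colors that the paper proves Lemma~\ref{lem:local-clique-exists} via Lemmas~\ref{lem:triple-count} and~\ref{lem:local-clique-pre}, the relation $\lambda_j\gtrsim\lambda_i n_j/n_i=\omega(\sqrt{\mu n_j})$, and the edge count $n_i\lambda_j\sim n_j\lambda_i$. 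Even in your handled case $n_j<n_i$, the step placing $D=\hat C\cap\mfX_j(u)$ inside a single clique of $\mcP_u^{(j)}$ needs, via pigeonhole over the $\sim n_j/\lambda_j$ blocks, that $|D|=\omega(\mu n_j/\lambda_j)$; your definition of $I$ only gives $|D|=\omega(\mu)$, and since $n_j/\lambda_j$ can be polynomially large (e.g.\ $\lambda_j\sim\sqrt n$, $n_j$ near $n^{2/3}$), pairwise adjacency of $D$ does not by itself confine $D$ to one block of the partition.

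There are further structural gaps. The collection $\{\hat C\setminus\{u\}: C\in\mcP_u^{(i)}\}$ is not shown to be a partition of $\mfX_I(u)$: nothing proves that every vertex of $\mfX_j(u)$, $j\in I$, lies in some $\hat C$ (coverage would essentially require $n_i\lambda_j\gtrsim n_j\lambda_i$, which you do not derive, and in any case ``partition'' is an exact requirement that an asymptotic count cannot certify --- the paper instead builds coverage in via Lemma~\ref{lem:local-clique-pre}~(i), which gives each $v\in\mfX_j(u)$ a unique clique $C$ with $C\subseteq N(v)$); and $\hat C$ may contain vertices of colors with small but nonzero intersection, i.e.\ colors outside $I$, so $\hat C\setminus\{u\}\not\subseteq\mfX_I(u)$, while trimming $\hat C$ to $\mfX_I(u)$ destroys the maximality in $G(\mfX)$ on which your strongness claim rests. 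Finally, the assertion that the definition of $I$ is independent of the choice of $u$ and $C$ ``by coherence'' is unsupported: $\hat C$ is not determined by the structure constants, and different cliques of $\mcP_u^{(i)}$ could a priori meet $\mfX_j(u)$ very differently --- the paper obtains such uniformity only through explicit counting. In short, your proposal attempts to redo the content of Lemma~\ref{lem:local-clique-exists} from scratch while omitting its hardest steps, and the induction on $n_i$ that drives the paper's proof of strongness is neither used nor replaced.
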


We will prove Lemma~\ref{lem:local-clique-strong} via a sequence of lemmas
which gradually improve our guarantees about the number of edges between
cliques of the $I$-local clique partition at a vertex $u$ and the various
neighborhoods $\mfX_j(u)$ for $j \notin I$.

\begin{lemma}\label{lem:triple-count}
    Let $\mfX$ be a PCC, and let $i$ and $j$ be nondominant colors.
    Then for any $0< \eps <1$ and any $u,v \in V$ with $c(u,v) = j$, we have
    \[
        |\mfX_i(u)\cap N(v)| \le \max \left\{\frac{\lambda_i+1}{1 - \eps},
        n_i\sqrt{\frac{\mu}{\eps n_j}}\right\}
    \]
\end{lemma}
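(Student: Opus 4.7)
The plan is to let $A = \mfX_i(u) \cap N(v)$ with $s = |A|$ and to establish the non-trivial bound in the regime $s > (\lambda_i+1)/(1-\eps)$, which is equivalent to $s - 1 - \lambda_i > s \eps$. In this regime I would show $s^2 \le n_i^2 \mu / (\eps n_j)$ by double-counting ordered pairs of distinct vertices in $\mfX_i(u)$ that share a common nondominant neighbor in $\mfX_j(u)$.

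First I would invoke coherence to establish uniform degrees in the nondominant bipartite graph between $\mfX_i(u)$ and $\mfX_j(u)$: for each $z \in \mfX_j(u)$ the count $d_z = |\mfX_i(u) \cap N(z)|$ equals $s$ (it depends only on $c(u,z) = j = c(u,v)$), and by double-counting edges each $w \in \mfX_i(u)$ has exactly $t = n_j s / n_i$ nondominant neighbors in $\mfX_j(u)$. Then
\[
    n_j s(s-1) \;=\; \sum_{z \in \mfX_j(u)} d_z(d_z - 1) \;=\; \sum_{\substack{w, w' \in \mfX_i(u) \\ w \ne w'}} C(w,w'),
\]
where $C(w,w') = |\mfX_j(u) \cap N(w) \cap N(w')|$.

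Next I would split the right-hand side according to $c(w,w')$. For the $n_i(n_i - 1 - \lambda_i)$ ordered pairs with $c(w,w') = 1$ (dominant), $C(w,w') \le |N(w) \cap N(w')| = \mu$. For the $n_i \lambda_i$ ordered pairs with $c(w,w')$ nondominant, the essential bound is $C(w,w') \le |\mfX_j(u) \cap N(w)| = t$. Putting these in, $n_j s(s-1) \le \mu n_i(n_i - 1 - \lambda_i) + n_i \lambda_i \cdot t = \mu n_i(n_i - 1 - \lambda_i) + n_j s \lambda_i$. Rearranging gives $n_j s(s - 1 - \lambda_i) \le \mu n_i(n_i - 1 - \lambda_i) \le n_i^2 \mu$. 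Using $s - 1 - \lambda_i > s \eps$ in our regime, $\eps n_j s^2 \le n_i^2 \mu$ and hence $s \le n_i \sqrt{\mu/(\eps n_j)}$.

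The main obstacle is choosing the right bound for $C(w,w')$ on the nondominant pairs. The naive estimate $C(w,w') \le \lambda_{c(w,w')} \le \rho$ yields only $n_j s(s-1) \le n_i^2 \mu + n_i\lambda_i\rho$, which cannot be rearranged into the desired form. Using $C(w,w') \le t$ instead produces a term $n_j s \lambda_i$ on the right that combines cleanly with $n_j s(s-1)$ on the left, cancelling to leave the factors $(s - 1 - \lambda_i)$ and $(n_i - 1 - \lambda_i)$ in parallel and enabling the conclusion.
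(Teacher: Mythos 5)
Your proof is correct and follows essentially the same route as the paper: a double count of triples $(z,x,y)$ with $z\in\mfX_j(u)$ and $x,y\in\mfX_i(u)\cap N(z)$, using coherence to get the constant count $s$ per $z$, the parameter $\mu$ for dominant pairs, $\lambda_i$ to control nondominant pairs inside $\mfX_i(u)$, and the same $\eps$-case split at the end. The only (valid) difference is organizational: the paper restricts to pairs with $c(x,y)=1$ from the start, lower-bounding their number per $z$ by $\alpha(\alpha-\lambda_i-1)$ and upper-bounding globally by $n_i^2\mu$, whereas you count all pairs and cancel the nondominant contribution via the biregularity $t=n_js/n_i$, arriving at the same inequality $n_js(s-1-\lambda_i)\le n_i^2\mu$.
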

\begin{proof}
    Fix $u,v \in V$ with $c(u,v) = j$ and let $\alpha = |\mfX_i(u)\cap N(v)|$. We
    count the number of triples $(x,y,z)$ of vertices such that $x,y \in
    \mfX_i(u)\cap N(z)$, with $c(u,z)=j$ and $c(x,y)=1$. There
    are at most $n_i^2$ pairs $x,y \in \mfX_i(u)$, and if $c(x,y)=1$ then there
    are at most $\mu$ vertices $z$ such that $x,y \in N(z)$. Hence, the number
    of such triples is at most $n_i^2\mu$. On the other hand, by the coherence
    of $\mfX$, for every $z$ with $c(u,z) = j$, we have at least
    $\alpha(\alpha-\lambda_i-1)$ pairs $x,y \in \mfX_i(u)\cap N(z)$ with
    $c(x,y)=1$. Hence, there are at least $n_j \alpha(\alpha-\lambda_i-1)$
    total such triples.  Thus, 
    \[
        n_j\alpha(\alpha-\lambda_i-1) \le n_i^2\mu.
    \]
    Hence, if $\alpha \leq (\lambda_i + 1) / (1 - \eps)$, then we are done.
    Otherwise, $\alpha > (\lambda_i + 1) / (1 - \eps)$, and then $\lambda_i + 1
    < (1 - \eps)\alpha$. So, we have
    \[
        n_i ^2 \mu \ge n_j \alpha (\alpha - \lambda_i - 1) > \eps n_j \alpha^2,
    \]
    and then $\alpha < n_i \sqrt{\mu/(\eps n_j)}$.
\end{proof}

\begin{lemma}\label{lem:local-clique-pre}
    Let $\mfX$ be a PCC, and let $i$ be a nondominant color such that $n_i\mu =
    o(\lambda_i^2)$.  Let $I$ be a set of nondominant colors with $i \in I$
    such that $\mfX$ has $I$-local clique partitions.  Let $j$ be a nondominant
    color such that $n_i\sqrt{\mu/n_j} < (\sqrt{3}/2)\lambda_i$.   
    Let $u \in V$, let $\mcP_u$ be the $I$-local clique partition at $u$, and let $v
    \in \mfX_j(u)$. Suppose some clique $C \in \mcP_u$ is such that $c(u,v) = j$
    and $|N(v)\cap C| \ge \mu$. Then for every vertex $x,y \in V$ with $c(x,y)
    = j$, letting $\mcP_x$ be the $I$-local clique partition at $x$, the
    following statements hold: 
    \begin{enumerate}
        \item[(i)] there is a unique clique $C \in \mcP_x$ such that $C
            \subseteq N(y)$;
        \item[(ii)] $|N(y)\cap \mfX_i(x)| \sim \lambda_i$.
    \end{enumerate}
\end{lemma}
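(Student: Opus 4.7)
The plan is to establish both conclusions first for the specific pair $(u,v)$ in the hypothesis, and then transfer to arbitrary $(x,y)$ with $c(x,y) = j$ by coherence: the quantity $|N(y) \cap \mfX_i(x)|$ depends only on $i$ and $j$, since it equals $\sum_{\ell \text{ nondominant}} p^j_{i\ell}$.

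First, I would show that $C \subseteq N(v)$. The hypothesis gives only $|N(v) \cap C| \ge \mu$, which is not strict enough to apply Observation~\ref{obs:clique-basic} directly; instead, I apply it to the augmented clique $C \cup \{u\}$. This is still a clique in $G(\mfX)$ because $C \subseteq \mfX_I(u)$, and since $j$ is nondominant, $u \in N(v)$, giving $|N(v) \cap (C \cup \{u\})| \ge \mu + 1 > \mu$. Observation~\ref{obs:clique-basic} then yields $C \cup \{u\} \subseteq N(v)$, so $|N(v) \cap \mfX_i(u)| \ge |C \cap \mfX_i(u)| \sim \lambda_i$. (The degenerate case $v \in C$ gives $C \setminus \{v\} \subseteq N(v)$ automatically from the clique structure.)

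Next I would apply Lemma~\ref{lem:triple-count} with $\eps = 1/4$. Under the hypothesis $n_i\sqrt{\mu/n_j} < (\sqrt{3}/2)\lambda_i$, we have $n_i\sqrt{\mu/(\eps n_j)} = 2n_i\sqrt{\mu/n_j} < \sqrt{3}\lambda_i$ and $(\lambda_i+1)/(1-\eps) = (4/3)(\lambda_i+1)$, so for large $n$ we obtain $|N(v) \cap \mfX_i(u)| < \sqrt{3}\lambda_i$, strictly below $2\lambda_i$. If there were a second clique $C' \in \mcP_u$ with $|N(v) \cap C'| \ge \mu$, the previous paragraph applied to $C'$ would force $C' \subseteq N(v)$, and since $\mcP_u$ partitions $\mfX_I(u)$, the two disjoint cliques would contribute $\ge 2\lambda_i(1-o(1))$ vertices to $N(v) \cap \mfX_i(u)$, a contradiction. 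Hence $C$ is the unique clique in $\mcP_u$ with $|N(v) \cap C| \ge \mu$, and the remaining cliques each contribute at most $\mu$, summing to $O(\mu n_i/\lambda_i) = o(\lambda_i)$ by $n_i\mu = o(\lambda_i^2)$. Combining, $|N(v) \cap \mfX_i(u)| \sim \lambda_i$, and by coherence this gives conclusion~(ii) for every $(x,y)$ with $c(x,y)=j$.

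Finally, to derive (i) for arbitrary $(x,y)$ with $c(x,y)=j$, I would decompose $|N(y) \cap \mfX_i(x)| = \sum_{C' \in \mcP_x} |N(y) \cap C' \cap \mfX_i(x)|$. For each $C'$, applying Observation~\ref{obs:clique-basic} to $C' \cup \{x\}$ gives the dichotomy: either $C' \subseteq N(y)$, or $|N(y) \cap C'| \le \mu - 1$. Letting $t$ be the count of cliques of the first type, the sum evaluates to $t\lambda_i(1+o(1)) + o(\lambda_i)$, which by (ii) must equal $\lambda_i(1+o(1))$, forcing $t=1$ for large $n$, exactly (i). The main obstacle is calibrating the constants in the second step: the hypothesis $n_i\sqrt{\mu/n_j} < (\sqrt{3}/2)\lambda_i$ is tuned precisely to make both terms in the max of Lemma~\ref{lem:triple-count} fall strictly below $2\lambda_i$, which is what rules out a second large clique. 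Everything else is bookkeeping, with the strict-inequality requirement of Observation~\ref{obs:clique-basic} handled uniformly by augmenting each clique with the central vertex ($u$ or $x$), which is guaranteed to lie in the relevant neighborhood since $j$ is nondominant.
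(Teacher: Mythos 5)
Your proposal is correct and follows essentially the same route as the paper: both hinge on applying Observation~\ref{obs:clique-basic} to the augmented clique $C\cup\{u\}$ (resp.\ $C'\cup\{x\}$), on Lemma~\ref{lem:triple-count} with a small constant $\eps$ to cap $|N(v)\cap\mfX_i(u)|$ strictly below $2\lambda_i$ and thereby exclude a second fully contained clique, on the coherence of $\mfX$ to transfer the estimate to all pairs of color $j$, and on the count $\mu\,n_i/\lambda_i = o(\lambda_i)$ for the remaining cliques. The only difference is bookkeeping order (you settle $|N(v)\cap\mfX_i(u)|\sim\lambda_i$ at the hypothesis pair with $\eps=1/4$ and deduce (i) from the transferred value of (ii), while the paper transfers only the lower bound and reruns the triple-count argument with $\eps=1/3$ at each pair), which does not change the substance of the argument.
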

\begin{proof}
    Letting $\widehat{C} = C \cup \{u\}$, we have $|\widehat{C}\cap N(v)| \ge \mu+1 > \mu$.
    Therefore, by Observation~\ref{obs:clique-basic},
    we have $C \subseteq N(v)$. In particular, $|N(v)\cap \mfX_i(u)| \gtrsim
    \lambda_i$, and so by the coherence of $\mfX$, $|N(y)\cap \mfX_i(x)|
    \gtrsim \lambda_i$ for every pair $x,y \in V$ with $c(x,y) = j$.

    Now fix $x \in V$, and let $\mcP_x$ be the $I$-local clique partition at
    $x$.  By the definition of an $I$-local clique partition, we have
    $|\mcP_x| \sim n_i/\lambda_i$. For every $y \in \mfX_j(x)$, by assumption we have  
    \begin{equation}\label{eq:xy-i-nbrhd-lower}
        |N(y)\cap \mfX_i(x)|\gtrsim \lambda_i = \omega(\mu n_i/\lambda_i).
    \end{equation}
    Then it follows from the pigeonhole principle that for $n$ sufficiently
    large, there is some clique $C \in \mcP_x$ such that $|N(y)\cap C| > \mu$,
    and then $C \subseteq N(y)$ by Observation~\ref{obs:clique-basic}. 

    Now suppose for contradiction that there is some clique $C' \in \mcP_x$
    with $C' \ne C$, such that $C' \subseteq N(y)$.
    \begin{equation}\label{eq:localclique}
        |N(y)\cap \mfX_i(x)| \ge |C \cup C'| \gtrsim 2\lambda_i \sim
        2(\lambda_i + 1)
    \end{equation}
    (with the last relation holding since $\lambda_i = \omega(\sqrt{n_i\mu}) =
    \omega(1)$.) However, by Lemma~\ref{lem:triple-count} with $\eps = 1/3$, we
    have
    \[
        |\mfX_i(x)\cap N(y)| \le
        \max\left\{\frac{3}{2}(\lambda_i+1),n_i\sqrt{\frac{3\mu}{n_j}}\right\}
        =\frac{3}{2}(\lambda_i + 1),
    \]
    with the last equality holding by assumption. This contradicts
    Eq.~\eqref{eq:localclique}, so we conclude that $C$ is the unique clique in
    $\mcP_x$ satisfying $C \subseteq N(y)$. In particular, by
    Observation~\ref{obs:clique-basic}, we have $|N(y) \cap C'| \le \mu$ for
    every $C' \in \mcP_x$ with $C' \ne C$.

    Finally, we estimate $|N(y)\cap \mfX_i(x)|$ by
    \begin{align*}
        |N(y)\cap \mfX_i(x) \cap C| &+ \sum_{C' \ne
        C}|N(y)\cap \mfX_i(x) \cap C'| \\
        \lesssim \lambda_i &+ \mu n_i/\lambda_i
        \sim \lambda_i,
    \end{align*}
    which, combined with Eq.~\eqref{eq:xy-i-nbrhd-lower}, gives $|N(y)\cap
    \mfX_i(x)| \sim \lambda_i$.
\end{proof}

\begin{lemma}\label{lem:local-clique-exists}
    Let $\mfX$ be a PCC, and let $i$ be a nondominant color such that
    $n_i\mu = o(\lambda_i^2)$. There exists a set $I$ of nondominant colors
    with $i \in I$ such that $\mfX$ has $I$-local clique partitions and the
    following statement holds. Suppose $j$ is a nondominant color such that
    $n_i\sqrt{\mu/n_j} = o(\lambda_i)$, let $u \in V$, and let $\mcP$ be the 
    $I$-local clique partition at $u$. Then for any $C \in \mcP$ and any
    vertex $v \in \mfX_j(u)\setminus C$, we have $|N(v)\cap C| < \mu$.
\end{lemma}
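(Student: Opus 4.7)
The plan is to take $I$ to be a maximal set of nondominant colors containing $i$ such that $\mfX$ has $I$-local clique partitions; such an $I$ exists because $\{i\}$ itself works by Corollary~\ref{cor:single-color-clique}, and the color set is finite. We show by contradiction that this $I$ has the boundary property claimed.

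Suppose for contradiction that some $j$ with $n_i\sqrt{\mu/n_j} = o(\lambda_i)$, along with $u \in V$, $C \in \mcP_u$ (where $\mcP_u$ is the unique $I$-local clique partition at $u$, by Lemma~\ref{lem:lcp-unique}), and $v \in \mfX_j(u) \setminus C$, satisfies $|N(v) \cap C| \geq \mu$. The case $j \in I$ is ruled out immediately: then $v \in \mfX_I(u) \setminus C$, and Observation~\ref{obs:clique-basic} applied to the clique $C \cup \{u\}$ (for which $|N(v) \cap (C \cup \{u\})| \ge \mu + 1$) yields $C \subseteq N(v)$, making $C \cup \{v\}$ a larger clique in the subgraph of $G(\mfX)$ induced on $\mfX_I(u)$, contradicting the maximality of $C \in \mcP_u$. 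So $j \notin I$, and Lemma~\ref{lem:local-clique-pre} then supplies, for every $x \in V$ and $y \in \mfX_j(x)$, a unique clique $C(y) \in \mcP_x$ with $C(y) \subseteq N(y)$, together with $|N(y) \cap \mfX_i(x)| \sim \lambda_i$. We use this to extend each $\mcP_x$ to a partition $\mcP'_x$ of $\mfX_{I\cup\{j\}}(x)$ by setting $C^+ := C \cup \{y \in \mfX_j(x) : C \subseteq N(y)\}$ for each $C \in \mcP_x$; our aim is to verify that $\mcP'_x$ is an $(I\cup\{j\})$-local clique partition, contradicting the maximality of $I$.

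That each $C^+$ is a maximal clique of the induced subgraph on $\mfX_{I\cup\{j\}}(x)$ is straightforward: any two absorbed $y, y' \in \mfX_j(x) \cap C^+$ share at least $|C| \sim \lambda_i$ common neighbors (and $\mu = o(\lambda_i)$ follows from $n_i\mu = o(\lambda_i^2)$ together with $\lambda_i \leq n_i$), forcing $c(y, y')$ to be nondominant; maximality comes from the maximality of $C$ in the old partition (handling candidate extensions in $\mfX_I(x)$) and the uniqueness of absorption (handling candidates in $\mfX_j(x)$). The sizes $|C^+ \cap \mfX_{i'}(x)| \sim \lambda_{i'}$ for $i' \in I$ are inherited from $\mcP_x$.

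The main obstacle is the new size condition $|C^+ \cap \mfX_j(x)| \sim \lambda_j$. We derive this from a coherence-based double count. Fixing $v \in C \cap \mfX_i(x)$ and using that $v$ is adjacent to all of $C^+ \setminus \{v\}$, decompose the structure constant $\beta_{ij} := |N(v) \cap \mfX_j(x)|$ as $\beta_{ij} = |C^+ \cap \mfX_j(x)| + \sum_{C' \neq C} |N(v) \cap (C')^+ \cap \mfX_j(x)|$. Each off-diagonal term is at most $\mu$: otherwise Observation~\ref{obs:clique-basic} applied to the clique $(C')^+$ would force $(C')^+ \subseteq N(v)$, making $(C')^+ \cup \{v\}$ a larger clique in the induced subgraph on $\mfX_{I\cup\{j\}}(x)$ — contradicting the maximality just established. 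The sum is therefore $O(\mu n_i/\lambda_i)$, and by the biregularity of the bipartite graph between $\mfX_i(x)$ and $\mfX_j(x)$ (Fact~\ref{fact:bipartite_regular}) combined with Lemma~\ref{lem:local-clique-pre}~(ii), $\beta_{ij} = n_j\beta_{ji}/n_i \sim n_j\lambda_i/n_i$, giving $|C^+ \cap \mfX_j(x)| \sim n_j\lambda_i/n_i$ uniformly in $C$. Running the analogous decomposition from a vertex $v' \in \mfX_j(x)$ yields $\lambda_j = |C^+ \cap \mfX_j(x)| - 1 + O(\mu n_i/\lambda_i)$ for the clique containing $v'$, so $\lambda_j \sim n_j\lambda_i/n_i$ as well. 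The hypothesis $n_i\sqrt{\mu/n_j} = o(\lambda_i)$ rearranges precisely to $\mu n_i/\lambda_i = o(n_j\lambda_i/n_i)$, which is exactly what is needed to make the error term negligible relative to both estimates, yielding $|C^+ \cap \mfX_j(x)| \sim \lambda_j$ and contradicting the maximality of $I$.
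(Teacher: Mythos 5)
Your proposal is correct, and its skeleton is the paper's: take $I$ maximal with $i \in I$ such that $\mfX$ has $I$-local clique partitions, use the contradiction witness to invoke Lemma~\ref{lem:local-clique-pre}, absorb $\mfX_j(x)$ into the cliques of $\mcP_x$, and contradict maximality (your explicit dismissal of the case $j \in I$ via Observation~\ref{obs:clique-basic} is a point the paper leaves tacit). Where you genuinely diverge is the verification of Definition~\ref{def:local-clique}~(ii) for the new color $j$, which is the technical heart. The paper argues indirectly: pigeonhole gives one extended clique with $|C \cap \mfX_j(u)| \gtrsim \lambda_i n_j/n_i$, whence $\lambda_j \gtrsim \lambda_i n_j/n_i = \omega(\sqrt{\mu n_j})$ and $n_j\mu = o(\lambda_j^2)$; it then re-invokes Corollary~\ref{cor:single-color-clique} to get $\{j\}$-local clique partitions, applies Lemma~\ref{lem:local-clique-pre} a second time with the roles of $i$ and $j$ reversed to obtain $|N(x)\cap \mfX_j(u)| \sim \lambda_j$ for $x \in \mfX_i(u)$, double-counts to get $n_i\lambda_j \sim n_j\lambda_i$, and finally bounds $|(N(v)\cap \mfX_j(u))\setminus C| \le \mu|\mcP'|$. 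You replace all of this by exact biregularity (Fact~\ref{fact:bipartite_regular}) combined with Lemma~\ref{lem:local-clique-pre}~(ii) to compute $\beta_{ij} \sim n_j\lambda_i/n_i$, and two decompositions over the cliques of $\mcP'_x$ whose cross terms are bounded by $\mu$ via Observation~\ref{obs:clique-basic} and the maximality of the extended cliques; the hypothesis on $j$ enters exactly as $\mu n_i/\lambda_i = o(n_j\lambda_i/n_i)$. This route is shorter, avoids the second Metsch step and the role-reversed application of Lemma~\ref{lem:local-clique-pre}, and gives $|C^+\cap \mfX_j(x)| \sim n_j\lambda_i/n_i$ uniformly over \emph{all} cliques (so each clique automatically meets $\mfX_j(x)$, another point the paper glosses), whereas the paper's longer route has the side benefit of establishing $n_j\mu = o(\lambda_j^2)$ and the $\{j\}$-local structure explicitly. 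Two small points you should record, both harmless: the decomposition from $v \in C\cap \mfX_i(x)$ requires $C\cap \mfX_i(x) \ne \emptyset$, which holds since $|C\cap \mfX_i(x)| \sim \lambda_i = \omega(1)$; and the $-1$ in your $\lambda_j$ estimate is negligible because $n_j\lambda_i/n_i = \omega(\sqrt{\mu n_j}) = \omega(1)$.
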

\begin{proof}
    By Corollary~\ref{cor:single-color-clique}, $\mfX$ has $\{i\}$-local clique
    partitions. Let $I$ be a maximal subset of of the nondominant
    colors such that $i \in I$ and $\mfX$ has $I$-local clique partitions.
    We claim that $I$ has the desired property.

    Indeed, suppose there exists some color $j \notin I$ satisfying $n_i\sqrt{\mu/n_j} =
    o(\lambda_i)$, some vertices $u,v$ with $c(u,v) = j$, and some $C \in
    \mcP$ with $|N(v)\cap C| \ge \mu$, where $\mcP$ is the $I$-local clique
    partition at $u$. By Lemma~\ref{lem:local-clique-pre}, for $n$
    sufficiently large, for every vertex $u,v \in V$ with $c(u,v) = j$, and
    $I$-local clique partition $\mcP$ at $u$, there is a unique clique $C \in
    \mcP$ such that $C \subseteq N(v)$, and furthermore
    \begin{equation}\label{eq:intersection}
    |N(v) \cap \mfX_i(u)|  \sim \lambda_i\,.
    \end{equation}

    Now fix $u \in V$ and let $\mcP$ be the $I$-local clique partition at $u$.
    Let $\mcP'$ be the collection of sets $C'$ of the form
    \[
        C' = C \cup \{v \in \mfX_j(u) : C \subseteq N(v)\}
    \]
    for every $C \in \mcP$. Let $J = I\cup \{j\}$. We claim that $\mcP'$
    satisfies Definition~\ref{def:local-clique}, so
    $\mfX$ has local clique partitions on $J$. This contradicts the maximality
    of $I$, and the lemma then follows.
    
    First we verify Definition~\ref{def:local-clique}~(i).  By
    the second paragraph of this proof, $\mcP'$ partitions $\mfX_J(u)$. Furthermore,
    the sets $C \in \mcP'$ are cliques in $G(\mfX)$, since for any $C \in
    \mcP'$ and any distinct $v,w \in C\cap \mfX_j(u)$, we have
    \[
        |N(v)\cap N(w)| \ge |C \cap \mfX_I(u)| \gtrsim \lambda_i = \omega(\mu)
    \]
    and so $c(v,w)$ is nondominant by the definition of $\mu$. Furthermore, the
    cliques $C \in \mcP'$ are maximal in the subgraph of G(\mfX) induced on
    $\mfX_J(u)$, since they are maximal when restricted to $\mfX_I(u)$ and each
    vertex $v \in \mfX_j(u)$ extends a unique clique in the restriction of
    $\mcP'$ to $\mfX_I(u)$.

    We now verify Definition~\ref{def:local-clique}~(ii). By the
    pigeonhole principle, there is some $C \in \mcP'$ with 
    \[
        |C\cap \mfX_j(u)|\gtrsim \frac{n_j}{|\mcP'|} = \frac{n_j}{|\mcP|}  \sim \frac{\lambda_i
        n_j}{n_i}.
    \]
    But since $C$ is a clique in $G(\mfX)$, we have $|C\cap \mfX_j(u)| \le
    \lambda_j + 1$. So, from the defining property of $j$,
    \[
        \lambda_j + 1\gtrsim \frac{\lambda_i n_j}{n_i} = \omega(\sqrt{\mu n_j})
    \]
    Since $n_j$ and $\mu$ are positive integers, we have in particular $\lambda_j = \omega(1)$, and
    thus
    \begin{equation}\label{eq:lambda-ij}
        \lambda_j \gtrsim \frac{\lambda_i n_j}{n_i} = \omega(\sqrt{\mu n_j}).
    \end{equation}
    Hence, $n_j \mu = o(\lambda_j^2)$, and so by
    Corollary~\ref{cor:single-color-clique}, $\mfX$ has $\{j\}$-local clique
    partitions.
    
    Let $C' \subseteq \mfX_j(u)$ be a maximal clique in $G(\mfX)$ of order
    $\sim \lambda_j$. By Eq.~\eqref{eq:intersection} there are $\sim
    \lambda_j\lambda_i$ nondominant edges between $C'$ and $\mfX_i(u)$, so some
    $x \in \mfX_i(u)$ satisfies
    \[
        |N(x)\cap C'| \gtrsim \lambda_j\lambda_i/n_i =
        \omega(\lambda_j\sqrt{\mu/n_j}) = \omega(\mu).
    \] 
    (The last equality uses Eq.~\eqref{eq:lambda-ij}.)
    Furthermore, by
    Eq.~\eqref{eq:lambda-ij}, we have
    \[
        n_j \lesssim (n_i/\lambda_i)\lambda_j =
    o(\sqrt{n_i/\mu}\lambda_j),
    \]
    where the last inequality comes from the assumption that $\sqrt{n_i\mu} =
    o(\lambda_i)$.  So by applying Lemma~\ref{lem:local-clique-pre} with
    $\{j\}$ in place of $I$, it follows that for every $x \in \mfX_i(u)$, we
    have $|N(x)\cap \mfX_j(u)| \sim \lambda_j$.

    We count the nondominant edges between $\mfX_i(u)$ and $\mfX_j(u)$ in two
    ways: there are $\sim \lambda_j$ such edges at each of the $n_i$ vertices
    in $\mfX_i(u)$, and (by Eq.~\eqref{eq:intersection}) there are $\sim \lambda_i$ such
    edges at each of the $n_j$ vertices in $\mfX_j(u)$. Hence, $n_i\lambda_j \sim n_j\lambda_i$.

    Now, using Eq.~\eqref{eq:lambda-ij}, 
    $\mu |\mcP'| \sim \mu n_i/\lambda_i \sim \mu n_j/\lambda_j =
    o(\lambda_j).$
    By the maximality of the cliques $C \in \mcP'$ in the subgraph of $G(\mfX)$ induced on $\mfX_J(u)$, for every
    distinct $C,C' \in \mcP'$ and $v \in C$, we have $|N(v)\cap C'| \le \mu$.
    Therefore, for $v \in C \cap \mfX_j(u)$, we have
    \begin{align*}
        \lambda_j - |\mfX_j(u)\cap C| &= |\mfX_j(u) \cap N(v)| - |\mfX_j(u)\cap C|\\
                                      &\le \left|\left(N(v)\cap
    \mfX_j(u)\right)\setminus C\right| \\ &\le \mu|\mcP'|  =  o(\lambda_j),
    \end{align*}
    so that $|\mfX_j(u)\cap C| \sim \lambda_j$, as desired.

    Now $\mcP'$ satisfies Definition~\ref{def:local-clique}, giving the
    desired contradiction.
\end{proof}

\begin{proof}[Proof of Lemma~\ref{lem:local-clique-strong}]
    Suppose for contradiction that no set $I$ of nondominant colors with $i \in
    I$ is such that $\mfX$ has strong $I$-local clique partitions. Without loss
    of generality, we may assume that $n_i$ is minimal for this property, i.e.,
    for every nondominant color $j$ with $n_j < n_i$, there is a set $J$ of
    nondominant colors with $j \in J$ such that $\mfX$ has strong $I$-local
    clique partitions.

    Let $I$ be the set of nondominant colors containing $i$ guaranteed by
    Lemma~\ref{lem:local-clique-exists}.

    Let $u \in V$ be such that some clique
    $C$ in the $I$-local clique partition at $u$ is not maximal in $G(\mfX)$.
    In particular, let $v \in V\setminus C$ be such that $C \subseteq N(v)$,
    and let $j = c(u,v)$. Then $j$ is a nondominant color, and $j \notin I$. 
    Furthermore, by the defining property of $I$ (the guarantee of
    Lemma~\ref{lem:local-clique-exists}), it is not the case that
    $n_i\sqrt{\mu/n_j} = o(\lambda_i)$. In particular we may take $n_j < n_i$,
    since otherwise, if $n_j \ge n_i$, then 
        $n_i\sqrt{\mu/n_j} \le \sqrt{n_i \mu} = o(\lambda_i)$
    by assumption. Now since $n_j < n_i$, also $\lambda_j = \Omega(\sqrt{n})$
    by assumption. Furthermore, by the minimality of $n_i$, there is a set $J$
    of nondominant colors with $j \in J$ such that $\mfX$ has strong $J$-local
    clique partitions on $J$. In particular, $i \notin J$.

    By the definition of $I$-local clique partitions,
    \[
        |N(v)\cap \mfX_i(u)| \ge |N(v)\cap \mfX_i(u)\cap C| \gtrsim
        \lambda_i.
    \]
    Now let $D$ be the clique containing $v$ in the $J$-local clique partition
    at $u$.  By the coherence of $\mfX$, for every $x \in \mfX_j(u)\cap D$, we
    have $|N(x)\cap \mfX_i(u)| \gtrsim \lambda_i$. Hence, there are $\gtrsim
    \lambda_j\lambda_i$ nondominant edges between $\mfX_j(u)\cap D$ and
    $\mfX_i(u)$. So, by the pigeonhole principle, some vertex $y \in \mfX_i(u)$
    satisfies
    \begin{align*}
        |N(y)\cap D \cap \mfX_j(u)| &\gtrsim \frac{\lambda_i\lambda_j}{n_i} =
        \omega\left(\sqrt{\frac{\mu}{n_i}} \lambda_j\right) \\
        &= \omega\left(\sqrt{\frac{\mu n}{n_i}}\right) = \omega(\mu).
    \end{align*}
    (The second inequality uses the assumption that $\sqrt{n_i\mu} =
    o(\lambda_i)$. The last inequality uses Proposition~\ref{prop:23}.) But
    then $D\setminus \{y\} \subseteq N(y)$ by
    Observation~\ref{obs:clique-basic}. Then $y \in D$ by the definition of a
    strong local clique partition, and so $i \in J$, a contradiction.

    We conclude that in fact $\mfX$ has strong $I$-local clique partitions.
\end{proof}


We finally complete the proof of Theorem~\ref{thm:clique-geometry}.

\begin{proof}[Proof of Theorem~\ref{thm:clique-geometry}]
    By Lemma~\ref{lem:local-clique-strong}, for every nondominant color $i$
    there is a set $I$ such that $\mfX$ has strong local clique partitions on
    $I$.  We claim that these sets $I$ partition the collection of nondominant
    colors.  Indeed, suppose that there are two sets $I$ and $J$ of nondominant
    colors such that $i \in I\cap J$ and $\mfX$ has strong $I$-local and
    $J$-local clique partitions. Let $u,v \in V$ be such that $c(u,v) =
    i$. By the uniqueness of the induced $\{i\}$-local clique partition at $u$
    (Lemma~\ref{lem:lcp-unique}), we have
    \[
    |K_I(u,v) \cap K_J(u,v)| \gtrsim \lambda_i = \omega(\mu),
    \]
    so $K_I(u,v) = K_J(u,v)$, and $I = J$.  In particular, for every
    nondominant color $i$, there exists a unique set $I$ of nondominant colors
    such that $\mfX$ has strong $I$-local clique partitions.
    
    We simplify our notation and write $K(u,v) = K_I(u,v)$ whenever $c(u,v) \in
    I$ and $\mfX$ has strong $I$-local clique partitions. By
    Lemma~\ref{lem:clique-eq-sym}, we have $K(u,v) = K(v,u)$ for all $u,v \in
    V$ with $c(u,v)$ nondominant.  Let $\mcG$ be the collection of cliques of
    the form $K(u,v)$ for $c(u,v)$ nondominant. Then $\mcG$ is an
    asymptotically uniform clique geometry.
\end{proof}

\subsection{Consequences of local clique partitions for the parameters
$\lambda_i$}

We conclude this section by analyzing some consequences for the parameters
$\lambda_i$ of our results on strong local clique partitions.

\begin{lemma}\label{lem:largelambda-impossible}
Let $\mfX$ be a PCC with $\rho = o(n^{2/3})$.  For every nondominant
color $i$, we have $\lambda_i < n_i - 1$.
\end{lemma}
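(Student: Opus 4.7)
The plan is a proof by contradiction: assume $\lambda_i = n_i - 1$ for some nondominant $i$, and extract a clique that violates primitivity or exceeds the allowed parameters.

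First I would unpack the hypothesis combinatorially. By Proposition~\ref{prop:cc}(iv) we have $\sum_{k=0}^{r-1} p^i_{ik} = n_i$, and clearly $p^i_{i0} = 1$, so $\lambda_i = \sum_{k\ge 2} p^i_{ik} = n_i - 1 - p^i_{i1}$. Hence $\lambda_i = n_i - 1$ is equivalent to $p^i_{i1} = 0$, which (for $c(u,v)=i$) says that every $w \in \mfX_i(u) \setminus \{v\}$ satisfies $c(w,v)\ne 1$, i.e.\ $c(w,v)$ is nondominant. Therefore $C_u := \mfX_i(u)\cup\{u\}$ is a clique in $G(\mfX)$ of size $n_i + 1$ for every vertex $u$.

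Next I would bring in Lemma~\ref{lem:diam2}, which gives $\dist_i(1) = 2$. So for any $u,v$ with $c(u,v) = 1$, there is a length-$2$ path $u\to w\to v$ in $\mfX_i$, yielding $p^1_{ii} \ge 1$; unpacking, $p^1_{ii} = |\mfX_i(u) \cap \mfX_{i^*}(v)|$. On the other hand, since $c(u,v) = 1$ is dominant, $u \notin N(v)$, so $v$ cannot extend $C_u$. Observation~\ref{obs:clique-basic} then gives $|N(v)\cap C_u| \le \mu$, whence $|\mfX_i(u)\cap N(v)| \le \mu$, and since $\mfX_{i^*}(v)\subseteq N(v)$ we get $p^1_{ii} \le \mu$.

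The symmetric case $i = i^*$ yields an immediate contradiction: any $w \in \mfX_i(u)\cap\mfX_{i^*}(v)$ satisfies $c(u,w) = c(w,v) = i$, so $v \in \mfX_i(w) \subseteq C_w$, and when $i = i^*$ also $u \in \mfX_i(w) \subseteq C_w$. Hence both $u$ and $v$ lie in the clique $C_w$, forcing $c(u,v)$ to be nondominant, which contradicts $c(u,v) = 1$.

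The hard part is the asymmetric case $i \ne i^*$, where $u \notin C_w$ and so the direct clique-containment argument fails. Here I would apply Observation~\ref{obs:clique-basic} a second time, to the clique $C_w$ and the vertex $u$: the option $C_w \subseteq N(u)$ gives $v \in N(u)$, again contradicting $c(u,v)=1$; hence $|N(u)\cap C_w| \le \mu$, and since $w \in N(u)$ we obtain $|\mfX_i(w)\cap N(u)|\le \mu - 1$, so $|\mfX_1(u)\cap \mfX_i(w)| \ge n_i - \mu + 1$. By coherence this says $p^i_{1 i^*} \ge n_i - \mu + 1$. Proposition~\ref{prop:cc}(iii) then gives $p^1_{ii} = n_i p^i_{1i^*}/n_1 \ge n_i(n_i - \mu + 1)/n_1$, which combined with $p^1_{ii} \le \mu$ forces $n_i(n_i - \mu + 1) \le n_1 \mu$. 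I would then close the argument by bringing in the clique-overlap constraint derived from primitivity: applying the argument of Section~\ref{sec:clique} (cf.\ Lemma~\ref{lem:lcp-unique}) to the maximal cliques $\hat C_u,\hat C_v$ containing $C_u, C_v$ for $c(u,v) = i$, the equivalence $\hat C_u = \hat C_v$ would be preserved by $\mfX_i$-edges and hence (by primitivity of $\mfX_i$) yield $\hat C_u = V$, contradicting $|\hat C_u| \le \rho+1$. So $\hat C_u \ne \hat C_v$, forcing $|C_u \cap C_v| \le \mu$, i.e.\ $p^i_{ii^*} \le \mu - 1$. Feeding this together with Proposition~\ref{prop:cc}(iv), Proposition~\ref{prop:23} ($\mu = o(n_i)$, $\mu\rho = o(n)$), and the lower bound $n_i \ge \sqrt{n-1}$ from Lemma~\ref{lem:diam2} into the degree identity $n_i - 1 = p^i_{ii} + p^i_{ii^*} + \sum_{k\ne 0,1,i,i^*} p^i_{ik}$ forces the ``other nondominant colors'' to contribute $\gtrsim n_i$ degree, which under $\rho = o(n^{2/3})$ overruns the total nondominant budget $\rho$ and completes the contradiction.

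The main obstacle is precisely the asymmetric case: none of the individual constraints ($p^1_{ii} \le \mu$, the Observation-based bound on $|N(u)\cap C_w|$, the degree identities, or clique-overlap) suffices on its own, and one must assemble them carefully together with primitivity.
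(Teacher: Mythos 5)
Your reduction to $p^i_{i1}=0$ and the resulting clique $C_u=\mfX_i(u)\cup\{u\}$ is correct, and your handling of the symmetric case ($i=i^*$) is a genuinely clean shortcut: $1\le p^1_{ii}$ from Lemma~\ref{lem:diam2} together with $p^1_{ii}\le\mu$ from Observation~\ref{obs:clique-basic} does give an immediate contradiction there. In the asymmetric case your intermediate inequalities are also sound: $p^i_{1i^*}\ge n_i-\mu+1$, hence $n_i(n_i-\mu+1)\le\mu n_1$ via Proposition~\ref{prop:cc}(iii), and $p^i_{ii^*}\le\mu-1$ via the maximal-clique/primitivity propagation (you should make explicit that $\hat C_u$ is well defined, i.e.\ that the maximal clique containing $C_u$ is unique, and that the propagation hypothesis ``$|C_u\cap C_v|>\mu$ for one $i$-pair'' transfers to all $i$-pairs because $|C_u\cap C_v|=p^i_{ii^*}+1$ is determined by the color; both are easy, and this is essentially the paper's Lemma~\ref{lem:clique-overlap-eq} argument).

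The genuine gap is the final step. First, you never bound $p^i_{ii}$, so the degree identity $n_i-1=p^i_{ii}+p^i_{ii^*}+\sum_{k\ne0,1,i,i^*}p^i_{ik}$ does not yet force the ``other colors'' to carry $\gtrsim n_i$. More importantly, even if it did, there is no contradiction to be had from that identity: since $p^i_{ik}\le n_k$, the most you can conclude is $\rho-2n_i\gtrsim n_i$, i.e.\ $\rho\gtrsim 3n_i$, which is entirely compatible with $\rho=o(n^{2/3})$, $n_i\ge\sqrt{n-1}$ and all your other constraints (e.g.\ $n_i\approx n^{0.55}$, $\rho\approx n^{0.6}$, $\mu\approx n^{0.15}$ satisfies every inequality you list, including $n_i(n_i-\mu+1)\le\mu n_1$). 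Any contradiction has to compare a quantity of size $\omega(\rho)$ against $\rho$, and everything produced by a single-pair degree identity is $O(n_i)=O(\rho)$. This is exactly where the paper's proof does something different: it packs many cliques into one nondominant neighborhood. Fix $v$ and greedily choose $\ell=\Omega(n_i/\mu)$ vertices $u_\alpha\in\mfX_{i^*}(v)$ whose cliques are not mutually absorbed; since $c(u_\alpha,v)=i$, each clique at $u_\alpha$ contains $v$ and hence lies in $N(v)\cup\{v\}$, while pairwise overlaps are $\le\mu$, so $|N(v)|\gtrsim \ell\, n_i/2=\Omega(n_i^2/\mu)\ge\Omega((n-1)/\mu)=\omega(\rho)$ by $\mu\rho=o(n)$ (Proposition~\ref{prop:23}), contradicting $|N(v)|=\rho$. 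Your ingredients --- the overlap bound $|C_x\cap C_y|\le\mu$ for nondominant pairs, $n_i\ge\sqrt{n-1}$, and $\mu\rho=o(n)$ --- are precisely what this packing argument needs (the paper sets it up through strong $I$-local clique partitions, Lemmas~\ref{lem:local-clique-exists} and~\ref{lem:clique-overlap-eq}, rather than your direct $p^i_{i1}=0$ observation), but the packing-inside-$N(v)$ step is an essential missing idea, not a detail your degree-count can substitute for.
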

\begin{proof}
    Suppose for contradiction that $\lambda_i = n_i-1$ for some nondominant
    color $i$.  For every nondominant color $j$, by Proposition~\ref{prop:23},
    we have $n_i\sqrt{\mu/n_j} = o(n_i) = o(\lambda_i)$. Furthermore, $n_i \mu
    = o(\lambda_i^2)$. Let $I$ be the set of nondominant colors with $i \in I$
    guaranteed by Lemma~\ref{lem:local-clique-exists}. In particular, $\mfX$
    has $I$-local clique partitions. In fact, since $\lambda_i = n_i-1$, for
    every vertex $u$ and every clique $C$ in the $I$-local clique partition at
    $u$, we have $C\cap \mfX_i(u) \sim n_i$. Hence, there is only one clique in
    the $I$-local clique partition at $u$, and so $\mfX_I(u)$ is a clique in
    $G(\mfX)$. For every vertex $u$, let $K_I(u) = \mfX_I(u)\cup \{u\}$. Then
    for every vertex $v\notin \mfX_I(u)$, we have $|N(v) \cap K_I(u)| \le \mu$.
    In particular, $K_I(u)$ is a maximal clique in $G(\mfX)$, and $\mfX$ has
    strong $I$-local clique partitions.
    
    Let $U,v \in V$ with $v \in \mfX_I(u)$, let $j = c(u,v) \in I$, and suppose
    $|K_I(v)\cap K_I(u)| > \mu$. Then $K_I(v) = K_I(u)$ by
    Lemma~\ref{lem:clique-overlap-eq}. Hence, by the coherence of $\mfX$,
    for any $w, x \in V$ with $x \in \mfX_j(w)$, $K_I(w) = K_I(x)$.  By
    applying this fact iteratively, we find that for any two vertices $y, z
    \in V$ such that there exists a path from $y$ to $z$ in $\mfX_j$, we
    have $z \in K_I(y)$, contradicting the primitivity of $\mfX$.  We conclude
    that $|K_I(v)\cap K_I(u)| \le \mu$ if $c(u,v) \in I$.  Hence, if we fix a
    vertex $u$ and count pairs of vertices $(v,w) \in \mfX_i(u)\times
    \mfX_I(u)$ with $c(w,v) = i$, we have
    \[
        n_i\sum_{j \in I}p^i_{ji} \le n_I\mu,
    \]
    where $n_I = \sum_{i \in I} n_i$.  In particular, for any vertex $u$ and $v
    \in \mfX_i(u)$, we have $|\mfX_{i^*}(v) \cap \mfX_I(u)| \le \mu n_I/n_i$.

    Fix a vertex $v \in V$. For some integer $\ell$, we fix
    distinct vertices $u_1,\ldots, u_{\ell}$ in $\mfX_{i^*}(v)$ such that for
    all $1 \le \alpha,\beta \le \ell$, we have $u_\alpha \notin
    \mfX_I(u_\beta)$. Since $|\mfX_{i^*}(v)\cap \mfX_I(u_\alpha)| \le \mu
    n_I/n_i$, we may take $\ell = \lfloor n_i/(2\mu) \rfloor$. As $\mu = o(n_i)$
    by Proposition~\ref{prop:23}, we therefore have $\ell = \Omega(n_i/\mu)$.
    
    By Lemma~\ref{lem:clique-overlap-eq}, for $\alpha \ne \beta$, we have
    $|\mfX_I(u_\alpha)\cap \mfX_I(u_\beta)| \le \mu$.  Hence, for any $1 \le
    \alpha \le \ell$, we have
    \[
        \left|\mfX_I(u_\alpha)\setminus \bigcup_{\beta \ne \alpha}
        \mfX_I(u_\beta)\right| \gtrsim
        n_I - \left\lfloor \frac{n_i}{2\mu}\right\rfloor\mu \ge \frac{n_I}{2}.
    \]
    But $c(u_\alpha,v) = i$, so $v \in K_I(u_\alpha)$, and so
    $\mfX_I(u_\alpha)\setminus\{v\} \subseteq K_I(u_\alpha)\subseteq\{v\} \subseteq N(v)$.
    Then
    \[
        |N(v)| \ge \left|\bigcup_{\alpha=1}^{\ell} \mfX_I(u_\alpha,
        v)\setminus\{v\}\right| \gtrsim
        \frac{n_I \ell}{2} = \Omega(\frac{n_i^2}{\mu}) = \omega(\rho)
    \]
    by Proposition~\ref{prop:23}. But this contradicts the definition of
    $\rho$. We conclude that $\lambda_i < n_i-1$.
\end{proof}

\begin{lemma}\label{lem:lambda-gap-new}
    Let $\mfX$ be a  PCC.  Suppose for some nondominant color $i$ we
    have $\lambda_i < n_i-1$. Then $\lambda_i \le (1/2)(n_i + \mu)$.
\end{lemma}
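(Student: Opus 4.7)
The plan is to use the hypothesis $\lambda_i < n_i - 1$ to extract a single ``exceptional'' vertex $w \in \mfX_i(u)$ that is dominant-adjacent to $v$, and then exploit symmetry together with the bound $\mu$ on common neighbors of the dominant pair $\{v, w\}$.

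First I would fix $u, v$ with $c(u,v) = i$. Since $\lambda_i = |\mfX_i(u) \cap N(v)| < n_i - 1$, and since $\mfX_i(u)\setminus\{v\}$ has size $n_i - 1$, there exists some $w \in \mfX_i(u) \setminus \{v\}$ with $c(v, w) = 1$, i.e., $w \notin N(v)$. Next I would set
\[
A = \mfX_i(u) \cap N(v), \qquad A' = \mfX_i(u) \cap N(w).
\]
By coherence of $\mfX$ and the definition of $\lambda_i$ (applied with the pair $(u, w)$ in place of $(u, v)$, since $c(u, w) = i$ as well), we have $|A| = |A'| = \lambda_i$.

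The key containment is $A \cup A' \subseteq \mfX_i(u) \setminus \{v, w\}$: indeed $v, w \in \mfX_i(u)$ are distinct, $v \notin N(v)$, $w \notin N(w)$, and since $c(v, w) = 1$ is dominant, also $v \notin N(w)$ and $w \notin N(v)$. Hence $|A \cup A'| \le n_i - 2$. On the other hand, since $c(v, w) = 1$, the definition of $\mu$ gives $|N(v) \cap N(w)| = \mu$, so $|A \cap A'| \le \mu$. Inclusion--exclusion then yields
\[
2\lambda_i - \mu \;\le\; |A| + |A'| - |A \cap A'| \;=\; |A \cup A'| \;\le\; n_i - 2,
\]
and so $\lambda_i \le (n_i - 2 + \mu)/2 \le (n_i + \mu)/2$, as claimed.

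There is no real obstacle here beyond bookkeeping: the only thing to be careful about is the dual role of $v$ and $w$ as elements of $\mfX_i(u)$ (which forces the $-2$ in the union bound) and verifying that $|A'| = \lambda_i$ from the coherence of $\mfX$ applied to the pair $(u,w)$ rather than $(u,v)$.
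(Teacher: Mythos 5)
Your proof is correct and is essentially the paper's own argument: the paper likewise extracts a dominant pair $v,w \in \mfX_i(u)$ from the hypothesis $\lambda_i < n_i - 1$ and applies inclusion--exclusion with $|N(v)\cap N(w)| = \mu$ inside $\mfX_i(u)$ (your bookkeeping even sharpens the right-hand side to $n_i-2$, where the paper settles for $n_i$, and your explicit $(N(v)\cup N(w))$ is what the paper's displayed formula, which misprints it as $(N(u)\cup N(v))$, intends).
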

\begin{proof}
    Fix a vertex $u$, and suppose $\lambda_i < n_i-1$. Then there exist
    vertices $v,w \in \mfX_i(u)$ such that $c(v,w)$ is dominant. Then $|N(v)\cap
    N(w)| = \mu$. Therefore,
    \[
        2\lambda_i - \mu \le |(N(u)\cup N(v))\cap \mfX_i(u)| \le n_i.
    \]
\end{proof}

\begin{corollary}\label{cor:lambdai-not-ni}
    Suppose $\mfX$ is a PCC with $\rho = o(n^{2/3})$. Then for every
    nondominant color $i$, we have $\lambda_i \lesssim n_i/2$. 
\end{corollary}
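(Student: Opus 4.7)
The plan is to combine the two immediately preceding lemmas with Proposition~\ref{prop:23}. First, I would invoke Lemma~\ref{lem:largelambda-impossible}: since we are given $\rho = o(n^{2/3})$, for every nondominant color $i$ the strict inequality $\lambda_i < n_i - 1$ holds. This puts us exactly into the hypothesis of Lemma~\ref{lem:lambda-gap-new}, which then yields $\lambda_i \le (n_i + \mu)/2$.

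Next, I would control the error term $\mu/2$. By Proposition~\ref{prop:23}, the hypothesis $\rho = o(n^{2/3})$ forces $\mu = o(n_i)$ for every nondominant color $i$ (since $n_i \ge \sqrt{n-1}$ by Lemma~\ref{lem:diam2}, while $\mu = o(n^{1/3})$). Therefore
\[
\lambda_i \le \frac{n_i + \mu}{2} = \frac{n_i}{2} + o(n_i),
\]
which is exactly the asymptotic statement $\lambda_i \lesssim n_i/2$ as defined in Section~\ref{sec:asymptotic}.

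There is no real obstacle here; the entire content of the corollary is the observation that, once we know $\lambda_i$ cannot reach the trivial upper bound $n_i - 1$, the quantitative gap in Lemma~\ref{lem:lambda-gap-new} collapses to $n_i/2$ in the asymptotic regime $\rho = o(n^{2/3})$, since $\mu$ is negligible compared to $n_i$ in that regime. The argument is a one-line chaining of the two lemmas with the degree bound from Proposition~\ref{prop:23}.
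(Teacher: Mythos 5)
Your argument is correct and is essentially identical to the paper's proof: both chain Lemma~\ref{lem:largelambda-impossible} (to get $\lambda_i < n_i - 1$) into Lemma~\ref{lem:lambda-gap-new} (to get $\lambda_i \le (n_i+\mu)/2$) and then absorb $\mu$ using $\mu = o(n_i)$ from Proposition~\ref{prop:23}. Your extra remark explaining why $\mu = o(n_i)$ (via Lemma~\ref{lem:diam2} and $\mu = o(n^{1/3})$) simply unwinds the proof of Proposition~\ref{prop:23} and is consistent with the paper.
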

\begin{proof}
    For every nondominant color $i$ we have $\lambda_i < n_i - 1$ by
    Lemma~\ref{lem:largelambda-impossible}. Then by
    Lemma~\ref{lem:lambda-gap-new} and Proposition~\ref{prop:23}, we have
    $\lambda_i \le (1/2)(n_i + \mu) \sim n_i/2$.
\end{proof}

\begin{corollary}\label{cor:lambdai-integer}
    Let $\mfX$ be a PCC with $\rho = o(n^{2/3})$ with an asymptotically uniform clique geometry $\mcC$.
    Then for every nondominant color $i$ there is an integer $m_i \ge 2$ such
    that $\lambda_i \sim n_i/m_i$.
\end{corollary}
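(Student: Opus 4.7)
The plan is a short counting argument built on the partition structure induced by the clique geometry.

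First I would fix any vertex $u \in V$ and set $\mcC_u := \{C \in \mcC : u \in C\}$. Since $\mcC$ is a clique geometry, every pair of adjacent vertices of $G(\mfX)$ lies in a unique clique of $\mcC$, so the sets $C \setminus \{u\}$ for $C \in \mcC_u$ partition $N(u)$. Intersecting with $\mfX_i(u) \subseteq N(u)$ yields the partition
\[
\mfX_i(u) = \bigsqcup_{C \in \mcC_u}\bigl(C \cap \mfX_i(u)\bigr).
\]

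Next I would invoke the asymptotic uniformity of $\mcC$: for each $C \in \mcC_u$, either $|C \cap \mfX_i(u)| \sim \lambda_i$ or $|C \cap \mfX_i(u)| = 0$. Let $m_i$ denote the number of $C \in \mcC_u$ of the first type. Summing the displayed partition then gives $n_i \sim m_i \lambda_i$, so $\lambda_i \sim n_i/m_i$.

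Finally, to rule out $m_i \le 1$, I would invoke Corollary~\ref{cor:lambdai-not-ni}, which gives $\lambda_i \lesssim n_i/2$, i.e., $\liminf_{n \to \infty} n_i/\lambda_i \ge 2$. Since $m_i \sim n_i/\lambda_i$ and $m_i$ is a positive integer, this forces $m_i \ge 2$ for $n$ sufficiently large. I do not foresee any real obstacle: the argument is straightforward bookkeeping, with all of the heavy lifting already done in establishing the clique geometry (Theorem~\ref{thm:clique-geometry}) and the bound on $\lambda_i$ (Corollary~\ref{cor:lambdai-not-ni}).
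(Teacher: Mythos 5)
Your proposal is correct and follows essentially the same route as the paper's proof: fix a vertex $u$, let $m_i$ count the cliques through $u$ meeting $\mfX_i(u)$, use asymptotic uniformity to get $n_i \sim m_i\lambda_i$, and then apply Corollary~\ref{cor:lambdai-not-ni} to force $m_i \ge 2$. Your write-up just makes the partition of $\mfX_i(u)$ by the cliques of $\mcC_u$ explicit, which the paper leaves implicit.
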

\begin{proof}
    Fix a nondominant color $i$ and a vertex $u$, and let $m_i$ be the number
    of cliques $C \in \mcC$ such that $u \in C$ and $\mfX_i(u)\cap C \ne
    \emptyset$. So $n_i/m_i \sim \lambda_i$.  But by
    Corollary~\ref{cor:lambdai-not-ni}, we have $\lambda_i \lesssim n_i/2$, so
    $m_i \ge 2$.
\end{proof}


\section{Clique geometries in exceptional PCCs}\label{sec:2clique}

In this section we will classify PCCs $\mfX$ having a clique geometry
$\mcC$ and a vertex belonging to at most two cliques of $\mcC$. In particular,
we prove Theorem~\ref{thm:2clique-characterization}.

We will assume the hypotheses of Theorem~\ref{thm:2clique-characterization}.
So, $\mfX$ will be a PCC such that $\rho = o(n^{2/3})$, with an asymptotically
uniform clique geometry $\mcC$ and a vertex $u \in V$ belonging to at most two
cliques of $\mcC$.

\begin{lemma}\label{lem:2clique-all2}
    Under the hypotheses of Theorem~\ref{thm:2clique-characterization}, for $n$
    sufficiently large, every vertex $x \in V$ belongs to exactly two cliques
    of $\mcC$, each of order $\sim \rho/2$.
\end{lemma}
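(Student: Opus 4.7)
My plan proceeds in three main steps.

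First, I will rule out the possibility that $u$ belongs to a single clique of $\mcC$. If $u$ lay in exactly one clique $C$, then $C \setminus \{u\} = N(u)$, so for each nondominant color $i$ we would have $\mfX_i(u) \subseteq C$. By the asymptotic uniformity of $\mcC$, $|C \cap \mfX_i(u)| \sim \lambda_i$ (the alternative of $0$ is excluded since $n_i > 0$), and hence $\lambda_i \sim n_i$, contradicting Corollary~\ref{cor:lambdai-not-ni}. Thus $u$ lies in exactly two cliques $C_1, C_2$.

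Second, using the local structure at $u$, I will determine $\lambda_i$ and $|C_1|, |C_2|$. By Corollary~\ref{cor:lambdai-integer}, every nondominant color $i$ satisfies $\lambda_i \sim n_i/m_i$ for an integer $m_i \ge 2$. Since $\mfX_i(u)$ is partitioned by $\{C_1 \cap \mfX_i(u), C_2 \cap \mfX_i(u)\}$ and each piece has size $\sim \lambda_i$ or $0$ by asymptotic uniformity, we get $m_i \le 2$, so $m_i = 2$ and $\lambda_i \sim n_i/2$. Moreover both $C_1$ and $C_2$ must meet every nondominant color (if only one did, that clique would contain all of $\mfX_i(u)$, forcing $\lambda_i \sim n_i$ again). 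Summing over colors,
\[
|C_j| - 1 = \sum_i |C_j \cap \mfX_i(u)| \sim \sum_i \lambda_i \sim \frac{\rho}{2}, \qquad j \in \{1,2\}.
\]

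Third, I will extend this conclusion to every vertex using the type structure from the proof of Theorem~\ref{thm:clique-geometry}. There the nondominant colors are partitioned into classes $I_1, \ldots, I_k$ such that $\mfX$ admits a unique strong $I_j$-local clique partition at every vertex, and every clique of $\mcC$ has the form $K_{I_j}(v, w)$ for some $j$ and $c(v, w) \in I_j$. At any vertex $v$, the cliques of ``type'' $I_j$ at $v$ partition $\mfX_{I_j}(v)$ into cliques of size $\sim \lambda_{I_j} := \sum_{i \in I_j} \lambda_i \sim n_{I_j}/2$, yielding $\sim 2$ such cliques. Being an integer, this count is exactly $2$ for $n$ sufficiently large. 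Hence every vertex lies in exactly $2k$ cliques, and $m(u) = 2$ forces $k = 1$: the unique class $I$ consists of all nondominant colors, every clique of $\mcC$ has size $\sim \rho/2$, and every vertex belongs to exactly two cliques, as required.

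The main obstacle I anticipate is the third step: converting the local count at $u$ into a global uniform count requires unpacking the proof of Theorem~\ref{thm:clique-geometry} to identify the intrinsic ``type'' of each clique and to show that the number of type-$I_j$ cliques at a vertex is a coherent invariant asymptotic to $n_{I_j}/\lambda_{I_j}$. The first two steps are purely local to $u$, but the last step is the global consistency argument that pins down both the count and the size of cliques at every vertex.
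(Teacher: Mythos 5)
Your first two steps are correct and essentially reproduce the first half of the paper's proof: the hypothesis that $u$ lies in at most two cliques, combined with Corollary~\ref{cor:lambdai-not-ni} (or your Corollary~\ref{cor:lambdai-integer}), gives that $u$ lies in exactly two cliques, that $\lambda_i \sim n_i/2$ for every nondominant color $i$, and that the two cliques at $u$ have order $\sim \rho/2$. The genuine gap is exactly where you flag it: step three. The hypotheses of Theorem~\ref{thm:2clique-characterization} supply only an \emph{arbitrary} asymptotically uniform clique geometry $\mcC$; they do not include the hypotheses of Theorem~\ref{thm:clique-geometry}, and $\mcC$ is not assumed to be the geometry constructed there. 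So the ``type structure'' you invoke --- a partition $I_1,\ldots,I_k$ of the nondominant colors with strong $I_j$-local clique partitions such that \emph{every clique of $\mcC$} equals some $K_{I_j}(v,w)$ --- is not available to you; it is an artifact of the construction in Section~\ref{sec:clique}, carried out under assumptions ($\lambda_i = \Omega(\sqrt{n})$, or $n_i\mu = o(\lambda_i^2)$) not granted here. One can partially rescue this: after your step two, $\lambda_i \sim n_i/2 = \Omega(\sqrt{n})$ by Lemma~\ref{lem:diam2}, so strong local clique partitions do exist; but you would still need to prove that the given $\mcC$ coincides with the geometry they generate, e.g.\ by showing the clique of $\mcC$ through a nondominant edge meets the corresponding $K_I(v,w)$ in $\omega(\mu)$ vertices and concluding equality from maximality as in Lemma~\ref{lem:clique-overlap-eq}. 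As written, the decisive global claim is asserted, not proved, and you acknowledge as much.

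The paper closes the global step with a much lighter argument that your proposal misses: coherence transfer. Since both cliques at $u$ meet every $\mfX_i(u)$ in $\sim n_i/2$ vertices, any $v$ lying in a clique with $u$ satisfies $|N(v)\cap \mfX_i(u)| \gtrsim n_i/2$, and by the coherence of $\mfX$ this inequality then holds for \emph{every} pair $(x,y)$ with $c(x,y)$ equal to the given color, at every vertex $x$. Moreover, since the cliques at any $x$ partition $N(x)$ and each meets $\mfX_i(x)$ in $\sim \lambda_i \sim n_i/2$ or $0$ vertices, exactly two cliques at $x$ meet each color class. If some $x$ lay in at least three cliques, one clique $C$ at $x$ would miss $\mfX_i(x)$ for some $i$; taking $y \in C\cap\mfX_j(x)$ and applying the transferred bound, the pigeonhole principle places $\gtrsim n_i/4 = \omega(\mu)$ vertices of $N(y)\cap\mfX_i(x)$ inside a single clique $C'$ with $y \notin C'$, and Observation~\ref{obs:clique-basic} then forces $C' \subseteq N(y)$, contradicting the maximality of $C'$. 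I recommend replacing your step three by this argument, or else supplying in full the identification of $\mcC$ with the geometry built from strong local clique partitions.
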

\begin{proof}
    Recall that by the definition of a clique geometry, for every vertex $x \in V$, every
    nondominant color $i$, and every clique $C$ in the geometry containing $x$,
    we have $|C \cap \mfX_i(x)| \lesssim \lambda_i$.  Thus, by
    Corollary~\ref{cor:lambdai-not-ni}, every vertex belongs to at least two
    cliques. In particular, $u$ belongs to exactly two cliques of $\mcC$, and
    (by Corollary~\ref{cor:lambdai-not-ni}) it follows that $\lambda_i \sim n_i/2$
    for every nondominant color $i$.  Hence, by the definition of a clique
    geometry, for every vertex $x$ and every nondominant color $i$, there are
    exactly two cliques $C \in \mcC$ such that $x \in C$ and $\mfX_i(x)\cap C
    \ne \emptyset$. 

    Let $i$ and $j$ be nondominant colors, and let $v \in \mfX_j(u)$, and let
    $C \in \mcC$ be the clique containing $u$ and $v$. Since $|\mfX_i(u)\cap
    C| \sim \lambda_i \sim n_i/2$, we have
    \begin{equation}\label{eq:twoclique}
        |N(v)\cap \mfX_i(u)|\gtrsim n_i/2.
    \end{equation}

    Now suppose for contradiction that some $x \in V$ belongs to at least three
    cliques of $\mcC$. Then there is some $C \in \mcC$ and nondominant color $i$ such
    that $x \in C$ but $\mfX_i(x)\cap C = \emptyset$. Let $j$ be a nondominant
    color such that $\mfX_j(x)\cap C \ne \emptyset$, and let $y \in
    \mfX_j(x)\cap C$.  By the coherence of $\mfX$ and Eq.~\eqref{eq:twoclique},
    we have $|N(y)\cap \mfX_i(x)| \gtrsim n_i/2$.

    But since there are exactly two cliques $C' \in \mcC$ such that $x \in
    C'$ and $\mfX_i(x)\cap C' \ne \emptyset$, then one of these cliques $C'$ is
    such that $|N(y)\cap \mfX_i(x) \cap C'| \gtrsim n_i/4$.  By
    Proposition~\ref{prop:23}, $n_i/4 = \omega(\mu)$ for $n$ sufficiently
    large. But then $C' \subseteq N(y)$, and $y \notin C'$, contradicting the
    maximality of $C'$.

    So every vertex $x \in V$ belongs to exactly two cliques of $\mcC$, and for
    each clique $C \in \mcC$ containing $x$ and each nondominant color $i$, we
    have $|\mfX_i(x)\cap C| \sim n_i/2$. It follows that $|C| \sim \rho/2$ for
    each $C \in \mcC$.
\end{proof}

\begin{lemma}\label{lem:2clique-rank}
Under the hypotheses of Theorem~\ref{thm:2clique-characterization}, for $n$
sufficiently large, $\mfX$ has rank at most four.
\end{lemma}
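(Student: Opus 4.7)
The plan is to extract a sharp upper bound on $\rho$ from the clique structure already provided by Lemma~\ref{lem:2clique-all2}, and then combine this with the lower bound on $n_i$ from Lemma~\ref{lem:diam2} to limit the number of non-dominant colors.

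First I would invoke Lemma~\ref{lem:2clique-all2}: every vertex lies in exactly two cliques of $\mcC$, each of order $\sim \rho/2$. Letting $K = |\mcC|$ and counting vertex-clique incidences gives $\sum_{C \in \mcC}|C| = 2n$, hence $K \sim 4n/\rho$.

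Next I would exploit the fact that any two distinct cliques of $\mcC$ intersect in at most one vertex (if they shared two vertices, those two vertices would be an adjacent pair lying in two cliques, violating the uniqueness condition in Definition~\ref{def:clique-geom}). Since each vertex lies in exactly two cliques, and the unordered pair of cliques at one vertex determines the vertex uniquely, there are exactly $n$ unordered pairs of intersecting cliques. These are a subset of all $\binom{K}{2}$ pairs, so $n \le \binom{K}{2} \sim K^2/2$. Combined with $K \sim 4n/\rho$, this yields $\rho \lesssim 2\sqrt{2n}$.

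Finally, Lemma~\ref{lem:diam2} gives $n_i \ge \sqrt{n-1}$ for every nondiagonal color $i$. If $t$ denotes the number of nondominant colors, then $t\sqrt{n-1} \le \sum_{\text{nondom }i} n_i = \rho \lesssim 2\sqrt{2n}$, so $t \le 2\sqrt{2} + o(1) < 3$ for $n$ sufficiently large; hence $t \le 2$. Since $\rho = o(n^{2/3})$ forces $n_1 \sim n$, color $1$ is the unique dominant color, and including the diagonal color gives rank at most $1 + 1 + 2 = 4$. The entire argument is a short double-counting once the clique structure is in hand, so I do not anticipate a real obstacle; the only mildly delicate point is ensuring that the asymptotic inequality $t \lesssim 2\sqrt{2}$ is strict enough to give the integer bound $t \le 2$ for large $n$, which is immediate from the explicit constants.
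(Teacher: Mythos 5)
Your proposal is correct and follows essentially the same route as the paper: count vertex--clique incidences to get $|\mcC| \sim 4n/\rho$, use the pairwise-intersection-at-most-one property to get $n \lesssim |\mcC|^2/2$ and hence $\rho \lesssim \sqrt{8n}$, then invoke Lemma~\ref{lem:diam2} ($n_i \gtrsim \sqrt{n}$) to conclude there are at most two nondominant colors and thus rank at most four. The only difference is that you spell out the double-counting of intersecting clique pairs and the constant $2\sqrt{2}<3$ explicitly, which the paper leaves implicit.
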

\begin{proof}
    Counting the number of vertex--clique incidences in $G(\mfX)$, we have $2n
    \sim |\mcC|(\rho/2 + 1) \sim |\mcC|\rho/2$ by Lemma~\ref{lem:2clique-all2}. On the other hand,
    every pair of distinct cliques $C,C' \in \mcC$ intersects in at most one
    vertex in $G(\mfX)$ by Property 2 of Definition~\ref{def:clique-geom}, and so $|\mcC|^2/2 \gtrsim n$.  It follows that $\rho
    \lesssim \sqrt{8n}$.  On the other hand, by Lemma~\ref{lem:diam2}, we
    have $n_i \gtrsim \sqrt{n}$ for every $i > 1$.  Since $\rho = \sum_{i >
    1}n_i$, for $n$ sufficiently large there are at most two nondominant
    colors.
\end{proof}

\begin{lemma}\label{lem:2clique-cross-edges}
    Under the hypotheses of Theorem~\ref{thm:2clique-characterization}, let $w$
    be a vertex, and $C_1, C_2 \in \mcC$ be the two cliques containing $w$.  Then
    for any $v \neq w$ in $C_1$, we have $|N(v) \cap (C_2\setminus\{w\})| \le 1$.
\end{lemma}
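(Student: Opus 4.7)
The plan is to argue by contradiction, leveraging the two-cliques-per-vertex structure already established in Lemma~\ref{lem:2clique-all2} together with the defining uniqueness clause of the clique geometry. Suppose, for contradiction, that there exist distinct vertices $x, y \in N(v) \cap (C_2 \setminus \{w\})$.

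First I would record two structural facts. By Lemma~\ref{lem:2clique-all2}, every vertex (in particular $v$) lies in exactly two cliques of $\mcC$; let $C_3$ denote the second clique of $\mcC$ containing $v$ (the first being $C_1$). Next, any two distinct cliques of $\mcC$ meet in at most one vertex: otherwise two adjacent vertices would belong to two distinct cliques, contradicting the uniqueness requirement of Definition~\ref{def:clique-geom}. In particular, since $w \in C_1 \cap C_2$, we have $C_1 \cap C_2 = \{w\}$.

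Next I would locate $x$ and $y$ inside the geometry. Because $x \in N(v)$, the edge $\{v, x\}$ is nondominant, so by the clique geometry property there is a unique clique in $\mcC$ containing both $v$ and $x$; this clique must be either $C_1$ or $C_3$. But $x \in C_2 \setminus \{w\}$ forces $x \notin C_1$, since otherwise $x \in C_1 \cap C_2 = \{w\}$. Hence $v, x \in C_3$, and by the identical argument $v, y \in C_3$.

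Finally I would extract the contradiction. We now have $x, y \in C_2 \cap C_3$ with $x \neq y$, while $C_2 \neq C_3$ (indeed $v \in C_3$, and $v \in C_2$ would give $v \in C_1 \cap C_2 = \{w\}$, contradicting $v \neq w$). This violates the fact that distinct cliques of $\mcC$ share at most one vertex. I do not anticipate any serious obstacle here: the proof is essentially a bookkeeping argument inside the geometry, and the only real content is the observation that once the second clique $C_3$ through $v$ is fixed, any neighbor of $v$ lying in $C_2 \setminus \{w\}$ is forced into $C_3$, while $C_3$ can contribute at most one vertex to $C_2$.
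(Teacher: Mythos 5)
Your proposal is correct and follows essentially the same route as the paper's proof: both argue by contradiction, use Lemma~\ref{lem:2clique-all2} to identify the second clique $C_3$ through $v$, observe that the uniqueness clause of Definition~\ref{def:clique-geom} forces distinct cliques to meet in at most one vertex and forces $x,y$ into $C_3$, and then contradict $|C_2\cap C_3|\le 1$. No gaps; your explicit checks that $x,y\notin C_1$ and $C_2\neq C_3$ match the paper's (more tersely stated) reasoning.
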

\begin{proof}
    We first note that by Lemma~\ref{lem:2clique-all2}, there are indeed
    exactly two cliques containing $w$. Note that $v \notin C_2$, since
    otherwise there are two cliques in $\mcC$ containing both $w$ and $v$.
    Suppose $v$ has two distinct neighbors $x,y$ in $C_2\setminus\{w\}$, so
    $x,y \notin C_1$ for the same reason.  Let $C_3 \in \mcC\setminus\{C_1\}$
    be the unique clique containing $v$ other than $C_1$. We have $x,y \in
    C_3$, but then $|C_2\cap C_3| \ge 2$, a contradiction.  So $v$ has at most
    one neighbor in $C_2\setminus\{w\}$. 
\end{proof}

The following result is folklore, although we could not find an explicit
statement in the literature.  A short elementary proof can be found inside the
proof of~\cite[Lemma 4.13]{cgss-linegraphs}.

\begin{lemma}\label{lem:srg}
    Let $G$ be a connected and co-connected strongly regular graph. If $G$ is
    the line-graph of a graph, then $G$ is isomorphic to $T(m)$,
    $L_2(m)$, or $C_5$.
\end{lemma}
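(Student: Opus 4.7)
My plan is to write $G = L(H)$ and classify $H$ directly using the SRG conditions, assuming without loss of generality that $H$ is connected. Regularity of $G$ means $\deg_H(u) + \deg_H(v) - 2$ is constant over edges $uv \in E(H)$, so $H$ is either regular or bipartite biregular. In the bipartite biregular case with parts of degrees $a \ne b$, two $L(H)$-adjacent edges sharing a vertex of degree $a$ have exactly $a - 2$ common neighbors in $L(H)$ (the other edges at that vertex; bipartiteness precludes any triangle contribution), while two sharing a vertex of degree $b$ have $b - 2$. The SRG $\lambda$-condition forces $a = b$, so $H$ is regular of some degree $r$.

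I would then split on whether $H$ contains a triangle. Any two $L(H)$-adjacent edges $xy, yz$ have $r - 2$ common neighbors if $xz \notin E(H)$ and $r - 1$ if $xz \in E(H)$. The $\lambda$-condition therefore forces either every such pair to lie in a triangle of $H$, or none to. The first case means $H$ is a disjoint union of cliques, and by connectedness $H = K_m$, giving $G = T(m)$. Otherwise $H$ is triangle-free. If $r = 2$ then $H$ is a cycle $C_m$, and $L(C_m) \cong C_m$ is SRG only for $m = 4$ (which is $L_2(2)$) and $m = 5$ (giving $C_5$).

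For triangle-free $H$ with $r \geq 3$, the common neighbors in $L(H)$ of two non-adjacent vertices (disjoint edges $uv, xy$ in $H$) are exactly the cross-edges between $\{u, v\}$ and $\{x, y\}$; triangle-freeness forces each of $u, v$ to be adjacent to at most one of $\{x, y\}$, so $\mu \in \{0, 1, 2\}$. Fix an edge $uv$. Each cross-edge $zw$ (with $z \in \{u,v\}, w \notin \{u,v\}$) extends in $r - 1$ ways to a disjoint edge $\{w, t\}$ through $w$: we need $t \in N(w) \setminus \{z\}$, and $t$ is automatically distinct from the other vertex of $\{u, v\}$ by triangle-freeness. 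Counting triples $(uv, zw, \{w,t\})$ two ways yields $\mu \cdot (mr/2 - 2r + 1) = 2(r-1)^2$. Connectivity rules out $\mu = 0$ (a path of length $3$ exhibits a cross-edge), and $\mu = 1$ forces $m = 4r - 4 + 2/r$, which is never an integer for $r \geq 3$. Hence $\mu = 2$ and $m = 2r$.

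It remains to show that every triangle-free $r$-regular graph $H$ on $2r$ vertices with $r \geq 3$ is $K_{r,r}$. Pick $v \in V(H)$; its neighborhood $N(v)$ has size $r$ and is independent, while $Y := V \setminus (\{v\} \cup N(v))$ has size $r-1$. Each $x \in N(v)$ has all its remaining $r-1$ edges in $Y$, so there are $r(r-1)$ edges between $N(v)$ and $Y$. But $|Y| \cdot r = r(r-1)$ is also the maximum possible, so equality forces every $y \in Y$ to be adjacent to every vertex of $N(v)$ and to have no other edges. Hence $H$ is complete bipartite with parts $\{v\} \cup Y$ and $N(v)$, i.e., $H = K_{r,r}$, giving $G = L_2(r)$. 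The main obstacle is the triangle-free $r \geq 3$ case: the double-counting is precisely what rules out sporadic triangle-free regular graphs (most prominently the Petersen graph), by pinning down $m = 2r$, after which the bipartite structure of $H$ follows by pigeonhole.
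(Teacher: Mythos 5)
Your proof is correct, and it takes a genuinely different route from the paper, which in fact offers no proof of Lemma~\ref{lem:srg} at all: the authors invoke it as folklore and point to the short argument buried inside the proof of Lemma~4.13 of~\cite{cgss-linegraphs}. Your argument is a self-contained elementary classification of the underlying graph $H$ with $G=L(H)$: edge-regularity of $L(H)$ forces $H$ regular (after the biregular-bipartite alternative is killed by the $\lambda$-count $a-2$ versus $b-2$), the $\lambda$-count $r-2$ versus $r-1$ splits into the ``union of cliques'' case ($H=K_m$, $G=T(m)$) and the triangle-free case, cycles give $L_2(2)$ and $C_5$, and for $r\ge 3$ the double count $\mu\bigl(mr/2-2r+1\bigr)=2(r-1)^2$ together with $\mu\le 2$ pins down $\mu=2$, $m=2r$, after which your pigeonhole argument correctly identifies any triangle-free $r$-regular graph on $2r$ vertices as $K_{r,r}$, so $G=L_2(r)$; I checked each of these counts and they are right. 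What your approach buys is independence from the root-system machinery of~\cite{cgss-linegraphs} and a proof that actually classifies the base graph $H$, at the cost of some case analysis. Two small points deserve a line each if this were to be written out: the standard one-line argument that constancy of $\deg_H(u)+\deg_H(v)$ over edges of a connected graph forces $H$ regular or bipartite biregular, and the degenerate situation $\min\{a,b\}=1$ in the biregular case (then adjacent edge-pairs at a degree-$1$ vertex do not exist, so the $\lambda$-comparison cannot be made; but there $H$ is a star, $G$ is complete, and co-connectedness excludes it). Neither affects the validity of the argument.
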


\begin{proof}[Proof of Theorem~\ref{thm:2clique-characterization}]
    Let $H$ be the graph with vertex set $\mcC$, and an edge $\{C,C'\}$
    whenever $|C\cap C'|\ne 0$. Then $G(\mfX)$ is isomorphic to the line-graph
    $L(H)$.
    
    By Lemma~\ref{lem:2clique-rank}, $\mfX$ has rank at most four. By
    assumption (see Section~\ref{sec:overview}), $\mfX$ has rank at least
    three.
    
    Consider first the case that $\mfX$ has rank three.
    The nondiagonal colors $i,j$ of a rank three PCC $\mfX$ satisfy either $i^*
    = i$ and $j^* = j$, in which case $\mfX$ is a strongly regular graph, or
    $i^* = j$, in which case $\mfX$ is a ``strongly regular tournament,'' and
    $\rho = (n-1)/2$. We have assumed $\rho = o(n^{2/3})$, so $\mfX$ is a
    strongly regualr graph. But $G(\mfX)$ is the line-graph of the graph $H$,
    so by Lemma~\ref{lem:srg}, for $n > 5$, $\mfX$ is isomorphic to either
    $\mfX(T(m))$ or $\mfX(L_2(m))$.

    Suppose now that $\mfX$ has rank four, and let $I = \{2,3\}$ be the
    nondominant colors.  Fix $u \in V$, and let $C_1, C_2 \in \mcC$ be the
    cliques containing $u$ by Lemma~\ref{lem:2clique-all2}. By
    Corollary~\ref{cor:lambdai-not-ni} and Lemma~\ref{lem:2clique-cross-edges}, for any
    $i,j \in I$, not necessarily distinct, there exist $v \in C_1$ and $w \in
    C_2$ with $c(v,w)=1$, $c(v,u)=i$, and $c(u,w)=j$. Therefore, $p^1_{ij}\ge
    1$, and so $\mu \sum_{i,j > 1} p^1_{ij}\ge 4$.  Now let $x \in V$ be such that $c(u,x)=1$, and let
    $D_1,D_2 \in \mcC$ be the cliques containing $x$. For any $\alpha,\beta \in
    \{1,2\}$, we have $|C_\alpha \cap D_\beta| \le 1$, and so $\mu \le 4$. Hence, $\mu =
    4$, and $|C_\alpha \cap D_\beta| = 1$ for every $\alpha, \beta \in \{1,2\}$. Therefore, for any
    pair of distinct cliques $C,C' \in \mcC$ we have $|C\cap C'| = 1$, and so
    $H$ is isomorphic to $K_m$, where $m = |\mcC|$.

    In particular, every clique $C \in \mcC$ has order $m-1$, and so $n_2 + n_3
    = 2(m-2)$.
    
\begin{figure}
\begin{center}
\begin{tikzpicture}

\def\ptrad{0.06}

\definecolor{plum}{RGB}{142, 69, 33}

\coordinate (u) at (0,0);
\coordinate (v) at (0, -5);
\coordinate (w) at (-2,-1.7);
\coordinate (z) at (-2, -3.3);
\coordinate (y) at (2, -1.7);
\coordinate (x) at (2, -3.3);

\node[above] at (u) {$u$};
\node[below] at (v) {$v$};
\node[left] at (w) {$w$};
\node[right] at (x) {$x$};
\node[right] at (y) {$y$};
\node[left] at (z) {$z$};

\fill[color = black] (u) circle[radius=\ptrad];
\fill[color = black] (v) circle[radius=\ptrad];
\fill[color = black] (w) circle[radius=\ptrad];
\fill[color = black] (x) circle[radius=\ptrad];
\fill[color = black] (y) circle[radius=\ptrad];
\fill[color = black] (z) circle[radius=\ptrad];


\draw[dashed, thick](u)--(v);
\draw[dashed, thick](w)--(x);
\draw[dashed, thick](y)--(z);

\draw[-,  thick, color = red](u) -- (w);
\draw[-,  thick, color = red](u) -- (x);
\draw[-,  thick, color = blue](y) -- (u);
\draw[-,  thick, color = blue](z) -- (u);
\draw[-,  thick, color = red](v) -- (w);
\draw[-,  thick, color = blue](v) -- (x);
\draw[-,  thick, color = red](v) -- (y);
\draw[-,  thick, color = blue](v) -- (z);
\draw[-,  thick, color = blue](w) -- (y);
\draw[-,  thick, color = blue](w) -- (z);

\end{tikzpicture}
\end{center}
\caption{Two non-adjacent vertices $u$, $v$ and their common neighbors $w, x, y, z$.
The dashed line represents color 1. The red line represents color 2. The blue line represent color 3.}
\label{fig:twoclique}
\end{figure}
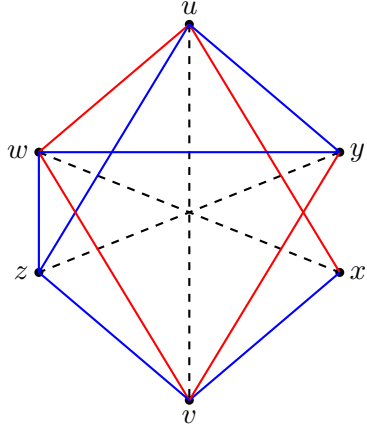

    Now we prove $2^* = 3$ and $3^* = 2$.
    Suppose for contradiction that colors $2$ and $3$ are symmetric. Fix
    two vertices $u$ and $v$ with $c(u,v)=1$. (See Figure~\ref{fig:twoclique}.) Then $N(u)\cap N(v) =
    \{w,x,y,z\}$ for some vertices $w,x,y,z \in V$, and there are four distinct
    cliques $C_1,C_2,C_3,C_4 \in \mcC$ such that every vertex in $A =
    \{u,v,w,x,y,z\}$ lies in the intersection of two of these cliques. Without
    loss of generality, assume $c(w,x)$ and $c(y,z)$ are dominant, and all other
    distinct pairs in $A$ except $(u,v)$ have nondominant color. Since for any $i,j \in I$
    we have $p^1_{ij}=1$, then without loss of generality, by considering the
    paths of length two from $u$ to $v$ in $G(\mfX)$, we have $c(u,w) =
    c(u,x)=2$, $c(u,y)=c(u,z)=3$, $c(v,w)=c(v,y)=2$, and $c(v,x)=c(v,z)=3$.
    Now $c(w,u)=c(w,v)=2$, and so $c(w,y)=c(w,z)=3$ since $p^1_{ij}=1$ for all
    $i,j \in I$ and $c(w,x)=1$. But now $c(u,z)=c(v,z)=c(w,z)=3$, which
    contradicts the fact that $p^1_{23} = p^1_{33}=1$ for $c(z, y) = 1$.   We
    conclude that $2^* = 3$ and $3^* = 2$.
\end{proof}

Finally, we prove that individualizing $O(\log n)$ vertices suffices to
completely split the PCCs of situation~(b) of Theorem~\ref{thm:2clique-characterization}.

\begin{proof}[Proof of Lemma~\ref{lem:2clique-split}]
    By Theorem~\ref{thm:2clique-characterization}, we may assume that $\mfX$ is a
    rank four PCC with a non-symmetric nondominant color $i$, and $G(\mfX)$ is
    isomorphic to $T(m)$ for $m = n_i+2$. (The other nondominant color is
    $i^*$.) In particular, every clique in $\mcC$ has order $n_i + 1$. We show
    that there is a set of size $O(\log n)$ which completely splits $\mfX$. 
    
    Note that $p^i_{ii^*} = p^i_{ii} = p^i_{i^*i}$ by
    Proposition~\ref{prop:cc}.  For any edge $\{u,v\}$ in $T(m)$, there are
    exactly $m-2=n_i$ vertices $w$ adjacent to both $u$ and $v$. Hence,
    considering all the possible of colorings of these edges in $\mfX$, we have
    \[
        n_i = p^i_{ii} + p^i_{ii^*} + p^i_{i^*i} + p^i_{i^*i^*} = 3p^i_{ii} +
        p^i_{i^*i^*}.
    \]
    Therefore, $p^i_{ii} + p^i_{i^*i^*} \ge n_i/3$, and 
    \begin{equation}\label{eq:piii-bound}
        p^i_{ii^*} + p^i_{i^*i} \le 2n_i/3.
    \end{equation}

    Fix an arbitrary clique $C \in \mcC$ and any pair of distinct vertices $u,v
    \in C$. (By possibly exchanging $u$ and $v$, we have $c(u, v) = i$.) Of the
    $n_i-1$ vertices $w$ in $C\setminus\{u,v\}$, at most $2n_i/3$ of these have
    $c(w,u) = c(w,v)$, by Eq.~\eqref{eq:piii-bound}. So, including $u$ and $v$
    themselves, there are at least $n_i/3 - 1 + 2 = n_i/3 + 1$ vertices $w \in
    C$ such that $c(w,u) \ne c(w,v)$.  Thus, if we individualize a random
    vertex $w \in C$, then $\Pr[c(w,u)\ne c(w,v)] > 1/3$. If this event occurs,
    then $u$ and $v$ get different colors in the stable refinement.
    Hence, if we
    individualize each vertex of $C$ independently at random with probability
    $6 \ln (n_i^2) / n_i$, then $u$ and $v$ get the same color in the stable
    refinement with probability $\le 1/n_i^4$. The union bound then gives a
    positive probability to every pair of vertices getting a different color,
    so there is a set $A$ of size $O(\log n_i)$ such that after individualizing
    each vertex in $A$ and refining to the stable coloring, every vertex in $C$
    has a uniqe color.  We repeat this process for another clique $C'$, giving
    every vertex in $C'$ a unique color at the cost of another $O(\log n_i)$
    individualizations. 

    On the other hand, every other clique $C'' \in \mcC$ intersects $C \cup C'$ in
    two uniquely determined vertices, since $G(\mfX)$ is isomorphic to
    $T(m)$. So, if $u \in C''$ and $v \notin C''$, then $u$ and $v$ get
    different colors in the stable refinement. Since every vertex lies in two
    uniquely determined cliques by Lemma~\ref{lem:2clique-all2}, it follows
    that every vertex gets a different color in the stable refinement.
\end{proof}


\section{Good triples}\label{sec:spielman}

In this section, we finally prove Lemma~\ref{lem:main-spielman-combined}.
For given nondominant colors $i$ and $j$, we will be interested in
quadruples of vertices $(u,w,x,y)$ with the following property:

\vspace{0.2cm}
\noindent \emph{Property $Q(i,j)$:} $c(x,y)=c(u,x)=c(u,y)=1$, $c(u,w)=i$, and
$c(w,x)=c(w,y)=j$ (See Figure~\ref{fig:spielman})

\begin{figure}
\begin{center}
\begin{tikzpicture}

\def\ptrad{0.06}

\coordinate (u) at (0,0);
\coordinate (w) at (0,-2);
\coordinate (x) at (-2, -3.5);
\coordinate (y) at (2, -3.5);

\node[right] at (u) {$u$};
\node[right] at (w) {$w$};
\node[below] at (x) {$x$};
\node[below] at (y) {$y$};

\fill[color = black] (u) circle[radius=\ptrad];
\fill[color = black] (w) circle[radius=\ptrad];
\fill[color = black] (x) circle[radius=\ptrad];
\fill[color = black] (y) circle[radius=\ptrad];

\draw[->,  thick](u) -- (0, -1.96);
\draw[->,  thick](w) -- (-1.96, -3.46);
\draw[->,  thick](w) -- (1.96, -3.46);

\draw[dotted, thick](u) -- (x);
\draw[dotted, thick](u) -- (y);
\draw[dotted, thick](x) -- (y);

\node[right] at (0, -1) {$i$};
\node[right] at (-.9, -2.75) {$j$};
\node[left] at (.9, -2.75) {$j$};

\end{tikzpicture}
\end{center}
\caption{$(u, w, x, y)$ has Property $Q(i, j)$. The dotted line represents the dominant color.}
\label{fig:spielman}
\end{figure}
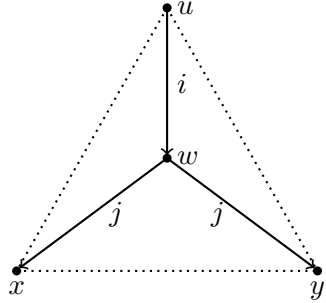

\begin{definition}[Good triple of vertices]\label{def:good-triple}
    For fixed nondominant colors $i,j$ and vertices $u,v$, we say a triple of
    vertices $(w,x,y)$ is \emph{good} for $u$ and $v$ if $(u,w,x,y)$
    has Property $Q(i,j)$, but there is no vertex $z$ such that
    $(v,z,x,y)$ has Property $Q(i,j)$.
\end{definition}

We observe that if $(w,x,y)$ is good for vertices $u$ and $v$, and both $x$ and $y$
are individualized, then $u$ and $v$ receive different colors after two
refinement steps. 

In the case of SRGs, there is only one choice of nondominant color, and
Property $Q(i,j)$ and Definition~\ref{def:good-triple} can be simplified: a
triple $(w,x,y)$ is good for $u$ and $v$ if $w,x,y,u$ induces a $K_{1,3}$, but
there is no vertez $z$ such that $z,x,y,v$ induces a $K_{1,3}$. Careful
counting of induced $K_{1,3}$ subgraphs formed a major part of Spielman's proof
of Theorem~\ref{thm:main-pcc} in the special case of SRGs~\cite{spielman-srg}.
Spielman's ideas inspired parts of this section. In particular, the proof of
the following lemma directly generalizes Lemmas~14 and~15
of~\cite{spielman-srg}.

\begin{lemma}\label{lem:many-good-triples-suffice}
    Let $\mfX$ be a PCC with $\rho = o(n^{2/3})$.  Suppose that for
    every distinct $u,v \in V$ there are nondominant colors $i$ and $j$ such
    that there are $\alpha = \Omega(n_in_j^2)$ good triples $(w,x,y)$ of vertices
    for $(u,v)$. Then there is a set of $O(n^{1/4}(\log n)^{1/2})$ vertices
    that completely splits $\mfX$.
\end{lemma}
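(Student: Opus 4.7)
The plan is to apply the probabilistic method: include each vertex of $V$ in $S$ independently with probability $p = Cn^{-3/4}(\log n)^{1/2}$ for a sufficiently large absolute constant $C$. By a Chernoff bound, $|S| = O(n^{1/4}(\log n)^{1/2})$ with overwhelming probability. Via the union bound over $\binom{n}{2}$ pairs, it suffices to show that for each fixed $u \ne v$, the probability that $S$ fails to distinguish $u$ and $v$ after naive refinement is at most $n^{-3}$.

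Fix $u \ne v$ and let $(i,j)$ be the colors provided by the hypothesis. The critical observation, noted just before the lemma, is this: if $S$ contains vertices $x,y$ for which some $w \in V$ makes $(w,x,y)$ good for $(u,v)$, then $u$ and $v$ are distinguished after refinement, because the refined color of $u$ records the count $|\{w' : c(u,w') = i,\ c(w',x) = c(w',y) = j\}|$, which is positive (it contains $w$), while the analogous count at $v$ vanishes by the ``no $z$'' clause in the definition of goodness. So the failure event at $(u,v)$ is contained in the event that $S$ contains no pair $\{x,y\}$ from the family $\Pi = \Pi(u,v)$ of unordered good pairs; I regard $\Pi$ as the edge set of a graph $H$ on $V$.

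The plan is to apply Janson's inequality to the events $\{e \subseteq S\}$ for $e \in \Pi$, obtaining a bound of the form
\[
  \Pr\!\left[\Pi \cap \binom{S}{2} = \emptyset\right] \;\le\; \exp\!\left(-\,\frac{|\Pi|\,p^2}{2 + 4\,\Delta(H)\,p}\right),
\]
where $\Delta(H)$ is the maximum degree in $H$. The lower bound $|\Pi| = \Omega(n^{3/2})$ comes from controlling witness multiplicity: for $c(x,y) = 1$ the number of $w$ with $c(w,x) = c(w,y) = j$ is the structure constant $p^1_{j^*j}$, and Proposition~\ref{prop:cc}(iii) gives $p^1_{j^*j} = n_j\,p^{j^*}_{1j^*}/n_1 \le n_j^2/n_1$. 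Combined with $\alpha = \Omega(n_in_j^2)$, $n_1 \sim n$, and $n_i \gtrsim \sqrt{n-1}$ (Lemma~\ref{lem:diam2}), this yields $|\Pi| \ge \alpha/p^1_{j^*j} = \Omega(n_i\cdot n) = \Omega(n^{3/2})$.

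The principal obstacle is that the naive bound $\Delta(H) \le p^1_{11} \lesssim n_1$ makes $\Delta(H)\,p = \omega(1)$ and collapses the Janson estimate. I expect to overcome this by a deletion argument exploiting that the average $H$-degree is only $2|\Pi|/n_1 = O(\sqrt{n})$: restrict Janson to the subgraph $H' \subseteq H$ spanned by edges both of whose endpoints have $H$-degree $O(\sqrt{n})$, so that $\Delta(H')\,p = O(n^{-1/4}(\log n)^{1/2}) = o(1)$. A careful accounting---using the coherence-forced biregularity of the bipartite graphs $(\mfX_i(u),\mfX_1(u),j)$ that was exploited in Section~\ref{sec:spheres}, to show that good triples are essentially uniformly distributed across pairs---should yield $|E(H')| = \Omega(|\Pi|) = \Omega(n^{3/2})$. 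Janson's inequality applied to $H'$ then delivers $\Pr[\Pi \cap \binom{S}{2} = \emptyset] \le \exp(-\Omega(C^2 \log n)) \le n^{-3}$ once $C$ is chosen large, and the union bound over pairs completes the proof.
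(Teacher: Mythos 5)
Your overall frame (random individualization with $p \approx n^{-3/4}\sqrt{\log n}$, the observation that an individualized good pair $\{x,y\}$ separates $u$ from $v$ after two refinement rounds, Chernoff for $|S|$, union bound over pairs) matches the paper, and your lower bound $|\Pi| \ge \alpha/p^1_{j^*j} = \Omega(n_i n) = \Omega(n^{3/2})$ is correct. The gap is the step you yourself flag as the ``careful accounting'': the claim that the subgraph $H'$ spanned by vertices of $H$-degree $O(\sqrt n)$ retains $\Omega(|\Pi|)$ good pairs. First, the calibration of the threshold is based on an error: you use the \emph{lower} bound $|\Pi| = \Omega(n^{3/2})$ as if it were an upper bound when asserting the average degree is $2|\Pi|/n_1 = O(\sqrt n)$. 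In fact $|\Pi|$ can be as large as order $n_i n_j^2$ (e.g.\ $n^{1.8}$ when $n_i \approx n_j \approx n^{3/5}$), so typical $H$-degrees may be far above $\sqrt n$ and your $H'$ may contain almost no edges, or none. Second, the uniformity you hope to extract from coherence is not available: a triple is good for $(u,v)$ only if \emph{no} $z$ works for $v$, and this non-existence condition is not governed by structure constants; even for plain $Q(i,j)$-triples, biregularity of $(\mfX_i(u),\mfX_1(u),j)$ only gives that each $x\in\mfX_1(u)$ has exactly $p^1_{ij}$ candidate witnesses, hence the weak bound $\deg_H(x)\le p^1_{ij}\,n_j$, not any spread-out property of the pairs $(x,y)$ themselves.

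More importantly, no choice of truncation threshold repairs the argument without a genuinely new anti-concentration input, because every bound you do establish is consistent with the good pairs forming a near-clique on a set of $\Theta(n^{3/4})$ vertices (take $n_i=\Theta(\sqrt n)$; then $|\Pi|=\Theta(n^{3/2})$, per-pair witness multiplicity $\le n_j^2/n_1$, and about $n_i$ witnesses each covering pairs inside their own $n_j$-neighborhood are all compatible with this). In that scenario $\sum_x \deg_H(x)^2 = \Theta(n^{9/4})$, so Janson in any form gives only $\exp\bigl(-\Theta(C\sqrt{\log n})\bigr)$, which no constant $C$ pushes below the $n^{-2-\Omega(1)}$ needed for the union bound over $\binom{n}{2}$ pairs. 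The paper avoids exactly this obstacle by arguing witness-by-witness rather than pair-by-pair: an averaging step produces $\Omega(n_i)$ witnesses $w$ each carrying $\Omega(n_j^2)$ good pairs (all inside $\mfX_j(w)$, a set of size only $n_j$); a second, independent sparsification of the witnesses (keeping each with probability $n/(3n_in_j)$ and retaining only pairs whose witness is unique in the sparsified set, via $\mfX_{j^*}(x)\cap W=\mfX_{j^*}(y)\cap W=\{w\}$) makes the per-witness success events depend on disjoint vertex sets, hence truly independent, and $(1-\Omega(p^2n_j^2))^{\Omega(n/n_j)}\le 1/(2n^2)$ finishes. To salvage your route you would need to prove a bound like $\sum_x \deg_H(x)^2 = O(|\Pi|^2 p/\log n)$ for the good-pair graph, and that is precisely what is missing.
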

\begin{proof}
    Let $S$ be a random set of vertices given by including each vertex in $
    V$ independently with probability $p$. Fix distinct $u,v \in V$. We
    estimate the probability that there is a good triple $(w,x,y)$ for $u$ and $v$
    such that $x,y \in S$.

    Let $T$ denote the set of good triples $(w,x,y)$ of vertices for $(u,v)$.
    Observe that any vertex $w \in \mfX_i(u)$ appears in at most $n_j^2$ good
    triples $(w,x,y)$ in $T$. On the other hand, if $w \in \mfX_i(u)$ is a
    random vertex, and $X$ is the number of pairs $x,y$ such that $(w,x,y) \in
    T$, then $\bbE[X] \ge \alpha/n_i$.  Therefore, 
    we have
    \[
        n_j^2 \Pr[X \ge \alpha/(2n_i)] + (1- \Pr[X \ge \alpha/(2n_i)])\alpha/(2n_i) \ge \bbE[X] \ge \alpha/n_i,
    \]
    and so, since $\alpha = \Omega(n_in_j^2)$ and $\alpha < n_in_j^2$ by definition,
    \[
        \Pr[X \ge \alpha/(2n_i)]  \ge \frac{1}{2n_j^2n_i/\alpha - 1} = \Omega(1).
    \]
    Let $U$ be the set of vertices $w \in \mfX_i(u)$ appearing in at least
    $\alpha/(2n_i)$ triples $(w,x,y)$ in $T$, so $|U| = \Omega(n_i)$. Now let $W
    \subseteq U$ be a random set given by including each vertex $w \in U$
    independently with probability $n/(3n_in_j)$.

    Fix a vertex $w \in W$ and a triple $(w,x,y) \in T$. Note that there are at
    most $p^1_{ij} \lesssim n_in_j/n$ vertices $w' \in U$ such that
    $c(w',x)=j$. Therefore, by the union bound, the probability that there is
    some $w' \ne w$ with $w' \in \mfX_{j^*}(x)\cap W$ is $\le 1/3$.  Similarly,
    the probability that there is some $w' \in \mfX_{j^*}(y)\cap W$ with $w'
    \ne w$ is at most $1/3$.  Hence, the probability that 
    \begin{equation}\label{eq:unique-nbr}
        \mfX_{j*}(x)\cap W = \mfX_{j^*}(y)\cap W = \{w\}
    \end{equation}
    is at least $1/3$.
    
    Now for any $w \in W$, let $T_w$ denote the set of pairs $x,y \in V$ such
    that $(w,x,y) \in T$ and Eq.~\eqref{eq:unique-nbr} holds. We have
    $\bbE[|T_w|] \ge \alpha / (6n_i) = \Omega(n_j^2)$. But in any case,
    $|T_w| \le n_j^2$. Therefore, for any $w \in W$, we have $|T_w| =
    \Omega(n_j^2)$ with probability $\Omega(1)$. Let $W' \subseteq W$ be the
    set of vertices $w$ with $|T_w| = \Omega(n_j^2)$. Since $\bbE[|W'|] =
    \Omega(|W|)$, we have $|W'| = \Omega(|W|)$ with probability $\Omega(1)$.
    Furthermore, $|W| = \Omega(n/n_j)$ with high probability by the Chernoff bound.
    
    Thus, there exists a set $W \subseteq \mfX_i(x)$ with a subset $W' \subseteq W$
    of size $\Omega(n/n_j)$ such that $|T_w| = \Omega(n_j^2)$ for every $w \in
    W'$.

    Now fix a  $w \in W'$. The probability that there are at least two vertices in
    $\mfX_j(w)\cap S$ is at least
    \begin{align*}
        1 - (1-p)^{n_j} - pn_j(1-p)^{n_j-1} &> 1 - e^{-pn_j} - pn_je^{-pn_j} 
        = \Omega(p^2n_j^2)
    \end{align*}
    if $pn_j < 1$, 
    using the Taylor expansion of the exponential function. Since $|T_w| = \Omega(n_j^2)$,
    the
    probability that there is a pair $(x,y) \in T_w$ with $x,y \in S$ is
    $\Omega(p^2n_j^2)$.

    Therefore, the probability that there is no $w \in W'$ with a pair $(x,y)
    \in T_w$ such that $x,y \in S$ is at most
    \[
        (1 - \Omega(p^2n_j^2))^{|W'|} \leq (1 - \Omega(p^2n_j^2))^{\eps n/n_j},
    \]
    for some constant $0 < \eps < 1$.
    For  $p = \beta \sqrt{\log n/(nn_j)}$ with a sufficiently large constant
    $\beta$, this probability is at most $1/(2n^2)$. Since $n_j \gtrsim
    \sqrt{n}$ for all $j$ by Lemma~\ref{lem:diam2}, we may take $p = \beta
    \sqrt{\log n / n^{3/2}}$ with a sufficiently large constant $\beta$.  Then,
    for any pair $u,v \in V$ of distinct vertices, the probability no good
    triple $(w,x,y)$ for $u$ and $v$ has $x,y \in S$ is at most $1/(2n^2)$. By
    the union bound, the probability that there is some pair $u,v \in V$ of
    distinct vertices such that no triple $(w,x,y)$ has the desired property is
    at most $1/2$. Therefore, after individualizing every vertex in $S$, every
    vertex in $V$ gets a unique color with probability at least $1/2$. By the
    Chernoff bound, we may furthermore assume that $|S| = O(n^{1/4}(\log
    n)^{1/2})$.
\end{proof}

We will prove that the hypotheses of Lemma~\ref{lem:many-good-triples-suffice}
hold separately for the case that $\lambda_k$ is small for some nondominant
color $k$ and the case that $\mfX$ has an asymptotically uniform clique geometry. Specifically, in
Section~\ref{sec:small-lambda} we prove the following lemma.

\begin{lemma}\label{lem:small-lambda-main}
    There is an absolute constant $\eps > 0$ such that the following holds. Let
    $\mfX$ be a  PCC with $\rho = o(n^{2/3})$ and a nondominant color
    $k$ such that $\lambda_k < \eps n^{1/2}$. Then there are two nondominant
    colors $i$ and $j$ such that for every pair of distinct vertices $u,v \in
    V$, there are $\Omega(n_i n_j^2)$ good triples of vertices for $u$ and $v$
    with respect to the colors $i$ and $j$.
\end{lemma}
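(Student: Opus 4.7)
The plan is to choose nondominant colors $i$ and $j$ and, for every pair of distinct vertices $u,v$, write the good-triples count as
\[
P_u^{i,j} - B_{u,v}^{i,j},
\]
where $P_u^{i,j} = \sum_{(x,y)} N_u(x,y)$ is the total number of Property $Q(i,j)$ quadruples based at $u$, $B_{u,v}^{i,j} = \sum_{(x,y)\,:\,N_v(x,y) > 0} N_u(x,y)$ is the ``bad'' contribution, and $N_u(x,y) = |\{w \in \mfX_i(u) : c(w,x) = c(w,y) = j\}|$. The task reduces to exhibiting $i,j$ for which $P_u^{i,j} = \Omega(n_i n_j^2)$ for every $u$ while $B_{u,v}^{i,j} = o(n_i n_j^2)$ for every pair $u,v$.

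For the lower bound on $P$, I will set $j = k$ and count contributions from each $w \in \mfX_i(u)$ separately. By coherence, $A_w := \mfX_k(w) \cap \mfX_1(u)$ has size $p^i_{1,k^*}$, independent of $w$. Any ordered pair $(x,y) \in A_w^2$ with $c(x,y) \ne 1$ is a nondominant pair inside $\mfX_k(w)$, and the total count of such pairs in $\mfX_k(w)^2$ is exactly $n_k \lambda_k < \eps n_k n^{1/2}$. Hence $P_u^{i,k} \ge n_i\bigl[(p^i_{1,k^*})^2 - n_k \lambda_k\bigr]$. Using the identity $n_i p^i_{1,k^*} = n_1 p^1_{i,k}$ from Proposition~\ref{prop:cc} together with a pigeonhole over nondominant colors (and noting $n_k \ge \sqrt{n-1}$ from Lemma~\ref{lem:diam2}), I will extract a nondominant $i$ with $p^i_{1,k^*} = \Omega(n_k)$, yielding $P_u^{i,k} = \Omega(n_i n_k^2)$ for small enough $\eps$.

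For the bad-contribution bound, I will follow the case split suggested in the overview of Lemma~\ref{lem:main-spielman-combined}, depending on whether $\lambda_{k^*}$ is also $O(n^{1/2})$. If yes, then a Spielman-style claw argument applies symmetrically: the number of pairs $(x,y)$ reachable by Property $Q(i,k)$ quadruples from both $u$ and $v$ is bounded by carefully counting overlaps $\mfX_k(w) \cap \mfX_k(z)$ for $w \in \mfX_i(u)$, $z \in \mfX_i(v)$, where both the $\lambda_k$ and $\lambda_{k^*}$ bounds come into play, combined with $\mu = o(n^{1/3})$ from Proposition~\ref{prop:23}. If instead $\lambda_{k^*} = \omega(n^{1/2})$, then $n_{k^*}\mu = o(\lambda_{k^*}^2)$, so Corollary~\ref{cor:single-color-clique} produces $\{k^*\}$-local clique partitions of $\mfX_{k^*}(\cdot)$ into cliques of order $\sim \lambda_{k^*}$; the resulting geometric rigidity forces the pairs $(x,y)$ shared between $u$ and $v$ to lie in few intersecting cliques, giving the required bound on $B_{u,v}^{i,k}$.

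The main obstacle I expect is the bad-contribution estimate in this second case, where the small-$\lambda$ hypothesis is on $k$ but the clique structure exists on $k^*$, and the auxiliary color $i$ may be unrelated to either. The challenge will be transferring clique rigidity from the $\mfX_{k^*}$-direction to the joint structure of $\mfX_i$-neighborhoods of $u$ and $v$. Carrying this out will require careful use of the estimates $\mu = o(n^{1/3})$ and $n_k = \omega(n^{1/2})$ (the latter following from the small-$\lambda_k$ assumption combined with Corollary~\ref{cor:lambdai-not-ni}), along with Observation~\ref{obs:clique-basic}-style absorption arguments to show that any $(x,y)$ contributing to $B_{u,v}^{i,k}$ is forced into a combinatorially rigid configuration that can occur for only $o(n_k^2)$ pairs.
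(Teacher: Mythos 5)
Your high-level plan (count Property~$Q$ quadruples at $u$, subtract the ``bad'' pairs $(x,y)$ reachable from $v$, and split on the size of $\lambda_{k^*}$) matches the paper's strategy in outline, but the two quantitative steps that carry the proof do not hold as you state them. First, the extraction of a nondominant $i$ with $p^i_{1k^*}=\Omega(n_k)$ by pigeonhole from $n_i p^i_{1k^*}=n_1 p^1_{ik}$ fails: summing over nondominant $i$ gives $\sum_{i\ge 2} n_i p^i_{1k^*}=n_1\sum_{i\ge 2}p^1_{ik}$, and for a dominant pair $(u,v)$ every vertex counted by $p^1_{ik}$ with $i$ nondominant lies in $N(u)\cap N(v)$, so $\sum_{i\ge 2}p^1_{ik}\le \mu$ and is only known to be $\ge 1$. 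The pigeonhole therefore yields some $i$ with $p^i_{1k^*}\ge n_1\sum_{i\ge2}p^1_{ik}/\rho=\Omega(n/\rho)$, which can be $\Theta(n^{1/3})$ while $n_k=\Theta(n^{2/3})$; the hypothesis $n_k\ge\sqrt{n-1}$ does not close this gap. The true reason $p^i_{1k^*}=n_k-|\mfX_k(w)\cap N(u)|-O(1)$ is $\Omega(n_k)$ is a triple-counting bound of the type of Lemma~\ref{lem:triple-count}, which uses $\lambda_k<\eps n^{1/2}$ together with $\mu=o(n_i)$ from Proposition~\ref{prop:23}; so this part is repairable, but not by the argument you give.

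The more serious problem is the bad-contribution bound with your choice $j=k$. For $z$ with $c(w,z)=\ell$, the number of candidate $x$ is $|\mfX_k(w)\cap\mfX_k(z)|=p^{\ell}_{kk^*}$, and Proposition~\ref{prop:cc} converts these into $p^{k}_{\ell k}$, whose sum over nondominant $\ell$ equals $|\{w\in N(u):c(w,v)=k\}|$ for $c(u,v)=k$ --- a ``reversed'' parameter that the hypothesis does not control (the paper emphasizes exactly this: a bound on $\lambda_k$ gives no bound on its starred counterparts). The paper's proof takes $j=k^*$ (and $i$ minimizing $n_i$) precisely so that the same computation produces $\sum_{\ell}(p^{k}_{k\ell^*})^2\le\lambda_k^2$, which is where $\lambda_k<\eps n^{1/2}$ enters (Lemma~\ref{lem:small-lambda-miss-v-new}). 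Moreover, your hard case is a plan rather than a proof, and the case split is at the wrong threshold: when $\omega(n^{1/2})=\lambda_{k^*}=o(n_k)$, Corollary~\ref{cor:single-color-clique} only partitions each $\mfX_{k^*}(w)$ into $n_k/\lambda_{k^*}\to\infty$ small cliques, which does not force the shared pairs $(x,y)$ into $o(n_k^2)$ configurations. The paper splits instead at $\lambda_{k^*}\approx n_k/3$: for $\lambda_{k^*}\lesssim n_k/3$ the $j=k^*$ Spielman-type count applies (Lemma~\ref{lem:small-lambda-main-new}), and for $\lambda_{k^*}\gtrsim n_k/3$ it switches to $i=j=k^*$ and uses the \emph{strong} local clique partition machinery (Lemma~\ref{lem:local-clique-exists}, Lemma~\ref{lem:large-lambda-miss-v-new}), whose hypotheses ($k^*\in I$, $k\notin I$, $\lambda_j=\Omega(n_j)$) your sketch does not arrange. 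As written, the central estimate $B^{i,j}_{u,v}=o(n_in_j^2)$ is unproven.
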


Then, in Section~\ref{sec:large-lambda}, we prove the following lemma.

\begin{lemma}\label{lem:large-lambda-main}
    Let $\mfX$ be a  PCC with $\rho = o(n^{2/3})$ and a asymptotically uniform clique geometry
    $\mcC$ such that every vertex $u \in V$ belongs to at least three cliques
    in $\mcC$. Suppose that $n_i\mu = o(\lambda_i^2)$ for every nondominant
    color $i$. Then there are
    nondominant colors $i$ and $j$ such that for every pair of distinct vertices $u,v \in V$,  there are $\Omega(n_i n_j^2)$ good
    triples of vertices for $u$ and $v$ with respect to the colors $i$ and $j$.
\end{lemma}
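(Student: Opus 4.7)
The plan is to exploit the asymptotically uniform clique geometry $\mcC$ (with every vertex in at least three cliques) to count good triples directly for each pair $(u,v)$. For appropriately chosen nondominant colors $i, j$, I will first produce $\Omega(n_i n_j^2)$ quadruples $(u, w, x, y)$ satisfying Property $Q(i,j)$ for each fixed $u$, and then bound the number of ``bad'' triples $(w, x, y)$ which are simultaneously realizable by some $z$ for $v$.

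To count $Q(i,j)$-quadruples for fixed $u$: for each $w \in \mfX_i(u)$, let $C$ be the unique clique of $\mcC$ containing both $u$ and $w$; by hypothesis $w$ lies in at least two further cliques, none containing $u$. By asymptotic uniformity, each clique $D \ni w$ with color-$j$ edges at $w$ contributes $\sim \lambda_j$ vertices to $\mfX_j(w)$. The set $A = \mfX_j(w) \setminus N(u)$ has size $\Omega(n_j)$: excluding $N(u)$ removes at most $\lambda_j$ vertices (coming from $C$, if $C$ contains color $j$ at $w$) plus $o(n_j)$ from intersections of cliques through $w$ with the other cliques through $u$, a bound obtained via Lemma~\ref{lem:triple-count}, the rigidity $|D \cap D'| \le 1$ for distinct $D, D' \in \mcC$, and the hypothesis $n_i\mu = o(\lambda_i^2)$. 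A constant fraction of pairs $(x, y) \in A^2$ then satisfy $c(x, y) = 1$: pairs in a common clique through $w$ are counted by $\sum_D |D \cap A|^2$, which by Corollary~\ref{cor:lambdai-not-ni} is asymptotically less than $|A|^2$ when $\lambda_j$ is bounded away from $n_j$. Moreover, $c(u, x) = c(u, y) = 1$ holds for the same geometric reason as $|A| = \Omega(n_j)$. Summing over the $n_i$ choices of $w$ yields $\Omega(n_i n_j^2)$ quadruples $(u, w, x, y)$ with Property $Q(i, j)$.

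For the bad triples, I use the bound Bad$(u,v) \le \sum_{w \in \mfX_i(u), z \in \mfX_i(v)} |\mfX_j(w) \cap \mfX_j(z)|^2$ obtained by re-indexing. By coherence, $|\mfX_j(w) \cap \mfX_j(z)|$ depends only on $c(w, z)$, and the clique-geometric rigidity controls how large this intersection can be: vertices common to $\mfX_j(w)$ and $\mfX_j(z)$ must sit at clique intersections or in a clique shared by $w$ and $z$. The main obstacle is translating these constraints into a uniform bound — the naive estimate $|\mfX_j(w) \cap \mfX_j(z)| \le n_j$ would give Bad $\le n_i^2 n_j^2$, which is far too weak; and the worst case arises when $\mfX_i(u)$ and $\mfX_i(v)$ share many vertices or are ``aligned'' through common cliques. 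The plan is to choose $(i, j)$ so that $i$ and $j$ are rarely simultaneously present at a single clique (so that typical $(w, z)$ pairs lie in few common color-$j$ cliques), and to partition the sum $\sum_{w, z}$ by the clique-incidence type of the pair $(w, z)$, using the integrality constraint $m_j \ge 2$ from Corollary~\ref{cor:lambdai-integer} and the 3-cliques hypothesis to show that the contribution from aligned pairs is $o(n_i n_j^2)$. Combining, Good $= \Omega(n_i n_j^2)$.
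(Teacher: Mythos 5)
There are two genuine gaps, one in each half of your plan. First, your count of $Q(i,j)$-quadruples does not go through as stated. Knowing $|A| = |\mfX_j(w)\setminus N(u)| = \Omega(n_j)$ together with $\lambda_j \lesssim n_j/2$ (Corollary~\ref{cor:lambdai-not-ni}) does \emph{not} give a constant fraction of pairs of $A$ with $c(x,y)=1$: if $m_j = 2$ and the other color-$j$ clique at $w$ lies inside $N(u)$ (e.g.\ it is the clique through $u$ and $w$), then $A$ is essentially a single clique of order $\sim \lambda_j \sim n_j/2$, so $\sum_D |D\cap A|^2 \sim |A|^2$ and there are almost no dominant pairs in $A$ at all. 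What is needed is an upper bound $|\mfX_j(w)\cap N(u)| \lesssim n_j/3$, so that $\mfX_j(w)\setminus N(u)$ necessarily meets at least two cliques; producing a color $j$ (depending only on $i$) with this property is exactly the content of the paper's Lemma~\ref{lem:good-color-j}, which is where the hypothesis that every vertex lies in at least three cliques and the integrality statement of Corollary~\ref{cor:lambdai-integer} are used. Your proposal never secures this, and the phrase ``when $\lambda_j$ is bounded away from $n_j$'' does not substitute for it.

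Second, the bad-triple bound $\sum_{w\in\mfX_i(u),\,z\in\mfX_i(v)} |\mfX_j(w)\cap\mfX_j(z)|^2$ throws away the constraint $c(x,y)=1$, and that constraint is what makes the estimate possible. A pair $(w,z)$ sharing a clique can have $|\mfX_j(w)\cap\mfX_j(z)| \sim \lambda_j$, so such a pair contributes $\Theta(n_j^2)$ to your sum, and nothing in your setup limits the number of such pairs to $o(n_i)$; the proposed remedies (choosing $(i,j)$ ``rarely simultaneously present at a clique,'' partitioning by incidence type) are unsubstantiated and cannot be arranged in general (consider $j=i^*$ or small rank). The paper's argument avoids this as follows: by Lemma~\ref{lem:good-color-i-new} one may restrict to the $\gtrsim n_i/2$ vertices $w\in\mfX_i(u)\setminus N(v)$, i.e.\ $c(w,v)=1$; then any witness $z\in\mfX_i(v)$ with $c(z,w)$ nondominant lies in $N(v)\cap N(w)$, of which there are only $\mu$, and for each such $z$ the condition $c(x,y)=1$ forces at least one of $x,y$ outside the common clique $C$ of $w$ and $z$, so the pair count is at most $|\mfX_j(w)\cap\mfX_j(z)|\cdot|(\mfX_j(w)\cap\mfX_j(z))\setminus C| \lesssim (\lambda_j+(n_j/\lambda_j)^2)(n_j/\lambda_j)^2 = o(n_j^2/\mu)$ (Lemma~\ref{lem:large-lambda-miss-v}), while witnesses with $c(z,w)=1$ are handled by $|\mfX_j(w)\cap\mfX_j(z)| = p^1_{jj^*} \lesssim n_j^2/n$. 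Without the restriction $w\notin N(v)$, the $\mu$-cap on close witnesses, and the replacement of the square by the ``one-vertex-off-the-clique'' product, the bad count is not $o(n_i n_j^2)$, so your final comparison with the good count cannot be completed.
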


Lemma~\ref{lem:main-spielman-combined} follows from
Lemmas~\ref{lem:many-good-triples-suffice},~\ref{lem:small-lambda-main}, and~Lemma~\ref{lem:large-lambda-main}
(noting for the latter that $\lambda_i = \Omega(n^{1/2})$ implies $n_i\mu = o(\lambda_i^2)$
by Proposition~\ref{prop:23}). \qed

Before proving Lemmas~\ref{lem:small-lambda-main}
and~\ref{lem:large-lambda-main}, we prove two smaller lemmas that will be
useful for both.

\begin{lemma}\label{lem:good-color-i-new}
    Let $\mfX$ be a  PCC with $\rho = o(n^{2/3})$.  Let $i$ be a
    nondominant color, and let $u,v\in V$ be distinct vertices.  We have  $|\mfX_i(u)\setminus N(v)|\gtrsim n_i/2$.  
\end{lemma}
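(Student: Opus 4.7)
The plan is to proceed by case analysis on the color $c(u,v)$, which is either the dominant color $1$ or some nondominant color $j$. In either case we will in fact show that $|\mfX_i(u) \cap N(v)| \leq (1/2 + o(1))n_i$, which gives the claim since $|\mfX_i(u) \setminus N(v)| = n_i - |\mfX_i(u) \cap N(v)|$ (noting that $v \notin N(v)$, so $v \in \mfX_i(u) \setminus N(v)$ if $v \in \mfX_i(u)$, but $v$ is never in $\mfX_i(u) \cap N(v)$).

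First consider the dominant case $c(u,v) = 1$. Since $i$ is nondominant, $v \notin \mfX_i(u)$, so $|\mfX_i(u) \setminus N(v)| = p^1_{i,1}$, the number of vertices $w$ with $c(u,w) = i$ and $c(w,v) = 1$. By Proposition~\ref{prop:cc}(iii), $p^1_{i,1} = (n_i/n_1)\,p^i_{1,1}$. The quantity $p^i_{1,1}$ counts the common dominant neighbors of a fixed pair $u',v'$ with $c(u',v') = i$; since $\mfX_1(u') \cup \mfX_1(v') \subseteq V \setminus \{u',v'\}$ (using $i \ne 1$), inclusion-exclusion yields $p^i_{1,1} \geq 2n_1 - (n-2) = n - 2\rho$. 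Combined with $n_1 = n(1-o(1))$, this gives $|\mfX_i(u) \setminus N(v)| \geq (1-o(1))n_i$, much stronger than the required $n_i/2$.

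Next consider the nondominant case $c(u,v) = j$. I will apply Lemma~\ref{lem:triple-count} with a slowly decreasing $\eps = \eps(n)$, say $\eps = n^{-1/12}$, which gives
\[
|\mfX_i(u) \cap N(v)| \leq \max\left\{\frac{\lambda_i+1}{1-\eps},\; n_i\sqrt{\frac{\mu}{\eps n_j}}\right\}.
\]
By Corollary~\ref{cor:lambdai-not-ni}, $\lambda_i \lesssim n_i/2$, and since $1/(1-\eps) = 1 + o(1)$, the first term is at most $(1/2 + o(1))n_i$. For the second term, Proposition~\ref{prop:23} gives $\mu = o(n^{1/3})$ and Lemma~\ref{lem:diam2} gives $n_j \geq \sqrt{n-1}$, so $\mu/(\eps n_j) = o(n^{1/3 + 1/12 - 1/2}) = o(1)$, making this term $o(n_i)$. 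Hence $|\mfX_i(u) \cap N(v)| \leq (1/2 + o(1))n_i$, giving $|\mfX_i(u) \setminus N(v)| \gtrsim n_i/2$.

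No serious obstacle is anticipated, as all the needed machinery is already in place. The main subtlety is that Lemma~\ref{lem:triple-count} requires $c(u,v)$ to be nondominant, so the dominant case must be handled separately by the direct inclusion-exclusion argument; conversely, the nondominant case requires choosing $\eps = o(1)$ in Lemma~\ref{lem:triple-count} in order to avoid losing a constant factor relative to $n_i/2$, which forces one to simultaneously control $\eps = \omega(\mu/n_j)$---this is where the assumptions $\rho = o(n^{2/3})$ and $n_j \gtrsim \sqrt{n}$ are critical.
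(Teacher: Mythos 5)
Your proof is correct, and in its main case it follows the paper's own argument: apply Lemma~\ref{lem:triple-count} together with $\lambda_i \lesssim n_i/2$ (Corollary~\ref{cor:lambdai-not-ni}) and $\mu/n_j = o(1)$. The differences are in the bookkeeping, and they work in your favor. The paper's one-line proof sets $j = c(u,v)$ and $\eps = 2\mu/n_j$ and invokes Lemma~\ref{lem:triple-count} without distinguishing whether $c(u,v)$ is dominant, although that lemma is stated only for nondominant $j$; you instead dispose of the case $c(u,v)=1$ by a direct count, $|\mfX_i(u)\setminus N(v)| = p^1_{i1} = (n_i/n_1)\,p^i_{11} \ge (n_i/n_1)\bigl(2n_1-(n-2)\bigr) \sim n_i$, which is clean and gives an even stronger bound in that case. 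Moreover, with the square root present in the statement of Lemma~\ref{lem:triple-count}, the paper's choice $\eps = 2\mu/n_j$ would make the second term $n_i/\sqrt{2}$ rather than $\lesssim n_i/2$ (the displayed application in the paper drops the square root); your choice of a slowly vanishing $\eps$ (any $\eps = o(1)$ with $\eps = \omega(\mu/n_j)$, e.g.\ $\eps = n^{-1/12}$, justified via $\mu = o(n^{1/3})$ from Proposition~\ref{prop:23} and $n_j \ge \sqrt{n-1}$ from Lemma~\ref{lem:diam2}) keeps the second term $o(n_i)$ and the first term $(1/2+o(1))n_i$, which is exactly what the stated bound $\gtrsim n_i/2$ requires. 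In short: same key lemma and structure, with a case split and a choice of $\eps$ that make the argument airtight.
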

\begin{proof}
    Let $j = c(u,v)$, and let $\eps = 2\mu/n_j$, so $\eps = o(1)$ by
    Proposition~\ref{prop:23}. By Corollary~\ref{cor:lambdai-not-ni}, we have
    $\lambda_i \lesssim n_i/2$. Therefore, by Lemma~\ref{lem:triple-count}, we
    have
    \[
        |\mfX_i(u)\cap N(v)| \le \max\left\{\frac{\lambda_i+1}{1-\eps},
        n_i\frac{\mu}{\eps n_j}\right\} \lesssim n_i/2.
    \]
\end{proof}

\begin{lemma}\label{lem:many-satisfy-star-new}
    Let $\mfX$ be a  PCC with $\rho = o(n^{2/3})$.  Let $i$ and $j$ be
    nondominant colors and let $u$ and $w$ be vertices with $c(u,w)=i$.
    Suppose that $|\mfX_j(w)\cap N(u)| \lesssim n_j/3$.
    Then there are $\gtrsim (1/9)n_j^2$ pairs of vertices $(x,y)$ with $c(x, y ) = 1$ such that
    $(u,w,x,y)$ has Property $Q(i,j)$.
\end{lemma}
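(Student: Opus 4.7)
The plan is to lower-bound the count of good $(x,y)$-pairs by restricting $x$ and $y$ to the set
\[
T \;=\; \mfX_j(w) \setminus (N(u) \cup \{u\}).
\]
I would first check that any two distinct $x, y \in T$ with $c(x,y) = 1$ automatically produce a quadruple $(u,w,x,y)$ with Property $Q(i,j)$. The equalities $c(u,w) = i$, $c(w,x) = c(w,y) = j$, and $c(x,y) = 1$ are immediate from the definition of $T$ and the assumption on $u,w$; and since $x, y \notin N(u)$ and $x, y \ne u$, the pairs $(u,x)$ and $(u,y)$ are nondiagonal and receive no nondominant color, so they must carry the unique dominant color, namely $1$.

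From the hypothesis $|\mfX_j(w) \cap N(u)| \lesssim n_j/3$, I would then derive the size bound
\[
|T| \;\ge\; n_j - (1/3 + o(1))n_j - 1 \;\gtrsim\; 2n_j/3.
\]

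The core counting step is then to bound the number of ordered pairs $(x,y) \in T \times T$ with $x \ne y$ and $c(x,y) = 1$. This count is at least $|T|^2 - |T| - \sum_{x \in T} |N(x) \cap T|$, and the key is to estimate the last sum via coherence rather than via the trivial bound $|N(x)| = \rho$. Applying the definition of $\lambda_j$ with the roles of the defining pair played by $(w,x)$ for $x \in \mfX_j(w)$ (so $c(w,x) = j$), coherence yields $|N(x) \cap \mfX_j(w)| = \lambda_j$ for every $x \in \mfX_j(w)$. Since $T \subseteq \mfX_j(w)$, summing gives $\sum_{x \in T} |N(x) \cap T| \le \lambda_j \cdot |T|$, so the number of good ordered pairs is at least $|T|(|T| - 1 - \lambda_j)$. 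Invoking $\lambda_j \lesssim n_j/2$ from Corollary~\ref{cor:lambdai-not-ni}, one gets $|T| - 1 - \lambda_j \gtrsim 2n_j/3 - n_j/2 = n_j/6$, and hence the count is $\gtrsim (2n_j/3)(n_j/6) = n_j^2/9$, as required.

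The main obstacle to be aware of is that the naive estimate $|N(x) \cap T| \le |N(x)| = \rho$ is hopelessly weak in this regime: by Lemma~\ref{lem:diam2} one has $n_j \ge \sqrt{n-1}$ while $\rho$ can be as large as $n^{2/3 - o(1)}$, so $\rho$ may even exceed $n_j$, making $\rho \cdot |T|$ swamp $|T|^2$. The entire argument hinges on replacing this crude bound with the coherence-tight bound $|N(x) \cap \mfX_j(w)| = \lambda_j$, which together with Corollary~\ref{cor:lambdai-not-ni} leaves the decisive gap $|T| - \lambda_j \gtrsim n_j/6$ between the size of $T$ and the typical nondominant in-neighborhood within $T$.
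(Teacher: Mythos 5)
Your proposal is correct and follows essentially the same route as the paper: both arguments restrict to $x,y\in\mfX_j(w)\setminus N(u)$ (a set of size $\gtrsim 2n_j/3$), use coherence to bound the nondominant neighbors of $x$ inside $\mfX_j(w)$ by $\lambda_j$, and invoke Corollary~\ref{cor:lambdai-not-ni} ($\lambda_j\lesssim n_j/2$) to leave $\gtrsim n_j/6$ admissible choices of $y$ per $x$, giving $\gtrsim (2n_j/3)(n_j/6)=n_j^2/9$ pairs. Your write-up is just slightly more explicit about excluding $u$ and the diagonal and about why $c(u,x)=c(u,y)=1$, which the paper leaves implicit.
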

\begin{proof}
    By Corollary~\ref{cor:lambdai-not-ni}, we have $\lambda_j \lesssim n_j/2$.
    Thus, for every vertex $x \in \mfX_j(w)\setminus N(u)$, there are at least
    $n_j - |\mfX_j(w)\cap N(u)| - \lambda_j \gtrsim n_j/6$ vertices $y \in
    \mfX_j(w) \setminus N(u)$ such that $(u,w,x,y)$ has Property $Q(i,j)$. 
    Since $|\mfX_j(w)\cap N(u)| \lesssim n_j/3$, the number of pairs $(x, y)$
    with $c(x, y) = 1$ such that $(u,w,x,y)$ has Property $Q(i,j)$ is 
    $\gtrsim (2n_j/3)(n_j/6) = (1/9)n_j^2$.
\end{proof}

\subsection{Good triples when some parameter $\lambda_k$ is small}\label{sec:small-lambda}

We prove Lemma~\ref{lem:small-lambda-main} in two parts, via the following two
lemmas.

\begin{lemma}\label{lem:small-lambda-main-new}
    There is an absolute constant $\eps > 0$ such that the following holds. Let
    $\mfX$ be a  PCC with $\rho = o(n^{2/3})$ and a nondominant color
    $k$ such that $\lambda_k < \eps n^{1/2}$ and $\lambda_{k^*} \lesssim n_k /
    3$. Then there are two nondominant colors $i$ and $j$ such that for every
    pair of distinct vertices $u,v \in V$, there are $\Omega(n_i n_j^2)$ good
    triples of vertices for $u$ and $v$ with respect to the colors $i$ and $j$.
\end{lemma}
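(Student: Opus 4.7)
My plan is to take $i = k^*$ and $j = k$, so that $n_i = n_j = n_k$, with $\lambda_j = \lambda_k < \eps n^{1/2}$ (small) and $\lambda_i = \lambda_{k^*} \lesssim n_k/3$ (moderate). By Lemma~\ref{lem:diam2} we have $n_k \gtrsim \sqrt n$, and by Proposition~\ref{prop:23}, $\mu = o(n^{1/3})$. I will first establish a lower bound of $\Omega(n_i n_j^2)$ on the number of triples $(w,x,y)$ for which $(u,w,x,y)$ satisfies property $Q(i,j)$, and then bound the ``bad'' subset (those for which some $z$ completes $(v,z,x,y)$ to satisfy $Q(i,j)$) by $o(n_i n_j^2)$, yielding $\Omega(n_i n_j^2)$ good triples.

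For the lower bound, Lemma~\ref{lem:good-color-i-new} produces $\gtrsim n_i/2$ vertices $w \in \mfX_i(u) \setminus N(v)$. For each such $w$, I verify the hypothesis $|\mfX_j(w) \cap N(u)| \lesssim n_j/3$ of Lemma~\ref{lem:many-satisfy-star-new} by applying Lemma~\ref{lem:triple-count} with a small constant $\eps_0$: the first branch of the max, $(\lambda_j+1)/(1-\eps_0) < \eps n^{1/2}/(1-\eps_0)$, is $\le n_j/3$ for $\eps$ sufficiently small (using $n_j \ge \sqrt n$), while the second branch $n_j\sqrt{\mu/(\eps_0 n_i)} = \sqrt{n_k\mu/\eps_0}$ is $o(n_k)$. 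Lemma~\ref{lem:many-satisfy-star-new} then produces $\gtrsim n_j^2/9$ pairs $(x,y)$ per valid $w$, so summing over $w$ gives $\Omega(n_i n_j^2)$ triples in $T(u) := \{(w,x,y) : (u,w,x,y) \in Q(i,j)\}$.

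For the bad count, define $A_u(x,y) = \mfX_i(u) \cap \mfX_{j^*}(x) \cap \mfX_{j^*}(y)$ and similarly $A_v$. Since a bad triple requires $|A_v(x,y)| \ge 1$ and $c(v,x)=c(v,y)=1$, I have $\mathrm{bad} \le \sum_{(x,y) : c(x,y) = 1} |A_u(x,y)| \cdot |A_v(x,y)|$; after switching the order of summation to the quadruples $(w,z,x,y)$ being counted, this is at most $\sum_{w \in \mfX_i(u),\, z \in \mfX_i(v)} |\mfX_{j^*}(w) \cap \mfX_{j^*}(z) \cap \mfX_1(u) \cap \mfX_1(v)|^2$. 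I split by the color $c(w,z)$: for dominant $c(w,z) = 1$ the intersection is at most $|N(w) \cap N(z)| = \mu$ (since $\mfX_{j^*} \subseteq N$), while for nondominant $c(w,z)$ the number of such ordered pairs $(w,z)$ is $O(n_i\rho)$, and the intersection size is controlled by $\lambda$-parameters, with the small $\lambda_k$ providing the key bound when $c(w,z) \in \{k,k^*\}$.

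The hardest step will be closing the bad bound in the dominant case: the naive contribution $n_i^2 \mu^2$ can exceed $n_i n_j^2$ when $n_k$ is near its minimum $\sqrt n$ and $\mu$ approaches $n^{1/3}$. To sharpen it, I plan to exploit the additional constraints $x,y \in \mfX_1(u) \cap \mfX_1(v)$ together with $c(u,w) = c(v,z) = k^*$ and coherence to replace the crude bound $\mu^2$ by a bound involving structure constants such as $p^1_{k^*,k^*}$ and $p^{k^*}_{k^*,1}$, whose magnitudes are controlled by the hypothesis $\lambda_{k^*} \lesssim n_k/3$ via the identities in Proposition~\ref{prop:cc}. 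Once bad $= o(n_i n_j^2)$ is confirmed, the count of good triples is $T(u) - \mathrm{bad} = \Omega(n_i n_j^2)$, as required.
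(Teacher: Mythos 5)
Your first half (the lower bound) is fine and essentially mirrors the paper: Lemma~\ref{lem:good-color-i-new} gives $\gtrsim n_i/2$ choices of $w \in \mfX_i(u)\setminus N(v)$, and Lemma~\ref{lem:triple-count} plus Lemma~\ref{lem:many-satisfy-star-new} give $\gtrsim n_j^2/9$ pairs $(x,y)$ per such $w$. The gap is in the second half: you never actually prove the bad-count bound, you only state a plan (``I plan to exploit\ldots''), and the plan does not close as sketched. Moreover, your choice of colors $i=k^*$, $j=k$ disables the only place where the hypothesis $\lambda_k<\eps n^{1/2}$ can enter. Following the natural per-$w$ computation (which is what the paper does), the number of completions $(z,x,y)$ with $x,y\in\mfX_j(w)$, $c(x,y)=1$ and $(v,z,x,y)$ satisfying $Q(i,j)$ is $\sum_{\ell} p^1_{\ell i^*}\,(p^{\ell}_{j j^*})^2$. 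With $j=k$ one has $p^{\ell}_{kk^*}=n_k p^{k}_{\ell k}/n_\ell$ and $\sum_{\ell\ge 2}p^{k}_{\ell k}=\lambda_{k^*}$, so the coherence identities route the estimate through $\lambda_{k^*}$, which is only $\lesssim n_k/3$; the resulting bound is of order $n_k^2\lambda_{k^*}^2/n\sim n_k^4/(9n)$ per $w$, and since $n_k\gtrsim\sqrt{n}$ (and $n_k$ can be as large as $\rho$) this is not $o(n_j^2)$, let alone below the $(1/9)n_j^2$ good-pair count. Your fallback of invoking $\lambda_{k^*}\lesssim n_k/3$ for the dominant case is likewise too weak a hypothesis to yield $o(n_in_j^2)$. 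Incidentally, the case you single out as hardest (dominant $c(w,z)$) is actually the easy one: for a dominant pair, $|\mfX_j(w)\cap\mfX_j(z)|=p^1_{jj^*}\lesssim n_j^2/n$ by coherence, which is much better than the crude bound $\mu$ and makes that contribution $o(n_j^2)$ per $w$; the genuine obstruction with your color choice is the nondominant case, where nothing ties $p^{\ell}_{kk^*}$ to the small parameter $\lambda_k$ for colors $\ell\notin\{k,k^*\}$.

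The paper's proof avoids this by taking $j=k^*$ and $i$ a nondominant color of \emph{minimum} degree. Then in the per-$w$ bad count $\sum_\ell p^1_{\ell i^*}(p^\ell_{k^*k})^2$ (Lemma~\ref{lem:small-lambda-miss-v-new}), the identity $p^{k^*}_{\ell k^*}=p^{k}_{k\ell^*}$ makes the sum telescope to $\lesssim (n_k^2/n)\lambda_k^2$, where the minimality of $n_i$ supplies the needed $n_i/n_\ell\le 1$; the hypothesis $\lambda_k<\eps n^{1/2}$ then gives $\lesssim \eps^2 n_k^2$, and choosing $\eps=\sqrt{1/18}$ beats the $(1/9)n_k^2$ good pairs, yielding $\gtrsim (1/36)n_i n_k^2$ good triples. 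To repair your proposal you would either have to switch to the paper's color choice (and add the minimality of $n_i$, which your $i=k^*$ need not satisfy), or supply a genuinely new argument bounding the nondominant-$c(w,z)$ contribution with $j=k$; as written, the proof is incomplete at exactly the step that carries the lemma's content.
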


\begin{lemma}\label{lem:clique-halfnk}
    Let $\tau$ be an arbitrary fixed positive integer. Let $\mfX$ be a  PCC with $\rho =
    o(n^{2/3})$ and a nondominant color $k$ such that $\lambda_k <
    n^{1/2}/(\tau +1)$ and $\lambda_{k^*} \gtrsim n_k/\tau$. Then for every pair of
    distinct vertices $u,v \in V$, there are $\Omega(n_k^3)$ good triples
    of vertices for $u$ and $v$ with respect to the colors $i = k^*$ and $j =
    k^*$.
\end{lemma}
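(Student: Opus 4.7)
My plan is to count good triples for $(u, v)$ with $i = j = k^*$ by combining the clique structure on $\mfX_{k^*}(u)$ (which the hypothesis $\lambda_{k^*} \gtrsim n_k/\tau$ supplies via Metsch's Lemma) with coherence-derived bounds on certain structure constants (which exploit the small $\lambda_k$). First, I will verify that Lemma~\ref{lem:metsch-local} applies to the subgraph of $G(\mfX)$ induced on $\mfX_{k^*}(u)$: using $n_k \ge \sqrt{n-1}$ from Lemma~\ref{lem:diam2} and $\mu = o(n^{1/3})$ from Proposition~\ref{prop:23}, the hypothesis $n_k \mu = o(\lambda_{k^*}^2)$ follows from $\lambda_{k^*}^2 = \Omega(n_k^2/\tau^2)$. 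This yields a partition of $\mfX_{k^*}(u)$ into $\sim \tau$ maximal cliques, each of order $\sim n_k/\tau$.

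Next, to lower bound the number $A_u$ of triples $(w, x, y)$ satisfying Property $Q(k^*, k^*)$ at $u$, I will for each $w \in \mfX_{k^*}(u)$ estimate the structure constant $p = |\mfX_1(u) \cap \mfX_{k^*}(w)| = p^{k^*}_{1, k}$, which counts candidates for $x$ (and symmetrically for $y$). Using Lemma~\ref{lem:triple-count} in a form that pulls in $\lambda_k$ (rather than $\lambda_{k^*}$), together with the hypothesis $\lambda_k < n^{1/2}/(\tau+1) \lesssim n_k/(\tau+1)$, I will bound the neighborhood overlap $|\mfX_{k^*}(w) \cap N(u)|$ from above, giving $p \gtrsim n_k$. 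A Cauchy--Schwarz estimate on the pairs in $(\mfX_{k^*}(w) \cap \mfX_1(u))^2$ with $c(x, y) = 1$, exploiting the Metsch clique partition of $\mfX_{k^*}(w)$ to spread out the nondominant edges, then produces $\Omega(n_k^2)$ valid pairs $(x, y)$ per $w$, for a total of $A_u = \Omega(n_k^3)$ triples.

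Finally, I will show that at least $\Omega(n_k^3)$ of these triples are not matched by any $z \in V$ satisfying Property $Q(k^*, k^*)$ at $v$. The main obstacle is the case $c(u, v) = 1$: here the dominant neighborhoods $\mfX_1(u)$ and $\mfX_1(v)$ differ in only $O(\rho) = o(n^{2/3})$ vertices, so the clique structures around $u$ and $v$ look very similar, making it hard to isolate triples at $u$ with no $z$-counterpart at $v$. To overcome this, I will establish the quantitative discrepancy $|\mfX_1(u) \triangle \mfX_1(v)| = \Omega(\rho)$, ruling out degenerate ``twin'' pairs by invoking the primitivity of $\mfX$, and then combine this discrepancy with the clique-partition analysis to extract the required unmatched triples. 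When $c(u, v)$ is nondominant the argument is easier, since then $u \in N(v)$ already provides structural asymmetry that breaks any potential matching.
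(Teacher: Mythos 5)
There is a genuine gap at the heart of your step 2, and it is exactly the difficulty this lemma exists to handle. You need, for $w\in\mfX_{k^*}(u)$ (so $c(w,u)=k$), an upper bound on $|\mfX_{k^*}(w)\cap N(u)|$ that is strictly below $n_k/2$ by a constant factor, since the valid pairs $(x,y)$ must satisfy $c(x,y)=1$ and hence lie in \emph{different} cliques of the partition of $\mfX_{k^*}(w)$; if one clique of that partition (of order $\sim\lambda_{k^*}$, possibly $\sim n_k/2$) were swallowed by $N(u)$, essentially no valid pairs would survive, and no Cauchy--Schwarz averaging can recover them. But Lemma~\ref{lem:triple-count} applied to $|\mfX_{k^*}(w)\cap N(u)|$ gives a bound in terms of $\lambda_{k^*}$ (the color of the neighborhood being intersected), not $\lambda_k$; there is no ``form that pulls in $\lambda_k$,'' and in the present regime $\lambda_{k^*}\gtrsim n_k/\tau$ is large, so that lemma only yields roughly $n_k/2$, which is useless here. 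Indeed, smallness of $\lambda_k$ does not transfer to $\lambda_{k^*}$-type quantities (this asymmetry is stated explicitly in the overview of Lemma~\ref{lem:main-spielman-combined}). The paper gets the needed bound $|\mfX_{k^*}(w)\cap N(u)|\le\tau\mu=o(n_k)$ by a different mechanism: it invokes Lemma~\ref{lem:local-clique-exists} to obtain a set $I\ni k^*$ with \emph{strong} $I$-local clique partitions (cliques maximal in all of $G(\mfX)$, not merely in the induced subgraph that Metsch provides), deduces $k\notin I$ from $\lambda_k<n^{1/2}/(\tau+1)\lesssim n_k/(\tau+1)$ versus $\lambda_{k^*}\gtrsim n_k/\tau$, and then uses Observation~\ref{obs:clique-basic} plus maximality to conclude that $u$ (which lies in $\mfX_k(w)$, a color outside $I$) meets each of the $\lesssim\tau$ cliques at $w$ in at most $\mu$ vertices. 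Your plan, which only invokes Metsch, never rules out the scenario that a whole clique at $w$ lies inside $N(u)$, so the claimed $\Omega(n_k^2)$ pairs per $w$ is unsupported.

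The third step is also not a workable plan as stated. What matters for killing matched triples is the relation of $w$ to $v$, not of $u$ to $v$: the paper restricts to the $\gtrsim n_k/2$ vertices $w\in\mfX_{k^*}(u)\setminus N(v)$ (Lemma~\ref{lem:good-color-i-new}), so that $c(w,v)=1$, and then Lemma~\ref{lem:large-lambda-miss-v-new} bounds, for each such $w$, the number of triples $(z,x,y)$ with $x,y\in\mfX_{k^*}(w)$, $c(x,y)=1$ and $(v,z,x,y)$ having Property $Q(k^*,k^*)$ by $o(n_k^2)$ --- splitting on whether $c(z,w)=1$ (a structure-constant count via $(p^1_{k^*k})^2n_{k^*}=o(n_k^2)$) or $z\in N(w)$ (where again the clique partition at $w$ confines $\mfX_{k^*}(z)\cap\mfX_{k^*}(w)$ to essentially one clique). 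Your proposed discrepancy $|\mfX_1(u)\,\triangle\,\mfX_1(v)|=\Omega(\rho)$ has no visible route to the needed per-$w$ count of unmatched pairs, and the asserted dichotomy on $c(u,v)$ (with the nondominant case being ``easier'') does not reflect the actual structure of the argument. To repair the proposal you would need to import the strong local clique partition machinery of Section~\ref{sec:clique} and the choice $w\notin N(v)$; with only Metsch's lemma and Lemma~\ref{lem:triple-count} the two key quantitative steps fail.
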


We observe that Lemma~\ref{lem:small-lambda-main} follows from these two.

\begin{proof}[Proof of Lemma~\ref{lem:small-lambda-main}]
    Let $\eps'$ be the absolute constant given by Lemma~\ref{lem:small-lambda-main-new}, and
    let $\eps = \min\{\eps', 1/4\}$. Let $\mfX$ be a  PCC with $\rho =
    o(n^{2/3})$ and a nondominant color $k$ such that $\lambda_k < \eps
    n^{1/2}$. If $\lambda_{k^*} \gtrsim n_k/3$, then
    Lemma~\ref{lem:clique-halfnk} gives the desired result. Otherwise, the
    result follows from Lemma~\ref{lem:small-lambda-main-new}.
\end{proof}

We now turn our attention to proving Lemmas~\ref{lem:small-lambda-main-new}.

\begin{lemma}\label{lem:small-lambda-miss-v-new}
    Let $0 < \eps < 1$ be a constant, and $\mfX$ be a  PCC with $\rho = o(n^{2/3})$.
    Let $i$ and $k$ be nondominant colors, and let $w$ and $v$ be vertices such
    that $c(w,v)$ is dominant. Suppose $n_i \le n_\ell$ for all $\ell$, and
    $\lambda_{k} < \eps n^{1/2}$. Then there are $\lesssim \eps^2 n_k^2$ triples
    $(z,x,y)$ of vertices such that $x,y \in \mfX_{k^*}(w)$, $c(x, y)  = 1$ and $(v,z,x,y)$
    has Property $Q(i,k^*)$.
\end{lemma}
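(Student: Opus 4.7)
My plan is to bound the count by double counting on pairs $(x,z)$, using the hypothesis $\lambda_k < \eps n^{1/2}$ through Lemma~\ref{lem:triple-count}. Applying that lemma to the pair $(v,x)$ with $c(v,x) = 1$ (i.e.\ taking the lemma's ``$i$''$=k$, ``$j$''$=1$, and any fixed $\eps_0 \in (0,1)$), I first use Proposition~\ref{prop:23} (which gives $\mu = o(n^{1/3})$) together with $n_k \le \rho = o(n^{2/3})$ to check that the second term $n_k\sqrt{\mu/(\eps_0 n_1)}$ in the maximum is $o(\lambda_k)$, so the first term dominates and $|\mfX_k(x) \cap N(v)| = O(\lambda_k)$. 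Consequently, for each $x$ with $c(v,x) = 1$ and $c(w,x) = k^*$, the number of vertices $z$ with $c(v,z) = i$ and $c(z,x) = k^*$ is at most $|\mfX_i(v) \cap \mfX_k(x)| \le |\mfX_k(x) \cap N(v)| = O(\lambda_k)$. Since the number of such $x$ is $p^1_{k^*,1} = O(n_k)$, there are at most $O(n_k \lambda_k)$ admissible pairs $(x,z)$.

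For each pair $(x,z)$ I then bound the number of admissible $y$'s: any such $y$ lies in $\mfX_{k^*}(z) \cap \mfX_{k^*}(w)$ and satisfies $c(x,y) = 1$. I split on $c(z,w)$: if $c(z,w) = 1$ (dominant), then $\mfX_{k^*}(z) \cap \mfX_{k^*}(w) \subseteq N(z) \cap N(w)$ has size at most $\mu$, capping the per-$z$ contribution; if $c(z,w)$ is nondominant, applying Lemma~\ref{lem:triple-count} to the pair $(z,w)$ yields a bound of the form $O(\lambda_{k^*})$ (the second term of the max is again $o(\lambda_{k^*})$ by Proposition~\ref{prop:23}). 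The minimality hypothesis $n_i \le n_\ell$ for every nondominant $\ell$ is crucial for controlling $|\mfX_i(v)| = n_i$ and thus the total contribution when summing over $z$.

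The hardest step will be combining the above estimates tightly enough to reach $O(\eps^2 n_k^2)$. A direct worst-case product of the pair count $O(n_k \lambda_k)$ with the per-pair bound $O(n_k)$ yields only $O(n_k^2 \lambda_k)$, which exceeds the target by roughly $n^{1/2}/\eps$. To close this gap I plan to use averaged double counting on the bipartite graph between $\mfX_i(v)$ and the relevant $x$'s: in the dominant $c(z,w)$ case, the $\mu$-cap controls $\sum_z s_z^2$ via $s_z \le \mu$; in the nondominant case, the $\lambda_{k^*}$-cap together with the pair bound $\sum_z s_z = O(n_k\lambda_k)$ gives control through Cauchy--Schwarz. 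The final inequality $O(\eps^2 n_k^2)$ should follow via the relations $\lambda_k^2 \le \eps^2 n$ and $n_k^2 \ge n$ (Lemma~\ref{lem:diam2}), so that any bound of the form $O(\lambda_k^2) + O(\mu^2 n_i)$ would translate to the desired one.
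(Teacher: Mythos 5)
Your overall setup (restrict to $z\in\mfX_i(v)$, split on whether $c(z,w)$ is dominant, cap the per-$z$ multiplicity, and double count) is reasonable, but the step you yourself flag as the hardest is genuinely missing, and the tools you name cannot supply it. Write $s_z=|\mfX_{k^*}(w)\cap\mfX_{k^*}(z)\cap\mfX_1(v)|$, so the count is at most $\sum_z s_z^2$. In the nondominant case your cap is $O(\lambda_{k^*})$ — but the hypothesis bounds only $\lambda_k$, not $\lambda_{k^*}$ (the paper stresses exactly this asymmetry in Section~\ref{sec:overview}), and $\lambda_{k^*}$ can be of order $n_k$ (e.g.\ $\sim n_k/2$). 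Hence Cauchy--Schwarz with your pair bound $\sum_z s_z=O(n_k\lambda_k)$ yields only $O(n_k\lambda_k\lambda_{k^*})=O(n_k^2\lambda_k)$, i.e.\ exactly the worst-case product you already identified as too large; indeed nothing in your estimates rules out roughly $\lambda_k$ vertices $z$ each with $s_z\approx n_k$, which would give $\approx n_k^2\lambda_k\gg\eps^2n_k^2$. The ingredient the paper uses, and which your argument never accesses, is a per-color count: grouping the $z$'s by $\ell=c(w,z)$, there are exactly $p^1_{\ell i^*}\lesssim n_in_\ell/n$ such $z$, and $s_z\le p^{\ell}_{k^*k}=n_kp^{k}_{k\ell^*}/n_\ell$ by Proposition~\ref{prop:cc}; then $\sum_{\ell\ge 2}p^1_{\ell i^*}\bigl(p^{\ell}_{k^*k}\bigr)^2\le \frac{n_k^2}{n}\sum_{\ell\ge2}\frac{n_i}{n_\ell}\bigl(p^k_{k\ell^*}\bigr)^2\le\frac{n_k^2}{n}\lambda_k^2<\eps^2n_k^2$, where the minimality $n_i\le n_\ell$ absorbs the factor $1/n_\ell$ and $\sum_{\ell\ge2}p^k_{k\ell^*}=\lambda_k$ is what brings the hypothesis on $\lambda_k$ (rather than $\lambda_{k^*}$) into play. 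Your use of the minimality hypothesis — only to say $|\mfX_i(v)|=n_i$ is smallest — does not produce this cancellation.

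There are also secondary problems. Your opening step invokes Lemma~\ref{lem:triple-count} with the dominant color in the role of $j$, outside its stated hypotheses, and the conclusion $|\mfX_k(x)\cap N(v)|=O(\lambda_k)$ requires the first term of the maximum to dominate; since no lower bound on $\lambda_k$ is assumed, the honest conclusion is only $O\bigl(\lambda_k+n_k\sqrt{\mu/n}\bigr)$. More importantly, the lemma asserts $\lesssim\eps^2n_k^2$, i.e.\ asymptotic constant at most $1$, and this sharpness is used downstream in Lemma~\ref{lem:small-lambda-main-new}, where $\eps^2=1/18$ must beat the $(1/9)n_j^2$ count of good pairs; so a bound with an unspecified constant, such as your target form ``$O(\lambda_k^2)$,'' does not prove the statement. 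Finally, the error term $O(\mu^2n_i)$ you deem acceptable is not: the available estimates permit $\mu$ near $n^{1/3}$ and $n_i=n_k$ near $n^{1/2}$, in which case $\mu^2n_i$ exceeds $n_k^2$. In the dominant-$c(z,w)$ case one needs the sharper cap $s_z\le p^1_{k^*k}\lesssim n_k^2/n$ (again Proposition~\ref{prop:cc}), giving $n_i\bigl(p^1_{k^*k}\bigr)^2\lesssim n_k^2\rho^3/n^2=o(n_k^2)$ as in the paper, rather than the $\mu$-cap you propose.
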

\begin{proof}
    First we observe that
    \[
        (p^1_{k^*k})^2 n_i \lesssim (n_k^2/n)^2n_i \le n_k^2 (\rho^3/n^2) =
        o(n_k^2).
    \]

    For every color $\ell$, there are exactly $p^1_{\ell i^*}$   vertices
    $z$ such that $c(v,z)=i$ and $c(w,z)=\ell$. For every such vertex
    $z$, there are at most $(p^\ell_{k^*k})^2$ pairs $x,y \in \mfX_{k^*}(w)$ with $c(x,y) = 1$ such that
    $(v,z,x,y)$ has Property $Q(i,k^*)$. Thus, by
    Proposition~\ref{prop:cc}, the total number of such triples is
    \begin{align*} 
        \sum_{\ell=1}^{r-1} p^1_{\ell i^*}(p^\ell_{k^*k})^2
        &\lesssim p^1_{1i^*}(p^{1}_{k^*k})^2 +
        \sum_{\ell=2}^{r-1}\frac{n_\ell n_i}{n}\left(\frac{n_k p^{k^*}_{\ell k^*}}{n_\ell}\right)^2 \\
        &\le o(n_k^2) +
        \sum_{\ell=2}^{r-1}\left(\frac{n_k^2}{n}\right)\left(p^{k}_{k\ell^*}\right)^2 \\
        &\le o(n_k^2) + \frac{n_{k}^2}{n}\lambda_{k}^2 \lesssim \eps^2 n_k^2.
    \end{align*}
\end{proof}

We finally complete the proof of Lemma~\ref{lem:small-lambda-main-new}.
\begin{proof}[Proof of Lemma~\ref{lem:small-lambda-main-new}]
    Let $\eps = \sqrt{1/18}$.     Let $i$ be a nondominant color minimizing $n_i$. By
    Lemma~\ref{lem:good-color-i-new}, we have  $|\mfX_i(u)\setminus
    N(v)|\gtrsim n_i/2$ for any pair of distinct vertices $u, v \in V$. 
    
    Let color $j = k^*$. Fix two distinct vertices $u$
    and $v$.
     Let $w \in \mfX_i(u)\setminus N(v)$. 
     Since $\lambda_j \lesssim n_j / 3$, we have $|\mfX_j(w)\cap N(u)| \lesssim n_j / 3$
        by Lemma~\ref{lem:triple-count} (with $\eps = \sqrt{9 \mu / n_i} = o(1)$).
    By Lemma~\ref{lem:many-satisfy-star-new}, there are $\gtrsim
    (1/9)n_j^2$ pairs of vertices $(x,y)$ with $c(x, y) = 1$ such that
    $(u,w,x,y)$ has Property $Q(i,j)$.  Furthermore, by
    Lemma~\ref{lem:small-lambda-miss-v-new}, for all but $\lesssim (1/18)
    n_j^2$ of these pairs $(x,y)$, the triple $(w,x,y)$ is good for $u$ and
    $v$. Since there are $\gtrsim n_i/2$ such vertices $w$, we have a total of
    $\gtrsim (1/36)n_in_j^2$ triples $(w,x,y)$ that are good for $u$ and $v$.
\end{proof}

Now we prove Lemma~\ref{lem:clique-halfnk}.

\begin{lemma}\label{lem:large-lambda-miss-v-new}
    Let $\mfX$ be a PCC with $\rho = o(n^{2/3})$ and strong $I$-local
    clique partitions for some set $I$ of nondominant colors. Let $j \in
    I$ be a color  such that $\lambda_j = \Omega(n_j)$. Let $w$ and $v$ be vertices such
    that $c(w,v)=1$. Then for any nondominant color $i$ with $n_i \le n_j$,
    there are $o(n_j^2)$ triples $(z,x,y)$ of vertices such that $x,y \in
    \mfX_j(w)$, $c(x, y) =1$ and $(v,z,x,y)$ has Property $Q(i,j)$.
\end{lemma}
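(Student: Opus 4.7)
The plan is to bound the number $|T|$ of triples $(z,x,y)$ by summing over $z \in \mfX_i(v)$. For each such $z$, define $A_z = \mfX_j(z) \cap \mfX_j(w)$, so that any $(x,y)$ in a counted triple lies in $A_z \times A_z$. I will split into three cases according to $c(w,z)$, leveraging both Proposition~\ref{prop:cc}(iii) and the strong $I$-local clique partition $\mcP_w$. A key preliminary observation is that, since $\lambda_j = \Omega(n_j)$ and $\mcP_w$ is an $I$-local clique partition, the restriction of $\mcP_w$ to $\mfX_j(w)$ consists of only $m = O(1)$ cliques, each of size $\sim \lambda_j$.

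In Case (i), $c(w,z) = 1$: Proposition~\ref{prop:cc}(iii) gives $p^1_{j,j^*} = (n_j/n_1)p^j_{1,j} \lesssim n_j^2/n$, so $|A_z| \lesssim n_j^2/n$. There are at most $p^1_{1,i^*} \le n_i$ such $z$, contributing $\lesssim n_i(n_j^2/n)^2 = (n_i n_j^2/n^2) n_j^2 = o(n_j^2)$, since $n_i n_j^2 \le n_j^3 \le \rho^3 = o(n^2)$.

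In Case (ii), $c(w,z) = \ell \in I$: the vertex $z$ lies in a unique clique $C_{t_0}^w$ of $\mcP_w$. For $t \neq t_0$, Observation~\ref{obs:clique-basic} applied to the maximal clique $C_t^w \cup \{w\}$ yields $|A_z \cap C_t^w| \le \mu$, while $|A_z \cap C_{t_0}^w| \lesssim \lambda_j$. Crucially, any contributing pair $(x,y)$ has $c(x,y) = 1$ and so $x,y$ must lie in different cliques of $\mcP_w$ (within a clique of $G(\mfX)$ all pairs are nondominant). A direct case-split then bounds the pairs per $z$ by $O(\lambda_j \mu + \mu^2) = O(\lambda_j \mu)$. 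The number of such $z$ is $\sum_{\ell \in I} p^1_{\ell, i^*} \lesssim n_i \rho/n$ by Proposition~\ref{prop:cc}(iii), so the total contribution is $O(n_i \rho \lambda_j \mu/n) \lesssim n_j^2 (\rho \mu / n) = o(n_j^2)$, using $\rho \mu = o(n)$ from Proposition~\ref{prop:23}. In Case (iii), $c(w,z)$ is nondominant but not in $I$: then $z \notin \mfX_I(w) \cup \{w\}$, so Observation~\ref{obs:clique-basic} gives $|A_z \cap C_t^w| \le \mu$ for every $t$, hence $|A_z| = O(\mu)$ and the pairs per $z$ are $O(\mu^2)$; the number of such $z$ is again $\lesssim n_i \rho/n$, so the total is $O(n_i \rho \mu^2/n) \lesssim n_j \cdot n^{1/3} \cdot o(1) = o(n_j^2)$, using $n_j = \Omega(\sqrt{n}) = \omega(n^{1/3})$ from Lemma~\ref{lem:diam2}.

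Combining the three cases gives $|T| = o(n_j^2)$, as desired. The main obstacle is Case (ii): the naive bound $|A_z|^2 \lesssim \lambda_j^2$ per $z$ is too weak, and tightening it to $O(\lambda_j \mu)$ requires simultaneously exploiting (a) the large-$\lambda_j$ hypothesis to bound the number of cliques in $\mcP_w$, (b) Observation~\ref{obs:clique-basic} to control the intersection of $A_z$ with cliques not containing $z$, and (c) the constraint $c(x,y) = 1$ to force $(x,y)$ across clique boundaries.
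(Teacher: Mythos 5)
Your proposal is correct and follows essentially the same route as the paper's proof: the same split on whether $c(z,w)$ is dominant, the bound $p^1_{jj^*}\lesssim n_j^2/n$ via Proposition~\ref{prop:cc}, and, for nondominant $c(z,w)$, the use of the strong $I$-local clique partition at $w$ with $O(1)$ cliques (from $\lambda_j=\Omega(n_j)$), Observation~\ref{obs:clique-basic} to cap intersections with cliques not containing $z$ by $\mu$, and the constraint $c(x,y)=1$ forcing the pair across clique boundaries, followed by the same $\rho\mu=o(n)$ arithmetic. Your only deviation is cosmetic: you treat $c(w,z)\in I$ and $c(w,z)\notin I$ as separate subcases, where the paper merges them by setting $C=\emptyset$ when $z$ lies in no clique of the partition.
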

\begin{proof}
    Fix a nondominant color $i$, and let $T$ be the set of triples $(z,x,y)$
    such that $x,y \in \mfX_j(w)$, $c(x, y) = 1$ and $(v,z,x,y)$ has Property $Q(i,j)$.
    
    If $c(z,w)=1$, then $|\mfX_j(z)\cap \mfX_j(w)| = p^1_{jj^*}$, and so there are at most
    $(p^1_{jj^*})^2$ pairs $x,y \in \mfX_j(w)$ with $c(x, y) = 1$ such that
    $(z,x,y) \in T$. Then since $c(v,z) = i$ whenever $(z,x,y) \in T$, the
    total number of triples $(z,x,y) \in T$ such that $C(z,w)=1$ is 
    at most
    \[
        (p^1_{jj^*})^2 n_i \lesssim (n_j^2/n)^2n_i \le n_j^2 (\rho^3/n^2) =
        o(n_j^2),
    \]
    where the first inequality follows from Proposition~\ref{prop:cc}~(iii)
    and~(iv), and the relation $n_1 \sim n$.

    Since $c(w,v)=1$, there are $\le \rho n_i/n$ vertices $z \in \mfX_i(v)\cap N(w)$.
    Suppose $z \in \mfX_i(v) \cap N(w) $. Let $\mcC$ denote the collection of cliques
    partitioning $\mfX_I(w)$. If some clique in $\mcC$ contains $z$, let $C$ be
    that clique; otherwise, let $C = \emptyset$. Since $\mcC$ partitions
    $\mfX_j(w)$ into $\sim n_j/\lambda_j = O(1)$ cliques for each $w \in V$,
    and $|\mfX_j(z)\cap C'| \le \mu$ for every clique $C' \in \mcC$ with $C \ne C'$, we
    therefore have $|(\mfX_j(w)\cap \mfX_j(z))\setminus C| \lesssim \mu n_j/\lambda_j= O(\mu)$.  But then
    there are at most
    \[
        |\mfX_j(w)\cap \mfX_j(z)|\cdot|(\mfX_j(w)\cap\mfX_j(z))\setminus C|
        = O(n_j\mu)
    \]
    pairs $x,y \in \mfX_j(w)$ with $c(x, y) = 1$ such that $(v,z,x,y) \in T$, for a total of at
    most $O(n_j \mu (\rho n_i)/n) = o(n_j^2)$ triples $(z,x,y) \in T$ with
    $c(z,w) \ne 1$.
\end{proof}

\begin{proof}[Proof of Lemma~\ref{lem:clique-halfnk}]
    For every nondominant color $\ell$, by Proposition~\ref{prop:23}, we have
    $n_k\sqrt{\mu/n_\ell} = o(n_k) = o(\lambda_{k^*})$. Similarly, $n_k \mu =
    o(\lambda_{k^*}^2)$. Hence, by Lemma~\ref{lem:local-clique-exists} and
    Definition~\ref{def:str-local-clique}, there is a set $I$ of nondominant
    colors with $k^* \in I$ such that $\mfX$ has strong $I$-local clique
    partitions.  Since $\lambda_k < n^{1/2} / (\tau + 1) \lesssim n_k / (\tau +
    1)$ by Lemma~\ref{lem:diam2}, and since $\lambda_{k^*} \gtrsim n_k / \tau$,
    the by the definition of an $I$-local clique partition, $k \notin I$.
    Hence, by the definition of a strong $I$-local clique partition and
    Observation~\ref{obs:clique-basic}, for a vertex $x$ and a vertex $y \in
    \mfX_k(x)$, $|N(y) \cap \mfX_{k^*}(x)| \le \tau \mu = o(n_k)$.
    
    On the other hand, by Corollary~\ref{cor:lambdai-not-ni}, we  have
    $\lambda_{k^*} \lesssim n_k/2$, and hence $\lambda_k \le n^{1/2}/3 \lesssim
    n_k/3$ by Lemma~\ref{lem:diam2}.

    Fix $u,v \in V$. By Lemma~\ref{lem:good-color-i-new}, we have
    $|\mfX_{k^*}(u)\setminus N(v)| \gtrsim n_{k^*}/2$. Let $w \in \mfX_{k^*}(u)\setminus
    N(v)$. We have $c(w,u) = k$ and so $|\mfX_{k^*}(w)\cap N(u)| = o(n_k)$. By
    Lemma~\ref{lem:many-satisfy-star-new}, there are $\Omega(n_k^2)$ pairs of non-adjacent
    vertices $(x,y)$ such that $(u,w,x,y)$ has Property $Q(k^*,k^*)$.
    But by Lemma~\ref{lem:large-lambda-miss-v-new}, there are $o(n_k^2)$
    triples $(x,y,z)$ of vertices such that $x,y \in \mfX_{k^*}(w)$, $c(x, y) = 1$ and
    $(v,z,x,y)$ has Property $Q(k^*,k^*)$. So there are $\Omega(n_k^2)$
    pairs $(x,y)$ of vertices such that $(w,x,y)$ is good for $u$ and $v$ with
    respect to colors $i = k^*$ and $j = k^*$. Since we have $\Omega(n_k)$
    choices for vertex $w$, there are in total $\Omega(n_k^3)$ good triples, as
    desired.
\end{proof}

\subsection{Good triples with clique geometries}\label{sec:large-lambda}

We now prove Lemma~\ref{lem:large-lambda-main}.

\begin{lemma}\label{lem:good-color-j}
    Let $\mfX$ be a  PCC with $\rho = o(n^{2/3})$ and an asymptotically uniform clique geometry
    $\mcC$ such that every vertex $u \in V$ belongs to at least three cliques
    in $\mcC$. Suppose that $n_i\mu = o(\lambda_i^2)$ for every nondominant
    color $i$. 
     Then, for any nondominant color $i$, there is a nondominant color $j$ such that 
    for every $u, w$ with $w \in \mfX_i(u)$, we have
    $|\mfX_j(w)\cap N(u)| \lesssim n_j/3$. 
\end{lemma}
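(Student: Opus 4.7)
From Theorem~\ref{thm:clique-geometry} and its proof, the nondominant colors partition into parts $I_1,\ldots,I_s$ corresponding to the strong local clique partitions, and every vertex lies in $m_\ell$ cliques of $\mcC$ of ``type $I_\ell$'' for each $\ell$, where $m_\ell\ge 2$ depends only on $\ell$; I write $\ell(k)$ for the part containing nondominant color $k$. Corollary~\ref{cor:lambdai-integer} gives $\lambda_j \sim n_j/m_{\ell(j)}$ for each nondominant $j$, and the hypothesis that every vertex lies in at least three cliques of $\mcC$ becomes $\sum_{\ell=1}^s m_\ell \ge 3$. The approach is to decompose $\mfX_j(w)$ along the cliques at $w$ and to choose $j$ based on whether $m_{\ell(i^*)}\ge 3$ or not.

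Fix $u,w$ with $c(u,w)=i$ and let $C\in\mcC$ denote the unique clique containing both. The cliques at $w$ that meet $\mfX_j(w)$ nontrivially are exactly the $m_{\ell(j)}$ cliques of type $I_{\ell(j)}$ at $w$, each contributing $\sim\lambda_j$ vertices by the asymptotic uniformity of $\mcC$. The clique $C$ is among these if and only if $j\in I_{\ell(i^*)}$, since the type of $C$ at $w$ is determined by $c(w,u)=i^*$. For any other such clique $C_w\ne C$, we have $u\notin C_w$ (because the edge $\{u,w\}$ lies in the unique clique $C$), and Observation~\ref{obs:clique-basic} gives $|C_w\cap N(u)|\le\mu$. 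Hence
\[
|\mfX_j(w)\cap N(u)| \;\le\; |C\cap\mfX_j(w)| + \mu\cdot m_{\ell(j)},
\]
where the first term is $\sim\lambda_j$ when $j\in I_{\ell(i^*)}$ and $0$ otherwise. The hypothesis $n_j\mu = o(\lambda_j^2)$ combined with $\lambda_j\sim n_j/m_{\ell(j)}$ yields $\mu m_{\ell(j)}^2 = o(n_j)$, so $\mu m_{\ell(j)} = o(\lambda_j) = o(n_j)$ and this error is negligible.

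It remains to pick $j$. Set $\ell_0=\ell(i^*)$. If $m_{\ell_0}\ge 3$, choose any $j\in I_{\ell_0}$; then $|\mfX_j(w)\cap N(u)| \lesssim \lambda_j \sim n_j/m_{\ell_0} \le n_j/3$. Otherwise $m_{\ell_0}\le 2$, and since $\sum_\ell m_\ell\ge 3$ some $\ell'\ne\ell_0$ has $m_{\ell'}\ge 1$; any $j\in I_{\ell'}$ then gives $|\mfX_j(w)\cap N(u)|\le \mu m_{\ell(j)} = o(n_j) \lesssim n_j/3$. The main subtlety is that a naive error of $\mu\cdot m_u$ summed over all cliques at $u$ would be too weak; the refinement is that $\mfX_j(w)$ lies in only $m_{\ell(j)}$ cliques at $w$, a count sharply controlled by the hypothesis $n_j\mu = o(\lambda_j^2)$.
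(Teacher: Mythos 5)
Your proof is correct and follows essentially the same route as the paper's: the same dichotomy (whether $\mfX_{i^*}(w)$ is split among at least three cliques at $w$, i.e.\ $\lambda_{i^*}\lesssim n_{i^*}/3$, or exactly two, i.e.\ $\lambda_{i^*}\sim n_{i^*}/2$ via Corollary~\ref{cor:lambdai-integer}), the same choice of $j$ (either $j=i^*$, or a color carried by a clique at $w$ that avoids $\mfX_{i^*}(w)$), and the same count: the unique clique through $u$ and $w$ contributes $\lesssim\lambda_j$ or nothing, every other clique at $w$ contributes at most $\mu$ by Observation~\ref{obs:clique-basic}, and $\mu n_j/\lambda_j=o(\lambda_j)$ by the hypothesis $n_j\mu=o(\lambda_j^2)$. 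The only divergence is that you import the type/part structure of the cliques from the proof of Theorem~\ref{thm:clique-geometry} instead of working only with the abstract asymptotically uniform geometry hypothesized in the lemma; this is harmless (the lemma is applied only to that geometry) and in fact makes explicit the step the paper's second case leaves implicit, namely that the clique containing $u$ and $w$ is disjoint from $\mfX_j(w)$ for the chosen $j$.
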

\begin{proof}
    Let $C \in \mcC$ be the unique clique such that $u,w \in C$.
    
    If $\lambda_{i^*} \lesssim n_i/3$, we let $j = i^*$. Then by the maximality
    of the cliques in $\mcC$ partitioning $\mfX_j(w)$, we have \[|(\mfX_j(w)\cap
    N(u))\setminus C| \le \mu n_j/\lambda_j = o(\lambda_j)\] by Observation \ref{obs:clique-basic}. So $|\mfX_j(w)\cap
    N(u)| \lesssim n_j/3$.

    Otherwise, by Corollary~\ref{cor:lambdai-integer}, $\lambda_{i^*} \sim n_i/2$, and so there is at most one
    clique $C' \in \mcC$ with $C' \ne C$ such that $|\mfX_{i^*}(w) \cap C'| \ne
    0$. Therefore, there is a clique $C''$ such that $|\mfX_{i^*}(w) \cap C''|
    = 0$. Let $j$ be a nondominant color such that $|\mfX_j(w)\cap C''| \sim
    \lambda_j$. Again, by the maximality of the cliques in $\mcC$ partitioning
    $\mfX_j(w)$ and Observation \ref{obs:clique-basic}, we have \[|\mfX_j(w)\cap N(u)| \lesssim \mu n_j/\lambda_j =
    o(\lambda_j),\] as desired. Furthermore, this inequality does not depend on
    the choice of $w$ by the coherence of $\mfX$.
\end{proof}

\begin{lemma}\label{lem:large-lambda-miss-v}
    Let $\mfX$ be a  PCC with $\rho = o(n^{2/3})$ and an asymptotically uniform clique geometry
    $\mcC$, let $w$ and $v$ be vertices such that $c(w,v)$ is dominant, and let
    $j$ be a nondominant color such that $\mu = o(\min\{\lambda_j,
    \lambda_j^4/n_j^2\})$. Then for any nondominant color $i$, there are
    $o(n_j^2)$ triples $(z,x,y)$ of vertices such that $x,y \in \mfX_j(w)$, $c(x, y) = 1$ and
    $(v,z,x,y)$ has Property $Q(i,j)$.
\end{lemma}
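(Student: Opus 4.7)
The plan is to follow the same general structure as the proof of Lemma~\ref{lem:large-lambda-miss-v-new}: split the triples $(z,x,y)$ according to whether $c(z,w) = 1$ (``Case 1'') or $c(z,w)$ is nondominant (``Case 2''). The reason the hypothesis $\lambda_j = \Omega(n_j)$ can be weakened to $\mu = o(\min\{\lambda_j,\lambda_j^4/n_j^2\})$ while also dropping the auxiliary $n_i \le n_j$ assumption of Lemma~\ref{lem:large-lambda-miss-v-new} is that I would use two tighter bounds in Case 2.

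Case 1 is essentially unchanged from Lemma~\ref{lem:large-lambda-miss-v-new}: Proposition~\ref{prop:cc} yields $|\mfX_j(w) \cap \mfX_j(z)| = p^1_{jj^*} \lesssim n_j^2/n$, there are at most $n_i \le \rho$ choices for $z$, and the total is $\le n_i (p^1_{jj^*})^2 \lesssim \rho n_j^4/n^2 \le n_j^2 \cdot \rho^3/n^2 = o(n_j^2)$ by $\rho^3 = o(n^2)$.

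Case 2 is where the new ingredients enter.  First, any valid $z$ must lie in $N(v) \cap N(w)$ because both $c(v,z)$ and $c(w,z)$ are nondominant, so the definition of $\mu$ together with $c(v,w)=1$ already bounds the number of $z$ by $\mu$; this is much tighter than the estimate one would get from Proposition~\ref{prop:cc} alone, and it is the step that removes the need for any relationship between $n_i$ and $n_j$.  Second, for each such $z$, let $C_z \in \mcC$ be the unique clique containing $\{w,z\}$ and set $S_z = \mfX_j(w) \cap \mfX_j(z)$.  I would bound $|S_z \setminus C_z|$ by a ``bipartite-pair'' argument on the cliques of $\mcC$ at $w$ and at $z$: each $x \in S_z \setminus C_z$ lies in a unique clique $C_x \ne C_z$ at $w$ and a unique clique $C_x' \ne C_z$ at $z$, and $C_x = C_x'$ would force $w,z \in C_x$ and hence $C_x = C_z$, a contradiction, so $|C_x \cap C_x'| \le 1$ by the clique geometry; since only $\sim n_j/\lambda_j$ cliques at each of $w$ and $z$ host $j$-neighbors, this yields $|S_z \setminus C_z| \lesssim (n_j/\lambda_j)^2$.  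Combined with $|S_z \cap C_z| \lesssim \lambda_j$ from asymptotic uniformity, and using that $c(x,y) = 1$ forbids both $x,y$ from lying in $C_z$, the pair count per $z$ is $\lesssim |S_z| \cdot |S_z \setminus C_z| \lesssim n_j^2/\lambda_j + n_j^4/\lambda_j^4$.  Multiplying by $\mu$ gives a Case 2 total of $\lesssim \mu n_j^2/\lambda_j + \mu n_j^4/\lambda_j^4$, whose two summands are $o(n_j^2)$ by exactly the two halves of the hypothesis, namely $\mu = o(\lambda_j)$ and $\mu n_j^2 = o(\lambda_j^4)$.

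The hardest part will be locating these two tighter bounds.  The elementary observation $|\mfX_i(v) \cap N(w)| \le \mu$ is easy to miss because one instinctively reaches for structure constants like $p^1_{\ell,i^*}$, and the bipartite-pair bound $|S_z \setminus C_z| \lesssim (n_j/\lambda_j)^2$ takes the place of the coarser per-clique estimate $|C' \cap \mfX_j(z)| \le \mu$ which suffices in Lemma~\ref{lem:large-lambda-miss-v-new} only because its stronger hypothesis $\lambda_j = \Omega(n_j)$ shrinks the number of relevant cliques to $O(1)$.  That the two halves of the hypothesis on $\mu$ are tuned to match exactly the two summands produced by this analysis is strong evidence that this is the intended proof.
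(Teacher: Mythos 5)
Your proposal is correct and is essentially the paper's own proof: the same split on whether $c(z,w)=1$, the same $p^1_{jj^*}\lesssim n_j^2/n$ estimate for the dominant case, and in the nondominant case the same $\le\mu$ bound on the number of admissible $z$ (from $c(v,w)=1$ and $z\in\mfX_i(v)\cap N(w)$), the same pairwise clique-intersection count giving $|(\mfX_j(w)\cap\mfX_j(z))\setminus C|\lesssim(n_j/\lambda_j)^2$, and the same per-$z$ pair bound $\lesssim n_j^2/\lambda_j+(n_j/\lambda_j)^4=o(n_j^2/\mu)$ matched to the two halves of the hypothesis on $\mu$. No gaps to report.
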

\begin{proof}
    Fix a nondominant color $i$, and let $T$ be the set of triples $(z,x,y)$
    such that $x,y \in \mfX_j(w)$, $c(x, y) = 1$ and $(v,z,x,y)$ has Property $Q(i,j)$.
    
    If $c(z,w)=1$, then $|\mfX_j(z)\cap \mfX_j(w)| = p^1_{jj^*}$, and so there are at most
    $(p^1_{jj^*})^2$ pairs $x,y \in \mfX_j(w)$ such that $(z,x,y) \in T$, for a total of
    at most
    \[
        (p^1_{jj^*})^2 n_i \lesssim (n_j^2/n)^2n_i \le n_j^2 (\rho^3/n^2) =
        o(n_j^2).
    \]
    triples $(z,x,y) \in T$ with $c(z,w) =1$.

    Since $c(w,v)=1$, there are $\le \mu$ vertices $z \in \mfX_i(v)\cap N(w)$.
    Suppose $z \in \mfX_i(v) \cap  N(w)$. Let $C$ be the clique in $\mcC$ containing both $z$ and $w$.
    Note that $|C\cap
    \mfX_j(w)|\lesssim \lambda_j$. For any $C_w,C_z \in \mcC$ with $w \in C_w$
    and $z \in C_z$ such that $C_w \ne C$ and  $C_z \ne C$, we have $|C_w\cap C_z| \le
    1$.  Since $\mcC$ partitions $\mfX_j(u)$ into $\sim n_j/\lambda_j$ cliques
    for each $u \in V$, we therefore have $|(\mfX_j(w)\cap \mfX_j(z)) \setminus C|
    \lesssim (n_j/\lambda_j)^2$.  But then there are at most
    \begin{align*}
        |\mfX_j(w)\cap \mfX_j(z)|\cdot|(\mfX_j(w)\cap\mfX_j(z))\setminus C|
        &\lesssim (\lambda_j + (n_j/\lambda_j)^2)(n_j/\lambda_j)^2 \\
        &\lesssim n_j^2/\lambda_j + (n_j/\lambda_j)^4 = o(n_j^2/\mu)
    \end{align*}
    pairs $x,y \in \mfX_j(w)$ with $c(x, y) = 1$ such that $(v,z,x,y) \in T$, for a total of at
    most $o(n_j^2)$ triples $(z,x,y) \in T$ with $c(z,w) \ne 1$.
\end{proof}

\begin{proof}[Proof of Lemma~\ref{lem:large-lambda-main}]
    Let $u$ and $v$ be two distinct vertices. By Lemma~\ref{lem:good-color-i-new},
    there is a nondominant color $i$ such that $|\mfX_i(u)\setminus N(v)|
    \gtrsim n_i/2$.  By Lemma~\ref{lem:good-color-j}, there is a nondominant
    color $j$ such that for every $w \in \mfX_i(u)$, we have $|\mfX_j(w)\cap
    N(u)| \lesssim n_j/3$. 
    
    Let $w \in \mfX_i(u)\setminus N(v)$. 
    By Lemma~\ref{lem:many-satisfy-star-new},
    there are $\Omega(n_j^2)$ pairs of vertices $(x,y)$ with $c(x, y) = 1$ such that $(u,w,x,y)$
    has Property $Q(i,j)$.  Furthermore, 
    since $\mu$ is a positive integer and $\mu = o(\lambda_j^2 / n_j)$, we have $\mu = o(\min\{\lambda_j, \lambda_j^4 / n_j^2\})$.
    By
    Lemma~\ref{lem:large-lambda-miss-v}, for all but $o(n_j^2)$ of these pairs
    $(x,y)$, the triple $(w,x,y)$ is good for $u$ and $v$. Since there are
    $\gtrsim n_i/2$ such vertices $w$, we have a total of $\Omega(n_in_j^2)$
    triples $(w,x,y)$ that are good for $u$ and $v$.
\end{proof}



\section{Conclusion}\label{sec:conclusion}

We have proved that except for the readily identified exceptions of
complete, triangular, and lattice graphs, a PCC is completely split after
individualizing $\wtO(n^{1/3})$ vertices and applying naive color refinement.
Hence, with only those three classes of exceptions, PCCs have at most
$\exp(\wtO(n^{1/3}))$ automorphisms. As a corollary, we have given a CFSG-free classifcation of
the primitive permutation groups of sufficiently large degree $n$ and order not
less than $\exp(\wtO(n^{1/3}))$.

As we remarked in the
introduction, Theorem~\ref{thm:main-pcc} is tight up to polylogarithmic
factors in the exponent, as evidenced by the Johnson and Hamming schemes. 
However, further progress may be possible for Babai's conjectured classification of PCCs with large
automorphism groups, Conjecture~\ref{conj:babai}.

The PCCs with large automorphism groups
appearing in Conjecture~\ref{conj:babai} are all in fact association schemes,
i.e., they satisfy $i^* = i$ for every color $i$. Intuitively, the presence of
asymmetric colors (oriented constituent graphs) should reduce the number of
automorphisms. On the other hand, the possibility of asymmetric colors greatly
complicates our analysis. For example, situation~(2) of
Theorem~\ref{thm:2clique-characterization} and Lemma~\ref{lem:2clique-split}
could be eliminated, and the proof of Lemma~\ref{lem:diam2} would become
straightforward, for association schemes.  Hence, a reduction to the case of
association schemes would be desirable.

\begin{question}
    Is it the case that every sufficiently large PCC with at least
    $\exp(n^{\eps})$ automorphisms is an association scheme?
\end{question}

The best that is known in this direction is the result of the present paper: if $\mfX$ is a PCC
that is not an association scheme, then $|\Aut(\mfX)| \le \exp(\wtO(n^{1/3}))$.

We comment on the bottlenecks for the current analysis. Below the threshold
$\rho = o(n^{1/3})$, we in fact have the improved bound $|\Aut(\mfX)| \le
\exp(\wtO(n^{1/4}))$ when $\mfX$ is nonexceptional, by
Lemma~\ref{lem:main-spielman-combined}. This region of the parameters is
therefore not a bottleneck for improving the current analysis. On the other
hand, Conjecture~\ref{conj:babai} suggests that nonexceptional PCCs $\mfX$ with
$\rho = o(n^{1/3})$ should satisfy $|\Aut(\mfX)| \le \exp(O(n^{o(1)}))$.

When $\rho = \Theta(n^{2/3})$, the Johnson scheme $J(m,3)$ and $H(3,m)$ emerge
as additional exceptions, with automorphism groups of order
$\exp(\Theta(n^{1/3}\log n))$.  The bottleneck for the current analysis is
above this threshold. In this region of the parameters, we analyze the
distinguishing number $D(i)$ of the edge-colors $i$. When $\rho < n/3$, our
best bounds are $D(1) = \Omega(\rho)$ and $D(i) \ge \Omega(D(1)n/\rho)^{2/3}$
from Lemma~\ref{lem:Di-technical}. When $\rho \ge n/3$, we use the estimate
$D(i) \ge \Omega(\sqrt{\rho n_i/\log n})$ from
Lemma~\ref{lem:nondominant-main}. Neither bound simplifies to anything better
than $D(i) = \Omega(n^{2/3})$ in any portion of the range range $\rho =
\Omega(n^{2/3})$.

Babai makes the following conjecture~\cite[Conjecture 7.4]{babai-cc}, which
would give an improvement when $\rho \ge n^{2/3 + \eps}$.

\begin{conjecture}
Let $\mfX$ be a PCC. Then there is a set of $O((n/\rho)\log n)$ vertices which
completely splits $\mfX$ under naive refinement. In particular, $\Aut(\mfX) \le
\exp((n/\rho)\log^2 n)$.
\end{conjecture}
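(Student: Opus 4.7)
The plan is to deduce the conjecture from a uniform lower bound on the distinguishing number, namely $D(i) = \Omega(\rho/\log n)$ for every nondiagonal color $i$. Given such a bound, Babai's individualization lemma (Lemma~\ref{lem:distinguishing}) produces a completely splitting set of size $O(n\log n/\min_i D(i)) = O((n/\rho)\log^2 n)$, matching the conjecture up to a $\log n$ factor in the exponent; sharpening the $D(i)$ bound to $\Omega(\rho)$ would then recover the conjecture exactly. By Corollary~\ref{cor:exists-large-Di}, at least one color $j$ already satisfies $D(j) > \rho$, so the task is to transfer this lower bound from a single color to every color.

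The transfer would proceed through Babai's inequality $D(j) \le \dist_i(j)\,D(i)$ from Proposition~\ref{prop:babai-Di}, reducing everything to showing that for every color $i$ there exists a color $j$ with $D(j) = \Omega(\rho)$ and $\dist_i(j) = O(\log n)$. Concretely, I would argue by averaging: define the set $J = \{j : D(j) \ge \rho/2\}$, use Proposition~\ref{prop:avg-Di} to show that $\sum_{j \in J} n_j = \Omega(n)$, and then argue that the union $\bigcup_{j \in J}\mfX_j(u)$ cannot be too distant from $u$ in $\mfX_i$. The natural tool for the latter is the sphere-growth bound of Lemma~\ref{lem:growth-spheres}, which forces the product $|\mfX_i^{(\alpha+1)}(u)|\cdot|\mfX_i^{(\delta-\alpha)}(u)|$ to be at least $n_i n_j$; iterating this should prevent $\mfX_i$ from having super-logarithmic radius whenever the spheres eventually reach the large set $\bigcup_{j \in J}\mfX_j(u)$.

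The hardest step, and the reason the conjecture remains open, is the diameter control. The sphere-growth lemma only constrains \emph{pairs} of antipodal spheres, not individual ones; in principle a constituent digraph of $\mfX$ could have many small consecutive spheres, provided the opposing spheres are correspondingly large. To really obtain $\dist_i(j) = O(\log n)$ one likely needs a multiplicative growth statement of the form $|\mfX_i^{(\alpha+1)}(u)| \ge (1+c)|\mfX_i^{(\alpha)}(u)|$ up to the halfway radius, a kind of expansion enjoyed by distance-regular graphs but presently unknown for general PCCs. I would attempt to extract such a statement from the coherence relations $n_k p^k_{ij} = n_j p^j_{ik^*}$, possibly combined with the smoothness lemma (Lemma~\ref{lem:nondominant-smooth}) to propagate the largeness of $D(j)$ along geodesics in $\mfX_i$. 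Even a weaker diameter bound of the form $(n/\rho)^{o(1)}$ would already yield interesting partial progress short of the full conjecture.
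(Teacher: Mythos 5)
The statement you were asked about is not a theorem of the paper at all: it is Babai's open conjecture (Conjecture~7.4 of~\cite{babai-cc}), quoted in the paper's conclusion precisely as an unproved conjecture. The authors explicitly state that the best known bound remains $|\Aut(\mfX)| \le \exp(\wtO(n^{1/3}))$ from their own main theorem, and that the conjecture has only been confirmed for PCCs of bounded rank. So there is no proof in the paper to compare against, and your proposal, as you yourself concede in the final paragraph, is not a proof either: the essential step --- showing $\dist_i(j) = O(\log n)$ for some color $j$ with $D(j) = \Omega(\rho)$, or any multiplicative sphere-expansion statement for general constituent digraphs --- is exactly the missing ingredient, and Lemma~\ref{lem:growth-spheres} genuinely does not supply it, since it only constrains products of antipodal spheres and is compatible with long chains of small spheres.

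Two further points in your sketch would need repair even as partial progress. First, the averaging step is off: from Proposition~\ref{prop:avg-Di} together with $D(j) \le n$ and $\sum_{j\ge 1} n_j = n-1$, setting $J = \{j : D(j) \ge \rho/2\}$ only yields $\sum_{j \in J} n_j = \Omega(\rho)$, not $\Omega(n)$; when $\rho = o(n)$ this set of colors need not cover a constant fraction of any sphere, so "the spheres eventually reach a large set'' is not available in the form you want. Second, even granting the hypothetical diameter bound, your route via Lemma~\ref{lem:distinguishing} gives a splitting set of size $O((n/\rho)\log^2 n)$, which is weaker than the conjectured $O((n/\rho)\log n)$; to meet the conjecture one would need $D(i) = \Omega(\rho)$ for every nondiagonal color $i$, i.e.\ bounded (not logarithmic) distances $\dist_i(j)$, which is an even stronger structural statement. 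In short: the conjecture is open, the paper does not prove it, and your proposal correctly identifies but does not close the gap.
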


Again, the best known bound is $|\Aut(\mfX)| \le \exp(\wtO(n^{1/3}))$, from the
present paper, although the conjecture has been confirmed for PCCs of bounded
rank~\cite{babai-cc}.

\bibliographystyle{abbrv}
\bibliography{primcc}



\end{document}